\documentclass[12pt,reqno]{amsart}
\usepackage{amsmath,amssymb,amsthm,mathtools,wasysym,calc,verbatim,enumitem,tikz,pgfplots,url,mathrsfs,fullpage}
\pgfplotsset{compat=1.18}
\SetSymbolFont{wasy}{bold}{U}{wasy}{m}{n}

\usepackage{comment}

\newtheorem{theorem}{Theorem}[section]
\newtheorem{definition}[theorem]{Definition}

\newtheorem{lemma}[theorem]{Lemma}
\newtheorem{corollary}[theorem]{Corollary}
\newtheorem{proposition}[theorem]{Proposition}

\theoremstyle{definition}

\theoremstyle{remark}

\usepackage[colorlinks = true,
            linkcolor  = blue,
            citecolor  = blue,
            urlcolor   = blue,
            bookmarks  = true]{hyperref}

\theoremstyle{remark}

\numberwithin{equation}{section}

\usepackage[backend=biber, style=numeric, citestyle=numeric-comp]{biblatex}
\DeclareFieldFormat{labelnumberwidth}{\mkbibbrackets{#1}}
\DeclareNameAlias{author}{family-given}

\DeclareFieldFormat*{title}{#1}

\AtEveryBibitem{%
  \clearfield{issn}
  \clearfield{url}
  \clearfield{eprint}
}

\renewbibmacro{in:}{}

\DeclareFieldFormat[article]{volume}{\textbf{#1}}
\DeclareFieldFormat[article]{number}{\textbf{#1}}
\renewbibmacro*{volume+number+eid}{%
  \printfield{volume}
  \iffieldundef{number}
    {}%
    {\printtext[parens]{\printfield{number}}}
  \setunit{\addcomma\space}%
  \printfield{eid}}

\DeclareFieldFormat[journal]{journaltitle}{\mkbibemph{#1}}

\DeclareNameAlias{author}{family-given}
\DeclareNameAlias{editor}{family-given}

\DeclareDelimFormat{namedelim}{\addcomma\space}
\DeclareFieldFormat[article,incollection,inproceedings,book]{giveninit}{\mkbibnamegiven{#1}}

\DeclareNameFormat{family-given}{%
  \usebibmacro{name:family-given}
    {\namepartfamily}
    {\namepartgiveni} 
    {\namepartprefix}
    {\namepartsuffix}%
  \usebibmacro{name:andothers}}

\DeclareFieldFormat{pages}{#1}

\allowdisplaybreaks[4]

\begin{document}

\title{The singularity probability of dense random regular graphs}

\author{Zeyan Song}
\address{Shandong University, Jinan, 250100, China.}
\email{zeyansong8@gmail.com}

\author{Hanchao Wang}
\address{Shandong University, Jinan, 250100, China.}
\email{wanghanchao@sdu.edu.cn}

\author{Shengyan Yin}
\address{Shandong University, Jinan, 250100, China.}
\email{202620348@mail.sdu.edu.cn}

\author{Kexin Yu}
\address{Shandong University, Jinan, 250100, China.}
\email{kexinyu01@gmail.com}

\subjclass[2020]{60B20, 05C80, 15B52}
\keywords{random regular graphs, random matrices, singularity probability, Littlewood-Offord theory}

\begin{abstract}
Let $G_{n,d}$ be a uniformly random simple $d$-regular graph on
$n$ vertices, and let $A_n$ be its adjacency matrix. For every fixed $\lambda\in(0,1/2)$, we prove that for any $\lambda(n-1) \le d \le (1-\lambda)(n-1)$,
\begin{align*}
    \mathbb P(A_n\text{ is singular})\le e^{-cn}
\end{align*}
where $c>0$ depends only on $\lambda$, and $n$ is sufficiently large with $nd$ even.
        
\end{abstract}

\maketitle

\section{Introduction}\label{Sec:introduction}
Let $G_{n,d}$ be uniform on the simple $d$-regular graphs on $[n]$, and let $A_n$ be its adjacency matrix. 
\begin{equation}\label{cond: dense regime}
 \lambda(n-1)\le d\le(1-\lambda)(n-1),
 \quad 0<\lambda <1/2, \quad nd \in 2\mathbb{Z}.
\end{equation}

\begin{theorem}\label{thm:main}
For every $\lambda\in(0,1/2)$, there exist $c_{\ref{thm:main}}>0$ and $n_{\ref{thm:main}}>0$ depending only on $\lambda$ such that
\begin{align*}
 \mathbb{P}(A_n\text{ is singular})\le e^{-c_{\ref{thm:main}}n}
\end{align*}
for every $n\ge n_{\ref{thm:main}}$ and $d$ that satisfies \eqref{cond: dense regime}.
\end{theorem}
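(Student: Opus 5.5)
We reduce singularity of $A_n$ to the existence of a nonzero null vector and split the unit sphere into structured and unstructured pieces, in the spirit of Rudelson--Vershynin and of the Jain--Sah--Sawhney and Nguyen--Wood treatments of random regular digraphs and graphs. Since $A_n\mathbf 1=d\mathbf 1$ and $A_n$ is symmetric, any null vector $v$ is orthogonal to $\mathbf 1$. So it suffices to show that, with probability $1-e^{-cn}$, no unit $v\perp\mathbf 1$ satisfies $A_nv=0$.

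The first step replaces independence by local randomness, using the switching method. For vertices $i,j,k,l$ with $ik,jl\in E$ and $il,jk\notin E$, the switch $\{ik,jl\}\to\{il,jk\}$ preserves regularity. In the dense regime \eqref{cond: dense regime}, for any fixed pair of rows $i,j$ the set of columns $k$ with $A_{ik}\neq A_{jk}$ has size $\Theta_\lambda(n)$ with probability $1-e^{-cn}$. Conditioning on everything except the entries in the symmetric difference of rows $i,j$, the resulting configuration is close to a uniform random balanced $\pm1$ pattern on that set. This yields an anticoncentration statement: for a fixed $v$, the quantity $\langle A_i-A_j,v\rangle$ behaves like a Rademacher sum $\sum_{k\in S}\epsilon_k(v_k-v_{k'})$ over $\Theta(n)$ disjoint pairs. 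To obtain tensorization over many rows, we select $\Theta(n)$ disjoint pairs of rows and run the switchings simultaneously, using a simple-switching lemma with bounded distortion of the uniform measure. We also need a preliminary tail estimate $\|A_n-\tfrac dn J\|\le C\sqrt n$ with probability $1-e^{-cn}$, which follows from the standard switching/Kahn--Szemerédi argument for dense regular graphs.

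The second step handles compressible vectors, meaning those that are $\delta$-close to $\theta n$-sparse vectors. For a single such $v\perp\mathbf 1$, the pair anticoncentration above gives $\mathbb P(\|A_nv\|\le \epsilon\sqrt n)\le e^{-c n}$. An $\epsilon$-net of compressible vectors has size $(C/\delta\theta)^{\theta n}$, which is beaten for small $\theta$; the operator norm bound transfers the estimate to the whole set. There is one subtlety: a vector close to a constant on a large set but not orthogonal to $\mathbf 1$ does not matter, because we restrict to $\mathbf 1^\perp$, where such vectors are far from constant.

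The third step treats incompressible vectors and is the main obstacle. An invertibility-via-distance argument reduces the problem to bounding $\mathbb P(\langle A_i-A_j, v\rangle=0)$, where $v$ is a normal vector to the span of the other columns, projected appropriately. The symmetry of $A_n$ and the regularity constraint couple $v$ to the rows being tested. We therefore decouple in the style of Vershynin's symmetric-matrix argument: split $[n]=I\cup J$ and expose the $J\times J$ block. Switchings supported in $I\times J$ with fixed row and column sums then randomize the test vector's inner products. To control the arithmetic structure of $v$, we use the least common denominator $\mathrm{LCD}$ of the pair differences $(v_k-v_{k'})$, together with an inverse Littlewood--Offord theorem, to get $\mathbb P(|\langle A_i-A_j,v\rangle|\le \epsilon)\lesssim \epsilon+1/\mathrm{LCD}$. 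We then show that vectors with $\mathrm{LCD}\le e^{cn}$ are unlikely to be null vectors, via a net argument over level sets of the LCD; the tensorized switching estimate of the first step supplies the needed $(\epsilon+1/D)^{cn}$ factors. The hard part is making these tensorized small-ball bounds hold uniformly in the presence of the degree constraints and the symmetric dependence. For this we would first try conditioning on a typical "switching-rich" event, on which every pair of rows has $\Theta(n)$ independent-enough switching options, and then combine it with a Halász-type bound adapted to balanced signs. Combining the three steps gives $\mathbb P(A_n \text{ singular})\le e^{-cn}$, which is Theorem~\ref{thm:main}.
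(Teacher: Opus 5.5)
Your outline names the right obstacle but does not overcome it, and the tools you propose for Step 3 are known to fall short of an exponential bound. A net argument over $\mathrm{LCD}$ level sets closes only if the fixed-vector small-ball estimate has exponent essentially equal to the dimension of the net. The vectors with $\mathrm{LCD}\approx D$ need a net of size about $(CD/\sqrt n)^{n}$, so you need a bound of order $(C/D)^{n-o(n)}$. Tensorizing over disjoint row pairs, or over the rows of a decoupled $I\times J$ block, gives at best your $(\epsilon+1/D)^{cn}$ with $c<1$. Then $(C/D)^{cn}(CD/\sqrt n)^{n}$ diverges once $D$ exceeds a fixed power of $n$, far below $e^{cn}$.

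There is a second problem in the same step. Because $A_n$ is symmetric, a switch on rows $i,j$ also changes columns $i,j$, and hence every coordinate $(A_nv)_k$ with $k$ in the disagreement set. So the normal vector is not fixed under the randomness you use. After the compression that makes the remaining matrix independent of the switched entries, the singularity event is a \emph{quadratic} form in the balanced signs, not a linear event $\langle A_i-A_j,v\rangle=0$ for fixed $v$. Vershynin-type decoupling of such quadratic forms is exactly what yields only $\exp(-n^{c})$ for symmetric matrices. Your proposal only says what you ``would first try''.

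The paper supplies the missing mechanism in several steps.
\begin{enumerate}
\item It extends $A_n$ to a corank-one $A_m$. The compression $C_{A_m}$ is then measurable with respect to the exposure, and the two-row Schur complement is decoupled by two Cauchy--Schwarz steps. This reduces everything to the linear quantity $q_m(\beta)$.
\item It rewrites the graph law exactly as a product of independent slices on an exposure fibre.
\item It dominates each slice's characteristic function by that of a lazy matching walk. Gaussian smoothing then gives centre-uniform concentration with exponent $\operatorname{rank}Q$.
\item It obtains the full exponent by inversion of randomness: averaging over coefficient vectors in product boxes with a two-copy second moment, which gives counts $(R/L)^{2m}$.
\item A stopping chain handles the fact that rounding does not preserve the auxiliary threshold.
\end{enumerate}

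Separately, your compressible step, and the net-to-set transfer in any $\mathrm{LCD}$ argument, rely on $\|A_n-\tfrac dnJ\|\le C\sqrt n$ with probability $1-e^{-cn}$. You call this standard, but it is not at that probability level. Comparison with $G(n,p)$ costs the factor $\mathbb P(G(n,p)\text{ is }d\text{-regular})^{-1}=e^{\frac12 n\log n+O(n)}$, which swamps an $e^{-cn}$ tail. The Kahn--Szemer\'edi argument would need an exponential-scale Bernstein inequality for linear statistics of the uniform regular law, which you would have to prove. The paper states explicitly that no sufficiently strong spectral norm bound is available. Without it, your nets must be taken at scale $\epsilon/\sqrt n$, costing $e^{c\theta n\log n}$, which a fixed-vector bound $e^{-cn}$ cannot absorb. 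The paper avoids the operator norm entirely. It uses only $\|A_m\|_{\mathrm{HS}}\le m$ together with Livshyts' random rounding, whose deterministic nets control images through the Hilbert--Schmidt norm at arbitrarily small exponential overhead.
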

The singularity problem for discrete random matrices connects
anti-concentration with arithmetic structure. For independent entries,
Kahn, Koml\'os and Szemer\'edi \cite{KKS-jams} obtained an exponential
singularity bound, inverse Littlewood--Offord theory was developed by
Tao and Vu \cite{TV-aom}, and the LCD approach of Rudelson and
Vershynin \cite{RV-adv} gave quantitative invertibility estimates.
Tikhomirov's inversion of randomness method \cite{Tikhomirov-aom}
resolved the asymptotic Bernoulli singularity problem, and was further
developed by Hu, Song and Wu \cite{HSW-arxiv} to obtain sharp
singularity asymptotics for iid discrete random matrices. These results
motivate counting exceptional coefficient vectors through their small
ball probabilities rather than prescribing their arithmetic form in
advance.

Symmetry creates dependence between a row and the complementary minor.
Costello, Tao and Vu \cite{CTV-duke} proved asymptotic nonsingularity
for symmetric Bernoulli matrices. Campos, Jenssen, Michelen and
Sahasrabudhe \cite{CJMS-jams} obtained an exponential singularity
bound; their work on the least singular value \cite{CJMS-pi} develops
further quantitative control in the symmetric setting. We adopt their Fourier approach to the inversion of randomness argument, which serves as a useful bridge between the random model and its structural analysis. The conditional switching and
hybrid matching constructions below are the additional steps required
by the present fixed-degree model.

For regular directed graphs, switching and anti-concentration were
used by Cook \cite{Cook-ptrf} and by Litvak, Lytova, Tikhomirov,
Tomczak-Jaegermann and Youssef \cite{regular-jmaa,regular-ptrf}.
Jain, Sah and Sawhney \cite{JSS-imrn} proved a dense-digraph least
singular value estimate with an exponential remainder. Undirected
regular graph invertibility was studied by Huang \cite{Huang-duke};
M\'esz\'aros \cite{Meszaros-tams} and Nguyen and Wood \cite{NW-arxiv}
also relate nonsingularity to the distribution of finite abelian
groups associated with regular graph models. These models and
parameter regimes are distinct from the conditional symmetric model
used here. Our argument keeps the original simple-graph law throughout.

The proof combines these ideas with exact exposure identities. Dense
enumeration \cite{McKay-cpc,McKay-ejc} is used only for a bounded number
of incidence rows at a time. The slice Fourier estimate comes from
Roos \cite{Roos}, in the form used by Tran \cite{Tran-arxiv}; the latter
also supplies the CLCD small-ball estimate used in the final
classification. The Gaussian comparison, residual-to-hybrid second
moment, and monotone threshold reclassification are established below
for the operators constructed in this paper. Random rounding is used
in the sense of Livshyts \cite{Livshyts-jam}, with the precise
geometric and image approximations written out before application.

\subsection{Notation}\label{Sec:notation}

We use $\mathfrak c$ for a fixed nonempty atom of a specified
conditional exposure, called a fibre or certificate. Its exposed data
are stated before it is used in a conditional argument.
In graph arguments, $\mathbb P$ and $\mathbb E$ refer to the original
graph law, enlarged by the independent slice columns in
\eqref{eq:extension}. On a stated conditional fibre they refer to its
exact conditional law: the exposed data are fixed and only the
unrevealed graph or extension variables are integrated. In abstract
probability lemmas, they refer to the law specified in that lemma.
Whenever several independent layers coexist, we distinguish them:
$\mathbb P_\omega,\mathbb E_\omega$ integrate the auxiliary matching
and lazy-sign seeds on fixed supports;
$\mathbb P_Z,\mathbb E_Z$ integrate the uniform product measure on a
fixed coefficient box; and $\mathbb P_R,\mathbb E_R$ integrate only
the coordinate choices of the random rounding map defined where it
is used. These laws do not average over certificates. The matching
seed and the coefficient box are constructed in
Definitions~\ref{def:lazy matching walk} and~\ref{def:signed coefficient box};
all supports, coefficient data, and projections held fixed under a
particular expectation are specified in its local context.

For a random vector $X\in\mathbb R^k$ and $r\ge 0$, its L\'evy
concentration function is
\begin{align*}
    \mathcal L(X,r)
    :=
    \sup_{z\in\mathbb R^k}
    \mathbb P\left(\|X-z\|_2\le r\right).
\end{align*}
We write $[n]:=\{1,\dots,n\}$, and denote by $S^{k-1}$ and
$B_2^k$ the Euclidean unit sphere and unit ball in $\mathbb R^k$,
respectively. The standard coordinate vector at index $i$ is denoted
by $\mathbf e_i$, and $\mathbf 1_T$ denotes the indicator vector of a
finite set $T$; $\mathbf 1$ is the all-ones vector in the stated ambient
space. For a vector $x$ and a finite coordinate set $T$, write
$x_T:=x|_T\in\mathbb R^T$ and let $B_2^T$ be the Euclidean unit ball
in $\mathbb R^T$. For a matrix, restriction to $T$ means row restriction,
with all columns retained, unless two index sets are displayed.
Restrictions and indicator vectors are extended by zero when viewed in
a larger coordinate space.

We use the Gaussian measure
\begin{align*}
    d\gamma_k(\theta)
    :=
    e^{-\pi\|\theta\|_2^2}\,d\theta.
\end{align*}
Thus $\gamma_k$ is the centered Gaussian measure with covariance
$(2\pi)^{-1}I_k$. This normalization will be used throughout the
Fourier arguments and in the Gaussian cylinder lemma.

For a $p\times q$ matrix $B$, its singular values are ordered as
\begin{align*}
    \sigma_1(B)
    \ge \cdots
    \ge \sigma_{\min(p,q)}(B)
    \ge 0.
\end{align*}
For $K\subset\mathbb R^k$, we write $N(K,rB_2^k)$ for the least
cardinality of an $r$-net of $K$ in Euclidean distance.

For $t\in\mathbb R$, let
\begin{align*}
    \|t\|_{\mathbb R/\mathbb Z}
    :=
    \operatorname{dist}(t,\mathbb Z).
\end{align*}
If $T$ is finite and $0\le a\le |T|$, the Boolean $a$-slice on $T$ is
\begin{align*}
    \binom{T}{a} := \left\{ x\in\{0,1\}^T: \sum_{i\in T}x_i=a \right\}.
\end{align*}
We identify subsets of $T$ with their indicator vectors, and write
$\operatorname{Unif}\binom{T}{a}$ for the uniform distribution on this
slice.

Let $T$ be finite with $s:=|T|\ge 2$. For $x\in\mathbb R^T$, set
\begin{align*}
    \bar x_T
    &:=
    \frac1s\sum_{i\in T}x_i,\\
    P_{T,0}x
    &:=
    x-\bar x_T\mathbf 1_T,\\
    D_Tx
    &:=
    (x_i-x_j)_{\{i,j\}\in\binom{T}{2}}.
\end{align*}
The map $P_{T,0}$ is the orthogonal projection onto
$\mathbf 1_T^\perp$. For an ambient vector $x$, the notation
$P_{T,0}x$ means $P_{T,0}(x_T)$.
We further define the centered torus energy
\begin{align*}
    \|x_{T}\|_{\mathbb D}^2
    :=
    \frac1{s-1}
    \sum_{\{i,j\}\in\binom{T}{2}}
    \|x_i-x_j\|_{\mathbb R/\mathbb Z}^2.
\end{align*}
Then
\begin{align}
    \|D_Tx\|_2^2
    &=
    s\|P_{T,0}x\|_2^2,
    \notag\\
    \operatorname{dist}
    \left(D_Tx,\mathbb Z^{\binom{s}{2}}\right)^2
    &=
    (s-1)\|x_{T}\|_{\mathbb D}^2.
    \label{eq:difference-identities}
\end{align}

The combinatorial least common denominator introduced by Tran
\cite{Tran-arxiv} measures the arithmetic structure through these coordinate differences. For $\alpha>0$ and $\gamma\in(0,1)$, define
\begin{align*}
    \operatorname{CLCD}_{\alpha,\gamma}^{T}(x)
    :=
    \inf\left\{
        \theta>0:
        \operatorname{dist}
        \left(\theta D_Tx,\mathbb Z^{\binom{s}{2}}\right)
        <
        \min\{
            \gamma\theta\|D_Tx\|_2,\alpha
        \}
    \right\}.
\end{align*}
When the support $T$ is fixed from context, we simply write
$\operatorname{CLCD}_{\alpha,\gamma}(x)$.

For $\delta,\rho\in(0,1)$, define
\begin{align*}
    \mathrm{Comp}_{\delta,\rho}(m)
    :=
    \left\{
        x\in S^{m-1}:
        \operatorname{dist}
        \left(
            x,\{y:|\operatorname{supp}y|\le \delta m\}
        \right)
        \le \rho
    \right\},
\end{align*}
and
\begin{align*}
    \mathrm{Incomp}_{\delta,\rho}(m)
    :=
    S^{m-1}\setminus \mathrm{Comp}_{\delta,\rho}(m).
\end{align*}
We omit the ambient dimension $m$ when it is clear from context.

For a finite set $T$, we write $u\in\mathrm{Cons}_{\delta,\rho}(T)$ if
$u\in\mathbb R^T$, $\|u\|_2=1$, and there exists $a\in\mathbb R$
such that
\begin{align*}
    \left|
        \{i\in T:|u_i-a|\le \rho/\sqrt{|T|}\}
    \right|
    \ge
    (1-\delta)|T|.
\end{align*}

Finally, fix
\begin{align*}
    h:=
    \frac{\mathbf e_1-\mathbf e_2}{\sqrt2}, \
    H_m:=
    h^\perp, \
    P_h:=
    I-hh^T.
\end{align*}
For a symmetric $m\times m$ matrix $B$, we denote its compression to
$H_m$ by
\begin{align*}
    C_B
    :=
    P_hB|_{H_m}.
\end{align*}

Constants supplied by a labeled result are indexed by its number:
$c_{\ref{prop:dense slice Fourier}}$ or $C_{\ref{thm:Fourier capture}}$, for example.
When several constants of the same letter arise from one result, we use
superscripts $(1),(2),\ldots$, as in
$c_{\ref{thm:reduction}}^{(1)},c_{\ref{thm:reduction}}^{(2)}$.
Dependence on a parameter may be displayed as an argument, for example
$c_{\ref{thm:hybrid box inversion}}(L)$. These labeled constants retain
their meaning in subsequent applications. Locally we use
$c,c',c''$ and $C,C',C''$ for temporary constants, specifying when one
must remain fixed; otherwise their values may change from line to line. Floors and ceilings are suppressed
in entropy and dimension estimates whenever they affect the bounds only
by a fixed polynomial factor.

\section{Proof overview}

The argument first constructs a short independent slice extension
$A_m$ of $A_n$, then an auxiliary matching operator
$\widehat M_{\mathfrak c}$ on a fixed conditional certificate
$\mathfrak c$. Here $v$ denotes a candidate kernel witness and $T$
a fixed coordinate support. The sequence of reductions is
\begin{align*}
 A_n&\longrightarrow A_m
 \longrightarrow\text{exact conditional product slices}
 \longrightarrow\widehat M_{\mathfrak c},\\
 \widehat M_{\mathfrak c}&\longrightarrow\text{inversion of randomness}
 \longrightarrow\text{stopping chain},\\
 &\longrightarrow\text{classification of }v_T
 \longrightarrow\mathbb P(A_n\text{ is singular}).
\end{align*}
The extension and the auxiliary operator are constructed in
\eqref{eq:extension} and \eqref{eq:hybrid auxiliary operator}, respectively.
We now describe the purpose of each passage.

A short independent slice extension converts a nonzero kernel of $A_n$
into a one-dimensional kernel of some $A_m$, with only a polynomial loss.
Permutation symmetry then makes this kernel transverse to the fixed
hyperplane $H_m$. The compression $C_{A_m}$ is consequently invertible,
and a Schur complement expresses the terminal singularity event as a
quadratic equation in one balanced disagreement slice. Two applications
of Cauchy--Schwarz reduce that equation to a linear concentration
problem. For a concentration cutoff $\beta\in(0,1)$,
$q_m(\beta)$ denotes the witness probability in \eqref{eq:qm}.
Theorem~\ref{thm:reduction} records the output as a bound in terms of
this quantity, retaining a deterministic target and a supremum over
every sufficiently large fixed support.

The extended matrix is not a regular adjacency matrix. Section~\ref{Sec:quasirandomness}
therefore preserves its law and changes the exposure. After the pair
profile and the extension block are fixed, the residual original graph
has degree defect at most two. Local enumeration supplies quasirandom
switching supports, and a reconstruction bijection gives the exact
conditional product law in Theorem~\ref{thm:exact hybrid factorization}.
Only the good event measurable in this one exposure is used for
conditioning. This replaces the independence unavailable in the original
matrix by independent slices on an explicitly identified fibre.

The second change of model is analytic. In Section~\ref{Sec:fourier},
the absolute characteristic function of each true slice is dominated
by the nonnegative characteristic function of a lazy matching walk.
Their product defines the auxiliary operator $\widehat M_{\mathfrak c}$.
Gaussian smoothing turns this comparison into concentration uniformly
in the centre, with exponent exactly $\operatorname{rank}Q$ for an
output projection $Q$. The same identity removes the random translation
created by extension walks before the coefficient average in
Section~\ref{Sec:inversion}. Both operations preserve the exponentially
small probability scale.
The two-row exposure fixes the compression, whereas the hybrid exposure
leaves the product factors fresh. A witness fixed under the first need
not be fixed under the second; this is why deterministic coefficient nets
must precede the final conditional estimates.

Inversion of randomness is performed for this auxiliary operator.
Two independent residual copies test the same coefficient vector;
centered support energy supplies local soft singular directions, and
successive conditioning along the alternating path tensorizes their
small-ball savings. A fixed-centre second moment, followed by the
independent extension factors, bounds the fraction of coefficients in
each product box with excessive auxiliary concentration. Section~\ref{Sec:threshold control}
converts these fractions into lattice counts and image nets.

A rounded point need not have the same auxiliary threshold as the point
it approximates. The stopping chain handles this obstruction on the
same fibre: either the rounded point has a usable threshold and the
chain stops, or it is reclassified at a strictly larger scale. A
terminal point always lies in a predetermined net, so one union over
terminal nets gives uniform threshold control without conditioning on
the path selected by the true matrix.

Finally, Section~\ref{Sec:main proof} excludes compressible solutions
and places incompressible solutions in a fixed finite family of signed
geometric domains. On each large fixed support, the restriction is
constant, has an almost-constant centered direction, or belongs to a
CLCD level. The entropy saved on that support survives the two scalar
meshes and the output codimension. Multiplying the covers by Fourier
capture closes $q_m(\beta)$ and then Theorem~\ref{thm:reduction}.
The model transformations, Gaussian bridge, local-to-global inversion,
and stopping argument are kept in the main text; the appendices contain
only the routine auxiliary proofs indicated there.

\section{Reduction to \texorpdfstring{$q_m(\beta)$}{q-m(beta)}}
The reduction follows the two-row argument of \cite{CJMS-jams}.
The extension columns below are independent dense slices; their role is to
reduce the corank while retaining a witness from the original kernel.

For an even set $T\subseteq W_0:=\{3,\ldots,n\}$, set
\begin{align*}
 \Omega_T^0:=\{\eta\in\{-1,1\}^T:\langle\eta,\mathbf1_T\rangle=0\},
 \qquad
 \rho_T(v):=\sup_{a\in\mathbb R}
       \mathbb P_{\eta\sim\operatorname{Unif}(\Omega_T^0)}
                  (\langle\eta,v_T\rangle=a).
\end{align*}
Let $c_{\ref{lem:Sigma fields}}^{(2)}(\lambda)>0$ be the support constant
in Lemma~\ref{lem:Sigma fields}, and define
\begin{align}
 \rho_*(v):=\sup_{\substack{T\subseteq W_0,\ |T|\ge
           c_{\ref{lem:Sigma fields}}^{(2)}n\\ |T|\ \mathrm{even}}}\rho_T(v).\label{eq:rho-star}
\end{align}
The supremum is over all such fixed supports and does not depend on a partition.
Fix $\lambda_*:=\lambda/4$. For $n\le s\le2n-3$, let
$X_s\in\{0,1\}^s$ be independent uniform vectors of deterministic
weights $r_s\in[\lambda_*s,(1-\lambda_*)s]$, independent also of $A_n$.
Set
\begin{align}
 A_{s+1}:=\begin{pmatrix}A_s&X_s\\X_s^T&0\end{pmatrix}.\label{eq:extension}
\end{align}
Recall $h=(\mathbf e_1-\mathbf e_2)/\sqrt2$, $H_m=h^\perp$ and
$C_{A_m}=P_hA_m|_{H_m}$. For $\beta\in(0,1)$, put
\begin{align}
 q_m(\beta):=\sup_{b\in H_m\cap\mathbf1^\perp}
 \mathbb P\{\exists v\in H_m\setminus\{0\}:
                P_hA_mv=b,\ \rho_*(v)>\beta\}.\label{eq:qm}
\end{align}

\begin{theorem}\label{thm:reduction}
Fix $0 < \kappa < 1/4$ and let $k_n=\lfloor\kappa n\rfloor$.
There are positive constants
$c_{\ref{thm:reduction}}^{(i)}=c_{\ref{thm:reduction}}^{(i)}(\lambda,\kappa)$ and
$C_{\ref{thm:reduction}}^{(i)}=C_{\ref{thm:reduction}}^{(i)}(\lambda,\kappa)$
for $i=1,2$, and
$\beta_{\ref{thm:reduction}}=\beta_{\ref{thm:reduction}}(\lambda,\kappa)$,
such that, whenever
$e^{-{c_{\ref{thm:reduction}}^{(1)}}n}\le\beta\le
\beta_{\ref{thm:reduction}}$,
\begin{align}
 \mathbb P(A_n\text{ is singular})
 \le e^{-{c_{\ref{thm:reduction}}^{(1)}}n}+ n^{{C_{\ref{thm:reduction}}^{(1)}}}\sum_{m=n}^{n+k_n-1}
 \left(\beta^{-{C_{\ref{thm:reduction}}^{(2)}}}q_m(\beta)
 +\beta^{{c_{\ref{thm:reduction}}^{(2)}}}\right).\label{eq:reduction}
\end{align}
\end{theorem}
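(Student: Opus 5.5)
The plan is to follow the two-row argument of \cite{CJMS-jams}, adapted to the slice extension \eqref{eq:extension}.

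\textbf{Step 1: reduction to corank one.} On the event that $A_n$ is singular, the corank of $A_s$ changes by at most one per extension step. When a fresh slice column $X_s$ is adjoined, it lies in the column space of $A_s$ only with small probability, unless the kernel is structured. Concretely, take a kernel vector $w$ of $A_s$. The new matrix drops corank unless $\langle X_s,w\rangle=0$. Since $X_s$ is a dense uniform slice vector, this probability is at most $\rho_*$-type concentration of $w$ plus an exponentially small term; the exponential term comes from compressible or constant $w$, which are handled by a standard net argument on the slice.

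Iterating over at most $k_n$ steps with a pigeonhole, singularity of $A_n$ implies one of two things: either an exponentially rare event, or for some $m\in[n,n+k_n-1]$ the matrix $A_m$ has corank exactly one. In the second case the kernel vector $v$ restricts to the original coordinates as a witness, so that $\rho_*(v)$ is controlled. The polynomial factor $n^{C^{(1)}}$ comes from the union over $m$ and over the step at which the corank reaches one.

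\textbf{Step 2: transversality and the Schur complement.} By permutation invariance of the law of $A_m$ in the coordinates $1,2$ among $W_0$-exchangeable labels, the one-dimensional kernel $v$ has $v_1\ne v_2$ for some relabelling, at a cost of a factor $n^2$. Hence $v\notin H_m$, and $C_{A_m}$ is invertible on $H_m$.

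Next, expose everything except rows and columns $1,2$ restricted to the disagreement set $T=\{i\ge3:a_{1i}\ne a_{2i}\}$. Conditioned on the pair profile, the pattern on $T$ is $\eta\in\Omega_T^0$, essentially uniform; here the support constant $c^{(2)}_{\ref{lem:Sigma fields}}$ guarantees $|T|\ge c^{(2)}n$ with high probability. Singularity of $A_m$ then becomes the vanishing of a Schur-complement quadratic form $Q(\eta)=\langle \eta, M\eta\rangle+\langle u,\eta\rangle+c$, with $M$ built from $C_{A_m}^{-1}$.

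\textbf{Step 3: decoupling.} Apply Cauchy--Schwarz twice, decoupling $\eta$ over a random split of $T$. This bounds $\mathbb P(Q=0)^4$ by the concentration of a linear form $\langle \eta-\eta',v_T\rangle$ with $v=C_{A_m}^{-1}b$ for a vector $b\in H_m\cap\mathbf 1^\perp$ determined by the exposed data. The vector $b$ is orthogonal to $\mathbf 1$ because the row sums are fixed.

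Split according to whether $\rho_*(v)>\beta$. If it is, we land in the event defining $q_m(\beta)$, taking the supremum over $b$; pulling the conditioning outside costs the factor $\beta^{-C^{(2)}}$, via Markov applied to the conditional probability. If $\rho_*(v)\le\beta$, the decoupled bound gives $\beta^{c^{(2)}}$.

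\textbf{Main obstacle.} The hardest step is Step 2's conditioning. The two-row exposure must leave $\eta$ genuinely uniform on $\Omega^0_T$ under the simple regular-graph law, and only the extension columns are independent. To handle this, I would first try switchings on rows $1,2$, which swap entries $i,j\in T$ and preserve all degrees. These show that the conditional law of $\eta$ given the rest is exactly uniform on the balanced slice, and this is what makes the Cauchy--Schwarz decoupling legitimate.
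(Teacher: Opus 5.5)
The conditioning you flag as the main obstacle is actually easy: given the pair sums, every balanced orientation on $\mathcal T$ reconstructs exactly one graph with the same exposed data, so $\eta$ is exactly uniform. The step that produces the specific form of \eqref{eq:reduction} is what is missing.

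\textbf{Step 3.} After the two Cauchy--Schwarz steps the vector is $b=L_S(\eta_S-\eta'_S)$. It depends on the two copies of the orientations on the split block, not only on exposed data. The surviving linear form is $\langle\eta_J-\eta'_J,v_J\rangle$ on the complementary block $J$ only.

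Since $q_m(\beta)$ is a supremum over \emph{deterministic} targets, the branch $\rho_*(v)>\beta$ can be charged to $q_m(\beta)$ only by a union bound over all values $b$ can take. A Markov inequality does not substitute for this.

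That union is affordable only if the split block is tiny. Choose, independently of the graph, a uniform set $I\subset W_0$ of size $t\asymp\log(1/\beta)$. Decouple $S=I\cap\mathcal T$ against $J=\mathcal T\setminus S$, and condition on the number $K$ of plus signs in $S$. Then every possible $b$ lies in the deterministic set $\mathcal W_I=\{L_I\delta:\delta\in\{-2,0,2\}^I,\ \sum_i\delta_i=0\}\subset H_m\cap\mathbf 1^\perp$, which has at most $3^t$ points. Orthogonality to $\mathbf 1$ comes from the common count $K$, not from the row sums.

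The event that some nonzero $w\in\mathcal W_I$ has $\rho_*(C_{A_m}^{-1}w)>\beta$ is determined by $\mathcal F_m$ and $I$. Its probability is at most $3^tq_m(\beta)=\beta^{-O(1)}q_m(\beta)$. It can be discarded before the Cauchy--Schwarz steps, which keeps $q_m$ linear rather than under a fourth root. With a split of linear size this union would cost $e^{cn}$.

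The same smallness is needed on the other branch:
\begin{enumerate}
\item Given $K$, $\eta_J$ is an unbalanced slice, so $\rho_*(v)\le\beta$ only yields $\sup_a\mathbb P(\langle\eta_J,v_J\rangle=a\mid I,K)\le 8^{|S|}\rho_{\mathcal T}(v)\le 8^t\beta$. This follows by extending an atom on $J$ with an admissible pattern on $S$.
\item The diagonal case $\eta_S=\eta'_S$, which you omit, contributes $\binom{|S|}{K}^{-1}\le e^{-ct}$.
\end{enumerate}
Taking $t=a\log(1/\beta)$ with $a$ small balances $3^t$, $8^t\beta$ and $e^{-ct}$. This trade-off is exactly what yields $\beta^{-C^{(2)}}$ and $\beta^{c^{(2)}}$.

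\textbf{Step 1} does not work as written. An incompressible kernel vector of $A_s$ with large $\rho_*$ is not excluded by any standard net argument; controlling such vectors is precisely what $q_m$ is introduced for. So you cannot assert that each extension fails only with small probability. Moreover, the corank can rise (when $X_s=A_sy$ with $y^TA_sy=0$), so ``singular implies corank one at some $m$ outside a rare event'' is not a pigeonhole.

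A structure-free route works instead:
\begin{enumerate}
\item A dense slice lies in a fixed proper subspace with probability at most $1-c$. Indeed, if $a\ne0$ is normal to the subspace, the two orientations of a disagreeing pair $i,j$ with $a_i\ne a_j$ cannot both lie in $a^\perp$.
\item It lies in a subspace of codimension $j$ with probability at most $C\sqrt s(1-\lambda_*)^j$.
\item Hence on $\{\operatorname{corank}A_n=k\}$ the next $k-1$ extensions all succeed with conditional probability at least $c^{O(\log n)}=n^{-C}$.
\end{enumerate}
This gives $\mathbb P(\operatorname{corank}A_n=k)\le n^{C}\,\mathbb P(\operatorname{corank}A_{n+k-1}=1,\ C_{A_{n+k-1}}\text{ invertible})$. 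On this successful path $\ker A_{n+k-1}\subseteq\ker A_n\times\{0\}$, which is what makes the kernel nonconstant on $[n]$ in your Step 2. This comparison, not the union over $m$, is the source of the polynomial factor.

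Alternatively, you could charge a structured $w\in\ker A_s\cap H_s$ to $q_s(\beta)$ with $b=0$, but that has to be said and justified.

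Finally, you need a separate bound $\mathbb P(\operatorname{corank}A_n>k_n)\le e^{-cn}$, since otherwise $k_n$ steps cannot reach corank one. One gets it by comparing with $G(n,p)$, $p=d/(n-1)$, where regularity has probability at least $(c/\sqrt n)^n$. In that model corank at least $k$ has probability at most $2^n\zeta^{\binom k2}$, with $\zeta=\max(p,1-p)$.
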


We prove it by passing to a simple kernel with invertible compression,
and then decoupling its Schur complement.

\subsection{Short extensions}
\begin{proposition}\label{prop:rank increment}
    Fix $0 < \kappa < 1/4$ and let $k_{n}=\lfloor \kappa n\rfloor$. There exist positive constants $n_{\ref{prop:rank increment}}=n_{\ref{prop:rank increment}}(\lambda,\kappa)$, $c_{\ref{prop:rank increment}}=c_{\ref{prop:rank increment}}(\lambda,\kappa)$ and $C_{\ref{prop:rank increment}}=C_{\ref{prop:rank increment}}(\lambda,\kappa)$ depending only on $\kappa$ and $\lambda$ such that for $n \ge n_{\ref{prop:rank increment}}$
    \begin{align*}
        \mathbb P\left(  A_{n} \text{ is singular} \right) \le n^{C_{\ref{prop:rank increment}}}\sum_{m=n}^{n+k_{n}-1}\mathbb P\left( \operatorname{corank}(A_{m})=1 , \  C_{A_{m}} \text{ is invertible} \right) + e^{-c_{\ref{prop:rank increment}}n}.
    \end{align*}
\end{proposition}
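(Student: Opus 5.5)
We follow the corank-reduction scheme of \cite{CJMS-jams}: adding one bordered row and column by a fresh dense slice $X_s$ typically lowers the corank by one while the corank is at least two. The plan has three steps. First, show that $\operatorname{corank}(A_{s+1})\le\max(\operatorname{corank}(A_s)-1,1)$ except on an event of probability at most $e^{-cn}$. Second, iterate this over $k_n$ steps and pigeonhole on the first time the corank equals one. Third, pay a polynomial factor to convert ``corank one'' into ``corank one with $C_{A_m}$ invertible'' via permutation symmetry.

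\emph{Step 1 (one extension step).} Let $A_s$ have kernel $K$ with $\dim K=k\ge 1$. By the bordered block form \eqref{eq:extension}, $\operatorname{rank}A_{s+1}\ge \operatorname{rank}A_s+2$ as soon as $X_s\notin \operatorname{Im}A_s=K^\perp$, i.e.\ as soon as $X_s$ is not orthogonal to all of $K$. In that case the corank drops by one when $k\ge 2$ and stays at one when $k=1$. If $X_s\perp K$, the corank can increase by at most one. So we need to bound $\mathbb P_{X_s}(X_s\perp K)$ with $A_s$ fixed.
\begin{itemize}
\item If $K$ contains a vector $v$ that is not almost constant on a set of size $\gtrsim s$, then an Erd\H{o}s--Littlewood--Offord bound for the slice (via swapping pairs of coordinates with distinct values of $v$) gives $\mathbb P(\langle X_s,v\rangle=0)\le C/\sqrt{s}$.
\item The degenerate directions are vectors supported on few coordinates or nearly constant. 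For these we use the kernel of the original graph: since $A_n\mathbf 1=d\mathbf 1$, and by an a priori exclusion of sparse kernel vectors, on an event of probability $1-e^{-cn}$ every nonzero kernel vector of every $A_s$ with $s\le n+k_n$ has $\gtrsim n$ distinct-valued pairs.
\item The exclusion rests on expansion of dense regular graphs: no small sets of columns are linearly dependent. This follows from the switching/enumeration estimates \cite{McKay-cpc} together with a union bound.
\end{itemize}
With $\kappa<1/4$, the new slice columns cannot destroy this property.

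\emph{Step 2 (iteration and pigeonhole).} Track the corank process $Y_j=\operatorname{corank}(A_{n+j})$. On the good event, each step decreases $Y_j$ by one with probability at least $1-C/\sqrt n$ unless $Y_j=1$, and increases it by at most one otherwise. If $A_n$ is singular, $Y_0\ge1$. We need $Y_j=1$ for some $j<k_n$. A Chernoff bound on the number of failure steps shows that, if $Y_0\le k_n/3$, the process hits $1$ within $k_n$ steps except with probability $e^{-cn}$. If $Y_0>k_n/3$, the original matrix has corank linear in $n$, which is itself exponentially unlikely. This follows, for example, from the same small-set expansion together with a rank bound obtained by exposing rows, or more simply by handling $Y_0$ large with a larger number of steps. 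Since $\kappa$ is fixed, we instead absorb this case into $e^{-cn}$ via a crude bound on the rank of dense random regular graphs, which we take from the quasirandom structure available in Section~\ref{Sec:quasirandomness}. This yields
\[
\mathbb P(A_n\text{ singular})\le \sum_{m}\mathbb P(\operatorname{corank}A_m=1)+e^{-cn}.
\]

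\emph{Step 3 (invertible compression).} When $\ker A_m=\operatorname{span}(w)$, the compression $C_{A_m}$ is invertible iff $w\notin H_m$, i.e.\ $w_1\ne w_2$. Since $w$ is nonconstant with $\gtrsim n$ distinct-valued pairs, some pair $(i,j)$ among the first $n$ indices has $w_i\ne w_j$. The law of $A_m$ is invariant under simultaneous permutations of $[n]$, because $G_{n,d}$ is exchangeable and the slices $X_s$ are uniform. Hence, by averaging over the $\binom n2$ transpositions,
\[
\mathbb P(\operatorname{corank}=1)\le n^2\,\mathbb P(\operatorname{corank}=1,\ w_1\ne w_2).
\]

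\textbf{Main obstacle.} The hard part is the a priori structure of kernel vectors in Step 1, which must hold uniformly for $s\le n+k_n$: ruling out sparse or almost-constant kernel directions for the dependent regular-graph matrix, and ruling out linear corank, with exponential error. We would attack this first through switching-based expansion estimates for $G_{n,d}$.
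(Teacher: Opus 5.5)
Your Steps 1 and 3 both rest on a structural claim that you do not prove, and it is not a routine consequence of enumeration. The claim is that with probability $1-e^{-cn}$ every nonzero kernel vector of every $A_s$, $n\le s\le n+k_n$, has $\gtrsim n$ disjoint pairs of distinct coordinates. Granting it, your hitting-time and symmetrization arguments would go through; without it, neither does.

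The relation $A_n\mathbf 1=d\mathbf 1$ only excludes \emph{exactly} constant kernel vectors. A nonconstant vector such as $\mathbf e_1-\mathbf e_2$ gives $\mathbb P(\langle X,v\rangle=0)=\mathbb P(X_1=X_2)$, which is bounded away from zero. So your $C/\sqrt n$ failure bound needs the exclusion of all kernel vectors $a\mathbf 1+w$ with $|\operatorname{supp}w|\le\varepsilon n$, with an error small enough to survive a union over $\binom n{\varepsilon n}$ supports. Dense enumeration compares $G_{n,d}$ with independent rows only for boundedly many rows, or globally at cost $e^{\frac12n\log n+Cn}$. This is therefore a genuine compressible-type estimate for $G_{n,d}$ that needs its own switching argument.

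For $s>n$ the claim needs separate work. After a failed step with $A_sy=X_s$ and $y^TA_sy=0$, the vector $(-y,1)$ enters the kernel, and its structure is uncontrolled. A later successful step can leave only combinations involving it, so ``the new slice columns cannot destroy this property'' is unsupported. The same claim is what you use in Step 3 to find $i,j\le n$ with $w_i\ne w_j$. The linear-corank case is also left open, since the quasirandom structure you cite contains no rank bound. A minor slip: when $k=1$ and $X_s\notin\operatorname{Im}A_s$, the bordered matrix is invertible rather than of corank one. This is harmless for a hitting-time argument.

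The missing idea is that the statement tolerates a factor $n^{C}$, so you never need high-probability corank reduction. The paper fixes $k=\operatorname{corank}A_n\le k_n$ and bounds from below by $n^{-C}$ the conditional probability, given $A_n$, that the next $k-1$ extensions all succeed. This uses only structure-free slice estimates.
\begin{itemize}
\item For every proper subspace $V$, $\mathbb P(X\in V)\le1-c$. Take $a\ne0$ with $V\subseteq a^\perp$. A constant $a$ is missed because the slice weight is positive. Otherwise some $a_i\ne a_j$, and on $\{X_i\ne X_j\}$, of probability at least $2\lambda_*^2$, at most one of the two equally likely orientations lies in $a^\perp$.
\item For $V$ of codimension $j$, $\mathbb P(X\in V)\le C\sqrt s(1-\lambda_*)^j$.
\end{itemize}
All steps at corank at least $J=C_0\log n$ then succeed jointly with probability at least $3/4$, and the at most $J$ remaining steps cost $c^J=n^{-C}$.

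On this all-success path the bordering identity gives $\ker A_m\subseteq\ker A_n\times\{0\}^{m-n}$ with $m=n+k-1$. So the kernel vector restricted to $[n]$ is nonzero and orthogonal to $\mathbf 1$, hence has at least $n-1$ pairs $i,j\le n$ with distinct entries. This is exactly what your permutation averaging in Step 3 needs, with no structural input, and it gives a clean correspondence $k\mapsto m$ instead of a first-hitting union.

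Linear corank is excluded by comparison with $G(n,p)$, where $p=d/(n-1)$ and $\zeta=\max(p,1-p)$. An invertible principal minor and the Schur complement give $\mathbb P(\operatorname{corank}B\ge k)\le2^n\zeta^{\binom k2}$. The lower bound $(c/\sqrt n)^n$ on the probability of $d$-regularity then yields $\mathbb P(\operatorname{corank}A_n\ge k)\le\exp(\tfrac12n\log n+Cn-ck^2)$.
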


\begin{proof}
If $B=B^T$ and $x\notin\operatorname{Im}B$, then
\begin{align*}
 \ker\begin{pmatrix}B&x\\x^T&0\end{pmatrix}
   =\{(z,0):z\in\ker B,\ \langle x,z\rangle=0\}.
\end{align*}
Indeed, $Bz+tx=0$ forces $t=0$; the remaining constraint has
codimension one in $\ker B$. Thus a successful extension decreases
the corank by one.

For a dense slice vector $X\in\{0,1\}^s$ and a proper subspace $V$ of
codimension $j$, the elementary slice bounds are
\begin{align*}
 \mathbb P(X\in V)\le\min\{1-c,\ C\sqrt s(1-\lambda_*)^j\}.
\end{align*}
For the first bound, take $a\ne0$ with $V\subset a^\perp$.
If $a$ is constant the slice misses $a^\perp$; otherwise choose
$a_i\ne a_j$. The event $X_i\ne X_j$ has probability
$2r(s-r)/(s(s-1))\ge2\lambda_*^2$, and at most one of its two
equiprobable orientations lies in $a^\perp$.
For the second bound, condition independent Bernoulli$(r/s)$
coordinates on their sum $r$. The conditioning costs at most $C\sqrt s$
by Stirling's formula, while $j$ pivot coordinates for $V$ have
conditional mass at most $(1-\lambda_*)^j$.

Write $\mathcal H_s=\sigma(A_n,X_n,\ldots,X_{s-1})$.
On $\{\operatorname{corank}A_s=j\}$ these estimates apply to
$V=\operatorname{Im}A_s$ conditional on $\mathcal H_s$.
Choose $J=C_0\log n$ so that
$\sum_{j\ge J}C\sqrt{2n}(1-\lambda_*)^j\le1/4$.
Along successive successful extensions, the steps with corank at
least $J$ all succeed with probability at least $3/4$, and the
remaining at most $J$ steps cost at most a factor $c^J$.
Consequently, for $m=n+k-1$,
\begin{align*}
 &\mathbf1_{\{\operatorname{corank}A_n=k\}}
 \mathbb P\bigl(\operatorname{corank}A_m=1,
       \ \ker A_m\subseteq\ker A_n\times\{0\}^{m-n}\mid A_n\bigr)
 \\[-2pt]
 &\hspace{3em}\ge n^{-C}\mathbf1_{\{\operatorname{corank}A_n=k\}}.
\end{align*}

We also need a large-corank bound. Let $B$ be the adjacency matrix
of $G(n,p)$, where $p=d/(n-1)$, and put $\zeta=\max(p,1-p)$.
A symmetric matrix of rank $r$ has an invertible principal
$r\times r$ minor. Once this minor and its incident block are fixed,
the Schur complement prescribes the remaining block. Its independent
upper-triangular entries therefore give
\begin{align*}
 \mathbb P(\operatorname{corank}B\ge k)
 \le\sum_{r\le n-k}\binom nr\zeta^{\binom{n-r}{2}}
 \le2^n\zeta^{\binom k2}.
\end{align*}
Dense enumeration \cite[Theorem~3]{McKay-cpc} and Stirling's formula
bound the probability of all degrees being $d$ below by
$(c/\sqrt n)^n$. Hence
\begin{align*}
 \mathbb P(\operatorname{corank}A_n\ge k)
 \le\exp\bigl(\tfrac12n\log n+Cn-ck^2\bigr).
\end{align*}
In particular, corank exceeding $k_n$ has exponentially small probability.

On the successful-extension event above, let
$\ker A_m=\mathbb Rz$. Since $A_n\mathbf1=d\mathbf1$, the nonzero
restriction $z_{[n]}$ is orthogonal to $\mathbf1$.
At least $n-1$ pairs $i,j\in[n]$ therefore satisfy $z_i\ne z_j$.
For each such pair the compression to
$h_{ij}^\perp$, where $h_{ij}=(\mathbf e_i-\mathbf e_j)/\sqrt2$,
is invertible: if $x\perp h_{ij}$ and $A_mx\in\mathbb Rh_{ij}$,
its inner product with $z$ forces $A_mx=0$, and then $x=0$.
Averaging over pairs and using permutation symmetry of the original
$n$ coordinates gives
\begin{align*}
 \mathbb P(\operatorname{corank}A_n=k)
 \le n^{C+1}\mathbb P(\operatorname{corank}A_{n+k-1}=1,
                         \ C_{A_{n+k-1}}\text{ invertible}).
\end{align*}
Sum over $1\le k\le k_n$ and add the large-corank bound.
\end{proof}

\subsection{The two-row equation}
Put $\mathcal T=\{i\in W_0:A_{1i}\ne A_{2i}\}$ and expose
\begin{align*}
 \mathcal F_m:=\sigma\bigl(A_{12},(A_{ij})_{3\le i<j\le n},
          (A_{1i}+A_{2i})_{i\in W_0},
          (A_{ij})_{1\le i<j\le m,\ j>n}\bigr).
\end{align*}
\begin{lemma}\label{lem:Sigma fields}
    The sigma-field $\mathcal F_m$ determines $\mathcal T$ and $C_{A_m}$. Conditional on $\mathcal F_m$, the vector $\eta=(A_{1i}-A_{2i})_{i\in\mathcal T}$
    is uniform on the balanced sign slice $\Omega_{\mathcal T}^0$.
    Moreover, there exists
    ${c_{\ref{lem:Sigma fields}}^{(1)}=c_{\ref{lem:Sigma fields}}^{(1)}(\lambda)}>0$ depending only on $\lambda$ such that
    \begin{align*}
        \mathbb P\big(
        |\mathcal T|<{c_{\ref{lem:Sigma fields}}^{(2)}}n
        \big)
        \le
        e^{-{c_{\ref{lem:Sigma fields}}^{(1)}}n}
    \end{align*}
    for some
    ${c_{\ref{lem:Sigma fields}}^{(2)}}=c_{\ref{lem:Sigma fields}}^{(2)}(\lambda)>0$ depending only on $\lambda$.
\end{lemma}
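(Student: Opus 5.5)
The proof has three parts: measurability, the conditional law of $\eta$, and the size of $\mathcal T$.

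\emph{Measurability.} For $i\in W_0$, the sum $A_{1i}+A_{2i}\in\{0,1,2\}$ equals $1$ exactly when $A_{1i}\ne A_{2i}$. So $\mathcal T$ is $\mathcal F_m$-measurable.

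For $C_{A_m}$, a vector $x\in H_m$ has $x_1=-x_2$. Write $x=t\sqrt2\,h+x'$ with $x'$ supported on $\{3,\dots,m\}$. Then $P_hA_mx$ involves the following quantities.
\begin{itemize}
\item The entries $A_{12}$, $A_{11}=A_{22}=0$ and $A_{ij}$ for $i,j\ge3$. These are exposed: the block $3\le i<j\le n$ directly, and the extension entries with $j>n$ directly.
\item Rows $1,2$ tested against $x'$. Projecting by $P_h$ keeps only the component along $(\mathbf e_1+\mathbf e_2)/\sqrt2$. This uses $(A_{1i}+A_{2i})_{i\ge3}$, known for $i\in W_0$, and for $i>n$ from the exposed extension entries $A_{1i},A_{2i}$.
\item The columns $1,2$ acting on $t\sqrt2\,h$ in rows $i\ge3$. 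These give $t(A_{i1}-A_{i2})$, which is not exposed.
\end{itemize}
The last point is the delicate one. I would check that the compression is taken as a bilinear form on $H_m$, that is $P_hA_m$ composed with the projection onto $H_m$. Then $\langle A_mx,y\rangle$ for $x,y\in H_m$ involves the differences $A_{1i}-A_{2i}$ only in the combination $(x_1-x_2)(A_{1i}+A_{2i})$-type terms after the symmetrisation $y_1=-y_2$. Concretely, $x_1=-x_2$ and $y_1=y_2$ pairs rows $1,2$ only with sums.

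I would verify this calculation carefully. If instead the convention keeps $h$-components, the statement must be read with $H_m$ replaced appropriately. Either way the quantities entering $C_{A_m}$ are $A_{12}$, the sums $A_{1i}+A_{2i}$, and the exposed blocks.

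\emph{Conditional law.} Fix an atom of $\mathcal F_m$. The unexposed randomness is the assignment, for each $i\in\mathcal T$, of which of the vertices $1,2$ is adjacent to $i$. Any such assignment yields a simple graph with the same exposed data. The degrees of $i\in W_0$ are unchanged because $A_{1i}+A_{2i}$ is fixed.

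Vertex $1$ must have degree $d$. Its degree is $A_{12}$, plus the number of $i\notin\mathcal T$ with sum $2$, plus the number of $i\in\mathcal T$ assigned to $1$. The same holds for vertex $2$, so exactly $|\mathcal T|/2$ of the vertices in $\mathcal T$ go to each side, i.e. $\langle\eta,\mathbf1_{\mathcal T}\rangle=0$. Conversely, every balanced $\eta$ gives a valid $d$-regular graph. Since $G_{n,d}$ is uniform and the extension vectors $X_s$ are independent of $A_n$, all these completions are equally likely, so $\eta$ is uniform on $\Omega^0_{\mathcal T}$.

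\emph{Size of $\mathcal T$.} We need $|\mathcal T|\ge cn$ with probability $1-e^{-cn}$. I expect this to be the main obstacle, since codegrees in $G_{n,d}$ are not independent sums. The approach is to transfer from $G(n,p)$.

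In $G(n,p)$, $|\mathcal T|$ is Binomial$(n-2,2p(1-p))$ with $2p(1-p)\ge\lambda$. The Chernoff bound gives $\mathbb P(|\mathcal T|<\lambda n/4)\le e^{-c'n}$ with $c'\asymp\lambda$.

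Conditioning on $d$-regularity costs a factor $(c/\sqrt n)^{-n}=e^{O(n\log n)}$, which is too large. So I would instead use the exact enumeration of \cite[Theorem~3]{McKay-cpc}, applied only to the two rows $1,2$ in the way the paper allows: bounded numbers of incidence rows. Exposing the neighbourhoods $N(1),N(2)$, the dense enumeration formula shows that the probability of a given pair of neighbourhoods is within a factor $e^{O(1)}$ times an explicit product weight. That weight depends on $N(1),N(2)$ only through $|N(1)\cap N(2)|$ and the degree-defect profile, with smooth dependence.

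Hence the law of $|N(1)\triangle N(2)|$ is comparable, up to $e^{O(1)}$ or at worst polynomial factors, to that of two independent uniform $d$-subsets tilted by a bounded factor $\exp(O(\text{overlap deviation}^2/n))$. A hypergeometric tail then gives $\mathbb P(|\mathcal T|<c_2n)\le e^{-c_1n}$ for small $c_2(\lambda)$.

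If the tilt is not uniformly bounded at large deviations, I would fall back on a switching argument. For a graph with small $|\mathcal T|$, the common neighbours and common non-neighbours of $1,2$ both number $\Omega(n)$. Switchings along $i\in N(1)\cap N(2)$, $j\notin N(1)\cup N(2)$ with a suitable edge $ij'$ increase $|\mathcal T|$ by $2$. Comparing forward and backward switching counts gives a ratio $\ge C$ at each step, and iterating from $c_2n$ up to $\lambda n/2$ yields the exponential bound.
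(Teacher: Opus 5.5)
Your conditional-law argument and your overall plan for the support bound follow the paper. The measurability of $C_{A_m}$, however, rests on a sign error. Since $H_m=h^\perp$ with $h=(\mathbf e_1-\mathbf e_2)/\sqrt2$, a vector $x\in H_m$ satisfies $x_1=x_2$, not $x_1=-x_2$. So $H_m$ does not contain $h$, and your decomposition $x=t\sqrt2\,h+x'$ does not describe $H_m$. The unexposed differences $A_{i1}-A_{i2}$ in your third bullet come only from this error. Your bilinear-form rescue does not repair it. Under your convention both test vectors would satisfy $x_1=-x_2$ and $y_1=-y_2$, and then $\langle y,A_mx\rangle$ contains $y_1\sum_i(A_{1i}-A_{2i})x_i$. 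So $C_{A_m}$ would genuinely fail to be $\mathcal F_m$-measurable. The mixed pairing $x_1=-x_2$, $y_1=y_2$ is not a compression to a single subspace, and it still leaves the column term $x_1\sum_i y_i(A_{i1}-A_{i2})$.

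The correct argument is the paper's one line. For $i\in\mathcal T$ one has $A_{1i}=(1+\eta_i)/2$ and $A_{2i}=(1-\eta_i)/2$. Hence $A_m=\bar A+2^{-1/2}(h\eta^T+\eta h^T)$, where $\bar A$ is built from $A_{12}$, the pair sums, the block on $W_0$ and the extension entries, so it is $\mathcal F_m$-measurable. Since $P_hh=0$, and $h^Tx=0$ for $x\in H_m$, the random term is killed and $C_{A_m}=P_h\bar A|_{H_m}$. Equivalently, writing $x=t(\mathbf e_1+\mathbf e_2)+x'$, rows and columns $1,2$ enter only through $A_{1i}+A_{2i}$. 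The sign matters later too: this identity is what makes the central slice vanish from the compressed operator.

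For the support bound, you leave open exactly the fact that makes the two-row comparison work. The tilt you guess, $\exp(O(\Delta^2/n))$ in the overlap deviation $\Delta$, would not suffice. The event $|\mathcal T|<c_2n$ lives at $\Delta$ of order $n$, where such a tilt is $e^{O(n)}$ and competes with the tail bound.

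In fact the comparison is uniform. Given $A_{12}$, the conditional probability of a pair of rows is proportional to the number of completions on $W_0$. These completions have degrees $d-s_i$, where $s_i=A_{1i}+A_{2i}\in\{0,1,2\}$ and $\sum_i s_i=2(d-A_{12})$ is fixed. So every residual sequence has defect at most two and the same sum. Up to a factor $e^{O(1)}$, the dense enumeration formula depends on such a sequence only through $\prod_i\binom{n-3}{d-s_i}$. Using the fixed sum, this equals a common factor times $\bigl((1-1/d)(1-1/(n-1-d))\bigr)^{a_2}$, where $a_2\le n$ is the number of common neighbours of $1,2$ in $W_0$. That factor is $e^{O(1)}$ uniformly. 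Hence the pair of neighbourhoods has law within a constant factor of a uniform pair of $(d-A_{12})$-subsets of $W_0$.

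The paper then passes to Bernoulli$(p)$ rows at polynomial cost and applies Chernoff. Your hypergeometric tail for the symmetric difference works equally well, and no switching fallback is needed.
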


\begin{proof}
The pair sums fix both entries off $\mathcal T$. On $\mathcal T$
the two choices are $(1,0)$ and $(0,1)$, and equality of the original
degrees forces their counts to agree. Every balanced choice preserves
all degrees and exposed entries, so uniformity gives the stated slice
law. The corresponding matrix variation is
$2^{-1/2}(h\eta^T+\eta h^T)$, which vanishes upon compression to $H_m$.
For the support bound, dense degree-sequence enumeration
\cite[Theorem~3]{McKay-cpc} compares the two incidence rows with
independent Bernoulli$(p)$ rows at cost $Cn$, where $p=d/(n-1)$;
the finite-row argument is recalled in Section~\ref{Sec:quasirandomness}.
Their disagreement count has mean $2p(1-p)(n-2)\ge c_\lambda n$,
so a Chernoff bound proves the claim.
\end{proof}

For $S\subseteq W_0$, define
$L_Sa=2^{-1/2}\sum_{i\in S}a_i\mathbf e_i$.
In $\mathbb R^m=H_m\oplus\mathbb Rh$, the exposed matrix is
\begin{align*}
 A_m=\begin{pmatrix}
 C_{A_m}&y_0+L_{\mathcal T}\eta\\
 (y_0+L_{\mathcal T}\eta)^T&\alpha
 \end{pmatrix},
\end{align*}
where $y_0,\alpha$ are $\mathcal F_m$-measurable.
When $C_{A_m}$ is invertible, the terminal event is exactly
\begin{align*}
 F(\eta):=\alpha-
 \langle y_0+L_{\mathcal T}\eta,
        C_{A_m}^{-1}(y_0+L_{\mathcal T}\eta)\rangle=0.
\end{align*}

\begin{proposition}\label{prop:two row terminal}
    There exist positive constants
    $c_{\ref{prop:two row terminal}}=c_{\ref{prop:two row terminal}}(\lambda)$ and
    $C_{\ref{prop:two row terminal}}=C_{\ref{prop:two row terminal}}(\lambda)$ depending only on $\lambda$ such that for any $m \in [n,n+k_{n}-1]$, $1 \le t \le c_{\ref{prop:two row terminal}}n$ and $0<\beta<1$, we have
    \begin{align*}
        &\mathbb P\big(
        \operatorname{corank}A_m=1,\,
        C_{A_m} \text{ is invertible}
        \big)\\
        &\le
        e^{-c_{\ref{prop:two row terminal}}n}
        +3^t q_m(\beta)
        +C_{\ref{prop:two row terminal}}
        \big(
        C_{\ref{prop:two row terminal}}^t\beta
        +e^{-c_{\ref{prop:two row terminal}}t}
        \big)^{1/4}
        +C_{\ref{prop:two row terminal}}
        e^{-c_{\ref{prop:two row terminal}}t}.
    \end{align*}
\end{proposition}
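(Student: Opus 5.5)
We follow the decoupling argument of \cite{CJMS-jams}, conditioning on $\mathcal F_m$ throughout. By Lemma~\ref{lem:Sigma fields} we may discard the event $|\mathcal T|<c_{\ref{lem:Sigma fields}}^{(2)}n$ at cost $e^{-cn}$. On the complement, $C_{A_m}$ and $\mathcal T$ are fixed and $\eta\sim\operatorname{Unif}(\Omega^0_{\mathcal T})$. On $\{C_{A_m}\text{ invertible}\}$ the event $\{\operatorname{corank}A_m=1\}$ is exactly $\{F(\eta)=0\}$. So it suffices to bound $\mathbb P(F(\eta)=0\mid\mathcal F_m)$ in $L^1$ of the conditioning. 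Write $M:=C_{A_m}^{-1}$. Then $F(\eta)$ is a quadratic polynomial in $\eta$ with quadratic part $-\langle L_{\mathcal T}\eta,ML_{\mathcal T}\eta\rangle$.

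\textbf{Step 1: decoupling.} Split $\mathcal T$ at random into $I\sqcup J$ with $|I|\approx t$ chosen uniformly (balanced so that the slice condition factors). Let $\eta,\eta'$ be two configurations that agree on $J$ and are independently resampled on $I$, with the sums on $I$ held fixed so the balanced-slice law is preserved. Two applications of Cauchy--Schwarz (the standard quadratic decoupling) give
\begin{align*}
\mathbb P(F(\eta)=0)^4\lesssim \mathbb P\bigl(F(\eta)-F(\eta'_I,\eta_J)-F(\eta_I,\eta'_J)+F(\eta')=0\bigr).
\end{align*}
The fourfold alternating sum is the bilinear form $\langle L_I(\eta-\eta')_I,\,M L_J(\eta-\eta')_J\rangle$. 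Conditioning on the $I$-part, this vanishes only if $\langle w, v\rangle=$ const, where $w=(\eta-\eta')_J$ and $v:=ML_I(\eta-\eta')_I$. Thus $v$ satisfies $P_hA_mv=b$ with $b=L_I(\eta-\eta')_I$. This $b$ ranges over at most $3^t$ vectors, which lie in $H_m\cap\mathbf1^\perp$ because the slice sums are balanced. That is the source of the factor $3^t$.

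\textbf{Step 2: case split on $v$.} First, if $(\eta-\eta')_I=0$ the decoupled event is trivial. By anticoncentration of the difference of two independent balanced slices on $I$, this happens with probability at most $e^{-ct}$, which produces the $e^{-ct}$ inside the fourth root. Second, for each of the $3^t$ values of $b$, the event that the solution $v$ has $\rho_*(v)>\beta$ has probability at most $q_m(\beta)$ by definition \eqref{eq:qm}; note $\rho_*$ takes a supremum over all large supports, so $J$ is covered. A union bound gives $3^tq_m(\beta)$. This term must be kept outside the fourth root. To arrange that, split before decoupling: the event $\{\exists\,$ bad $v$ for some $b\}$ is $\mathcal F_m$-measurable up to the choice of $I$, so we remove it first. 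Third, on the remaining event $\rho_*(v)\le\beta$. The randomness of $w$ on the balanced slice of $J$ (of size $\ge cn$) then gives probability at most $\beta$, up to a $C^t$ loss for conditioning on the $I$-sums. This yields the term $C^t\beta$ inside the fourth root.

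\textbf{Step 3: assembly and main obstacle.} Combine the pieces using Jensen over the conditioning. The final $Ce^{-ct}$ accounts for an atypical split, namely $|J|$ too small or $I$-sums too unbalanced. The main obstacle is Step 1 on a slice rather than a product measure. Decoupling requires a product structure $\eta=(\eta_I,\eta_J)$, but the constraint $\langle\eta,\mathbf1\rangle=0$ couples the two parts. I would handle this by first conditioning on $\langle\eta_I,\mathbf1_I\rangle$: given that sum, $\eta_I$ and $\eta_J$ are independent uniform slices. Each typical value of the sum has probability $\ge C^{-t}$, and this conditioning is where the $C^t$ loss comes from. Balanced $I$-sums then also guarantee $b\in\mathbf1^\perp$.
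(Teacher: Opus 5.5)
Your outline follows the paper's proof in structure: you condition on the sign count of the small block so that the two blocks become independent slices, remove the non-spread witnesses before the fourth-root decoupling, pay $\binom{|S|}{K}^{-1}\le e^{-ct}$ for the diagonal, and pay a $C^t$ transfer back to a balanced slice. However, the step that produces $3^tq_m(\beta)$ does not work as you set it up. You take $I$ to be a random subset of $\mathcal T$, and $\mathcal T$ is determined by the graph. The candidate right-hand sides $b=L_I\delta$, $\delta\in\{-2,0,2\}^I$, therefore form a graph-dependent family. But $q_m(\beta)$ in \eqref{eq:qm} is a supremum over \emph{deterministic} $b$, so the claim that the probability is at most $q_m(\beta)$ ``for each of the $3^t$ values of $b$'' is unavailable when those values move with $\mathcal T$. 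Your observation that the bad event is measurable before decoupling keeps this term outside the fourth root, but it does not bound it. Unwinding your choice properly means summing over deterministic $I_0\subseteq W_0$ with weight $\mathbb P(I=I_0\mid\mathcal F_m)=\mathbf 1_{\{I_0\subseteq\mathcal T\}}\binom{|\mathcal T|}{t}^{-1}$. On $\{|\mathcal T|\ge c_{\ref{lem:Sigma fields}}^{(2)}n\}$ this gives $\binom{n-2}{t}\binom{|\mathcal T|}{t}^{-1}3^tq_m(\beta)$, i.e.\ $C_\lambda^tq_m(\beta)$ for some $C_\lambda$ depending on $\lambda$, not $3^tq_m(\beta)$. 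That would still suffice for Theorem~\ref{thm:reduction}, but it is not the stated bound, and your proposal does not contain this reweighting.

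The paper avoids the problem by choosing $I$ uniformly among the $t$-subsets of the deterministic set $W_0$, independently of the graph and the extension columns, and decoupling on $S=I\cap\mathcal T$, $J=\mathcal T\setminus S$. Conditional on $I$, which does not change the law of $A_m$, the set $\mathcal W_I=\{L_I\delta:\delta\in\{-2,0,2\}^I,\ \sum_i\delta_i=0\}\subset H_m\cap\mathbf 1^\perp$ is fixed and has at most $3^t$ points. It contains every $L_S(\eta_S-\eta'_S)$, so a union bound over $\mathcal W_I$ gives exactly $3^tq_m(\beta)$. The price is that $|S|$ is hypergeometric rather than equal to $t$. Hypergeometric concentration gives $ct\le|S|\le|\mathcal T|/4$ and $|S|/3\le K\le2|S|/3$ outside probability $e^{-cn}+Ce^{-ct}$, and this is what the last term of the proposition pays for.

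Two smaller corrections. First, the decoupling uses independent conditional copies on \emph{both} blocks, not copies that agree on $J$. Second, the final step should compare with $\rho_{\mathcal T}(v)\le\rho_*(v)$ through the bound $8^{|S|}\rho_{\mathcal T}(v)$, not with $\rho_J$. Given $K$, the vector $\eta_J$ is uniform on an unbalanced slice and $|J|$ need not be even. Your ``$C^t$ loss'' remark is the right mechanism for this comparison.
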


\begin{proof}
Let $\mathcal E_m=\{\operatorname{corank}A_m=1,
C_{A_m}\text{ invertible}\}$.
Independently choose a uniform $t$-subset $I\subset W_0$, and put
$S=I\cap\mathcal T$, $J=\mathcal T\setminus S$ and
$K=|\{i\in S:\eta_i=1\}|$.
Set $\mathcal H=\sigma(\mathcal F_m,I,K)$.
Conditional on $\mathcal H$, the restrictions $\eta_S,\eta_J$ are
independent uniform sign slices of their prescribed weights.
Hypergeometric concentration \cite[Section~6]{Hoeffding-jasa} gives,
outside probability $e^{-cn}+Ce^{-ct}$,
\begin{align*}
 |\mathcal T|\ge c_{\ref{lem:Sigma fields}}^{(2)}n,
 \qquad ct\le|S|\le|\mathcal T|/4,
 \qquad |S|/3\le K\le2|S|/3.
\end{align*}
Here and below the restriction $t\le cn$ uses a sufficiently small
constant depending only on $\lambda$.

For a fixed $I$ the deterministic set
\begin{align*}
 \mathcal W_I:=\{L_I\delta:\delta\in\{-2,0,2\}^I,
                                  \textstyle\sum_i\delta_i=0\}
 \subset H_m\cap\mathbf1^\perp
\end{align*}
has at most $3^t$ points. By \eqref{eq:qm},
\begin{align*}
 \mathbb P\{C_{A_m}\text{ invertible},\ 
       \exists w\in\mathcal W_I\setminus\{0\}:
             \rho_*(C_{A_m}^{-1}w)>\beta\mid I\}
 \le3^tq_m(\beta).
\end{align*}
Let $\mathcal G\in\mathcal H$ be the event that the compression is
invertible, the displayed support and balance bounds hold, and this last exceptional
event does not occur. Thus
\begin{align*}
 \mathbb P(\mathcal E_m)
 \le e^{-cn}+Ce^{-ct}+3^tq_m(\beta)
       +\mathbb E\bigl[\mathbf1_{\mathcal G}
                           \mathbb P(F(\eta)=0\mid\mathcal H)\bigr].
\end{align*}

Two applications of Cauchy--Schwarz, using independent conditional
copies $\eta'_S,\eta'_J$, cancel the single-block terms in $F$ and give
\begin{align*}
 &\mathbf1_{\mathcal G}\mathbb P(F(\eta)=0\mid\mathcal H)^4\\
 &\quad\le\mathbf1_{\mathcal G}\mathbb P\bigl(
 \langle L_S(\eta_S-\eta'_S),
       C_{A_m}^{-1}L_J(\eta_J-\eta'_J)\rangle=0\mid\mathcal H\bigr).
\end{align*}
The diagonal case $\eta_S=\eta'_S$ costs
$\binom{|S|}{K}^{-1}\le e^{-ct}$ on $\mathcal G$.
Otherwise set $w=L_S(\eta_S-\eta'_S)$ and $v=C_{A_m}^{-1}w$.
Then $w\in\mathcal W_I\setminus\{0\}$ and $\rho_*(v)\le\beta$.
After also fixing $\eta_S,\eta'_S,\eta'_J$, the remaining equation
is $\langle\eta_J,v_J\rangle=a$ for a fixed $a$.

For completeness, if $T$ is balanced and $s=|S|\le|T|/4$,
conditioning a prescribed sign pattern on $S$ gives
\begin{align*}
 \sup_a\mathbb P(\langle\eta_J,v_J\rangle=a\mid I,K)
 \le8^{|S|}\rho_T(v).
\end{align*}
Indeed, a prescribed admissible pattern on $S$ has conditional
mass at least $2^{-s}$, and its count $K$ has hypergeometric mass
at least $4^{-s}$. Combining it with a maximal atom on $J$
produces an atom of the original balanced slice. This argument
is pointwise in $v$, so it applies after the additional exposure.
Writing $\mathcal D=\sigma(\mathcal H,\eta_S,\eta'_S,\eta'_J)$, we obtain
\begin{align*}
 \mathbf1_{\mathcal G}\mathbf1_{\{\eta_S\ne\eta'_S\}}
   \mathbb P(\langle\eta_J,v_J\rangle=a\mid\mathcal D) \le\mathbf1_{\mathcal G}\mathbf1_{\{\eta_S\ne\eta'_S\}}
             8^t\rho_{\mathcal T}(v)
 \le\mathbf1_{\mathcal G}\mathbf1_{\{\eta_S\ne\eta'_S\}}8^t\beta.
\end{align*}
Average over the copies, take fourth roots and substitute into the event decomposition above. This proves the proposition.
\end{proof}

\begin{proof}[Proof of Theorem~\ref{thm:reduction}]
In Proposition~\ref{prop:two row terminal}, take
$t=\lfloor a\log(1/\beta)\rfloor$ with fixed $a>0$ small enough that
$C^t\beta\le\beta^{1/2}$.
After reducing the upper cutoff for $\beta$ and the lower exponential
range constant, $1\le t\le cn$, while
$3^t\le\beta^{-C_1}$ and $e^{-ct}\le C\beta^{c_1}$.
Thus the terminal probability is at most
$e^{-cn}+\beta^{-C_1}q_m(\beta)+C\beta^{c_1}$.
Proposition~\ref{prop:rank increment} now gives \eqref{eq:reduction}.
\end{proof}

\section{Conditional product structure}\label{Sec:quasirandomness}
The fixed degrees prevent independent row exposure. We instead expose
pair sums and partial row sums, leaving independent orientations on
explicit slices. Quasirandomness will ensure that these slices have
large supports and dense weights. The conditional law itself is exact.

\subsection{Alternating exposure}
Fix a vertex set $V=X\sqcup Y$, a balanced splitter $Y=Y_0\sqcup Y_1$,
and a prescribed degree sequence. Write $N=|V|$.
Write $C:=A_{X\times X}$, $D:=A_{Y\times Y}$, and
$H:=A_{X\times Y}$. Conditional on $C,D$, the cross-block is uniform on
\begin{align}
 \mathcal M(\mathbf r,\mathbf c)
 :=\{M\in\{0,1\}^{X\times Y}:M\mathbf 1=\mathbf r,
                  M^T\mathbf 1=\mathbf c\},\label{eq:rectangular margin fibre}
\end{align}
where $r_x=d_x-\sum_{x'\in X}C_{xx'}$ and
$c_y=d_y-\sum_{y'\in Y}D_{yy'}$.
Thus $H$ has the rectangular fixed-margin law underlying the
switching construction of \cite[Section~3.1]{JSS-imrn}; symmetry places
$H^T$ in the opposite block and imposes no further constraint on $H$.
This conditional law holds for every feasible degree sequence.

Let $\pi=(x_1,\ldots,x_\ell)$ order $X$, where $\ell:=|X|$.
The ordering is either deterministic or sampled independently of the
graph; in either case it is included in the exposure.
Set $R_i:=H_{x_i,\cdot}$ and
$r_i^{(b)}:=\sum_{y\in Y_b}H_{x_i y}$.
For $1\le j<\ell$, let $e_j:=(x_j,x_{j+1})$ and set
$b(j)=0$ for odd $j$, and $b(j)=1$ for even $j$.
Write $b(e_j):=b(j)$, $E:=\{e_j:1\le j<\ell\}$ and
$E_b:=\{e_j:b(j)=b\}$. Finally, let $P_\ell=Y_0$ if $\ell$ is
odd and $P_\ell=Y_1$ if $\ell$ is even. Define
\begin{align}
 \mathcal G_{X,Y}:=\sigma\bigl(&\pi,C,D,(r_i^{(b)})_{i,b},
 ((R_j+R_{j+1})_{Y_{b(j)}})_{j<\ell},(R_1)_{Y_1},(R_\ell)_{P_\ell}\bigr).\label{eq:alternating sigma field}
\end{align}
The last two restrictions are exactly the endpoint pieces not covered
by a pair. The notation suppresses the fixed splitter and ordering rule.

For $e_j=(x_j,x_{j+1})$, let
\begin{align*}
 T_{e_j}:=\{y\in Y_{b(j)}:H_{x_jy}+H_{x_{j+1}y}=1\}, a_{e_j}
 :=r_j^{(b(j))}
      -|\{y\in Y_{b(j)}:H_{x_jy}+H_{x_{j+1}y}=2\}|.
\end{align*}
Both are $\mathcal G_{X,Y}$-measurable. Write also
$T_j:=T_{e_j}$ and $a_j:=a_{e_j}$, and define
$\mathcal O_{e_j}:=\{y\in T_{e_j}:H_{x_jy}=1\}$.
Thus $|\mathcal O_e|=a_e$. We identify $\mathcal O_e$ with its indicator when it is
used as a vector.

The fresh rectangles follow an alternating path:
\begin{align*}
\begin{array}{c|cc}
 &Y_0&Y_1\\\hline
 x_1&\mathcal O_{e_1}&\text{exposed}\\
 x_2&T_{e_1}\setminus\mathcal O_{e_1}&\mathcal O_{e_2}\\
 x_3&\mathcal O_{e_3}&T_{e_2}\setminus\mathcal O_{e_2}\\
 x_4&T_{e_3}\setminus\mathcal O_{e_3}&\mathcal O_{e_4}\\
 x_5&\mathcal O_{e_5}&T_{e_4}\setminus\mathcal O_{e_4}\\
 \vdots&\vdots&\vdots
\end{array}
\end{align*}
Each displayed set represents the ones on its disagreement support;
entries with pair sum zero or two are already fixed. Odd pairs use $Y_0$,
even pairs use $Y_1$, and the final uncovered row piece is exposed.
Successive pairs share a row but never a matrix entry.

\begin{proposition}\label{prop:exact switching fibre}
Almost surely, on every nonempty exposure fibre,
\begin{align}
 \operatorname{Law}\left((\mathcal O_e)_{e\in E}
                      \mid\mathcal G_{X,Y}\right)
 =\bigotimes_{e\in E}
       \operatorname{Unif}\binom{T_e}{a_e}.\label{eq:exact switching fibre}
\end{align}
This identity is exact for every nonempty fibre
\eqref{eq:rectangular margin fibre}, regardless of its margins.
\end{proposition}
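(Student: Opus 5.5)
The plan is to prove \eqref{eq:exact switching fibre} by a reconstruction bijection on a fixed fibre, and to use the fact that the conditional law of $H$ given $(C,D)$ is uniform on $\mathcal M(\mathbf r,\mathbf c)$. Fix a nonempty atom of $\mathcal G_{X,Y}$. This fixes $\pi$, $C$, $D$, the row-part sums $r_i^{(b)}$, the pair sums $(R_j+R_{j+1})_{Y_{b(j)}}$, and the endpoint pieces $(R_1)_{Y_1}$ and $(R_\ell)_{P_\ell}$. Since the ordering is independent of the graph, or deterministic, conditioning on it does not change the uniform law on $\mathcal M(\mathbf r,\mathbf c)$. The fibre is then the set of $M\in\mathcal M(\mathbf r,\mathbf c)$ that are consistent with the exposed data, and it carries the uniform law. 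It therefore suffices to show that the map $M\mapsto(\mathcal O_e(M))_{e\in E}$ is a bijection from this fibre onto $\prod_{e\in E}\binom{T_e}{a_e}$.

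\textbf{Step 1 (the map lands in the product).} Every entry $H_{x_iy}$ with $y\in Y_b$ lies in exactly one of two kinds of position. Either it is covered by the unique pair $e_j$ with $b(j)=b$ and $i\in\{j,j+1\}$, or it is an endpoint piece. The alternation of $b(j)$ guarantees this: row $x_i$ meets $e_{i-1}$ and $e_i$, which use different halves. Row $x_1$ meets only $e_1$, on $Y_0$, and its $Y_1$ part is exposed. Row $x_\ell$ meets only $e_{\ell-1}$, on $Y_{b(\ell-1)}$, and its other half $P_\ell$ is exposed. On a covered position, a pair sum of $0$ or $2$ fixes both entries. On $T_e$, the entries of the row $x_j$ of $e=e_j$ are exactly $\mathbf 1_{\mathcal O_e}$, and those of $x_{j+1}$ are $\mathbf 1_{T_e\setminus\mathcal O_e}$. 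The exposed sum $r_j^{(b(j))}$ then forces $|\mathcal O_e|=a_e$.

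\textbf{Step 2 (injectivity).} By the partition of positions in Step 1, $H$ is recovered from the exposed data together with $(\mathcal O_e)_e$.

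\textbf{Step 3 (surjectivity).} Given an arbitrary tuple $(O_e)_e$ with $O_e\in\binom{T_e}{a_e}$, define $H'$ by the reconstruction rule. We check that $H'$ lies in $\mathcal M(\mathbf r,\mathbf c)$ and in the same fibre.

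Column sums are preserved. In each column $y\in Y_b$, every pair $e_j$ with $b(j)=b$ contributes $H_{x_jy}+H_{x_{j+1}y}$, which is exposed. Since the pairs in $E_b$ are disjoint in rows and together with the endpoint pieces cover every row, the column sum is a sum of exposed quantities. It thus agrees with $c_y$, computed from any fibre member, and such a member exists because the fibre is nonempty.

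Row sums are preserved. The sum over $Y_b$ in row $x_i$ is either exposed directly as an endpoint piece or equals $r_i^{(b)}$ by the constraint $|O_e|=a_e$. For the lower row $x_{j+1}$ of a pair this reads $r_{j+1}^{(b)}=(\text{pair sum total})-r_j^{(b)}$, which holds because it holds for the fibre member. Hence $H'\mathbf 1=\mathbf r$.

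The remaining exposed quantities ($T_e$, $a_e$, pair sums, endpoints) are unchanged by construction.

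Since the law on the fibre is uniform and the bijection holds for any margins, the pushforward is the product of uniform slices. The claim that the identity is margin-free then follows, because only nonemptiness of the fibre was used.

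The main obstacle is the bookkeeping in Step 1 and Step 3: verifying that every position of $X\times Y$ is covered exactly once, using both parities of $\ell$ and the choice of $P_\ell$. The row-sum consistency for $x_{j+1}$ relies on the exposed row-part sums. That is where I would be careful, checking small cases $\ell=2,3$ first.
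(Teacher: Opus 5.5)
Your proposal is correct and takes essentially the same route as the paper. Both fix a nonempty atom and use uniformity of the cross-block on $\mathcal M(\mathbf r,\mathbf c)$ given $C,D$, showing that $(\mathcal O_e)_{e\in E}$ is a bijection onto $\prod_{e\in E}\binom{T_e}{a_e}$: pair sums and endpoint pieces fix the column sums, the exposed partial row sums force $|\mathcal O_e|=a_e$, and the other row sum then follows automatically; your version just spells out the row-by-half bookkeeping, including the role of $P_\ell$, that the paper compresses into a few lines.
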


\begin{proof}
Fix a nonempty atom. On the rectangle for $e=(x,x')$, a pair sum
zero or two fixes both entries, while a pair sum one leaves the
choice $(1,0)$ or $(0,1)$. The exposed partial sum in row $x$
requires exactly $a_e$ first orientations. The other row sum follows
from the pair sums. Hence the choices are precisely $\binom{T_e}{a_e}$.

Within each half the row pairs are disjoint; opposite parities use
different halves. Each choice preserves the two partial row sums and
contributes the same exposed pair sum to every column. Together with
the exposed endpoint pieces, arbitrary choices in
$\prod_e\binom{T_e}{a_e}$ reconstruct exactly one compatible cross-block,
and every compatible block arises this way. Uniformity on the margin
fibre therefore assigns probability
$\prod_e\binom{|T_e|}{a_e}^{-1}$ to each configuration, proving the product law.
\end{proof}

For $b_e=\mathbf e_x-\mathbf e_{x'}$, set
$\xi_e=\mathbf1_{\mathcal O_e}-(a_e/|T_e|)\mathbf1_{T_e}$ when
$T_e\ne\varnothing$, and $\xi_e=0$ otherwise. Extend these vectors
by zero. The reconstruction reads
\begin{align}
 H=\bar H+\sum_{e\in E}(b_e|_X)(\xi_e|_Y)^T,
 \qquad
 H-\bar H=
 \begin{array}{c|cc}
 &Y_0&Y_1\\\hline
 x_1&\xi_{e_1}^T&0\\
 x_2&-\xi_{e_1}^T&\xi_{e_2}^T\\
 x_3&\xi_{e_3}^T&-\xi_{e_2}^T\\
 x_4&-\xi_{e_3}^T&\xi_{e_4}^T\\
 \vdots&\vdots&\vdots
 \end{array}\label{eq:conditional cross block diagram}
\end{align}
where $\bar H=\mathbb E(H\mid\mathcal G_{X,Y})$.
All entries on the diagonal blocks
$C,D$ are fixed, so the full residual fluctuation is obtained by
adding the transpose of this cross-block.

\subsection{Uniform support bounds}
Fix a finite collection of splits and balanced splitters before
sampling the graph, with $|X|,|Y|\ge\delta N$ and the number of choices
bounded by a fixed integer $K_0$. No vector-dependent selection is made
in the conditional law. The family used for incompressible vectors
will be chosen in Section~\ref{subsec:final signed geometry}.

For a degree sequence $\mathbf d=(d_i)_{i\in V}$, let
$\mathfrak G_V(\mathbf d)$ be the simple graphs on $V$ with those degrees,
$g_V(\mathbf d)=|\mathfrak G_V(\mathbf d)|$, and
$\bar d=N^{-1}\sum_i d_i$. We assume
\begin{align}
 \lambda(N-1)\le\bar d\le(1-\lambda)(N-1),
 \qquad \max_i|d_i-\bar d|\le B,\label{eq:bounded defect hypothesis}
\end{align}
where $B$ is fixed. For $V=[N]$ write $g$ for $g_V$.
We follow the finite-row comparison in the proof of
\cite[Theorem~2.17]{JSS-imrn}. For the present graph law, use the
enumeration formula \cite[Theorem~3]{McKay-cpc}: after fixing the
induced graph on $r=O(1)$ tested vertices, the remaining degree
sequences have a common sum and bounded defect, so their completion
counts differ by at most $e^{C(\lambda,B,r)}$. Comparison with
independent Bernoulli$(\bar d/(N-1))$ incidence rows then costs
$CN^{r/2}$, the polynomial factor coming from their prescribed
row sums. This transfers exponential concentration uniformly under
\eqref{eq:bounded defect hypothesis}. The comparison is made before
fixing $C,D$; the rectangular conditional law
\eqref{eq:rectangular margin fibre} is used for the subsequent switching.

For $e=(x,x')$ and $S\subseteq Y$, put
$S_e^Y=\{y\in Y:H_{xy}\ne H_{x'y}\}$ and
$\Delta_e(S)=\sum_{y\in S}(H_{xy}-H_{x'y})$.

\begin{proposition}\label{thm:fixed block quasirandomness}
Fix the preceding block data, a defect bound $B$ and
$\varepsilon\in(0,1/4)$. There are constants
$c_{\ref{thm:fixed block quasirandomness}}^{(i)}
=c_{\ref{thm:fixed block quasirandomness}}^{(i)}
(\lambda,B,\delta,\varepsilon,K_0)>0$, $1\le i\le4$, and an integer
$\ell_{\ref{thm:fixed block quasirandomness}}
=\ell_{\ref{thm:fixed block quasirandomness}}(\lambda,\varepsilon)$
such that, outside probability
$e^{-c_{\ref{thm:fixed block quasirandomness}}^{(1)}N}$,
the following hold simultaneously under \eqref{eq:bounded defect hypothesis}.
For every prescribed splitter half $S$, every row pair $e$, and
every two vertex-disjoint row pairs $e,e'$,
\begin{align*}
 |S_e^Y\cap S|&\ge c_{\ref{thm:fixed block quasirandomness}}^{(2)}N,
 &|S_e^Y\cap S_{e'}^Y\cap S|
     &\ge c_{\ref{thm:fixed block quasirandomness}}^{(3)}N,\\
 |\Delta_e(S)|&\le |S_e^Y\cap S|/10.
\end{align*}
Every cross-degree lies between
$c_{\ref{thm:fixed block quasirandomness}}^{(4)}$ and
$1-c_{\ref{thm:fixed block quasirandomness}}^{(4)}$
times the opposite block size. Moreover, for every collection
of $\ell=\ell_{\ref{thm:fixed block quasirandomness}}$
vertex-disjoint row pairs and each half $Y_b$,
\begin{align*}
 \left|Y_b\setminus\bigcup_{j=1}^{\ell}S_{e_j}^Y\right|
 \le\varepsilon|Y_b|.
\end{align*}
The same assertions hold with the blocks transposed.
One may take $c_{\ref{thm:fixed block quasirandomness}}^{(4)}
=\lambda/2$.
\end{proposition}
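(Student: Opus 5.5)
Every assertion in Proposition~\ref{thm:fixed block quasirandomness} concerns a bounded number $r\le 2\ell+2$ of incidence rows of the original graph on $V$. The plan is to reduce each assertion to a Bernoulli computation through the finite-row comparison recalled above, and then take a union bound over polynomially many configurations. For a fixed tuple of $r$ tested vertices, the rows of $A$ are compared with independent Bernoulli$(p)$ rows, $p=\bar d/(N-1)\in[\lambda,1-\lambda]$, at cost $e^{C(\lambda,B,r)}N^{r/2}$ under \eqref{eq:bounded defect hypothesis}. Each event is therefore bounded by $e^{C}N^{r/2}$ times its probability for independent Bernoulli rows. The comparison is made before fixing $C,D$, so the splits and splitters must be fixed in advance. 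Since there are at most $K_0$ of them, the union over them costs only a factor $K_0$. The vertex tuples number at most $N^{r}$. Hence it suffices to show that each assertion fails for independent Bernoulli rows with probability $e^{-cN}$, where $c=c(\lambda,\delta,\varepsilon,\ell)$.

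Under the Bernoulli model the bounds come from Chernoff estimates over the roughly $\delta N/2$ columns of a splitter half $S$, whose size is of order $\delta N$.
\begin{itemize}
\item For a row pair $e=(x,x')$, the indicator of $H_{xy}\ne H_{x'y}$ for $y\in S$ is i.i.d.\ Bernoulli$(2p(1-p))$, and $2p(1-p)\ge\lambda$. This gives $|S_e^Y\cap S|\ge\lambda|S|/2$.
\item For disjoint pairs $e,e'$ the joint event has probability $(2p(1-p))^2$, which gives the second bound.
\item For $\Delta_e(S)$, condition on $S_e^Y\cap S$. Given that set, the signs $H_{xy}-H_{x'y}$ are i.i.d.\ symmetric $\pm1$. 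Hoeffding's inequality then bounds $|\Delta_e(S)|\le|S_e^Y\cap S|/10$ outside probability $e^{-c|S_e^Y\cap S|}=e^{-cN}$.
\item Each cross-degree $\sum_{y\in Y}H_{xy}$ is Binomial$(|Y|,p)$. It therefore lies in $[\tfrac{\lambda}{2}|Y|,(1-\tfrac\lambda2)|Y|]$ outside probability $e^{-c\lambda^2|Y|}$. This is the constant $c^{(4)}=\lambda/2$.
\item For $\ell$ disjoint pairs, each column $y\in Y_b$ avoids all $S_{e_j}^Y$ independently with probability $(1-2p(1-p))^\ell\le(1-\lambda)^\ell$. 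Choose $\ell=\ell(\lambda,\varepsilon)$ so that $(1-\lambda)^\ell\le\varepsilon/2$. Chernoff then gives the count bound $\le\varepsilon|Y_b|$.
\end{itemize}
The transposed assertions follow in the same way by exchanging the roles of $X$ and $Y$.

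The main point to check is that the Bernoulli comparison applies to events about the cross block $H=A_{X\times Y}$, which sits inside the full rows. This is immediate, since each event is a function of the $r$ tested rows of $A$. The comparison loses only $e^{C(\lambda,B,r)}N^{r/2}$, and the union over $O(K_0N^{2\ell+2})$ choices is polynomial, so both are absorbed into $e^{-cN}$ for large $N$. One must also take $\ell$ before the constants $c^{(i)}$, so that $r$ is bounded by a quantity depending only on $(\lambda,\varepsilon)$. The resulting constants $c^{(i)}$ then depend only on $(\lambda,B,\delta,\varepsilon,K_0)$, as stated.
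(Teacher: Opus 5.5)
Your proposal is correct and follows essentially the same route as the paper's proof. Both arguments compare a bounded number of incidence rows with independent Bernoulli$(p)$ rows, apply Chernoff--Hoeffding to the disagreement, intersection, balance and margin counts, choose $\ell=\ell(\lambda,\varepsilon)$ so that $(1-2p(1-p))^{\ell}$ is a small fraction of $\varepsilon$ for the coverage assertion, and absorb the comparison cost and the union over the $K_0$ fixed splits and over polynomially many row tuples into $e^{-cN}$.
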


\begin{proof}
In the independent incidence model, disagreements have mean
$q=2p(1-p)$, intersections of disjoint pairs have mean $q^2$,
and row differences are centered. Chernoff--Hoeffding bounds
\cite{Hoeffding-jasa}, the preceding comparison with at most
four rows, and a polynomial union bound give the support,
intersection, balance and margin assertions. Choose the fixed
$\ell$ so that $(1-2\lambda(1-\lambda))^\ell\le\varepsilon/4$.
A column is then missed by $\ell$ disjoint pairs with probability
at most $\varepsilon/4$; concentration, comparison of $2\ell$
rows and a union over at most $N^{2\ell}$ collections prove coverage.
\end{proof}

\begin{lemma}\label{lem:pair coverage and energy}
Fix $\delta_0>0$. Suppose a deterministic cross-block has
$|X|,|Y|\ge\delta N$ and every column degree lies in
$[\delta_0|X|,(1-\delta_0)|X|]$.
Choose $\pi$ uniformly and independently of the graph.
There are positive constants
${c_{\ref{lem:pair coverage and energy}}^{(1)}=c_{\ref{lem:pair coverage and energy}}^{(1)}(\delta_0)}$ and
${c_{\ref{lem:pair coverage and energy}}^{(2)}=c_{\ref{lem:pair coverage and energy}}^{(2)}(\delta_0,\delta)}$, depending only on
$\delta_0,\delta$, with the first constant depending only on $\delta_0$,
such that, outside probability
$e^{-{c_{\ref{lem:pair coverage and energy}}^{(2)}}N}$ over $\pi$,
\begin{align}
 |\{e\in E_b:y,y'\in T_e\}|
 \ge {c_{\ref{lem:pair coverage and energy}}^{(1)}}|X|\label{eq:coordinate pair coverage}
\end{align}
for both $b=0,1$ and every two distinct $y,y'\in Y_b$.
Whenever \eqref{eq:coordinate pair coverage} holds, simultaneously
for every $z\in\mathbb R^{Y_b}$,
\begin{align}
 \sum_{e\in E_b}\|P_{T_e,0}z\|_2^2
 \ge {c_{\ref{lem:pair coverage and energy}}^{(1)}}|X|
       \|P_{Y_b,0}z\|_2^2.\label{eq:centered energy coverage}
\end{align}
The sum is indexed by the row pairs, so repeated supports retain their
multiplicities; terms with $|T_e|\le1$ are interpreted as zero.
\end{lemma}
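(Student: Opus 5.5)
The statement has two parts, and I would prove them in order. First comes a probabilistic count over the random ordering $\pi$, for fixed $y,y'$. Second comes a deterministic deduction of the energy inequality from the coverage bound.

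\emph{Coverage.} Fix $b$ and distinct $y,y'\in Y_b$. A pair $e=(x,x')$ has $y\in T_e$ exactly when $H_{xy}\ne H_{x'y}$. Partition $X$ according to the column pattern $(H_{xy},H_{xy'})\in\{0,1\}^2$ into classes $X_{00},X_{01},X_{10},X_{11}$. Then $y,y'\in T_e$ iff the two rows lie in complementary classes, i.e. $\{00,11\}$ or $\{01,10\}$.

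The degree hypothesis gives $|X_{1*}|,|X_{0*}|\ge\delta_0|X|$, and similarly for the second coordinate. From this I claim that some complementary pair of classes has product of sizes at least $c(\delta_0)|X|^2$. To see this, suppose both $|X_{00}||X_{11}|$ and $|X_{01}||X_{10}|$ were tiny. Then in each complementary pair one class is small. Whichever two classes are large, the marginal constraints are violated. For example, if $X_{00}$ and $X_{01}$ are both small, then $|X_{0*}|$ is small, a contradiction. So the claim holds with $c$ depending only on $\delta_0$.

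Now use the uniform ordering. The edges $e_j$ with $j$ of a fixed parity are disjoint consecutive pairs of a uniformly random permutation. The number of them joining the two complementary classes has mean at least $c'|X|$. It is a Lipschitz function of the permutation: a transposition changes it by at most $4$. McDiarmid's inequality for random permutations then gives a failure probability at most $e^{-c''|X|}\le e^{-c''\delta N}$.

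A union bound over at most $N^2$ pairs $(y,y')$ and the two values of $b$ costs only a polynomial factor. This yields \eqref{eq:coordinate pair coverage} with $c^{(1)}$ depending on $\delta_0$ only (after halving) and $c^{(2)}$ depending on $\delta_0,\delta$. This concentration step is the one needing care, but it is standard.

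\emph{Energy.} The proof is deterministic. I would use the identity from \eqref{eq:difference-identities} in the form
\begin{align*}
 \|P_{T,0}z\|_2^2=\frac1{|T|}\sum_{\{i,j\}\subseteq T}(z_i-z_j)^2 .
\end{align*}
Since $T_e\subseteq Y_b$, we have $|T_e|\le|Y_b|$. Summing over $e\in E_b$ and swapping the order of summation gives
\begin{align*}
 \sum_{e\in E_b}\|P_{T_e,0}z\|_2^2\ge\frac1{|Y_b|}\sum_{\{y,y'\}\subseteq Y_b}(z_y-z_{y'})^2\,|\{e\in E_b:y,y'\in T_e\}| .
\end{align*}
By \eqref{eq:coordinate pair coverage}, the last factor is at least $c^{(1)}|X|$ for every pair. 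The resulting sum $\frac1{|Y_b|}\sum_{\{y,y'\}}(z_y-z_{y'})^2$ equals $\|P_{Y_b,0}z\|_2^2$. This proves \eqref{eq:centered energy coverage} simultaneously for all $z$.

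Repeated supports are counted with multiplicity throughout, so no issue arises. Terms with $|T_e|\le1$ contribute nothing to either side of the pair count, consistent with the convention that they are zero.
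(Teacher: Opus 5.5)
Your proposal is correct and follows the paper's proof essentially step by step. It uses the same lower bound $|X_{00}||X_{11}|+|X_{01}||X_{10}|\ge c(\delta_0)|X|^2$ from the dense column margins; your case analysis makes quantitative the paper's remark that a vanishing product would force a constant column. It then uses the same bounded-differences concentration over the uniform ordering with a union over column pairs, and the identical deterministic double-counting for the energy inequality. Your Lipschitz constant $4$ could be sharpened to $2$, since each position lies in at most one pair of a given parity, but this is immaterial.
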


\begin{proof}
For fixed columns $y,y'$, let $\alpha_{uv}$ be the proportions of row
types $(u,v)\in\{0,1\}^2$. Dense column margins imply
$\alpha_{00}\alpha_{11}+\alpha_{01}\alpha_{10}\ge c(\delta_0)>0$:
a zero would force one column to be constant. Thus a uniform row pair
disagrees in both columns with probability at least $c(\delta_0)$.
For each parity, the expected count in \eqref{eq:coordinate pair coverage}
is at least $c|X|$. A transposition of the ordering changes it by at
most two. Permutation bounded differences and a union over column pairs
prove the coverage bound: the exposure martingale has bounded
increments, so deviations of order $N$ have probability $e^{-cN}$.
For every realization with this property,
\begin{align*}
 \sum_{e\in E_b}\|P_{T_e,0}z\|_2^2
 &=\sum_{e\in E_b}\frac1{|T_e|}
               \sum_{\{i,j\}\subseteq T_e}(z_i-z_j)^2\\
 &\ge\frac{c|X|}{|Y_b|}\sum_{\{i,j\}\subseteq Y_b}(z_i-z_j)^2
 =c|X|\|P_{Y_b,0}z\|_2^2.
\end{align*}
This deterministic inequality holds for all $z$ simultaneously.
\end{proof}

Apply this lemma below with
$\delta_0={c_{\ref{thm:fixed block quasirandomness}}^{(4)}}$.
Fix also $\varepsilon\in(0,1/4)$ and
$\ell_{\ref{thm:fixed block quasirandomness}}$ as in
Proposition~\ref{thm:fixed block quasirandomness}.

\begin{definition}\label{def:good switching certificate}
For a fixed split and splitter, let $\mathcal Q_{X,Y}$ be the event
that the following conditions hold:
\begin{enumerate}[label=\textnormal{(G\arabic*)},leftmargin=2.6em]
\item Every cross-degree lies in
$[c_{\ref{thm:fixed block quasirandomness}}^{(4)},
1-c_{\ref{thm:fixed block quasirandomness}}^{(4)}]$
times the opposite block size.
\item For every fresh pair $e$,
\begin{align*}
 |T_e|\ge {c_{\ref{thm:fixed block quasirandomness}}^{(2)}}N,
 \qquad |2a_e-|T_e||\le |T_e|/10.
\end{align*}
\item For each $b$ and distinct $e,e'\in E_b$,
$|T_e\cap T_{e'}|\ge {c_{\ref{thm:fixed block quasirandomness}}^{(3)}}N$.
\item For each $b$ and every $\mathcal J\subseteq E_b$ with
$|\mathcal J|=\ell_{\ref{thm:fixed block quasirandomness}}$,
$|Y_b\setminus\bigcup_{e\in\mathcal J}T_e|\le\varepsilon|Y_b|$.
\item The coordinate-pair coverage \eqref{eq:coordinate pair coverage}
holds for both halves.
\end{enumerate}
An exposure satisfying these conditions is called a good switching
certificate. Each condition is determined by the exposed data, so
$\mathcal Q_{X,Y}\in\mathcal G_{X,Y}$.
\end{definition}

Choose the orderings independently and uniformly before sampling
the graph. Proposition~\ref{thm:fixed block quasirandomness} gives
(G1)--(G4) for every ordering, since
$2a_e-|T_e|=\Delta_e(Y_{b(e)})$ and pairs of one parity are disjoint.
Conditional on this event, Lemma~\ref{lem:pair coverage and energy}
gives (G5). A finite union therefore yields
\begin{align}
 \mathbb P\left(\bigcap_{(X,Y,Y_0,Y_1)}\mathcal Q_{X,Y}\right)
 \ge1-e^{-cN},\label{eq:good certificate probability}
\end{align}
uniformly under \eqref{eq:bounded defect hypothesis}, with
$c=c(\lambda,B,\delta,\varepsilon,K_0)>0$.
The intersection over blocks is used only to bound failure
probabilities, never as additional conditioning for a single block.

We will also use the following deterministic consequence of (G4).
If $P\subseteq Y_b$ has $|P|\ge2\varepsilon|Y_b|$, fewer than
$\ell=\ell_{\ref{thm:fixed block quasirandomness}}$ indices satisfy
$|T_e\cap P|<\varepsilon|Y_b|/\ell$.
Otherwise their union would meet $P$ in fewer than
$\varepsilon|Y_b|$ points, contradicting (G4).
For two such sets $P_+,P_-$, both lower bounds therefore hold
outside at most $2(\ell-1)$ indices, simultaneously for every
choice of the sets.

\subsection{The complete conditional matrix}
For the extension $A_m$, we must retain both the switching orientations
and the original-vertex parts of its added columns. The two-row
sigma-field $\mathcal F_m$ fixes both, so we now use a coarser exposure.

Let $O:=[n]$, $W:=W_0$, and $U_m:=\{n+1,\ldots,m\}$.
Retain $\mathcal T$ and $\eta$ from Lemma~\ref{lem:Sigma fields}.
For the pair profile and the residual degrees, set
\begin{align*}
 \Pi
 :=\left(A_{12},(s_i)_{i\in W}\right),
 \qquad s_i:=A_{1i}+A_{2i},\qquad d_i':=d-s_i.
\end{align*}
Let $G'$ be the graph induced by $A_n$ on $W$, and write
\begin{align*}
 A_m=\begin{pmatrix}A_n&B_m\\ B_m^T&C_m\end{pmatrix}.
\end{align*}
For $j\in U_m$, write $B_m^{(j)}\in\{0,1\}^{O}$ for its column
on the original vertices, and define its $C_m$-measurable weight by
$a_j=r_{j-1}-\sum_{i=n+1}^{j-1}(C_m)_{ij}$.
Here the indices distinguish these column weights from the switching
weights $a_e$. Set $\mathcal H_m^0:=\sigma(\Pi,C_m)$.

\begin{proposition}\label{prop:pair profile factorization}
For every admissible value of $\mathcal H_m^0$,
\begin{align*}
 \operatorname{Law}\left(\eta,G',(B_m^{(j)})_{j\in U_m}
                    \mid\mathcal H_m^0\right)=
 \operatorname{Unif}(\Omega_{\mathcal T}^0)
 \otimes\operatorname{Unif}\left(\mathfrak G_W((d_i')_{i\in W})\right)
 \otimes\bigotimes_{j\in U_m}\operatorname{Unif}\binom{O}{a_j}.
\end{align*}
Moreover $|d_i'-\overline{d'}|\le2$, and the residual mean has
density margin at least $\lambda/2$ on $W$ for sufficiently large $n$.
\end{proposition}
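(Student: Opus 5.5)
The plan is a bijective count on a fixed fibre, followed by elementary degree arithmetic. Fix an admissible value $(\Pi,C_m)$ of $\mathcal H_m^0$; this fixes $A_{12}$, the pair sums $s_i$, hence $\mathcal T$, and all entries among the extension vertices $U_m$. The joint law of $(A_n,X_n,\ldots,X_{m-1})$ is the product of the uniform law on $d$-regular graphs on $[n]$ and the uniform slice laws on $\binom{[s]}{r_s}$. It is therefore uniform on the set of tuples, weighted by the constant $\prod_s\binom{s}{r_s}^{-1}$ times $g_{[n]}(d)^{-1}$. The main task is to show that the fibre $\{\mathcal H_m^0=(\Pi,C_m)\}$ is in bijection with
\begin{align*}
 \Omega^0_{\mathcal T}\times\mathfrak G_W((d_i')_{i\in W})\times\prod_{j\in U_m}\binom{O}{a_j}.
\end{align*}
The three stated components are coordinates of this bijection, so uniformity on the fibre gives the product law.

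To build the bijection, first reconstruct $A_n$ from $(\Pi,\eta,G')$. Rows $1,2$ on $W$ are determined: entries with $s_i\in\{0,2\}$ are forced, and on $\mathcal T$ the entry pair is $(1,0)$ or $(0,1)$ according to $\eta_i$. Together with $A_{12}$ this gives rows $1,2$ entirely. The graph on $W$ is $G'$. The degree of $1$ equals $A_{12}+|\{s_i=2\}|+|\{\eta_i=1\}|$ and the degree of $2$ equals $A_{12}+|\{s_i=2\}|+|\{\eta_i=-1\}|$. Both equal $d$ exactly when $\eta$ is balanced, i.e.\ $\eta\in\Omega^0_{\mathcal T}$; this is the computation already used in Lemma~\ref{lem:Sigma fields}. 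The degree of $i\in W$ is $s_i+\deg_{G'}(i)$, which equals $d$ exactly when $G'\in\mathfrak G_W(\mathbf d')$. So for fixed $\Pi$, the $d$-regular graphs with that profile correspond exactly to pairs $(\eta,G')\in\Omega^0_{\mathcal T}\times\mathfrak G_W(\mathbf d')$.

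Next, reconstruct the extension columns given $C_m$. The vector $X_{j-1}\in\{0,1\}^{j-1}$ splits into $B_m^{(j)}$ on $O$ and $((C_m)_{ij})_{n<i<j}$. Its total weight is $r_{j-1}$, so $B_m^{(j)}$ ranges over exactly $\binom{O}{a_j}$ once $C_m$ is fixed. The columns for different $j$ are independent and are independent of $A_n$ before conditioning, so after fixing $C_m$ each $B_m^{(j)}$ is free in its slice, independently of the others and of $(\eta,G')$. Combining the two steps, the fibre is the full product set, each point having the same prior weight. One point to check is that the event $\{C_m=\text{given}\}$ factors over $j$, since each column of $C_m$ belongs to a single $X_{j-1}$; hence no cross constraints arise.

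Finally, the degree bounds. Since $s_i\in\{0,1,2\}$, we have $d_i'\in\{d-2,d-1,d\}$, so $|d_i'-\overline{d'}|\le2$. For the density margin, $\overline{d'}\ge d-2\ge\lambda(n-1)-2$ and $\overline{d'}\le(1-\lambda)(n-1)$. Comparing with $|W|-1=n-3$, we get $\lambda(n-1)-2\ge(\lambda/2)(n-3)$ and $(1-\lambda)(n-1)\le(1-\lambda/2)(n-3)$ for $n$ large depending on $\lambda$. Admissibility ensures that $\mathfrak G_W(\mathbf d')$ is nonempty, so the uniform law is well defined. I expect the main obstacle to be purely bookkeeping: stating the bijection precisely, including the role of $A_{12}$ and of the indices $j=n+1$ (where $a_j=r_n$), so that the reconstruction is visibly one-to-one and onto.
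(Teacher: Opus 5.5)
Your proposal is correct and matches the paper's argument: the fibre of $\mathcal H_m^0$ is the product of the graphs with the given profile, in bijection with $\Omega^0_{\mathcal T}\times\mathfrak G_W((d_i')_{i\in W})$ via $(\eta,G')$, and the $a_j$-slices left free by $C_m$. Uniformity of the prior on this product gives the stated law, and the same arithmetic $d-2\le d_i'\le d$ gives the defect and margin bounds. The only tacit step, also left implicit in the paper, is that admissibility of $\Pi$ forces $A_{12}+|\{i:s_i=2\}|+|\mathcal T|/2=d$; this is what makes ``both degrees equal $d$'' equivalent to $\eta$ being balanced.
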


\begin{proof}
Fix the pair profile. At every $i\in\mathcal T$ exactly one edge to
$\{1,2\}$ is present, and equality of the two original degrees requires
a balanced orientation. Independently, the graph on $W$ must have degrees
$d-s_i$. Every such graph and balanced orientation reconstruct exactly
one original graph with that profile. Uniformity proves the first two
independent factors.

The upper-triangular entries of $C_m$ expose, separately for each birth
vector $X_{j-1}$, its coordinates in $\{n+1,\ldots,j-1\}$.
The remaining restriction to $O$ is a uniform $a_j$-slice.
These restrictions remain independent across birth vectors and independent
of the original graph. This proves the factorization.
Finally $d-2\le d_i'\le d$, giving defect two and residual density
margin $\lambda/2$ for sufficiently large $n$.
\end{proof}

Let $\mathcal D_m\in\sigma(C_m)$ be the event that
\begin{align}
 (\lambda_*/2)n\le a_j\le(1-\lambda_*/2)n
 \qquad(j\in U_m).\label{eq:dense extension weights}
\end{align}
Each $a_j$ is hypergeometric with mean
$nr_{j-1}/(j-1)\in[\lambda_*n,(1-\lambda_*)n]$.
By Hoeffding's inequality for sampling without replacement
\cite[Section~6]{Hoeffding-jasa}, each weight violates these bounds
with probability at most $2e^{-cn}$.
The weights are fixed at birth, so a union over the birth columns
gives $\mathbb P(\bigcup_{m=n}^{2n-2}\mathcal D_m^c)\le e^{-c'n}$.

On $W$, fix deterministic block and splitter families as above before
sampling $A_n$ or the extension. Choose their auxiliary orderings
independently of all graph variables. Apply the preceding graph and
certificate results with $N=n-2$, defect bound $2$, and density margin
$\lambda/2$; constants in this application always refer to those
parameter values.
For one fixed block and splitter, expose $G'$ according to
\eqref{eq:alternating sigma field} and define
\begin{align*}
 \mathcal H_{m;X,Y}
 :=\sigma(\Pi,C_m,\mathcal G_{X,Y}),
 \qquad
 \mathcal E_{m;X,Y}:=\mathcal D_m\cap\mathcal Q_{X,Y}.
\end{align*}
Only this block's switching exposure is included in
$\mathcal H_{m;X,Y}$. In particular,
$\mathcal E_{m;X,Y}\in\mathcal H_{m;X,Y}$.

\begin{theorem}\label{thm:exact hybrid factorization}
For every $n\le m\le2n-2$ and every fixed block and splitter,
\begin{align}
 \operatorname{Law}\left(
       \eta,(\mathcal O_e)_{e\in E},(B_m^{(j)})_{j\in U_m}
       \mid\mathcal H_{m;X,Y}\right) =
 \operatorname{Unif}(\Omega_{\mathcal T}^0)
 \otimes\bigotimes_{e\in E}
             \operatorname{Unif}\binom{T_e}{a_e}
 \otimes\bigotimes_{j\in U_m}\operatorname{Unif}\binom{O}{a_j}.\label{eq:exact hybrid factorization}
\end{align}
This identity holds on every nonempty exposure fibre. On
$\mathcal E_{m;X,Y}$, the switching slices have linear-size supports
and weights in $[|T_e|/3,2|T_e|/3]$, the extension slices satisfy
\eqref{eq:dense extension weights}, and the intersection, union,
coordinate-pair and centered energy conclusions of the certificate hold.

There is $c_{\ref{thm:exact hybrid factorization}}=c_{\ref{thm:exact hybrid factorization}}(\lambda,\delta,\varepsilon,K_0)>0$, depending only
on $\lambda,\delta,\varepsilon,K_0$, such that
\begin{align*}
 &\mathbf 1_{\mathcal D_m}
 \mathbb P\left(\bigcup_{(X,Y,Y_0,Y_1)}\mathcal Q_{X,Y}^c
                   \,\middle|\,\mathcal H_m^0\right)
 \le\mathbf 1_{\mathcal D_m}
                 e^{-c_{\ref{thm:exact hybrid factorization}}n}.
\end{align*}
In particular, simultaneously for every $n\le m\le2n-2$ and all
the fixed block choices, $\mathcal E_{m;X,Y}$ holds outside an event
of probability at most $e^{-c_{\ref{thm:exact hybrid factorization}}n}$,
after decreasing the constant if necessary.
\end{theorem}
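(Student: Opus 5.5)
The proof has three parts. First we combine the two exact factorizations already proved, then read off the deterministic consequences on $\mathcal E_{m;X,Y}$, and finally bound the failure probability by transferring \eqref{eq:good certificate probability} to the residual degree sequence.

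\textbf{Step 1: the exact law.} Start from Proposition~\ref{prop:pair profile factorization}. Conditional on $\mathcal H_m^0$, the triple $(\eta,G',(B_m^{(j)})_j)$ is a product of three laws: $\eta$ is balanced-sign uniform, $G'$ is uniform on $\mathfrak G_W((d'_i))$, and the birth restrictions are uniform slices. The additional exposure $\mathcal G_{X,Y}$ is a function of $G'$ together with the independent ordering $\pi$. Conditioning a product measure on a statistic of one factor leaves the other factors unchanged, so $\eta$ and the $B_m^{(j)}$ keep their laws and stay independent of $G'$.

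Next we describe $G'$ given $\mathcal G_{X,Y}$ in two stages.

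1. $\mathcal G_{X,Y}$ contains $C$ and $D$. Given these, the cross block $H$ is uniform on the margin fibre \eqref{eq:rectangular margin fibre}, with margins computed from $(d'_i)$. The uniform measure on $\mathfrak G_W$ disintegrates over $(C,D)$ into these uniform fibres, and the reconstruction $(C,D,H)\mapsto G'$ is a bijection.
2. Proposition~\ref{prop:exact switching fibre} then gives the product of $\operatorname{Unif}\binom{T_e}{a_e}$ for the orientations. This proposition holds for arbitrary margins, which is exactly what the bounded-defect residual sequence requires.

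Taking the product of the three conditional laws on each nonempty fibre gives \eqref{eq:exact hybrid factorization}.

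\textbf{Step 2: consequences on $\mathcal E_{m;X,Y}$.} These are read off from Definition~\ref{def:good switching certificate} together with \eqref{eq:dense extension weights}.

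- (G2) gives $|T_e|\ge cN$, and $|2a_e-|T_e||\le|T_e|/10$ gives $a_e\in[0.45|T_e|,0.55|T_e|]\subset[|T_e|/3,2|T_e|/3]$.
- (G3) and (G4) are the intersection and union conclusions.
- (G5) is the coordinate-pair coverage, and Lemma~\ref{lem:pair coverage and energy} deterministically converts it into the centered energy bound \eqref{eq:centered energy coverage}.

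\textbf{Step 3: the probability bound.} Fix an admissible value of $\mathcal H_m^0$ on $\mathcal D_m$. By Proposition~\ref{prop:pair profile factorization}, $G'$ is uniform on $\mathfrak G_W((d'_i))$, the ordering is independent, and the residual sequence satisfies \eqref{eq:bounded defect hypothesis} with $N=n-2$, $B=2$ and density margin $\lambda/2$ for large $n$. The event $\bigcup\mathcal Q_{X,Y}^c$ depends only on $G'$ and the orderings, not on $C_m$ or on $\eta$. Hence \eqref{eq:good certificate probability}, which holds uniformly over every degree sequence satisfying the hypothesis, bounds its conditional probability by $e^{-c(n-2)}$. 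The constant depends on $(\lambda,2,\delta,\varepsilon,K_0)$, and the union over at most $K_0$ fixed blocks is already contained in it.

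To pass to the unconditional statement, average over $\mathcal H_m^0$. This gives $\mathbb P(\mathcal D_m\cap\bigcup\mathcal Q^c)\le e^{-cn}$, and $\mathbb P(\bigcup_m\mathcal D_m^c)\le e^{-c'n}$ was shown before the theorem. The event $\bigcup\mathcal Q^c$ is defined from $A_n$ alone and so does not depend on $m$. A union over the at most $n$ values of $m$ therefore costs only a factor $n$, which is absorbed by decreasing the constant.

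\textbf{Main obstacle.} The delicate point is uniformity in Step 3. The estimate \eqref{eq:good certificate probability} must hold for every residual degree sequence with defect at most two, not only for the regular sequence. This rests on the enumeration comparison of \cite[Theorem~3]{McKay-cpc} with bounded defect, which was recorded earlier, so we check that it is invoked with $B=2$ and margin $\lambda/2$. We also have to check that in Step 1 the conditioning on $\pi$ and on $\mathcal G_{X,Y}$ is performed inside the $G'$ factor only. If it were not, $\eta$ and the extension columns would not be seen to remain fresh.
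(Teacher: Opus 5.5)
Your proposal is correct and follows essentially the same route as the paper. You apply the switching-fibre reconstruction inside the $G'$ factor of the pair-profile factorization, read the certificate consequences off (G2)--(G5) and $\mathcal D_m$, and then apply \eqref{eq:good certificate probability} conditionally on $\mathcal H_m^0$ to the defect-two residual sequence before averaging and adding the bound on $\bigcup_m\mathcal D_m^c$.
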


\begin{proof}
On an atom of $\mathcal H_m^0$, Proposition~\ref{prop:pair profile factorization}
leaves three independent factors. The additional exposure
$\mathcal G_{X,Y}$ depends only on the residual graph and its independent
ordering. The reconstruction identity \eqref{eq:exact switching fibre}
therefore changes only the middle factor, replacing it by its exact product of orientation
slices. In particular every admissible configuration has conditional mass
\begin{align*}
 |\Omega_{\mathcal T}^0|^{-1}
 \prod_{e\in E}\binom{|T_e|}{a_e}^{-1}
 \prod_{j\in U_m}\binom n{a_j}^{-1}.
\end{align*}
This proves the factorization on every nonempty fibre.
On the measurable event $\mathcal E_{m;X,Y}$, (G2) gives
$a_e/|T_e|\in[9/20,11/20]$ and the remaining properties are those
of the certificate and $\mathcal D_m$.

Conditional on $\mathcal H_m^0$, the residual graph has defect at most
two and density margin $\lambda/2$, uniformly over the pair profile.
Estimate~\eqref{eq:good certificate probability} gives
the stated conditional estimate; no union over profile values
is needed. Average this estimate and add the bound for
$\bigcup_m\mathcal D_m^c$ above. The union over at most
$n$ levels and the fixed block family still costs only $e^{-cn}$.
\end{proof}

For completeness, we express how this environment acts on the equation
in \eqref{eq:qm}. Extend support vectors by zero to $\mathbb R^m$,
and, on a good fibre, set
\begin{align*}
 \xi_e:=\mathbf 1_{\mathcal O_e}-\frac{a_e}{|T_e|}\mathbf 1_{T_e},
 \qquad
 \zeta_j:=B_m^{(j)}-\frac{a_j}{n}\mathbf 1_O,
 \qquad b_e:=\mathbf e_x-\mathbf e_{x'}\quad(e=(x,x')).
\end{align*}
Let $\bar A_m:=\mathbb E(A_m\mid\mathcal H_{m;X,Y})$.

\begin{corollary}
On every good fibre,
\begin{align}
 A_m=\bar A_m
 +\frac1{\sqrt2}(h\eta^T+\eta h^T)
 +\sum_{e\in E}(b_e\xi_e^T+\xi_e b_e^T)+\sum_{j\in U_m}(\mathbf e_j\zeta_j^T+\zeta_j\mathbf e_j^T).\label{eq:hybrid affine representation}
\end{align}
Consequently, for every fixed $v\in H_m$,
\begin{align*}
 P_hA_mv=P_h\bar A_mv
 +\sum_{e\in E}
       \left(b_e\langle\xi_e,v\rangle
                     +\xi_e\langle b_e,v\rangle\right)+\sum_{j\in U_m}
       \left(\mathbf e_j\langle\zeta_j,v\rangle
                     +v_jP_h\zeta_j\right).
\end{align*}
Conditional on $\mathcal H_{m;X,Y}$, the summands indexed by distinct
$e$ and $j$ in the compressed action are independent.
The same assertions hold for an $\mathcal H_{m;X,Y}$-measurable $v$.
\end{corollary}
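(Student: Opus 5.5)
The plan is to write $A_m$ as its conditional mean plus the fluctuations of the three independent factors in Theorem~\ref{thm:exact hybrid factorization}, and to identify each fluctuation as a rank-two symmetric update. Fix a good fibre of $\mathcal H_{m;X,Y}$. By \eqref{eq:exact hybrid factorization}, the unexposed randomness consists exactly of $\eta$, the orientation sets $(\mathcal O_e)_{e\in E}$ and the extension columns $(B_m^{(j)})_{j\in U_m}$. All other entries of $A_m$ are exposed: these are the pair profile, the diagonal blocks $C,D$ of $G'$, the pair sums and endpoint pieces in $\mathcal G_{X,Y}$, and $C_m$.

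\textbf{Step 1: entries in rows $1,2$.} On $\mathcal T$ the entries are $A_{1i}=(s_i+\eta_i)/2$ and $A_{2i}=(s_i-\eta_i)/2$. Since $\eta$ is uniform on the balanced slice, $\mathbb E\eta=0$, so the deviation is $\tfrac12(\mathbf e_1-\mathbf e_2)\eta^T=2^{-1/2}h\eta^T$. Symmetrizing gives the $h$-term.

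\textbf{Step 2: the cross block $H$ of $G'$.} Here \eqref{eq:conditional cross block diagram} already gives $H-\bar H=\sum_e (b_e|_X)(\xi_e|_Y)^T$. This uses $\mathbb E\mathbf 1_{\mathcal O_e}=(a_e/|T_e|)\mathbf 1_{T_e}$ and the fact that within row pair $e$ the two rows carry $\pm$ the same orientation indicator on $T_e$. Adding the transpose block gives $b_e\xi_e^T+\xi_e b_e^T$.

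\textbf{Step 3: extension columns.} Column $j$ restricted to $O$ is uniform on $\binom{O}{a_j}$, with mean $(a_j/n)\mathbf 1_O$. Its deviation $\zeta_j$ sits in column $j$ and, by symmetry, in row $j$, which gives $\mathbf e_j\zeta_j^T+\zeta_j\mathbf e_j^T$. One also checks that these three groups touch disjoint sets of entries:
\begin{itemize}[leftmargin=2em]
\item rows $1,2$ for $\eta$;
\item $X\times Y$ inside $W$ for the $\xi_e$;
\item $O\times U_m$ for the $\zeta_j$.
\end{itemize}
Because the groups are disjoint, $\bar A_m$ is the full conditional mean and \eqref{eq:hybrid affine representation} holds.

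\textbf{Step 4: the compressed action.} For $v\in H_m$, apply $P_h$ and use $P_hh=0$. Then $P_h(h\eta^Tv)=0$, and the term $P_h\eta\langle h,v\rangle$ vanishes because $\langle h,v\rangle=0$. So the $\eta$-term disappears entirely.

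For the switching terms, $b_e$ is supported on $X\subseteq W$, hence orthogonal to $h$, so $P_hb_e=b_e$. Likewise $\xi_e$ is supported on $Y\subseteq W$, so $P_h\xi_e=\xi_e$. This gives $b_e\langle\xi_e,v\rangle+\xi_e\langle b_e,v\rangle$.

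For the extension terms, $j>n$ gives $P_h\mathbf e_j=\mathbf e_j$. The term $\zeta_j\mathbf e_j^Tv$ becomes $v_jP_h\zeta_j$; here the projection is kept, since $\zeta_j$ may charge coordinates $1,2$.

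\textbf{Step 5: independence.} Each summand indexed by $e$ is a measurable function of $\mathcal O_e$ alone, and each summand indexed by $j$ is a function of $B_m^{(j)}$ alone. Conditional independence then follows from the product law \eqref{eq:exact hybrid factorization}. The case of an $\mathcal H_{m;X,Y}$-measurable $v$ is identical, because such a $v$ is constant on each fibre.

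The main point needing care is Step 2's bookkeeping. We need the endpoint pieces and the fixed entries (pair sums $0$ or $2$) to be exactly exposed. We also need successive pairs to share a row but not an entry, so that the decomposition has no overlaps and the means are computed correctly.
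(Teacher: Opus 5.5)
Your proposal is correct and follows the paper's own proof essentially step for step. You identify the central, switching and extension fluctuations as symmetric rank-two updates on disjoint sets of undirected edges, with all other entries exposed. You then remove the central term using $P_hh=0$ and $\langle h,v\rangle=0$, note that $b_e,\xi_e,\mathbf e_j$ lie in $H_m$ while $\zeta_j$ needs $P_h$, and read off conditional independence from the product law \eqref{eq:exact hybrid factorization}, with $v$ fixed on each atom.
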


\begin{proof}
On the rectangle for $e=(x,x')$, the two deviations from the conditional
means are $\xi_e(y)$ and $-\xi_e(y)$. They contribute
$b_e\xi_e^T+\xi_e b_e^T$. The central deviations are
$\eta_i/2,-\eta_i/2$, contributing
$2^{-1/2}(h\eta^T+\eta h^T)$; column $j$ contributes
$\mathbf e_j\zeta_j^T+\zeta_j\mathbf e_j^T$.
These are disjoint sets of undirected edges, and all other entries are
exposed. This proves \eqref{eq:hybrid affine representation}.
For $v\in H_m$, the central term vanishes after applying $P_h$.
The switching vectors lie in $H_m$, while the extension vector requires
$P_h\zeta_j$. Independence follows from
\eqref{eq:exact hybrid factorization}, with $v$ fixed on the atom.
\end{proof}

To display all remaining randomness at once, order the coordinates as
$\{1,2\},X,Y,U_m$. Write $J=H-\bar H$ from
\eqref{eq:conditional cross block diagram} and
$Z=[\zeta_j]_{j\in U_m}$, with row restrictions $Z_{\{1,2\}},Z_X,Z_Y$.
Then
\begin{align}
 A_m-\bar A_m=
 \begin{pmatrix}
  0_{2\times2}&h_{\{1,2\}}\eta_X^T/\sqrt2
                 &h_{\{1,2\}}\eta_Y^T/\sqrt2&Z_{\{1,2\}}\\
  \eta_Xh_{\{1,2\}}^T/\sqrt2&0&J&Z_X\\
  \eta_Yh_{\{1,2\}}^T/\sqrt2&J^T&0&Z_Y\\
  Z_{\{1,2\}}^T&Z_X^T&Z_Y^T&0
 \end{pmatrix}.\label{eq:conditional complete matrix diagram}
\end{align}
Thus the unfixed data are exactly the central balanced slice,
one slice for each alternating rectangle, and one slice for each
extension column. The central slice disappears in $P_hA_m|_{H_m}$;
the other factors survive.

Finally,
$\mathcal F_m=\sigma(\Pi,G',C_m,(B_m^{(j)})_{j\in U_m})$
and $\mathcal H_{m;X,Y}\subseteq\sigma(\mathcal F_m,\pi)$.
A coefficient chosen from $C_{A_m}^{-1}$ in Section~3 need not be
fixed under the present exposure. The Fourier estimates below are
therefore applied to fixed coefficients; the deterministic nets in
Sections~\ref{Sec:threshold control} and~\ref{Sec:main proof} will
handle the existence of witnesses.

\section{Fourier comparison}\label{Sec:fourier}
Fix a good atom $\mathcal H_{m;X,Y}=\mathfrak c$ of the exposure in
Section~\ref{Sec:quasirandomness}. All supports, weights, pair vectors
and exposed entries are fixed. We write $\mathbb P,\mathbb E$ for the
true conditional law, and
$\phi_Z(\theta)=\mathbb E e^{2\pi i\langle Z,\theta\rangle}$.
The notation $\mathbb P_\omega,\mathbb E_\omega$ is reserved for the
auxiliary matching walks. Matrix operator and Hilbert--Schmidt norms
are denoted by $\|\cdot\|_{\mathrm{op}}$ and $\|\cdot\|_{\mathrm{HS}}$.

\subsection{The Fourier factors of the conditional matrix}

For fixed $v\in\mathbb R^m$, the decomposition in
\eqref{eq:hybrid affine representation} gives
\begin{align*}
 A_mv={}&\bar A_mv+\frac1{\sqrt2}
       \bigl(h\langle\eta,v\rangle+\eta\langle h,v\rangle\bigr)\\
 &+\sum_{e\in E}
       \bigl(b_e\langle\xi_e,v\rangle+\xi_e\langle b_e,v\rangle\bigr)
 +\sum_{j\in U_m}
       \bigl(\mathbf e_j\langle\zeta_j,v\rangle+v_j\zeta_j\bigr).
\end{align*}
The coefficient of each slice in the Fourier transform is therefore
\begin{align*}
 R_{e,v}(\theta)&=\langle b_e,\theta\rangle v_{T_e}
                      +\langle b_e,v\rangle\theta_{T_e},\\
 S_{j,v}(\theta)&=\theta_jv_O+v_j\theta_O,\\
 R_{*,v}(\theta)&=2^{-1/2}
       \bigl(\langle h,\theta\rangle v_{\mathcal T}
                         +\langle h,v\rangle\theta_{\mathcal T}\bigr).
\end{align*}
By the exact conditional product law,
\begin{align}
 \phi_{A_mv}(\theta)
 &=e^{2\pi i\langle\bar A_mv,\theta\rangle}
       \phi_\eta(R_{*,v}(\theta))
       \prod_{e\in E}\phi_{\xi_e}(R_{e,v}(\theta))
       \prod_{j\in U_m}\phi_{\zeta_j}(S_{j,v}(\theta)).\label{eq:true hybrid Fourier factorization}
\end{align}
This formula determines the auxiliary matrix: replace each centered
Boolean slice by a zero-sum matching walk with nonnegative characteristic
function dominating its absolute Fourier transform. The deterministic
matrix $\bar A_m$ contributes only a phase.

\subsection{From slices to matching walks}
\begin{proposition}\label{prop:dense slice Fourier}
Fix $c_0\in(0,1/2)$. There is
$c_{\ref{prop:dense slice Fourier}}=c_{\ref{prop:dense slice Fourier}}(c_0)>0$, depending only on $c_0$,
such that, for $s:=|T|\ge2$,
$q\sim\operatorname{Unif}\binom T a$ and
$a/s\in[c_0,1-c_0]$,
\begin{align}
 \left|\mathbb E e^{2\pi i\langle q,x\rangle}\right|
 \le \exp\left(-c_{\ref{prop:dense slice Fourier}}
                         \|x_T\|_{\mathbb D}^2\right),
 \qquad x\in\mathbb R^T.\label{eq:dense slice Fourier}
\end{align}
The same bound holds for $q-(a/s)\mathbf 1_T$.
\end{proposition}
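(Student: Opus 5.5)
The plan is a comparison with a random matching of transpositions, which turns the slice into a product of independent two-point variables. Write $q=\mathbf 1_Q$ with $Q$ uniform among $a$-subsets of $T$. Set $k:=\min\{a,s-a\}\ge c_0 s$ (the case $s$ bounded is handled by adjusting constants, since $\|x_T\|_{\mathbb D}^2\le s/4$ and the characteristic function is bounded away from $1$ in modulus unless $x_T$ is nearly constant modulo $\mathbb Z$). Sample $Q$ in two stages. First draw a uniformly random injective pairing $(i_1,j_1),\dots,(i_k,j_k)$ of $2k$ distinct points of $T$, and a uniform set $Q_0$ of $a-k$ points from the rest of $T$. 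Then independently put one element of each pair into $Q$ with a fair coin. By symmetry the resulting $Q$ is uniform on $\binom Ta$. Conditional on the pairing and $Q_0$,
\begin{align*}
 \left|\mathbb E\bigl[e^{2\pi i\langle q,x\rangle}\mid\text{pairing}\bigr]\right|
 =\prod_{r=1}^k|\cos(\pi(x_{i_r}-x_{j_r}))|
 \le\exp\Bigl(-c\sum_{r=1}^k\|x_{i_r}-x_{j_r}\|_{\mathbb R/\mathbb Z}^2\Bigr),
\end{align*}
using $|\cos\pi t|\le\exp(-c\|t\|_{\mathbb R/\mathbb Z}^2)$. Taking expectations and using the triangle inequality, it suffices to bound $\mathbb E\exp(-c\Sigma)$, where $\Sigma$ is the random matched energy.

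The mean of $\Sigma$ is the easy part. Each pair is a uniformly random pair of distinct points, so
\begin{align*}
 \mathbb E\Sigma=k\binom s2^{-1}\sum_{\{i,j\}}\|x_i-x_j\|_{\mathbb R/\mathbb Z}^2=\frac{2k}{s}\|x_T\|_{\mathbb D}^2\ge 2c_0\|x_T\|_{\mathbb D}^2.
\end{align*}
Jensen's inequality points the wrong way, however, so we need a lower tail for $\Sigma$.

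The main obstacle is this lower tail: we need $\mathbb P(\Sigma<\tfrac12\mathbb E\Sigma)\le e^{-c\|x_T\|_{\mathbb D}^2}$, or an equivalent statement. The summands lie in $[0,1/4]$, and the pairs are a random matching, that is, sampling without replacement. For a fixed ordering, revealing the pairs one at a time gives a martingale with increments bounded by $1/4$. Alternatively, one can split the $k$ pairs into $k/2$ blocks and compare with independent uniform pairs, in the spirit of Hoeffding's comparison for sampling without replacement \cite[Section~6]{Hoeffding-jasa}. Either route should give the lower-tail bound $\exp(-c(\mathbb E\Sigma)^2/k)$, and this is only useful when $\mathbb E\Sigma$ is of order $s$.

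To cover small energy, I would instead use the cleaner negative-association fact: the summands $\|x_{i_r}-x_{j_r}\|_{\mathbb R/\mathbb Z}^2$, for a uniform random matching, satisfy the Laplace-transform bound
\begin{align*}
 \mathbb E e^{-c\Sigma}\le\prod_r\mathbb E e^{-c\|x_{i_r}-x_{j_r}\|_{\mathbb R/\mathbb Z}^2}.
\end{align*}
I would prove this by conditioning sequentially. Each new pair is uniform among the remaining points, and removing $2(r-1)\le s-2c_0 s$ points changes the average pair energy by at most a constant factor, after discarding a bounded fraction of heavy points. The single-pair factor satisfies $\mathbb E e^{-cY}\le 1-c'\mathbb E Y\le e^{-c'\mathbb E Y}$, since $Y\le1/4$.

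The crux is to control the conditional mean energy of the remaining points from below. I would try a greedy, or simply reverse, accounting. Since every point contributes energy, one can pick the matching order so that the first $k/2$ steps sample from a set still containing a $c_0$-fraction of every point's partners; the lower bound then follows from $\sum_{j\in T'}\|x_i-x_j\|^2$ summed over $i$. Multiplying the factors gives $\exp(-c\|x_T\|_{\mathbb D}^2)$, which is the claimed bound \eqref{eq:dense slice Fourier}. The centered statement follows at once, because subtracting $(a/s)\mathbf 1_T$ only multiplies the characteristic function by a phase.
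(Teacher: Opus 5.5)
Your reduction is correct. The two-stage sampling produces $\operatorname{Unif}\binom Ta$, the conditional modulus is $\prod_r|\cos(\pi(x_{i_r}-x_{j_r}))|$, and $\mathbb E\Sigma=\frac{2k}{s}\|x_T\|_{\mathbb D}^2$. You also rightly see that everything rests on a lower tail for $\Sigma$ whose exponent is proportional to $\mathbb E\Sigma$. The step you propose for it, however, fails.

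The Laplace-transform inequality $\mathbb Ee^{-c\Sigma}\le\prod_r\mathbb Ee^{-cY_r}$ is false for random matchings. Take $s=4$, $a=2$ (so $k=2$), $x=(0,0,\frac12,\frac12)$ and $\epsilon=e^{-c/4}$. The three perfect matchings are equally likely, with $\Sigma=0$ on $\{12,34\}$ and $\Sigma=\frac12$ on the other two. Hence $\mathbb Ee^{-c\Sigma}=\frac13(1+2\epsilon^2)$. Each $Y_r$ is $0$ with probability $\frac13$ and $\frac14$ otherwise, so the product is $(\frac13+\frac23\epsilon)^2$. The difference is $\frac29(1-\epsilon)^2>0$ for every $c>0$. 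The cause is positive correlation: matching $\{1,2\}$ forces $\{3,4\}$, and two clusters of size $s/2$ give positively correlated pair energies for every even $s\ge4$.

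The sequential-conditioning justification fails for a more basic reason: there is no deterministic lower bound on the conditional mean energy of the next pair. For $x=\frac12\mathbf e_1$, once vertex $1$ has been matched, every remaining pair has energy zero. Removing $2(r-1)$ points can therefore destroy the average pair energy completely, and ``discarding heavy points'' discards all of it. Choosing the revealing order does not help, because which points have been removed is random.

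The paper needs no tail estimate. It applies Roos' permutation inequality, in Tran's form, to $a$ ones and $s-a$ zeros, obtaining $|\phi_q(x)|\le(1-\frac{a(s-a)}{\binom s2^2}\sum_{i<j}\sin^2(\pi(x_i-x_j)))^{(s-1)/4}\le\exp(-\frac{4a(s-a)}{s^2}\|x_T\|_{\mathbb D}^2)$, already with prefactor one.

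To keep your matching route you need a genuine Bernstein-type lower tail, which is what the argument of Lemma~\ref{lem:matching captures energy} provides:
\begin{enumerate}
\item Work with $\sin^2(\pi(x_i-x_j))=\frac14\|u_i-u_j\|_2^2$ for the centered vectors $u_i=(\cos2\pi x_i,\sin2\pi x_i)-\bar u$; this is what $|\cos|$ controls anyway.
\item Generate the pairs from a uniform $k$-set $I$ and a uniform injection $\sigma:I\to T\setminus I$.
\item Observe that, conditional on $I$, the mean of $\sum_{i\in I}\|u_i-u_{\sigma(i)}\|_2^2$ is at least $\frac{k}{s-k}\sum_{i\in T}\|u_i\|_2^2$ deterministically.
\item Apply Chatterjee's permutation concentration inequality.
\end{enumerate}
This yields $|\phi_q(x)|\le3e^{-c\|x_T\|_{\mathbb D}^2}$.

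To reach prefactor one, you must separately show $1-|\phi_q(x)|\ge c\min\{\|x_T\|_{\mathbb D}^2,1\}$ for bounded energy. A second-moment and Paley--Zygmund argument using $\mathbb E\Sigma^2\le\mathbb E\Sigma+C(\mathbb E\Sigma)^2$ suffices. Your bounded-$s$ remark needs this same estimate, since ``bounded away from $1$'' fails exactly as $x_T$ approaches a constant.
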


\begin{proof}
Roos' permutation inequality \cite[Theorem~1.1]{Roos}, in the form
\cite[Theorem~3.4]{Tran-arxiv}, applied to a vector of $a$ ones and
$s-a$ zeros, gives
\begin{align*}
 |\phi_q(x)|
 &\le\left(1-\frac{a(s-a)}{\binom s2^2}
           \sum_{i<j}\sin^2(\pi(x_i-x_j))\right)^{(s-1)/4}\\
 &\le\exp\left(-\frac{4a(s-a)}{s^2}\|x_T\|_{\mathbb D}^2\right).
\end{align*}
Use $\sin^2(\pi u)\ge4\|u\|_{\mathbb R/\mathbb Z}^2$ and
$1-u\le e^{-u}$. Thus $c_{\ref{prop:dense slice Fourier}}=4c_0(1-c_0)$
is admissible; centering changes only the phase.
\end{proof}

\begin{definition}\label{def:lazy matching walk}
For a fixed finite support $T$ and $0<\nu\le1/4$, choose a uniform
perfect matching $\mathcal M_T$; if $|T|$ is odd, first leave a uniform
vertex unmatched. Given the matching, let its edges carry independent
signs with $\mathbb P_\omega(\varepsilon_{ij}=\pm1)=\nu/2$ and
$\mathbb P_\omega(\varepsilon_{ij}=0)=1-\nu$. Define
\begin{align*}
 \omega_{T,\nu}:=\sum_{\{i,j\}\in\mathcal M_T}
                 \varepsilon_{ij}(\mathbf e_i-\mathbf e_j),
 \qquad
 \boldsymbol\omega_T:=(\mathcal M_T,(\varepsilon_{ij})_{\{i,j\}\in\mathcal M_T}).
\end{align*}
The orientation convention is immaterial. For $|T|\le1$ the walk is
zero. Each walk has coordinate sum zero, and its complete seed
$\boldsymbol\omega_T$ includes both matching and signs.
The law $\mathbb P_\omega$ integrates these seeds, with the certificate
and coefficients fixed, independently of the true matrix.
\end{definition}

\begin{proposition}\label{prop:matching Fourier comparison}
There is an absolute $C_{\ref{prop:matching Fourier comparison}}>0$
such that, for $s:=|T|\ge2$ and $0<\nu\le1/4$,
\begin{align}
 \phi_{\omega_{T,\nu}}(x)
 \ge\exp\left(-C_{\ref{prop:matching Fourier comparison}}
                         \nu\|x_T\|_{\mathbb D}^2\right)>0.\label{eq:matching Fourier lower}
\end{align}
For a fixed dense slice window $[c_0,1-c_0]$, choose
\begin{align*}
 0<\nu_{\ref{prop:matching Fourier comparison}}
 \le\min\left\{\frac14,
 \frac{c_{\ref{prop:dense slice Fourier}}}
      {C_{\ref{prop:matching Fourier comparison}}}\right\}.
\end{align*}
Then every Boolean slice $q$ in this window satisfies, pointwise in $x$,
\begin{align*}
 \left|\mathbb E e^{2\pi i\langle q,x\rangle}\right|
 \le\phi_{\omega_{T,\nu_{\ref{prop:matching Fourier comparison}}}}(x),
 \qquad
 0\le\phi_{\omega_{T,\nu_{\ref{prop:matching Fourier comparison}}}}(x)
 \le1.
\end{align*}
The same domination holds for the centered slice.
\end{proposition}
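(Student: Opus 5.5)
Conditional on the matching $\mathcal M_T$, the walk is a sum of independent edge contributions, so the characteristic function factorizes. For an edge $\{i,j\}$ with sign law $\pm1$ w.p.\ $\nu/2$ and $0$ w.p.\ $1-\nu$,
\begin{align*}
 \mathbb E_\omega e^{2\pi i\varepsilon_{ij}(x_i-x_j)}
 =1-\nu+\nu\cos(2\pi(x_i-x_j))
 =1-2\nu\sin^2(\pi(x_i-x_j)).
\end{align*}
This is real and at least $1-2\nu\ge1/2$, since $\nu\le1/4$. Hence $\phi_{\omega_{T,\nu}}(x)$ is an average over matchings of products of numbers in $[1/2,1]$. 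It is therefore real, lies in $(0,1]$, and nonnegativity and the bound $\le1$ are immediate.

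For the lower bound I would use $\log(1-u)\ge-2u$ for $u\in[0,1/2]$ together with $\sin^2(\pi t)\le\pi^2\|t\|_{\mathbb R/\mathbb Z}^2$. These give, for each matching $\mathcal M$,
\begin{align*}
 \prod_{\{i,j\}\in\mathcal M}\bigl(1-2\nu\sin^2(\pi(x_i-x_j))\bigr)
 \ge\exp\Bigl(-4\pi^2\nu\sum_{\{i,j\}\in\mathcal M}\|x_i-x_j\|_{\mathbb R/\mathbb Z}^2\Bigr).
\end{align*}
Averaging over the uniform matching and applying Jensen's inequality (convexity of $\exp$) bounds $\phi_{\omega_{T,\nu}}(x)$ below by $\exp(-4\pi^2\nu\,\mathbb E\sum_{\mathcal M}\|x_i-x_j\|^2)$. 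By symmetry, each pair $\{i,j\}\in\binom T2$ lies in the uniform (near-)perfect matching with probability $\lfloor s/2\rfloor/\binom s2\le1/(s-1)$. Indeed, the matching has $\lfloor s/2\rfloor$ edges, equidistributed over pairs, including the odd case with a uniform unmatched vertex. Thus
\begin{align*}
 \mathbb E\sum_{\{i,j\}\in\mathcal M}\|x_i-x_j\|_{\mathbb R/\mathbb Z}^2
 \le\frac1{s-1}\sum_{\{i,j\}\in\binom T2}\|x_i-x_j\|_{\mathbb R/\mathbb Z}^2
 =\|x_T\|_{\mathbb D}^2,
\end{align*}
which gives \eqref{eq:matching Fourier lower} with $C_{\ref{prop:matching Fourier comparison}}=4\pi^2$.

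For the domination, combine with Proposition~\ref{prop:dense slice Fourier}. For $q$ in the window $[c_0,1-c_0]$, we have $|\mathbb Ee^{2\pi i\langle q,x\rangle}|\le\exp(-c_{\ref{prop:dense slice Fourier}}\|x_T\|_{\mathbb D}^2)$. The choice $\nu_{\ref{prop:matching Fourier comparison}}\le c_{\ref{prop:dense slice Fourier}}/C_{\ref{prop:matching Fourier comparison}}$ gives $C_{\ref{prop:matching Fourier comparison}}\nu\le c_{\ref{prop:dense slice Fourier}}$, so this bound is at most $\exp(-C_{\ref{prop:matching Fourier comparison}}\nu\|x_T\|_{\mathbb D}^2)\le\phi_{\omega_{T,\nu}}(x)$. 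Centering multiplies the slice characteristic function by a unimodular phase, so the same domination holds for the centered slice.

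The only mildly delicate step is the pair-inclusion probability in the odd case and the direction of Jensen. Both are routine: Jensen is applied to the convex function $\exp$ of minus a linear quantity, which gives a lower bound, as needed.
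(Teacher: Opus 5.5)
Your proposal is correct and follows the paper's proof essentially step by step. Both arguments compute the conditional product $\prod(1-2\nu\sin^2(\pi(x_i-x_j)))$ with factors in $[1/2,1]$, apply $1-u\ge e^{-2u}$ together with $\sin^2(\pi t)\le\pi^2\|t\|_{\mathbb R/\mathbb Z}^2$, and use the pair-inclusion probability $\lfloor s/2\rfloor/\binom s2\le 1/(s-1)$ with Jensen to obtain $C=4\pi^2$. The domination then comes from the choice $C\nu\le c$ in the slice bound, with centering contributing only a unimodular phase.
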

\begin{proof}
Conditional on the matching, the characteristic function is
\begin{align}
 \mathbb E_{\omega}\left(e^{2\pi i\langle\omega_{T,\nu},x\rangle}
                         \mid\mathcal M_T\right)
 =\prod_{\{i,j\}\in\mathcal M_T}
       \left(1-2\nu\sin^2(\pi(x_i-x_j))\right).\label{eq:matching characteristic product}
\end{align}
Each factor lies in $[1/2,1]$. Since $1-u\ge e^{-2u}$ for
$0\le u\le1/2$, the product is at least
\begin{align*}
 \exp\left(-4\pi^2\nu
       \sum_{\{i,j\}\in\mathcal M_T}
                      \|x_i-x_j\|_{\mathbb R/\mathbb Z}^2\right).
\end{align*}
A fixed pair is present with probability $1/(s-1)$ when $s$ is
even and $1/s$ when $s$ is odd. Jensen's inequality therefore gives
\eqref{eq:matching Fourier lower} with
$C_{\ref{prop:matching Fourier comparison}}=4\pi^2$.
Combine this lower bound with \eqref{eq:dense slice Fourier} and
the choice of laziness. Nonnegativity and the upper bound
one also follow from \eqref{eq:matching characteristic product}.
\end{proof}
Fix henceforth one comparison laziness
$\nu_{\ref{prop:matching Fourier comparison}}$ valid for the common
window with
$c_0:=\min\{1/3,\lambda_*/2\}$.
It applies to every switching slice on a good certificate and every
extension slice on $\mathcal D_m$, as well as to the balanced central
slice. We abbreviate the corresponding walk by $\omega_T$.

\subsection{Construction of the auxiliary matrix}

\begin{definition}\label{def:auxiliary switching operator}
Take independent walks $\omega_e\sim\omega_{T_e}$ for $e\in E$
and $\omega_j\sim\omega_O$ for $j\in U_m$.
On $W=X\sqcup Y$ define
\begin{align}
 H_{\mathfrak c}
  :=\sum_{e\in E}(b_e|_X)(\omega_e|_Y)^T,
 \qquad
 M_{\mathfrak c}
  :=\begin{pmatrix}0&H_{\mathfrak c}\\
                   (H_{\mathfrak c})^T&0\end{pmatrix}.\label{eq:auxiliary switching operator}
\end{align}
Let $\iota_W:\mathbb R^W\to\mathbb R^O$ insert zeros at coordinates
$1,2$, and set $\mathsf Z_{\mathfrak c}=[\omega_j]_{j\in U_m}$.
The complete auxiliary matrix is
\begin{align}
 \widehat M_{\mathfrak c}
 &:=\begin{pmatrix}
 \iota_W M_{\mathfrak c}\iota_W^T&\mathsf Z_{\mathfrak c}\\
 \mathsf Z_{\mathfrak c}^T&0
 \end{pmatrix}\notag\\
 &=\sum_{e\in E}(b_e\omega_e^T+\omega_e b_e^T)
  +\sum_{j\in U_m}(\mathbf e_j(\omega_j)^T
                                +\omega_j\mathbf e_j^T).\label{eq:hybrid auxiliary operator}
\end{align}
For comparison before compression also take an independent
$\omega_*\sim\omega_{\mathcal T}$ and put
$\widetilde M_{\mathfrak c}
 =\widehat M_{\mathfrak c}+\sqrt2(h\omega_*^T+\omega_*h^T)$.
All walks are extended by zero to their ambient coordinates.
\end{definition}

The block structure can now be read directly from
\eqref{eq:conditional complete matrix diagram}:
\begin{align*}
 \widehat M_{\mathfrak c}
 =\begin{pmatrix}
 0_{2\times2}&0&0&(\mathsf Z_{\mathfrak c})_{\{1,2\}}\\
 0&0&H_{\mathfrak c}&(\mathsf Z_{\mathfrak c})_X\\
 0&(H_{\mathfrak c})^T&0&(\mathsf Z_{\mathfrak c})_Y\\
 (\mathsf Z_{\mathfrak c})_{\{1,2\}}^T
   &(\mathsf Z_{\mathfrak c})_X^T&(\mathsf Z_{\mathfrak c})_Y^T&0
 \end{pmatrix},
 \qquad
 H_{\mathfrak c}=
 \begin{array}{c|cc}
 &Y_0&Y_1\\\hline
 x_1&\omega_{e_1}^T&0\\
 x_2&-\omega_{e_1}^T&\omega_{e_2}^T\\
 x_3&\omega_{e_3}^T&-\omega_{e_2}^T\\
 \vdots&\vdots&\vdots
 \end{array}.
\end{align*}
Each true switching factor $\xi_e$ has become $\omega_e$, each extension
factor $\zeta_j$ has become $\omega_j$, and the exposed
mean has disappeared. In particular,
\begin{align}
 \widehat M_{\mathfrak c}v
 &=\sum_e\bigl(b_e\langle\omega_e,v\rangle
                          +\omega_e\langle b_e,v\rangle\bigr)
   +\sum_{j\in U_m}\bigl(\mathbf e_j\langle\omega_j,v\rangle
                                      +v_j\omega_j\bigr)
                                      \notag\\
 &=\begin{pmatrix}
  \iota_W M_{\mathfrak c}v_W
                 +\sum_{j\in U_m}v_j\omega_j\\
  (\langle\omega_j,v_O\rangle)_{j\in U_m}
 \end{pmatrix}.\label{eq:hybrid auxiliary action}
\end{align}
The factor $\sqrt2$ in the full central term comes from writing
$\eta=2q_*-\mathbf1_{\mathcal T}$: its characteristic function is
compared with $\phi_{\omega_*}(2u)$, rather than $\phi_{\omega_*}(u)$.

\begin{proposition}\label{prop:hybrid Fourier domination}
On the fixed good hybrid fibre, for every fixed $v,\theta\in\mathbb R^m$,
\begin{align*}
 |\phi_{A_mv}(\theta)|
 \le\phi_{\widetilde M_{\mathfrak c}v,\omega}(\theta),
 \qquad
 \phi_{\widetilde M_{\mathfrak c}v,\omega}(\theta)\ge0.
\end{align*}
If $v\in H_m$ and $Q$ is an orthogonal projection with
$\operatorname{Im}Q\subseteq H_m$, then
\begin{align}
 |\phi_{QA_mv}(\theta)|
 \le\phi_{Q\widehat M_{\mathfrak c}v,\omega}(\theta),
 \qquad
 \phi_{Q\widehat M_{\mathfrak c}v,\omega}(\theta)\ge0.\label{eq:compressed hybrid Fourier domination}
\end{align}
The assertions also hold when $v$ and $Q$ are
$\mathcal H_{m;X,Y}$-measurable.
\end{proposition}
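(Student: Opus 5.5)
The plan is to work factor by factor in the exact conditional product formula \eqref{eq:true hybrid Fourier factorization}, and to do the same for the auxiliary matrix.

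\textbf{Factorization of the auxiliary transform.} Given the certificate and $v$, the walks $\omega_e$, $\omega_j$, $\omega_*$ are independent under $\mathbb P_\omega$. By \eqref{eq:hybrid auxiliary operator}, $\widetilde M_{\mathfrak c}v$ equals
\begin{align*}
 \sum_e\bigl(b_e\langle\omega_e,v\rangle+\omega_e\langle b_e,v\rangle\bigr)
 +\sum_j\bigl(\mathbf e_j\langle\omega_j,v\rangle+v_j\omega_j\bigr)
 +\sqrt2\bigl(h\langle\omega_*,v\rangle+\omega_*\langle h,v\rangle\bigr).
\end{align*}
Taking the inner product with $\theta$, each summand becomes $\langle\omega_e,R_{e,v}(\theta)\rangle$, $\langle\omega_j,S_{j,v}(\theta)\rangle$, or $\langle\omega_*,2R_{*,v}(\theta)\rangle$. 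Here we use that $\omega_e$ is supported on $T_e$, $\omega_j$ on $O$, and $\omega_*$ on $\mathcal T$, so only the restrictions $\theta_{T_e}$, $v_{T_e}$, and so on enter. Hence
\begin{align*}
 \phi_{\widetilde M_{\mathfrak c}v,\omega}(\theta)
 =\phi_{\omega_*}(2R_{*,v}(\theta))
  \prod_{e}\phi_{\omega_{T_e}}(R_{e,v}(\theta))
  \prod_j\phi_{\omega_O}(S_{j,v}(\theta)).
\end{align*}
Each factor is nonnegative by \eqref{eq:matching characteristic product}, so the product is nonnegative.

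\textbf{Factor-by-factor comparison.} Take absolute values in \eqref{eq:true hybrid Fourier factorization}; the mean $\bar A_mv$ contributes only a unimodular phase. We then compare factor by factor using Proposition~\ref{prop:matching Fourier comparison}.
\begin{itemize}
\item \emph{Switching slices.} For $e\in E$, $\xi_e$ is a centered slice with $a_e/|T_e|\in[9/20,11/20]\subset[c_0,1-c_0]$ on the good fibre. This gives $|\phi_{\xi_e}(R)|\le\phi_{\omega_{T_e}}(R)$.
\item \emph{Extension slices.} For $j\in U_m$, $\zeta_j$ is centered with $a_j/n\in[\lambda_*/2,1-\lambda_*/2]$ on $\mathcal D_m$, so the same domination holds.
\item \emph{Central slice.} Write $\eta=2q_*-\mathbf 1_{\mathcal T}$ with $q_*\sim\operatorname{Unif}\binom{\mathcal T}{|\mathcal T|/2}$, which has density exactly $1/2$. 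Then $|\phi_\eta(u)|=|\phi_{q_*}(2u)|\le\phi_{\omega_*}(2u)$. This matches the factor $\sqrt2$ in $\widetilde M_{\mathfrak c}$.
\end{itemize}
We also need the degenerate cases. When $|T_e|\le1$, the factor is $1$ on both sides. Any empty support is trivially dominated. Multiplying the factor bounds gives the first assertion.

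\textbf{Compressed assertion.} For $v\in H_m$ and $Q$ with $\operatorname{Im}Q\subseteq H_m$, we have $\phi_{QA_mv}(\theta)=\phi_{A_mv}(Q\theta)$. Likewise $\phi_{Q\widehat M_{\mathfrak c}v,\omega}(\theta)=\phi_{\widehat M_{\mathfrak c}v,\omega}(Q\theta)$, using $Q=Q^T$. Now $Q\theta\in H_m$ and $v\in H_m$ give $\langle h,Q\theta\rangle=0$ and $\langle h,v\rangle=0$. So $R_{*,v}(Q\theta)=0$, and both central factors equal $1$. Hence $\phi_{\widetilde M_{\mathfrak c}v}(Q\theta)=\phi_{\widehat M_{\mathfrak c}v}(Q\theta)$. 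Applying the first assertion at $Q\theta$ yields \eqref{eq:compressed hybrid Fourier domination}, and nonnegativity is inherited.

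\textbf{Measurable $v$ and $Q$.} If $v$ and $Q$ are $\mathcal H_{m;X,Y}$-measurable, they are constant on the fixed atom $\mathfrak c$. The argument above is pointwise on that atom, so it applies unchanged.

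\textbf{Main obstacle.} The only delicate point is bookkeeping: checking that each walk's test vector is exactly the corresponding slice's test vector, including the restriction to supports. The most error-prone instance is the scaling of the central term.
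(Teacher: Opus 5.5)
Your proposal is correct and matches the paper's proof: you take absolute values in the exact product factorization, dominate each slice factor by its matching-walk factor (the central slice at $2R_{*,v}(\theta)$), and get the compressed bound by evaluating at $Q\theta$. The only cosmetic difference is how the central term is removed. The paper does it pathwise, using $Qh=0$ and $\langle h,v\rangle=0$ to get $Q\widetilde M_{\mathfrak c}v=Q\widehat M_{\mathfrak c}v$, whereas you observe that the central Fourier factor equals $\phi_{\omega_*}(0)=1$; the two are equivalent.
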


\begin{proof}
Take absolute values in \eqref{eq:true hybrid Fourier factorization}.
The single-slice comparison gives
\begin{align*}
 |\phi_{A_mv}(\theta)|
 &\le\phi_{\omega_*}(2R_{*,v}(\theta))
       \prod_e\phi_{\omega_e}(R_{e,v}(\theta))
       \prod_{j\in U_m}\phi_{\omega_j}
                                  (S_{j,v}(\theta))\\
 &=\phi_{\widetilde M_{\mathfrak c}v,\omega}(\theta)\ge0.
\end{align*}
All factors are nonnegative and the walks are independent.
For $v\in H_m$ substitute $Q\theta$ for $\theta$. Since $Qh=0$,
$R_{*,v}(Q\theta)=0$ and
$Q\widetilde M_{\mathfrak c}v=Q\widehat M_{\mathfrak c}v$.
This proves the compressed comparison. For $Q=P_h$ the extension
summand is
$\mathbf e_j\langle\omega_j,v\rangle
       +v_jP_h\omega_j$; the latter projection is necessary.
Exposure-measurable coefficients and projections are fixed on each atom.
\end{proof}

For the residual graph $A_{\mathfrak c}$ the same proof, retaining
only the switching factors, gives
\begin{align*}
 |\phi_{A_{\mathfrak c}v}(\theta)|
 \le\phi_{M_{\mathfrak c}v,\omega}(\theta)
 =\prod_e\phi_{\omega_e}(R_{e,v}(\theta))\ge0.
\end{align*}
Every fixed linear output map preserves this comparison by
$\phi_{DZ}(\theta)=\phi_Z(D^T\theta)$.

\subsection{Gaussian smoothing and Fourier capture}

The Fourier domination gives concentration uniformly over the center
after Gaussian smoothing. Its exponent is the rank of the output
projection, including the one or two dimensions removed in the final
application.

\begin{definition}\label{def:auxiliary threshold}
Fix $\tau_0\in(0,1/8)$ once and for all. Let $L\ge2$
and $s_0:=\tau_0/L$. For an auxiliary random operator
$\mathsf M$ and a fixed orthogonal output projection $Q$ of rank
$r\ge1$, define
\begin{align}
 T_L(v;\mathsf M,Q)
 :=\sup\left(\{0\}\cup
 \left\{s\in(0,s_0]:
 \mathbb P_{\omega}
       (\|Q\mathsf Mv\|_2\le s\sqrt r)\ge(4Ls)^r\right\}\right).\label{eq:auxiliary threshold}
\end{align}
The probability integrates all randomness of $\mathsf M$, including
its matchings. For a certificate-dependent operator, this is its
auxiliary law on that fixed certificate. If $\mathsf M,Q$ and the
certificate are specified, we write simply $T_L(v)$.
\end{definition}

For the hybrid law write
$T_{L,\mathfrak c}^Q(v):=T_L(v;\widehat M_{\mathfrak c},Q)$.
The certificate, operator law and projection are held fixed when $v$
varies. This is abbreviated to $T_L(v)$ on a fixed fibre.

\begin{theorem}\label{thm:Fourier capture}
Let $Z$ be a random vector under a true law, let $\mathsf M$ be an
auxiliary random operator, and fix $v$ and an orthogonal projection
$Q$ of rank $r\ge1$. Suppose, pointwise on $\operatorname{Im}Q$,
\begin{align*}
 |\phi_{QZ}(\theta)|
 \le\phi_{Q\mathsf Mv,\omega}(\theta),
 \qquad \phi_{Q\mathsf Mv,\omega}(\theta)\ge0.
\end{align*}
There is $C_{\ref{thm:Fourier capture}}=C_{\ref{thm:Fourier capture}}(\tau_0)>0$, depending only on
$\tau_0$, such that
\begin{align}
 T_L(v;\mathsf M,Q)<t\le s_0/2
 \quad\Longrightarrow\quad
 \mathcal L(QZ,t\sqrt r)
 \le(C_{\ref{thm:Fourier capture}}Lt)^r.\label{eq:Fourier capture}
\end{align}
No assumption that $r$ is close to the ambient dimension is needed.
\end{theorem}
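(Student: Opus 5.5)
The plan is the standard Esseen-type smoothing argument, run with the Gaussian weight $\gamma_r$ on $\operatorname{Im}Q\cong\mathbb R^r$. For a centre $z$ and radius $t\sqrt r$, we use $e^{-\pi\|x\|^2}\ge e^{-\pi}$ on $\|x\|\le1$ to bound
\begin{align*}
 \mathbb P(\|QZ-z\|_2\le t\sqrt r)
 \le e^{\pi}\,\mathbb E\exp\bigl(-\pi\|QZ-z\|_2^2/(t^2 r)\bigr).
\end{align*}
Gaussian Fourier inversion on $\operatorname{Im}Q$ expresses the right side as $e^{\pi}(t\sqrt r)^r\int\phi_{QZ}(\theta)e^{-2\pi i\langle z,\theta\rangle}e^{-\pi t^2 r\|\theta\|^2}\,d\theta$. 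Taking absolute values and using the hypothesis $|\phi_{QZ}|\le\phi_{Q\mathsf Mv,\omega}$ pointwise on $\operatorname{Im}Q$ removes the centre $z$ entirely. Since $\phi_{Q\mathsf Mv,\omega}\ge0$, we can then run the identity backwards with centre $0$:
\begin{align*}
 \mathcal L(QZ,t\sqrt r)\le e^{\pi}\,\mathbb E_\omega\exp\bigl(-\pi\|Q\mathsf Mv\|_2^2/(t^2r)\bigr).
\end{align*}
This step is where nonnegativity of the dominating characteristic function is essential: it turns the $\theta$-integral back into a genuine probability-type expectation for the auxiliary operator.

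It remains to bound $\mathbb E_\omega e^{-\pi\|Y\|^2/(t^2r)}$, where $Y=Q\mathsf Mv$, using only the threshold information. We split dyadically into the ball $\{\|Y\|\le t\sqrt r\}$ and the shells $\{2^{k-1}t\sqrt r<\|Y\|\le 2^k t\sqrt r\}$. For scales $s=2^kt\le s_0$, the hypothesis $T_L(v)<t\le s$ means that $s$ is not in the set defining the supremum in \eqref{eq:auxiliary threshold}. Hence $\mathbb P_\omega(\|Y\|\le s\sqrt r)<(4Ls)^r$. The $k$-th shell therefore contributes at most $(4L2^kt)^r e^{-\pi 4^{k-1}}$. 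Summing over $k$ with $2^kt\le s_0$ gives
\begin{align*}
 (4Lt)^r\sum_k 2^{kr}e^{-\pi4^{k-1}}\le (C Lt)^r,
\end{align*}
since $\sum_k 2^{kr}e^{-\pi 4^{k-1}}\le C^r$. One checks this by bounding $2^{kr}e^{-\pi4^{k-1}/2}$ by $C^r$ and summing the remaining factors $e^{-\pi4^{k-1}/2}$.

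The main obstacle is the tail $\|Y\|>s_0\sqrt r$, where no threshold information is available. There the integrand is at most $\exp(-\pi s_0^2/t^2)$, and we need this to be $\le(CLt)^r$ with $C$ depending only on $\tau_0$. The condition $t\le s_0/2$ gives only $e^{-4\pi}$, a constant, which is not small enough when $r$ is large. To handle this, I would rescale the Gaussian by $\sqrt r$ relative to the radius. That is, I would work with $\mathbb E\exp(-\pi\|Y\|^2/(t^2))$ per dimension, or equivalently choose the smoothing width as $t$ rather than $t\sqrt r$ in the coordinates where the $\sqrt r$ normalisation is absorbed. The tail factor then becomes $\exp(-\pi r s_0^2/t^2)=(e^{-\pi s_0^2/t^2})^r\le (e^{-\pi s_0/(2t)})^r$, and $e^{-x}\le 1/x$ bounds this by $(2t/(\pi s_0))^r=(2Lt/(\pi\tau_0))^r$. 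This is exactly why the constant depends on $\tau_0$. The bookkeeping between the $t\sqrt r$ ball and the $e^{\pi r}$ loss, namely $\mathbf 1_{\|x\|\le\sqrt r}\le e^{\pi r}e^{-\pi\|x\|^2}$, costs only $e^{\pi r}=C^r$, which is absorbed into $C_{\ref{thm:Fourier capture}}^r$. With this normalisation I would redo the dyadic sum with the same outcome, giving $\mathcal L(QZ,t\sqrt r)\le(C_{\ref{thm:Fourier capture}}Lt)^r$.
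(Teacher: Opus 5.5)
Your final argument is correct and is essentially the paper's proof. It smooths at Gaussian width $t$ with the loss $\mathbf 1_{\{\|x\|_2\le t\sqrt r\}}\le e^{\pi r}e^{-\pi\|x\|_2^2/t^2}$. It uses the domination and nonnegativity of $\phi_{Q\mathsf Mv,\omega}$ to move every centre to $0$. It then applies $\mathbb P_\omega(\|Q\mathsf Mv\|_2\le s\sqrt r)<(4Ls)^r$ for all $s\in[t,s_0]$ and bounds the remaining tail by $(CLt/\tau_0)^r$. The paper integrates the distribution function of $\|Q\mathsf Mv\|_2/\sqrt r$ against the Gaussian weight via Tonelli instead of summing dyadic shells, which is the same estimate.

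You should, however, retract the claim made for the first normalization. The bound $\sum_k 2^{kr}e^{-\pi4^{k-1}}\le C^r$ is false: for $4^{k-1}$ comparable to $r$ the summand is of order $(c\sqrt r)^r$. So width $t\sqrt r$ fails already in the intermediate shells whenever $s_0/t\gtrsim\sqrt r$, not only in the tail. This is the relevant regime, since $t$ may be exponentially small.

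The failure is intrinsic to that normalization. The width-$t\sqrt r$ moment dominates $e^{-\pi r}\,\mathbb P_\omega(\|Q\mathsf Mv\|_2\le tr)$, which the threshold information bounds only by $(4Lt\sqrt r)^r$.

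After rescaling, the $k$-th shell contributes $(4Lt\cdot 2^ke^{-\pi4^{k-1}})^r$, and $2^ke^{-\pi4^{k-1}}\le 2e^{-\pi}<1$ for $k\ge1$. Hence the shell sum is $(4Lt)^r$ times an absolute constant. The width-$t$ normalization is what closes the shells as well as the tail; it does not merely reproduce an outcome you already had.
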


We use the Gaussian Fourier identity
\begin{align}
 \mathbb E e^{-\pi\|Z-z\|_2^2/a^2}
 =\int e^{-2\pi i\langle z,\theta\rangle/a}
                 \phi_Z(\theta/a)\,d\gamma_r(\theta).\label{eq:Gaussian Fourier identity}
\end{align}
Fourier inversion and Fubini prove it for every $Z\in\mathbb R^r$,
$a>0$ and $z\in\mathbb R^r$. In particular, if
$|\phi_Z|\le\phi_{Z',\omega}$ with $\phi_{Z',\omega}\ge0$, then
\begin{align*}
 \sup_z\mathbb E e^{-\pi\|Z-z\|_2^2/a^2}
 \le\mathbb E_\omega e^{-\pi\|Z'\|_2^2/a^2}.
\end{align*}
A law with nonnegative characteristic function thus has its largest
Gaussian moment at zero. Both assertions hold intrinsically in any
Euclidean subspace.

\begin{proof}[Proof of Theorem~\ref{thm:Fourier capture}]
Work intrinsically in $\operatorname{Im}Q$. Centres outside this
space can be projected into it, increasing the probability of the
corresponding ball. For every centre $z$ in this space,
\begin{align*}
 \mathbb P(\|QZ-z\|_2\le t\sqrt r)\le e^{\pi r}
       \mathbb Ee^{-\pi\|QZ-z\|_2^2/t^2}\le e^{\pi r}
       \mathbb E_{\omega}e^{-\pi\|Q\mathsf Mv\|_2^2/t^2}.
\end{align*}
The second line uses \eqref{eq:Gaussian Fourier identity},
before any supremum or coefficient average is taken.

Set $R:=\|Q\mathsf Mv\|_2/\sqrt r$ and
$F(s):=\mathbb P_{\omega}(R\le s)$.
The threshold hypothesis gives $F(s)<(4Ls)^r$ for
$t\le s\le s_0$, and monotonicity gives $F(s)\le(4Lt)^r$
for $s<t$. Tonelli's theorem yields
\begin{align*}
 \mathbb E_{\omega}e^{-\pi rR^2/t^2}
 &=\int_0^\infty\frac{2\pi r s}{t^2}
                  e^{-\pi rs^2/t^2}F(s)\,ds\\
 &\le(4Lt)^r\left(1+2\pi r\int_0^\infty
                         u^{r+1}e^{-\pi ru^2}\,du\right)
       +e^{-\pi r(s_0/t)^2}.
\end{align*}
The parenthesized factor is bounded by an absolute constant to
the power $r$. For example, bound
$(u e^{-\pi u^2/2})^r$ by its supremum and integrate the remaining
factor $2\pi r u e^{-\pi ru^2/2}$.
Writing $u=s_0/t\ge2$, the last term is
\begin{align*}
 e^{-\pi ru^2}
 =\left(Lt\,\frac{u}{\tau_0}e^{-\pi u^2}\right)^r
 \le(CLt)^r,
\end{align*}
where $C$ depends only on $\tau_0$.
Absorb the factor $e^{\pi r}$ and take the supremum over $z$.
Every exponent in this calculation is the intrinsic output
dimension $r$, proving \eqref{eq:Fourier capture}.
\end{proof}

\begin{corollary}\label{cor:conditional hybrid capture}
Let $v\in H_m$ and $Q$ be $\mathcal H_{m;X,Y}$-measurable, with
$\operatorname{Im}Q\subseteq H_m$ and $r:=\operatorname{rank}Q\ge1$.
For $0<t\le s_0/2$, set
\begin{align*}
 \mathcal J_t
 :=\mathcal E_{m;X,Y}\cap\{T_{L,\mathfrak c}^Q(v)<t\}.
\end{align*}
Then $\mathcal J_t\in\mathcal H_{m;X,Y}$ and, almost surely,
\begin{align*}
 \mathbf 1_{\mathcal J_t}
   \sup_{z\in\mathbb R^m}
      \mathbb P\left(\|QA_mv-z\|_2\le t\sqrt r
                              \mid\mathcal H_{m;X,Y}\right)\le
 \mathbf 1_{\mathcal J_t}(C_{\ref{thm:Fourier capture}}Lt)^r.
\end{align*}
The supremum is evaluated on each conditional fibre.
\end{corollary}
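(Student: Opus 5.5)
This corollary is Theorem~\ref{thm:Fourier capture} applied on one fibre at a time. We fix an atom $\mathfrak c$ of $\mathcal H_{m;X,Y}$ and take $Z:=A_mv$ under the exact conditional law of \eqref{eq:exact hybrid factorization}. We take $\mathsf M:=\widehat M_{\mathfrak c}$ with the walks of Definition~\ref{def:auxiliary switching operator}.

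\textbf{Measurability.} We show $\mathcal J_t\in\mathcal H_{m;X,Y}$ first. The event $\mathcal E_{m;X,Y}$ lies in $\mathcal H_{m;X,Y}$ by construction. On each atom, the certificate data are fixed: the supports $T_e$, the weights, the pair vectors $b_e$, the set $U_m$, and $\mathcal T$. Hence the law $\mathbb P_\omega$ of $\widehat M_{\mathfrak c}$ is determined by the atom. So are $v$ and $Q$, since both are $\mathcal H_{m;X,Y}$-measurable. Consequently $T^Q_{L,\mathfrak c}(v)$ is a function of the atom, and $\{T^Q_{L,\mathfrak c}(v)<t\}$ is a union of atoms.

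There is a subtlety: the exposure may have uncountably many values because $C_m$ is part of it. I would handle this by noting that every variable is discrete, so there are only finitely many atoms for each fixed $n,m$. Measurability is then automatic, and the conditional probabilities are elementary ratios on nonempty fibres.

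\textbf{Fourier capture on a fibre.} Fix an atom contained in $\mathcal J_t$; it is then a good hybrid fibre. By \eqref{eq:compressed hybrid Fourier domination} in Proposition~\ref{prop:hybrid Fourier domination}, with $v\in H_m$ and $\operatorname{Im}Q\subseteq H_m$, we have, pointwise in $\theta$,
\begin{align*}
|\phi_{QA_mv}(\theta)|\le\phi_{Q\widehat M_{\mathfrak c}v,\omega}(\theta),
\qquad
\phi_{Q\widehat M_{\mathfrak c}v,\omega}(\theta)\ge0.
\end{align*}
This holds in particular on $\operatorname{Im}Q$, which is exactly the hypothesis of Theorem~\ref{thm:Fourier capture}. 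Since $T_L(v;\widehat M_{\mathfrak c},Q)<t\le s_0/2$ on this atom, the theorem gives
\begin{align*}
\mathcal L(QA_mv,t\sqrt r)\le(C_{\ref{thm:Fourier capture}}Lt)^r
\end{align*}
under the conditional law. The concentration function $\mathcal L$ already takes the supremum over centres $z\in\mathbb R^m$. Centres outside $\operatorname{Im}Q$ are covered by projecting them into $\operatorname{Im}Q$, which only increases the ball probability.

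\textbf{Assembling.} Multiplying by $\mathbf 1_{\mathcal J_t}$ and ranging over atoms gives the almost sure inequality. Atoms outside $\mathcal J_t$ contribute zero on both sides.

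The only delicate point is that the supremum over $z$ must be taken inside each fibre, after conditioning. This is legitimate because the proof of Theorem~\ref{thm:Fourier capture} bounds each centre separately, before any averaging. No other obstacle arises: the corollary is bookkeeping around Proposition~\ref{prop:hybrid Fourier domination} and Theorem~\ref{thm:Fourier capture}.
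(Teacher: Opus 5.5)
Your proposal is correct and follows the paper's proof: apply Theorem~\ref{thm:Fourier capture} atom by atom via \eqref{eq:compressed hybrid Fourier domination}, and get measurability of $T_{L,\mathfrak c}^Q(v)$ from the fact that the auxiliary law is fixed by the exposed data on each of the finitely many atoms. The paper additionally sets the threshold equal to $s_0$ on bad atoms, and your concern about $C_m$ taking uncountably many values does not arise, since it is a $0/1$ matrix.
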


\begin{proof}
Apply Theorem~\ref{thm:Fourier capture} on each good atom using
\eqref{eq:compressed hybrid Fourier domination}. The auxiliary law is
fixed by the exposed data, so $T_{L,\mathfrak c}^Q(v)$ is measurable
there; define it as $s_0$ on bad atoms. Multiplication by the measurable indicator gives the claim.
\end{proof}
For $Q=P_h$ the exponent is $m-1$. For the projection onto
$\operatorname{span}\{h,b\}^\perp$, with fixed $b\in H_m$, it is
$m-2$ if $b\ne0$ and $m-1$ otherwise.

\subsection{A local augmented small-ball estimate}

Two independent factors will test the same arithmetic direction together
with additional soft columns. The Gaussian cylinder lemma is imported
from \cite{CJMS-jams}; the centered support and matching-walk estimates
needed for its application are proved in
Appendix~\ref{app:matching Gaussian estimates}.

For a set $T$ of size $s$, a vector $y\in\mathbb R^T$, and
$F=(F_0^T,F_1^T)^T\in\mathbb R^{2s\times k}$ with
$F_b\in\mathbb R^{T\times k}$, define
\begin{align*}
 \mathcal A_T(F,y)
 :=\begin{bmatrix}
       F_0&y&0\\
       F_1&0&y
   \end{bmatrix}
 \in\mathbb R^{2s\times(k+2)}.
\end{align*}
For $z=(z^{(0)},z^{(1)})\in\mathbb R^T\oplus\mathbb R^T$, set
\begin{align*}
 \|z\|_{\mathbb D,T\oplus T}^2
 :=\|z^{(0)}_T\|_{\mathbb D}^2+\|z^{(1)}_T\|_{\mathbb D}^2.
\end{align*}

For a Borel $S\subseteq\mathbb R^{k+2}$ put
$F_w(S;a,b)=\{u\in\mathbb R^k:(u,a,b)\in S-w\}$ and
$\mathfrak m(S)=\sup_{a,b,w}\gamma_k(F_w(S;a,b)-F_w(S;a,b))$.
With the Gaussian normalization of Section~\ref{Sec:notation}, the
following is the Gaussian cylinder lemma of \cite{CJMS-jams}.
For $k=0$, its conclusion follows by partitioning $\mathbb R^2$
into squares of side $16$: if the conclusion fails, each occupied
piece has coordinate diameters at most $v_0$, giving
$\gamma_2(S)\le2v_0^2$ and contradicting the premise.

\begin{lemma}\label{lem:Gaussian cylinder}
Let $k\ge0$ and $v_0>0$. If $S\subset\mathbb R^{k+2}$ is
Borel and
\begin{align*}
 \gamma_{k+2}(S)
 >8v_0^2e^{-k/8}+64v_0^2\mathfrak m(S)^{1/4},
\end{align*}
then $S-S$ contains a point $(\theta,\varphi_0,\varphi_1)$ with
\begin{align*}
 \|\theta\|_2\le2\sqrt k,\qquad
 v_0<\max\{|\varphi_0|,|\varphi_1|\}\le16.
\end{align*}
\end{lemma}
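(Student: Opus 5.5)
The plan is to prove the contrapositive by a slicing argument, following the proof in \cite{CJMS-jams}. Assume that $S-S$ contains no point $(\theta,\varphi_0,\varphi_1)$ with $\|\theta\|_2\le2\sqrt k$ and $v_0<\max\{|\varphi_0|,|\varphi_1|\}\le16$. We show $\gamma_{k+2}(S)\le 8v_0^2e^{-k/8}+64v_0^2\mathfrak m(S)^{1/4}$.

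\textbf{Step 1: localise the soft coordinates.} Put $S_0:=S\cap\{\|\theta\|_2\le\sqrt k\}$. Any two points of $S_0$ differ by a vector whose first block has norm at most $2\sqrt k$. Under $\gamma_k$, which has covariance $(2\pi)^{-1}I_k$, the norm $\|\theta\|_2$ concentrates near $\sqrt{k/2\pi}<\sqrt k$. A Gaussian tail bound (Laplace transform of $\|\theta\|_2^2$) therefore gives $\gamma_k(\|\theta\|_2>\sqrt k)\le e^{-k/8}$ for the normalisation of Section~\ref{Sec:notation}. We keep this loss in the form $\int\gamma_k(\cdot)\,d\gamma_2$ over the $(\varphi_0,\varphi_1)$-coordinates, so that it too picks up the factor $v_0^2$ from Step~2 rather than being paid in full.

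\textbf{Step 2: partition the rigid coordinates.} Partition $\mathbb R^2$ into half-open squares $Q$ of side $16$, so that differences inside one square have max-norm below $16$. Fix $Q$ and let $\Pi_Q$ be the projection of $S\cap(\mathbb R^k\times Q)$ to $\mathbb R^2$. The hypothesis says that any two points of $S_0\cap(\mathbb R^k\times Q)$ have last coordinates within max-distance $v_0$. Hence the relevant projection lies in a box of side $v_0$, which has $\gamma_2$-mass at most $v_0^2\sup_Q e^{-\pi\|\cdot\|^2}$. The same rigidity applies to the pairs used in Step~3, since those pairs also have small $\theta$-difference. Fubini then bounds the contribution of $Q$ by this box mass times $\sup_{(a,b)\in Q}\gamma_k(F_0(S;a,b))$, plus the Step~1 tail. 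Summing the Gaussian density suprema over the grid of squares gives an absolute constant. This is the source of the constants $8$ and $64$.

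\textbf{Step 3: fibres versus difference sets (main obstacle).} It remains to bound the individual fibres $\gamma_k(F)$, with $F=F_w(S;a,b)$, by $\mathfrak m(S)^{1/4}$. The naive route fails: $F-x\subseteq F-F$ for $x\in F$, but a shift by $x$ with $\|x\|\sim\sqrt k$ costs a factor $e^{-\pi\|x\|^2}$, which is exponential in $k$. I would instead apply Cauchy--Schwarz twice to the Fubini integral, in the same spirit as the two-row decoupling. The first application, in the rigid coordinates, pairs two points of one square. The second, in $\theta$, pairs two fibres. This expresses $\gamma_{k+2}(S_0)^4$ through quantities of the form $\gamma_k\bigl((F-x)\cap(F-x')\bigr)$ averaged over Gaussian $x,x'$. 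Each such quantity is dominated by $\gamma_k(F-F)\le\mathfrak m(S)$, with the translations $w$ absorbed into the supremum defining $\mathfrak m$. Taking fourth roots yields the $\mathfrak m(S)^{1/4}$ term. The delicate point is keeping the Gaussian weights consistent across the two decouplings without losing more than a constant factor. If that bookkeeping fails, I would fall back to the log-concavity (Prékopa--Leindler) inequality $\gamma_k(\tfrac12(F-F))\ge\gamma_k(F)$ and compare $\tfrac12(F-F)$ with $F-F$ at a constant cost.

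The case $k=0$ follows from Step~2 alone, exactly as noted before the statement.
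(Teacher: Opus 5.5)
Your skeleton is right: the contrapositive, squares of side $16$, and projection onto a $v_0$-box for the part of $S$ in the cylinder $\{\|\theta\|_2\le\sqrt k\}$. However, the two steps that carry the content are missing. The paper imports this lemma from \cite{CJMS-jams} for $k\ge1$, so a self-contained proof has to supply both.

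\textbf{The tail does not pick up $v_0^2$.} Points of $S$ with $\|\theta\|_2>\sqrt k$ need not be within $2\sqrt k$ of one another, so the rigidity of Step~2 says nothing about them. Adding the tail separately gives only $\gamma_2(Q)e^{-k/8}$, with no factor $v_0^2$. That factor is essential. In Theorem~\ref{thm:local augmented small ball} one has $v_0=Dt(\sqrt u+\sqrt k+1)$ with $t$ possibly exponentially small, and the $t^2$ inside $v_0^2$ must cancel the $t^2$ in \eqref{eq:augmented large level}.

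The fix is to localize to every ball $B(y,\sqrt k)$, not only the one at the origin. Write $A_\varphi=\{\theta:(\theta,\varphi)\in S\}$, $T=\{\|\theta\|_2>\sqrt k\}$ and $w_Q=\sup_Qe^{-\pi\|\varphi\|_2^2}$. For each $y\in\mathbb R^k$, the levels $\varphi\in Q$ whose fibre meets $B(y,\sqrt k)$ have pairwise $\ell_\infty$-distance at most $v_0$. Hence
\begin{align*}
\int_Q\gamma_k\bigl(A_\varphi\cap T\cap B(y,\sqrt k)\bigr)\,d\gamma_2(\varphi)
\le v_0^2w_Q\,\gamma_k\bigl(T\cap B(y,\sqrt k)\bigr).
\end{align*}
Integrate in $y$ against Lebesgue measure and use $\int\gamma_k(E\cap B(y,\sqrt k))\,dy=\mathrm{vol}(\sqrt kB_2^k)\,\gamma_k(E)$. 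This gives $\gamma_{k+2}(S\cap(T\times Q))\le v_0^2w_Q\gamma_k(T)\le v_0^2w_Qe^{-k/8}$.

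\textbf{The fibre bound is not established.} You need $\gamma_k(F)\lesssim\gamma_k(F-F)^{1/4}$ for a single horizontal fibre, and neither of your routes gives it.

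In the double Cauchy--Schwarz sketch, $(F-x)\cap(F-x')\subseteq F-F$ is guaranteed only if $x$ or $x'$ lies in $F$. With independent Gaussian $x,x'$, the points $G+x$ that appear have doubled covariance. That is exactly the scaling mismatch you flagged, and it can cost $e^{\Theta(k)}$, not a constant.

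The fallback fails for a similar reason. $F-F$ is neither convex nor star-shaped, so $\tfrac12(F-F)\not\subseteq F-F$. For symmetric sets, dilation by $\tfrac12$ can change $\gamma_k$-measure by $e^{\Theta(k)}$ (consider an annulus at radius $2\sqrt{k/2\pi}$), and you give no reason why difference sets behave better.

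The missing idea is a shared Gaussian component. Let $Z,G_1,G_2$ be independent with law $\gamma_k$, and set $X_i=(Z+G_i)/\sqrt2$. Then each $X_i$ has law $\gamma_k$, the difference $X_1-X_2=(G_1-G_2)/\sqrt2$ again has law $\gamma_k$, and $X_1,X_2$ are conditionally i.i.d.\ given $Z$. Therefore
\begin{align*}
\gamma_k(F-F)\ge\mathbb P(X_1\in F,\ X_2\in F)
=\mathbb E\bigl[\mathbb P(X_1\in F\mid Z)^2\bigr]\ge\gamma_k(F)^2 .
\end{align*}

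Combine this with Step~2 on $T^c\times Q$, the tail bound above, and $\sum_Qw_Q\le4+o(1)$ for the grid $16\mathbb Z^2$. The result is
\begin{align*}
\gamma_{k+2}(S)\le(4+o(1))\,v_0^2\bigl(e^{-k/8}+\mathfrak m(S)^{1/2}\bigr).
\end{align*}
Since $\mathfrak m(S)\le1$, this is stronger than the required contrapositive.
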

\begin{theorem}\label{thm:local augmented small ball}
Fix $c_0\in(0,1/2)$, $\nu_1\in(0,1/4]$, $\gamma\in(0,1)$
and $\kappa,\kappa_y>0$.
There are positive constants
$c_{\ref{thm:local augmented small ball}}^{(i)}=c_{\ref{thm:local augmented small ball}}^{(i)}(c_0,\nu_1,\gamma,\kappa,\kappa_y)$ for $1\le i\le3$,
$b_{\ref{thm:local augmented small ball}}=b_{\ref{thm:local augmented small ball}}(c_0,\nu_1,\gamma,\kappa,\kappa_y)$ and
$C_{\ref{thm:local augmented small ball}}=C_{\ref{thm:local augmented small ball}}(c_0,\nu_1,\gamma,\kappa,\kappa_y)$, depending only on
the displayed parameters. The following holds. Let $s:=|T|\ge2$, $0<\mu<1$, $t>0$ and let $k\ge0$ be an integer.
Suppose $y\in\mathbb R^T$ and
$F\in\mathbb R^{2s\times k}$ satisfy
\begin{align}
 F_b^T\mathbf 1_T=0\quad(b=0,1),\qquad
 \|F\|_{\mathrm{op}}\le2,\qquad
 \|F\|_{\mathrm{HS}}^2\ge\kappa k,\label{eq:local soft hypotheses}
\end{align}
and
\begin{align*}
 k \le {c_{\ref{thm:local augmented small ball}}^{(1)}}\mu^2s, \
 t \ge e^{-{c_{\ref{thm:local augmented small ball}}^{(2)}}\mu^2s}, \  \|P_{T,0}y\|_2 \ge\kappa_y/t, \  \operatorname{CLCD}_{\mu s,\gamma}^{T}(y) >16.
\end{align*}
Let $q^{(0)},q^{(1)}$ be independent vectors, each either a
Boolean slice on $T$ with density in $[c_0,1-c_0]$, or a matching
walk with law $\omega_{T,\nu_1}$. Then
\begin{align}
 \mathcal L\left(
   \mathcal A_T(F,y)^T(q^{(0)},q^{(1)}),
   b_{\ref{thm:local augmented small ball}}\sqrt{k+2}\right)
 \le (C_{\ref{thm:local augmented small ball}}t)^2
                       e^{-{c_{\ref{thm:local augmented small ball}}^{(3)}}k}.\label{eq:local augmented small ball}
\end{align}
For $k=0$ the matrix $F$ and its conditions are absent.
The concentration function is computed under the stated independent
product law of the two inputs: $\mathbb P$ for two slices,
$\mathbb P_{\omega}$ for two matching walks, and the product of those
independent laws in the mixed case. The constants are independent of
$\mu$, and there is no additional restriction $t<1$.
\end{theorem}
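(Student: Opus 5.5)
The plan is the standard Esseen-plus-Gaussian-cylinder route of \cite{CJMS-jams}, adapted to the centered difference geometry. Write $Z:=\mathcal A_T(F,y)^T(q^{(0)},q^{(1)})\in\mathbb R^{k+2}$. Because $F_b^T\mathbf 1_T=0$, adding a multiple of $\mathbf 1_T$ to $q^{(b)}$ shifts $Z$ by a deterministic vector. So we may replace each $q^{(b)}$ by its centered version and $y$ by $P_{T,0}y$ in the last two coordinates, which does not change $\mathcal L$. By the Gaussian Fourier identity \eqref{eq:Gaussian Fourier identity}, with a scale $a\asymp b\sqrt{k+2}$, we get
\begin{align*}
 \mathcal L(Z,b\sqrt{k+2})\le e^{\pi(k+2)}\int|\phi_{q^{(0)}}(x_0(\xi))\phi_{q^{(1)}}(x_1(\xi))|\,d\gamma_{k+2}(\xi/a)\,a^{-(k+2)}\cdots,
\end{align*}
where $x_b(\xi)=F_b\theta+\varphi_b y$ for $\xi=(\theta,\varphi_0,\varphi_1)$. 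For each input we apply Proposition~\ref{prop:dense slice Fourier} (slices) or the product formula \eqref{eq:matching characteristic product} together with its Jensen-type upper bound in Appendix~\ref{app:matching Gaussian estimates} (walks). Either way the integrand is bounded by $\exp(-c\|(x_0,x_1)\|_{\mathbb D,T\oplus T}^2)$ with $c=c(c_0,\nu_1)$. The problem is thus reduced to bounding a Gaussian integral of $\exp(-c\|\cdot\|_{\mathbb D}^2)$ over $\mathbb R^{k+2}$.

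Next we decompose by level sets. Let $S_j:=\{\xi:\|(x_0,x_1)(\xi)\|^2_{\mathbb D}\le j\}$ and write the integral as a sum over $j$ of $e^{-cj}\gamma(S_{j+1})$. The goal is $\gamma_{k+2}(S_j)\lesssim (C t)^2 e^{-c'k}e^{c j/2}$ for $j\le c''\mu^2 s$, with the large-$j$ tail handled trivially. Suppose instead $\gamma(S_j)$ exceeds this. Apply Lemma~\ref{lem:Gaussian cylinder} with $v_0\asymp$ a constant multiple of $t$ (so $8v_0^2$ matches $(Ct)^2$). There are two cases.

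\emph{Case of a large cylinder term.} The premise fails because $\mathfrak m(S_j)^{1/4}$ is large. Then some fibre $F_w(S_j;a,b)-F_w(S_j;a,b)$, which lies in the analogous set for $F$ alone at level $4j$, has Gaussian mass at least $e^{-c'k}$. This is excluded by the soft hypotheses: $\|F\|_{\mathrm{HS}}^2\ge\kappa k$ and $\|F\|_{\mathrm{op}}\le 2$ make $\|F_b\theta\|_{\mathbb D}$ typically of order $k$ for Gaussian $\theta$. A Hanson--Wright/Lipschitz concentration for the torus energy of $F\theta$ (small entries, so the torus distance equals the real one on most pairs) gives $\gamma_k(\|F\theta\|^2_{\mathbb D}\le 4j)\le e^{-ck}$ once $j\le c k$. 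For larger $j$ we only need $e^{-cj}$ to pay, and $k\le c_1\mu^2 s$ keeps $j$ in range.

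\emph{Case of a difference point.} Otherwise $S_j-S_j$ contains $(\theta,\varphi_0,\varphi_1)$ with $\|\theta\|\le 2\sqrt k$ and $v_0<|\varphi_b|\le16$ for some $b$. The triangle inequality for $\|\cdot\|_{\mathbb D}$ gives $\|F_b\theta+\varphi_b y\|_{\mathbb D}^2\le 4j$. Now $\|F_b\theta\|_2\le 4\sqrt k$, and its contribution to the distance of $\varphi_b D_Ty$ from $\mathbb Z^{\binom s2}$ is at most $\sqrt{s}\,\|F_b\theta\|_2$ by \eqref{eq:difference-identities}. Hence $\operatorname{dist}(\varphi_bD_Ty,\mathbb Z)\le C\sqrt{s(j+k)}\le\mu s$, using $j,k\le c\mu^2 s$. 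Since $|\varphi_b|\le16<\operatorname{CLCD}$, the CLCD condition forces $\operatorname{dist}\ge\gamma|\varphi_b|\,\|D_Ty\|_2=\gamma|\varphi_b|\sqrt s\,\|P_{T,0}y\|_2\ge \gamma v_0\sqrt s\,\kappa_y/t$. With $v_0=C't$ this exceeds $C\sqrt{s(j+k)}$ for all $j\le c\mu^2 s$, which is a contradiction. Summing over $j$ yields \eqref{eq:local augmented small ball}.

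The lower bound $t\ge e^{-c_2\mu^2 s}$ covers the residual tail $j>c\mu^2 s$, whose contribution $e^{-c\mu^2 s}$ is at most $t^2e^{-ck}$. The main obstacle is the large-cylinder case: I must establish the Gaussian small-ball bound for $\|F\theta\|_{\mathbb D}$ uniformly in shifts $w$. I would first try to linearize, controlling the event that many coordinate differences of $F\theta+w'$ are near integers. Here I would lean on the centered support estimates the paper says are in Appendix~\ref{app:matching Gaussian estimates}.
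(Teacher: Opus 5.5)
Your outline matches the paper's: Fourier domination by the slice and walk bounds, Gaussian smoothing, a large level set, Lemma~\ref{lem:Gaussian cylinder}, a fibre bound from the soft hypotheses, and a CLCD contradiction. The genuine gap is your choice $v_0=C't$.

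At the difference point, the CLCD hypothesis only gives $\gamma|\varphi_b|\,\|D_Ty\|_2>\gamma v_0\sqrt s\,\kappa_y/t=\gamma C'\kappa_y\sqrt s$, a fixed multiple of $\sqrt s$. Your upper bound, however, is $\operatorname{dist}(\varphi_bD_Ty,\mathbb Z^{\binom s2})\le2\sqrt{sj}+4\sqrt{sk}$. So the inequality you assert for all $j\le c\mu^2s$ fails once $j+k$ exceeds a constant of order $(\gamma C'\kappa_y)^2$. Yet $k$ may be as large as $c_1\mu^2s$, which is exactly the regime where the factor $e^{-c_3k}$ has content. $C'$ cannot depend on $j,k$, because $v_0^2$ enters the cylinder premise.

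The paper takes $v_0=Dt(\sqrt u+\sqrt k+1)$ with $D\ge8/(\gamma\kappa_y)$. It then checks that the resulting factor $u+k+1$ in $v_0^2$ is absorbed by the exponential slack $e^{au/2-d_*k}$ of \eqref{eq:augmented large level}, both against $e^{-k/8}$ and against $\mathfrak m(S)^{1/4}$. In your indexing you need $v_0\asymp t(\sqrt j+\sqrt k+1)$. You must then verify that $(j+k+1)e^{-k/8}$ and $(j+k+1)\mathfrak m(S_j)^{1/4}$ are both small compared with $e^{cj/2-c'k}$.

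Relatedly, the smoothing must be done at unit frequency scale. Then the measure is the $\gamma_{k+2}$ of the cylinder lemma, and the prefactor is $e^{\pi b^2(k+2)}$ with $\pi b^2$ below your $k$-gain $c'$. The prefactor $e^{\pi(k+2)}$ you wrote would swamp $e^{-c'k}$.

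The fibre estimate, which you call the main obstacle, is left unproved, and the route you sketch would not work. First, no shifts are needed. Two points of one horizontal fibre differ by $(\theta,0,0)\in S_j-S_j$, so the fibre difference set lies in the unshifted set $\{\theta:\|F\theta\|^2_{\mathbb D,T\oplus T}\le4j\}$. Second, linearizing through ``small entries'' fails, because only the average row of $F$ is small. Individual rows may have norm of order one. If $\|F\|_{\mathrm{HS}}^2$ sits on $O(k)$ rows, then essentially all the energy comes from pairs through those rows, where the torus and real distances genuinely differ.

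Proposition~\ref{prop:matching estimates} instead reverses the Fourier comparison. Since $\phi_{\omega_{T,\nu}}\ge e^{-C\nu\|x_T\|^2_{\mathbb D}}$ and is nonnegative, one gets $\gamma_k\{\|F\theta\|^2_{\mathbb D,T\oplus T}\le u\}\le e^{C\nu u}\,\mathbb E_\omega e^{-\pi\|F^T(\omega^{(0)},\omega^{(1)})\|_2^2}$. The walk image then has small-ball probability $e^{-c(\kappa,\nu)k}$, by matching energy capture, Paley--Zygmund and Talagrand's inequality. This bound holds for every $u$. Choosing the auxiliary laziness $\nu_0$ with $C\nu_0$ a small fraction of the Fourier decay rate makes the growth $e^{C\nu_0u}$ harmless. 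With that bound and the corrected $v_0$, your level-by-level version goes through.
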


\begin{proof}
Write $\mathcal A=\mathcal A_T(F,y)$, $q=(q^{(0)},q^{(1)})$ and
$E(\theta,\varphi_0,\varphi_1)=\sum_{b=0}^1
                  \|(F_b\theta+\varphi_by)_T\|_{\mathbb D}^2$.
Proposition~\ref{prop:dense slice Fourier} and
Proposition~\ref{prop:matching estimates} give
$|\phi_{\mathcal A^Tq}|\le C'e^{-aE}$ with $a>0,C'\ge1$
independent of $\mu$. Gaussian smoothing at scale one yields
\begin{align*}
 \mathcal L(\mathcal A^Tq,b_*\sqrt{k+2})
 \le C'e^{\pi b_*^2(k+2)}\int e^{-aE}\,d\gamma_{k+2}.
\end{align*}

Choose an auxiliary laziness $\nu_0$ with
$C_{\ref{prop:matching Fourier comparison}}\nu_0<a/8$,
and let $C'',c'>0$ be the constants in
Proposition~\ref{prop:matching estimates} for $(\kappa,\nu_0)$.
Choose $b_*,c_*>0$ so that
$d_*:=\pi b_*^2+c_*<\min(1/16,c'/8)$.
These choices precede $\mu$ and do not change the law of $q$.
If the conclusion with $b_*,c_*$ and a sufficiently large $C_*$ fails,
the layer-cake formula supplies a level $u\ge0$ such that
\begin{align}
 S:=\{E\le u\},\qquad
 \gamma_{k+2}(S)\ge B^2t^2 e^{au/2-d_*k},\label{eq:augmented large level}
\end{align}
where $B$ can be made arbitrarily large by choosing $C_*$ large.
Indeed, failure of this inequality for all $u$ would imply
$\int e^{-aE}\,d\gamma_{k+2}\le2B^2t^2e^{-d_*k}$, contradicting the smoothing bound.

The torus triangle inequality gives $S-S\subseteq\{E\le4u\}$.
Differences within a horizontal fibre have zero arithmetic coordinates;
Proposition~\ref{prop:matching estimates} consequently gives
\begin{align*}
 \mathfrak m(S)\le C''\exp\bigl(
       4C_{\ref{prop:matching Fourier comparison}}\nu_0u-c'k\bigr).
\end{align*}
Fix $D\ge8/(\gamma\kappa_y)$ and set
$v_0=Dt(\sqrt u+\sqrt k+1)$.
Dividing the premise in Lemma~\ref{lem:Gaussian cylinder} by
\eqref{eq:augmented large level} bounds the ratio by
\begin{align*}
 \frac{CD^2(u+k+1)}{B^2}\bigl[
 e^{-au/2-(1/8-d_*)k}
 +{C''}^{1/4}e^{-(a/2-C_{\ref{prop:matching Fourier comparison}}\nu_0)u
                         -(c'/4-d_*)k}\bigr].
\end{align*}
All exponent coefficients are positive, so increasing $B$ makes
this less than one uniformly in $u,k$. The cylinder lemma supplies
$(\theta,\varphi_0,\varphi_1)\in S-S$ with
$\|\theta\|_2\le2\sqrt k$, $|\varphi_b|\le16$, and
$|\varphi_b|>Dt(\sqrt u+\sqrt k+1)$ for at least one $b$.

For that $b$, centering of $F_b$ and $\|F_b\|_{\mathrm{op}}\le2$ give
\begin{align*}
 \operatorname{dist}(\varphi_bD_Ty,\mathbb Z^{\binom s2})
 &\le2\sqrt{su}+\|D_TF_b\theta\|_2
 \le4\sqrt s(\sqrt u+\sqrt k)\\
 &<\gamma|\varphi_b|\|D_Ty\|_2.
\end{align*}
The last inequality uses $\|D_Ty\|_2=\sqrt s\|P_{T,0}y\|_2$
and the choice of $D$.
Since $\gamma_{k+2}(S)\le1$ and $B\ge1$, \eqref{eq:augmented large level}
also implies
$u+k\le(4/a)\log_+(1/t)+(1+2d_*/a)k$.
Choose the two arithmetic range constants in the statement
small enough that $32s(u+k)<\mu^2s^2$.
The square of the preceding distance bound
is at most $32s(u+k)$, so the absolute CLCD cutoff is satisfied as well.
Thus $0<|\varphi_b|\le16$ contradicts the CLCD hypothesis.
The argument includes $k=0$ and $t\ge1$; the additional one in $v_0$
ensures that the arithmetic frequency is nonzero.
\end{proof}

The local estimate also applies fibrewise. If $\mathcal F$ fixes
$T,F,y,k$ and leaves the two tested inputs independent with the
stated laws, then on any $\mathcal F$-measurable event $\mathcal E$
where its hypotheses hold,
\begin{align*}
 \mathbf1_{\mathcal E}\,
 \mathcal L\left(\mathcal A_T(F,y)^T(q^{(0)},q^{(1)}),
 b_{\ref{thm:local augmented small ball}}\sqrt{k+2}
 \,\middle|\,\mathcal F\right)\le\mathbf1_{\mathcal E}
 (C_{\ref{thm:local augmented small ball}}t)^2
 e^{-c_{\ref{thm:local augmented small ball}}^{(3)}k}.
\end{align*}
The supremum over centres permits an $\mathcal F$-measurable target.
The conditioning may include all other complete matching seeds,
but must leave both current walks fresh.

For later use, signed coordinates $u_i\ge a$ on $P_+$ and $u_i\le-a$
on $P_-$ imply
\begin{align}
 \|P_{T_e,0}u\|_2^2
 \ge\frac{4a^2}{|T_e|}|P_+\cap T_e|\,|P_-\cap T_e|.\label{eq:local hard norm from support}
\end{align}
This follows from \eqref{eq:difference-identities}; the certificate
supplies the intersection sizes on all but a fixed number of supports.

\begin{corollary}\label{cor:extension scalar Gaussian estimate}
Fix $a_0,r_0,c_g>0$ and $\gamma\in(0,1)$, and let $O=[n]$.
Write $\nu=\nu_{\ref{prop:matching Fourier comparison}}$.
There are constants $c_{\ref{cor:extension scalar Gaussian estimate}}=c_{\ref{cor:extension scalar Gaussian estimate}}(a_0,r_0,c_g,\gamma,\nu)>0$
and $C_{\ref{cor:extension scalar Gaussian estimate}}=C_{\ref{cor:extension scalar Gaussian estimate}}(a_0,r_0,c_g,\gamma,\nu)>0$, depending
only on these parameters and the comparison laziness, such that
the following holds. Let $m\ge n\ge2$, $M\ge1$, $0<\mu<1$ and
$x\in\mathbb R^O$. Set $\widehat x:=(r_0/\sqrt m)x$ and suppose
\begin{align*}
 \|P_{O,0}\widehat x\|_2\ge a_0M, \ 
 \operatorname{CLCD}_{\mu n,\gamma}^{O}(\widehat x)>16, \ 
 \log M\le c_{\ref{cor:extension scalar Gaussian estimate}}\mu^2n.
\end{align*}
For $\omega\sim\omega_O$,
\begin{align}
 \sup_{a\in\mathbb R}
 \mathbb E_{\omega}
      \exp\left(-\frac{c_g}{m}(\langle\omega,x\rangle-a)^2\right)
 \le\frac{C_{\ref{cor:extension scalar Gaussian estimate}}}{M}.\label{eq:extension scalar Gaussian estimate}
\end{align}
The constants are independent of $\mu$.
\end{corollary}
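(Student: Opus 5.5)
The plan is to reduce \eqref{eq:extension scalar Gaussian estimate} to Fourier decay on the torus energy, via the Gaussian identity \eqref{eq:Gaussian Fourier identity} in dimension one, and then control the resulting integral using the CLCD hypothesis, following the $k=0$ case of Theorem~\ref{thm:local augmented small ball}.

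\emph{Fourier reduction.} Write $\sigma^2:=m/(2\pi c_g)$, so that $\exp(-c_g(\langle\omega,x\rangle-a)^2/m)=e^{-\pi(\langle\omega,x\rangle-a)^2/(2\pi\sigma^2)\cdot\pi\cdot\ldots}$ is a Gaussian at scale comparable to $\sqrt m$. By \eqref{eq:Gaussian Fourier identity} with $r=1$ and the nonnegativity of $\phi_\omega$ from \eqref{eq:matching characteristic product}, the supremum over $a$ is bounded by a constant times
\begin{align*}
 \int_{\mathbb R}\phi_{\omega}(\theta x/\sqrt{m'})\,d\gamma_1(\theta),
\end{align*}
where $m'=m/c'$ for some $c'=c'(c_g)$. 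I then need an upper bound for $\phi_\omega$, not the lower bound \eqref{eq:matching Fourier lower}. For this I would use Proposition~\ref{prop:matching estimates} from the appendix, which I expect gives $\phi_{\omega_{O,\nu}}(z)\le C e^{-a'\|z_O\|_{\mathbb D}^2}$. Failing that, averaging the product \eqref{eq:matching characteristic product} over the uniform matching, together with concentration of $\sum_{\mathcal M}\|z_i-z_j\|^2$ around its mean $\|z\|_{\mathbb D}^2$, should produce such a bound at least when the energy is large. After rescaling $\theta$ by $r_0$, it therefore suffices to show
\begin{align*}
 \int e^{-a'\|\varphi\widehat x\|_{\mathbb D}^2}\,d\gamma_1(\varphi/C)\le C/M .
\end{align*}

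\emph{Level-set argument.} Suppose the integral exceeds $B/M$ with $B$ large. By the layer-cake formula there is a level $u\ge0$ such that $S=\{\varphi:\|\varphi\widehat x\|_{\mathbb D}^2\le u\}$ has Gaussian measure at least $B e^{a'u/2}/M$. The one-dimensional cylinder argument (the $k=0$ case discussed before Lemma~\ref{lem:Gaussian cylinder}, here just a pigeonhole on intervals) then gives two points of $S$ whose difference $\varphi$ satisfies $v_0<|\varphi|\le16$, with $v_0=D(\sqrt u+1)/M$. By the torus triangle inequality $\|\varphi\widehat x\|_{\mathbb D}^2\le4u$, and \eqref{eq:difference-identities} gives
\begin{align*}
 \operatorname{dist}(\varphi D_O\widehat x,\mathbb Z^{\binom n2})\le2\sqrt{nu}.
\end{align*}
Since $\|D_O\widehat x\|_2=\sqrt n\|P_{O,0}\widehat x\|_2\ge\sqrt n\,a_0M$, this distance is below $\gamma|\varphi|\|D_O\widehat x\|_2$ once $D\ge 4/(\gamma a_0)$. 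From $\gamma_1(S)\le1$ we also get $u\le(2/a')\log(M/B)$. The hypothesis $\log M\le c\mu^2n$ with $c$ small then yields $2\sqrt{nu}<\mu n$, so $\varphi$ lies below the CLCD threshold. This contradicts $\operatorname{CLCD}_{\mu n,\gamma}^O(\widehat x)>16$.

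The main obstacle is the upper Fourier bound for the matching walk. The walk's characteristic function is a mixture over matchings, and a single matching can have small energy even when $\|z\|_{\mathbb D}$ is large. I will rely on Proposition~\ref{prop:matching estimates} for this and track only that its constants do not depend on $\mu$. Since $\mu$ enters solely through the final range condition, the resulting constants are independent of $\mu$ as claimed.
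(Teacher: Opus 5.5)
Your proof is correct, but it is organised differently from the paper's.

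\textbf{The paper's route.} The paper uses Theorem~\ref{thm:local augmented small ball} as a black box, with $k=0$, $y=\widehat x$, $\kappa_y=1$ and $t=R_0/M$ for a fixed $R_0\ge\max(1,a_0^{-1})$. This gives a bound $(Ct)^2$ on the concentration of the \emph{pair} $(\langle\omega^{(0)},\widehat x\rangle,\langle\omega^{(1)},\widehat x\rangle)$ at radius $b\sqrt2$. Taking a square root (a square of side $2b$ lies in that disc) yields $\mathcal L(\langle\omega,x\rangle,b\sqrt m/r_0)\le C/M$. This small-ball bound is then converted into the Gaussian functional by covering intervals of radius $(j+1)u_0$ and summing Gaussian shells.

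\textbf{Your route.} You inline the $k=0$ case of that theorem's proof for a single copy. The identity \eqref{eq:Gaussian Fourier identity} with $a^2=\pi m/c_g$ bounds the Gaussian functional itself, uniformly in the centre, by $\int|\phi_\omega(\varphi\widehat x)|$ against a rescaled Gaussian. Only $|\phi_\omega|$ is needed here, not nonnegativity. The upper bound $\phi_\omega\le 3e^{-c\|\cdot\|_{\mathbb D}^2}$ of Proposition~\ref{prop:matching estimates} is exactly the input you were unsure about, and it is available. The layer-cake, pigeonhole and CLCD contradiction then closes the argument.

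\textbf{Checks.} The steps go through:
\begin{itemize}
\item The level set can be taken with measure at least $(B/2M)e^{a'u/2}$.
\item If no difference lies in $(v_0,16]$, then each length-$16$ cell meets $S$ in a set of diameter at most $v_0$. Hence the measure of $S$ is at most a constant times $v_0$. That constant depends only on $c_g,r_0$ through the density of the rescaled frequency, which is a permitted dependence.
\item $D\ge 4/(\gamma a_0)$ gives $\gamma|\varphi|\|D_O\widehat x\|_2>4(\sqrt u+1)\sqrt n>2\sqrt{nu}$.
\item $u\le(2/a')\log M$ together with $c<a'/8$ gives $2\sqrt{nu}<\mu n$.
\end{itemize}

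\textbf{Comparison.} The direct route avoids the two-copy square root and the shell summation. It also shows plainly that $\mu$ enters only through $u\lesssim\log M\le c\mu^2 n$. The paper's route is shorter, since the theorem (which also covers slices and mixed inputs) is already proved, at the cost of passing through an intermediate small-ball statement. One minor point: the displayed exponent in your Fourier reduction is garbled. The correct normalisation is simply $a^2=\pi m/c_g$ as above.
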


\begin{proof}
Apply Theorem~\ref{thm:local augmented small ball} with $k=0$,
$y=\widehat x$, $\kappa_y=1$ and $t=R_0/M$, where
$R_0\ge\max(1,a_0^{-1})$ is fixed. The scale restriction follows by
decreasing $c_{\ref{cor:extension scalar Gaussian estimate}}$.
Two independent copies and a square root give
$\mathcal L(\langle\omega,x\rangle,u_0)\le C/M$, where
$u_0=b_{\ref{thm:local augmented small ball}}\sqrt m/r_0$.
An interval of radius $(j+1)u_0$ is covered by $C(j+1)$ intervals
of radius $u_0$. Gaussian shell summation therefore gives, uniformly in $a$,
\begin{align*}
 \mathbb E_\omega e^{-c_g(\langle\omega,x\rangle-a)^2/m}
 \le\frac CM\sum_{j\ge0}(j+1)
     e^{-c_gb_{\ref{thm:local augmented small ball}}^2j^2/r_0^2}
 \le\frac{C_{\ref{cor:extension scalar Gaussian estimate}}}{M}.
\end{align*}
\end{proof}

\section{Inversion of randomness}\label{Sec:inversion}
On a fixed coefficient box, two independent residual operators test the
same vector. Small singular directions provide extra columns in the
local estimate of Section~\ref{Sec:fourier}; tensorization along the
alternating path gives a residual second moment. The extension walks
supply the remaining dimensions. Throughout this section the certificate
is fixed, $\mathbb P_Z$ is uniform product measure on the stated box,
and $\mathbb P_\omega$ integrates only the independent auxiliary seeds.

\subsection{The product-box count}\label{subsec:inversion setting}
Work on a fixed good hybrid fibre from
Theorem~\ref{thm:exact hybrid factorization}. Retain its notation
$O=[n]$, $W=\{3,\ldots,n\}$, $U_m=\{n+1,\ldots,m\}$ and
$W=X\sqcup Y$, where $Y=Y_0\sqcup Y_1$ is the fixed balanced splitter.
Write $N=n-2$, $d_Y=|Y|$, $p_X=|X|=N-d_Y$ and $d_0=m-n$.
We assume $n\le m\le(1+\kappa)n$, with fixed $0<\kappa<1/4$.
The ordering of $X$ and every support $T_e$ are part of the
certificate. Write $H_{\mathfrak c}$, $M_{\mathfrak c}$ and
$\widehat M_{\mathfrak c}$ for the operators in
\eqref{eq:auxiliary switching operator} and
\eqref{eq:hybrid auxiliary operator}. In particular, all matching
walks used in these operators are sampled on the auxiliary
probability space of Section~\ref{Sec:fourier}.

Fix $\varepsilon\in(0,1/4)$ as in
Definition~\ref{def:good switching certificate}, and let
\begin{align*}
 \ell_*:=\ell_{\ref{thm:fixed block quasirandomness}},\qquad
 J_*:=4(\ell_*-1),\qquad \sigma_*:=\frac{\varepsilon}{3\ell_*}.
\end{align*}
All block proportions below are fixed independently of $L$.
We use a sufficiently small number $\theta_0>0$ and require
\begin{align}
 \theta_0N\le d_Y\le2\theta_0N.\label{eq:inversion block proportion}
\end{align}
The family realizing these proportions is chosen in
Section~\ref{subsec:final signed geometry}.
The coverage coefficient in \eqref{eq:centered energy coverage}
may be fixed before $\theta_0$: the proof of
Lemma~\ref{lem:pair coverage and energy} takes this coefficient
to depend only on the column-density window. The lower dimension
and the probability of obtaining the certificate may depend on
$\theta_0$. Here the certificate has already been fixed.

\begin{definition}\label{def:signed coefficient box}
Fix $0<a<b$, $c_I,C_I,B,B_Y>0$ and $M_0\ge1$.
A signed coefficient box of scale $M\ge M_0$ is a set
$\mathcal B=\prod_{i=1}^m I_i\subset\mathbb Z^m$, where each $I_i$
is a nonempty interval of consecutive integers satisfying
\begin{align}
 |I_i|\ge c_I M,\qquad \operatorname{diam}(I_i)\le C_I M.\label{eq:coefficient interval bounds}
\end{align}
The box also satisfies
\begin{align*}
 \sup_{z\in\mathcal B}\|z\|_2\le B\sqrt m\,M,
 \qquad
 \sup_{z\in\mathcal B}\|z_Y\|_2\le B_Y\sqrt{d_Y}\,M.
\end{align*}
For each $b'\in\{0,1\}$ there are fixed disjoint sets
$P_{b',+},P_{b',-}\subseteq Y_{b'}$, each of cardinality at least
$2\varepsilon|Y_{b'}|$, such that
\begin{align*}
 I_i\subseteq[aM,bM] \text{ for }i\in P_{b',+}, \ 
 I_i\subseteq[-bM,-aM]\text{ for }i\in P_{b',-}.
\end{align*}
Uniform measure on $\mathcal B$ is denoted by $\mathbb P_Z$.
It is independent of every auxiliary matching walk.

In $\mathbb P_Z\{\mathbb P_\omega(\cdot)\ge a\}$, the inner
probability fixes $Z$ and integrates the walks; the outer probability
counts coefficients under the unchanged product law. The notation
$\mathbb P_Z$ also denotes its marginals, including those named $U$
or $(V,U)$ below.
\end{definition}
Let $\mathcal D$ consist of the fresh support indices in half $Y_{b'}$
for which either signed intersection has size less than
$\varepsilon|Y_{b'}|/\ell_*$. By the consequence of (G4) noted after
Definition~\ref{def:good switching certificate},
\begin{align*}
 |\mathcal D|\le J_*,\qquad
 |T_e\cap P_{b',+}|,\ |T_e\cap P_{b',-}|
       \ge\sigma_*d_Y\quad(e\in E_{b'}\setminus\mathcal D),
\end{align*}
for all sufficiently large $N$. The set $\mathcal D$ depends only
on the certificate and the prescribed signed sets. It does not
depend on the sampled coefficient vector. Equal supports with
different indices are still counted separately.

\begin{theorem}\label{thm:hybrid box inversion}
Fix $\lambda,\kappa,\varepsilon$, the parameters
$a,b,c_I,C_I,B,B_Y,M_0$ of Definition~\ref{def:signed coefficient box},
a number $C_h>0$ and an integer $\ell_0\ge0$. Choose $\theta_0$ in
\eqref{eq:inversion block proportion} sufficiently small in terms
of this fixed data. Write
\begin{align*}
 \mathbf p=(\lambda,\kappa,\varepsilon,a,b,c_I,C_I,B,B_Y,M_0,C_h,\ell_0,\theta_0).
\end{align*}
There are constants
$R_{\ref{thm:hybrid box inversion}}=R_{\ref{thm:hybrid box inversion}}(\mathbf p)>1$ and integers
${C_{\ref{thm:hybrid box inversion}}^{(1)}=C_{\ref{thm:hybrid box inversion}}^{(1)}(\mathbf p)},
{C_{\ref{thm:hybrid box inversion}}^{(2)}=C_{\ref{thm:hybrid box inversion}}^{(2)}(\mathbf p)}\ge0$ with the following property.
For every sufficiently large $L$, there are
${c_{\ref{thm:hybrid box inversion}}=c_{\ref{thm:hybrid box inversion}}(\mathbf p,L)}>0$ and
$m_{L,\ref{thm:hybrid box inversion}}=m_{\ref{thm:hybrid box inversion}}(\mathbf p,L)$ such that, whenever
\begin{align*}
 m\ge m_{L,\ref{thm:hybrid box inversion}},\qquad
 M_0\le M\le \exp({c_{\ref{thm:hybrid box inversion}}(L)}m),
\end{align*}
every signed coefficient box and every deterministic orthogonal
projection $Q$ on $\mathbb R^m$ of codimension
$\ell_Q\le\ell_0$ satisfy
\begin{align}
 \mathbb P_Z\left\{
   \mathbb P_{\omega}\left(
     \|Q\widehat M_{\mathfrak c} Z\|_2\le C_hm\right)
       \ge (L/M)^{m-\ell_Q}\right\}
 \le m^{{C_{\ref{thm:hybrid box inversion}}^{(1)}}}
       M^{{C_{\ref{thm:hybrid box inversion}}^{(2)}}}
       \left(\frac{R_{\ref{thm:hybrid box inversion}}}{L}\right)^{2m}.\label{eq:hybrid box inversion}
\end{align}
The estimate is uniform over certificates, boxes and projections,
which are fixed before either probability is taken. Only the scale
range and the lower dimension may depend on $L$.
\end{theorem}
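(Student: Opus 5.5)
The plan is a second-moment argument in the coefficient variable, followed by a swap of the two integrations, in the spirit of \cite{CJMS-jams}. Write $f(Z):=\mathbb P_\omega(\|Q\widehat M_{\mathfrak c}Z\|_2\le C_hm)$ and $r:=m-\ell_Q$. Markov's inequality gives
\begin{align*}
 \mathbb P_Z\{f(Z)\ge(L/M)^{r}\}\le (M/L)^{2r}\,\mathbb E_Zf(Z)^2 .
\end{align*}
I then write $f(Z)^2=\mathbb P_{\omega,\omega'}(\|Q\widehat M_{\mathfrak c}Z\|_2\le C_hm,\ \|Q\widehat M'_{\mathfrak c}Z\|_2\le C_hm)$, where $\widehat M'_{\mathfrak c}$ is built from an independent copy $\omega'$ of all seeds on the same certificate. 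By Fubini, it suffices to bound, for almost every fixed pair of seeds, the $\mathbb P_Z$-probability that $Z\mapsto(Q\widehat M_{\mathfrak c}Z,Q\widehat M'_{\mathfrak c}Z)\in\mathbb R^{2m}$ lands in a ball of radius $C'm$. The target is roughly $(R/M)^{2r}$ times polynomial losses, up to an exceptional seed event of $\mathbb P_{\omega,\omega'}$-probability at most $(R/L)^{2m}(M/L)^{-2r}$-type size. Combining the two gives \eqref{eq:hybrid box inversion}; the factors $m^{C^{(1)}}M^{C^{(2)}}$ absorb the codimension $\ell_Q\le\ell_0$, the exceptional supports $\mathcal D$ (at most $J_*$ of them), the two rows $\{1,2\}$, and the endpoint pieces of the path.

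For fixed seeds, $Z$ is uniform on a product of integer intervals of length $\asymp M$. A box-valued Littlewood--Offord/volume estimate then bounds the probability of a ball of radius $C'm$ by $(C/M)^{k}$ times $\prod_{i\le k}(1+C'm/(M\sigma_i))$, where $\sigma_i$ are the singular values of the stacked operator $\mathcal S=[Q\widehat M_{\mathfrak c};Q\widehat M'_{\mathfrak c}]$ restricted to suitable coordinates. It is proved by conditioning on all but a well-conditioned set of $k$ coordinates. So I need $\mathcal S$ to have about $2r$ singular values of order $\sqrt m$, with only $O(1)$ exceptions. The extension block supplies the $U_m$ rows through $\langle\omega_j,Z_O\rangle$, which has full rank in the $j$ directions, and also the $v_j\omega_j$ columns. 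The residual switching operator supplies the rows indexed by $X\cup Y$. I would therefore split the seeds into a good event, where $\mathcal S$ has at most $\ell$ singular values below $c\sqrt m$ in each block, and its complement, where some $\ell$-dimensional family of soft directions exists.

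The main obstacle is bounding the complement: the probability, under the walks, that the two independent residual operators have many common small singular directions, \emph{jointly with} $Z$ being almost annihilated. Here I would use Theorem~\ref{thm:local augmented small ball} edge by edge along the alternating path. On each fresh support $T_e$ with $e\in E_{b'}\setminus\mathcal D$, the tested vector is $y=Z_{T_e}/(\text{scale})$. The signed sets $P_{b',\pm}$ and \eqref{eq:local hard norm from support} give $\|P_{T_e,0}y\|_2\gtrsim\sqrt{\sigma_*}M$, which supplies the hypothesis $\|P_{T,0}y\|_2\ge\kappa_y/t$ with $t\asymp 1/M$. The soft columns $F$ are the restrictions to $T_e$ of the small singular directions, centered and with $\|F\|_{\mathrm{op}}\le2$. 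Their Hilbert--Schmidt mass $\ge\kappa k$ on a positive fraction of edges follows from the centered energy coverage \eqref{eq:centered energy coverage}.

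The CLCD hypothesis is the delicate point, since $Z$ is a generic lattice point of the box. I would handle it by counting: the box points with $\operatorname{CLCD}_{\mu s,\gamma}^{T_e}\le16$ on many supports form a fraction of the box that is exponentially small in $m$. These are discarded at cost $e^{-cm}\le(R/L)^{2m}$, using $M\le e^{c(L)m}$ and the slice support bounds. Each good edge then contributes a factor $(Ct)^2e^{-ck}$. Successive conditioning on the other complete seeds keeps the two current walks fresh, as the fibrewise form of the local estimate allows. This tensorizes to $(C/M)^{2|E|}e^{-ck|E|}$, which pays for the entropy of choosing the soft subspace via a net. The extension directions are handled the same way with Corollary~\ref{cor:extension scalar Gaussian estimate}, giving a factor $C/M$ per column $j\in U_m$ for each copy. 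Finally, $L$ enters by choosing $\ell$ and the net scale depending on $L$, so that the exceptional probability beats $(R/L)^{2m}$. This is why $c_{\ref{thm:hybrid box inversion}}$ and $m_{L}$ depend on $L$.
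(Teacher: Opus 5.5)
There is a genuine gap, and it lies in the step you treat as routine: the fixed-seed count over $Z$. The stacked operator $\mathcal S=[Q\widehat M_{\mathfrak c};Q\widehat M'_{\mathfrak c}]$ maps $\mathbb R^m$ to $\mathbb R^{2m}$, so it has at most $m$ singular values, not ``about $2r$''. With the seeds fixed, $Z$ is uniform on a box of at most $(C_IM+1)^m$ points, so every nonempty event has $\mathbb P_Z$-mass at least $(C_IM+1)^{-m}$. The box randomness alone can therefore never give $(R/M)^{2r}$; at best your good-seed bound is $(C/M)^{m-\ell}$. Markov then gives $(M/L)^{2r}(C/M)^{m-\ell}$, which grows with $M$. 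Your complement analysis does not fill the hole either. Carried out as you describe, it produces $(C/M)^{2|E|}$ from the path walks and $(C/M)^{2d_0}$ from the extension walks, i.e.\ about $2(m-d_Y)$ factors. That is short by $2d_Y$, and you never say where those come from. About half of the $2m$ factors of $1/M$ must come from walks acting on fixed coefficients in every regime, and the rest from a part of the box large enough to carry them.

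The paper obtains this by splitting the coefficients asymmetrically, which is why $d_Y\le 2\theta_0N$ is small. Write $Z_W=(V,U)$ with $V=Z_X$ and $U=Z_Y$. The residual second moment becomes two copies $H_1,H_2$ with top constraints $\|H_iU-a\|_2\le C_hN$ and bottom constraints $\|H_i^TV-b\|_2\le C_hN$.
\begin{enumerate}
\item For the top constraints, $U$ is fixed in the arithmetic good set. The fresh walks along the alternating path give $(R/M)^{2p_X-C}e^{-cN(k-k_*)_+}$ by Proposition~\ref{prop:residual rank inversion}. Here $k$ counts small singular values of the \emph{horizontal} concatenation $\mathsf H=[H_1,H_2]$, whose soft directions become the extra columns $F_e$ in Theorem~\ref{thm:local augmented small ball}. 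They are not common null directions of your vertical stack.
\item For the bottom constraints, the walks are fixed and the box randomness of $V$ is used against $\mathsf H^T\in\mathbb R^{2d_Y\times p_X}$. The projection small-ball theorem gives $(B/M)^{2d_Y-k}$. This is where $2d_Y\le p_X$ is essential, and the loss $M^{k}$ is paid by $e^{-cNk}$.
\item The extension walks supply the remaining $2d_0$ factors. Because the residual auxiliary vector has nonnegative characteristic function, Gaussian smoothing removes the coefficient-dependent translation $\sum_j y_j\omega_j$, which your proposal does not address. Corollary~\ref{cor:extension scalar Gaussian estimate} then gives $C/M$ per walk and copy.
\item Finally, $Q$ is handled by a net of size $(1+C\sqrt m\,M)^{\ell_Q}$ on $\ker Q$, followed by Cauchy--Schwarz and Markov.
\end{enumerate}
The parameter $L$ enters only through Markov and through the choice $\mu=\mu(L)$ that makes the arithmetic exceptions at most $L^{-3m}$. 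It does not enter through a choice of $\ell$ or of a net scale.
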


\subsection{Arithmetic exceptions and deterministic bounds}

The arithmetic condition is required on the actual switching supports.
Its failures are counted under the original product measure; that
measure is never conditioned on the arithmetic good set.

\begin{lemma}\label{lem:box CLCD exceptions}
Fix $a,b,c_I,C_I,B_*,r_*,\rho,\sigma,C_*>0$, with $a<b$ and
$\rho,\sigma<1$, and fix
$0<\gamma_*<\min\{1,a\sigma/(2B_*)\}$. Put
$\mathbf p_0=(a,b,c_I,C_I,B_*,r_*,\rho,\sigma,C_*,\gamma_*)$.
Let $\mathcal I=\prod_{i=1}^D I_i$ satisfy
\eqref{eq:coefficient interval bounds} and
$\sup_{u\in\mathcal I}\|u\|_2\le B_*\sqrt D\,M$.
Let $N_*$ be an integer with
$\rho N_*\le D\le\rho^{-1}N_*$.
Suppose $\mathscr T$ is a fixed family of at most $C_*N_*$
indexed subsets of $[D]$. For each $T\in\mathscr T$ there are
fixed subsets $P_{T,+},P_{T,-}\subseteq T$, each of size at least
$\sigma D$, on which every point of $\mathcal I$ satisfies,
respectively, $u_i\in[aM,bM]$ and $u_i\in[-bM,-aM]$.
For every $A>1$ there are
$\mu_{A,\ref{lem:box CLCD exceptions}}=\mu_{\ref{lem:box CLCD exceptions}}(\mathbf p_0,A)\in(0,1)$,
${c_{\ref{lem:box CLCD exceptions}}=c_{\ref{lem:box CLCD exceptions}}(\mathbf p_0,A)}>0$ and
$N_{A,\ref{lem:box CLCD exceptions}}=N_{\ref{lem:box CLCD exceptions}}(\mathbf p_0,A)$ such that, for
$N_*\ge N_{A,\ref{lem:box CLCD exceptions}}$ and
$1\le M\le e^{{c_{\ref{lem:box CLCD exceptions}}(A)}N_*}$,
\begin{align}
 \mathbb P_Z\left\{\exists T\in\mathscr T:
  \operatorname{CLCD}_{\mu_{A,\ref{lem:box CLCD exceptions}}|T|,
                    \gamma_*}^{T}
       \left(\frac{r_*}{\sqrt{N_*}}U_T\right)\le16\right\}
 \le A^{-2N_*}.\label{eq:box CLCD exceptions}
\end{align}
Here $\mathbb P_Z$ is the uniform product measure of $U$ on the fixed
coefficient box $\mathcal I$.
The conclusion also holds if the displayed absolute CLCD parameter
is decreased.
\end{lemma}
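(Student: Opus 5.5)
We reduce to one fixed support $T$ by a union bound over the at most $C_*N_*$ members of $\mathscr T$; this costs only a factor $C_*N_*$, which is absorbed into $A^{-2N_*}$ after slightly enlarging $A$. Fix $T$, write $s=|T|\ge\sigma D\ge\sigma\rho N_*$ and $x:=(r_*/\sqrt{N_*})U_T$. The signed sets $P_{T,\pm}$ give the deterministic lower bound
\begin{align*}
 \|D_Tx\|_2^2\ge |P_{T,+}|\,|P_{T,-}|\,(2aMr_*)^2/N_*\ge c\,s\,M^2,
\end{align*}
and the norm bound on $\mathcal I$ gives $\|D_Tx\|_2\le\sqrt s\,\|x\|_2\le C\sqrt s\,M$. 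So on the event in \eqref{eq:box CLCD exceptions} there is some $\theta\le16$ with $\operatorname{dist}(\theta D_Tx,\mathbb Z^{\binom s2})<\min\{\gamma_*\theta\|D_Tx\|_2,\mu s\}$. The condition $\gamma_*<a\sigma/(2B_*)$ is exactly what rules out small frequencies. When $\theta\|D_Tx\|_\infty<1/2$ the nearest lattice point is $0$, so the distance equals $\theta\|D_Tx\|_2$, which exceeds $\gamma_*\theta\|D_Tx\|_2$. Hence $\theta\ge c_0/M$ for a constant $c_0$ depending on $\mathbf p_0$. (A more careful version uses that many differences across $P_{T,+}\times P_{T,-}$ are of size $\asymp M/\sqrt{N_*}$.) The dangerous frequencies therefore lie in $[c_0/M,16]$.

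Next we discretize $\theta$ on a mesh of step $\mu/(CM\sqrt{N_*})$ in $[c_0/M,16]$. This gives at most $CM\sqrt{N_*}/\mu$ points. Replacing $\theta$ by the nearest mesh point moves $\theta D_Tx$ by at most $\mu\sqrt s$ in $\ell_2$, so the same distance condition holds with $\mu s$ replaced by $2\mu s$. For fixed $\theta$ we must bound the $\mathbb P_Z$-probability that $\operatorname{dist}(\theta D_Tx,\mathbb Z^{\binom s2})\le2\mu s$. Here $\operatorname{dist}^2=(s-1)\|\theta x_T\|_{\mathbb D}^2$, so the event requires $\|\theta x_T\|_{\mathbb D}^2\le 8\mu^2 s$. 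By the definition of $\|\cdot\|_{\mathbb D}$ this means that for all but a $C\mu^2$-fraction of pairs $i,j$ we have $\|\theta(x_i-x_j)\|_{\mathbb R/\mathbb Z}\le1/8$. By averaging over $j$, there is a base coordinate $j_0$ and a set $G\subseteq T$ with $|G|\ge(1-C'\mu^2)s$ such that $\|\theta x_i-\theta x_{j_0}\|_{\mathbb R/\mathbb Z}\le1/8$ for all $i\in G$. We take a union over the choices of $j_0$ and $G$, which costs $s\binom{s}{C'\mu^2 s}\le e^{H(C'\mu^2)s}s$. After fixing $U_{j_0}$, the coordinates $U_i$ for $i\in G$ are independent and uniform on intervals of length at least $c_IM$.

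The main step, and the obstacle, is the one-coordinate estimate: $\mathbb P(\|\theta r_*U_i/\sqrt{N_*}-\beta\|_{\mathbb R/\mathbb Z}\le1/8)\le p_0<1$, uniformly for $\theta\in[c_0/M,16]$ and every $\beta$. Write $\alpha=\theta r_*/\sqrt{N_*}$; as $U_i$ runs over $I_i$, the phase $\alpha U_i$ sweeps a length $\alpha c_IM$. If $\alpha c_IM\ge2$, the phases cover several periods nearly equidistributed, and the probability is at most about $1/4+O(1/(\alpha c_I M))\le1/2$. If $\alpha c_IM<2$, the walk is not long enough to wrap, and this coordinate-wise bound fails. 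There we should instead use the pairs inside $P_{T,+}\times P_{T,-}$ together with the small-frequency argument above. With $\theta\ge c_0/M$ but $\alpha M\lesssim 1/\sqrt{N_*}$, the differences $\theta(x_i-x_j)$ are all much smaller than $1/8$ in absolute value. This reduces to the deterministic contradiction with $\gamma_*$: the distance equals $\theta\|D_Tx\|_2$ up to a negligible correction. The honest case division is therefore the following. In the regime $\theta\le N_*^{1/2}/(CM)$, the event is impossible deterministically for every $U$ in the box. In the regime $\theta\ge N_*^{1/2}/(CM)$, each coordinate has wrap length $\gtrsim1$ and we get $p_0<1$; I expect the boundary between these regimes to require the most care. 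In the second regime, independence gives probability at most $p_0^{(1-C'\mu^2)s}$.

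We now collect the bounds:
\begin{align*}
 C_*N_*\cdot\frac{CM\sqrt{N_*}}{\mu}\cdot s\,e^{H(C'\mu^2)s}\,p_0^{(1-C'\mu^2)s}.
\end{align*}
This is not yet $A^{-2N_*}$ for arbitrary $A$, because $p_0$ is a fixed constant. To reach arbitrary $A$, we replace the threshold $1/8$ by $\varepsilon_1=\varepsilon_1(A)$. Smaller $\mu$ forces $\|\theta(x_i-x_j)\|\le\varepsilon_1$ for most pairs, after redoing the averaging with a Markov threshold $\varepsilon_1$ and exceptional fraction $C\mu^2/\varepsilon_1^2$. The one-coordinate probability then becomes $p_0\le C\varepsilon_1$ in the wrapping regime. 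Choose $\varepsilon_1$ with $(C\varepsilon_1)^{\sigma\rho/2}\le A^{-4}$, then $\mu=\mu(A)$ so small that the entropy $e^{H(C\mu^2/\varepsilon_1^2)s}$ is negligible. Finally choose $c(A)$ so that the factor $M\le e^{c(A)N_*}$ is absorbed. This yields \eqref{eq:box CLCD exceptions}. Decreasing the absolute CLCD parameter only shrinks the event, which proves the last remark.
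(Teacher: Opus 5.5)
Your final argument is essentially the paper's proof. You exclude frequencies $\theta<\sqrt{N_*}/(4br_*M)$ deterministically, so the event is empty unless $M\gtrsim\sqrt{N_*}$. You then mesh the remaining frequencies, pick an anchor coordinate by averaging and Markov, condition on it, and bound each remaining coordinate by the integer-interval torus count $C(\varepsilon_1+\alpha+M^{-1})$ with $\alpha=\theta r_*/\sqrt{N_*}\lesssim N_*^{-1/2}$. Your decoupled threshold $\varepsilon_1(A)$ with a near-full set $G$ and binomial entropy replaces the paper's threshold $K\mu$, its subset of $P_{T,+}$ of size $\lfloor\sigma D/2\rfloor$, and its $2^D$ union; this difference is cosmetic.

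The point to repair is the small-frequency exclusion. Your $\|D_Tx\|_\infty$ argument only gives $\theta\gtrsim 1/M$. Also, the distance is not $\theta\|D_Tx\|_2$ up to a negligible error, since coordinates of size up to order $M$ are allowed. Instead, use the lower bound from the cross pairs in $P_{T,+}\times P_{T,-}$ alone:
$\operatorname{dist}(\theta D_Tx,\mathbb Z^{\binom{s}{2}})\ge 2a\sigma r_*\theta DM/\sqrt{N_*}>\gamma_*\theta\|D_Tx\|_2$.
This is exactly where $\gamma_*<a\sigma/(2B_*)$ enters. Once $\varepsilon_1$ is small, the per-coordinate count needs only a constant lower bound on the wrap length, so the boundary between your two regimes causes no difficulty.
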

The proof is given in Appendix~\ref{app:product box estimates}.

For the residual box we apply this lemma with $D=d_Y$, $N_*=N$,
$B_*=B_Y$, and the retained supports $e\notin\mathcal D$. Fix $0<\gamma_0<\min\{1,a\sigma_* /(2B_Y)\}$.
This choice precedes $\theta_0$. The normalization $r_*$ will be
fixed in Proposition~\ref{prop:residual rank inversion}.
For the full support $O$ we will apply the same lemma with
$D=n$, $N_*=m$ and the union of the prescribed signed sets.

We use unbiased coordinate rounding in the sense of
\cite{Livshyts-jam}. For fixed $x,B$, independently round each
coordinate of $x$ to adjacent multiples of $h_*$, preserving its
expectation. Then
\begin{align}
 \|R_{h_*}(x)-x\|_\infty\le h_*,
 \qquad
 \mathbb E_R\|B(R_{h_*}(x)-x)\|_2^2
 \le\frac{h_*^2}{4}\|B\|_{\mathrm{HS}}^2.\label{eq:rounding estimate}
\end{align}
All matching realizations satisfy
\begin{align}
 \|H_{\mathfrak c}\|_{\mathrm{HS}}^2\le4p_Xd_Y,\qquad
 \|M_{\mathfrak c}\|_{\mathrm{HS}}\le2N,\qquad
 \|\widehat M_{\mathfrak c}\|_{\mathrm{HS}}\le2m.\label{eq:auxiliary deterministic bounds}
\end{align}
Indeed, each row of $H_{\mathfrak c}$ meets at most two path
edges, and the extension entries are bounded by one. Hence
\begin{align}
 \sup_{z\in\mathcal B}\|\widehat M_{\mathfrak c}z\|_2
 \le2B m^{3/2}M.\label{eq:hybrid deterministic image bound}
\end{align}

\begin{lemma}\label{lem:Stiefel image rounding}
Let $1\le k\le D$ and $0<\xi<1/4$. There is a deterministic set
$\mathcal W_{D,k}(\xi)\subset\mathbb R^{D\times k}$ of size at most
$(C_{\ref{lem:Stiefel image rounding}}/\xi)^{Dk}$ such that for
every $V^TV=I_k$ and every matrix $B_0$ with $D$ columns, some
$V^\sharp\in\mathcal W_{D,k}(\xi)$ satisfies
\begin{align*}
 \|V^\sharp-V\|_{\mathrm{op}} \le\xi, \ 
 \|V^\sharp-V\|_{\mathrm{HS}} \le\xi\sqrt k, \ 
 \|B_0(V^\sharp-V)\|_{\mathrm{HS}} \le\xi\sqrt{k/D}\,\|B_0\|_{\mathrm{HS}}.
\end{align*}
Every net point may be required to have operator norm at most two
and squared Hilbert--Schmidt norm at least $k/2$.
\end{lemma}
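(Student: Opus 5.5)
We will build $\mathcal W_{D,k}(\xi)$ by random rounding in the sense of \cite{Livshyts-jam}, applied columnwise on a fixed lattice. Fix a mesh $h_*:=\xi/(C\sqrt D)$ with $C$ an absolute constant, and let $\mathcal W_{D,k}(\xi)$ consist of all matrices in $h_*\mathbb Z^{D\times k}$ whose entries have absolute value at most $1+h_*$. These entries lie in $[-1-h_*,1+h_*]$ because $V^TV=I_k$ forces $|V_{ij}|\le1$. The number of admissible entries is then at most $C'\sqrt D/\xi$, so the naive count is $(C'\sqrt D/\xi)^{Dk}$. This carries an extra factor $D^{Dk/2}$, which must be removed. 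To remove it, we first cover the Stiefel manifold by a $\xi/4$-net in operator norm of size $(C/\xi)^{Dk}$, using the standard volumetric bound for $\{V:\|V\|_{\mathrm{op}}\le1\}\subset\mathbb R^{D\times k}$. We then perform the rounding only inside a neighbourhood of each net point, counting lattice points of the mesh-$h_*$ grid in an operator-norm ball of radius $\xi/2$ around a net point. That ball has volume at most $(C\xi/\sqrt D)^{Dk}\cdot$vol$(B_{\mathrm{HS}})$-type bounds. Its lattice count is $(C)^{Dk}$ once one checks that the operator-norm ball of radius $\xi$ has volume comparable to $(c\xi\sqrt{k}/\sqrt{Dk})^{Dk}\cdot$ (ball factor) and that $h_*$ is of the same order. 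The product of the two counts gives $(C_{\ref{lem:Stiefel image rounding}}/\xi)^{Dk}$.

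The three approximation properties all come from a single random rounding $V^\sharp=R_{h_*}(V)$ with independent unbiased coordinates. By \eqref{eq:rounding estimate}, $\mathbb E_R\|B_0(V^\sharp-V)\|_{\mathrm{HS}}^2\le(h_*^2/4)k\|B_0\|_{\mathrm{HS}}^2$, which equals $(\xi^2k/(4C^2D))\|B_0\|_{\mathrm{HS}}^2$. Taking $B_0=I_D$ gives $\mathbb E_R\|V^\sharp-V\|_{\mathrm{HS}}^2\le\xi^2k/(4C^2)$. For the operator norm, $V^\sharp-V$ has independent centred entries bounded by $h_*$. A matrix Bernstein inequality, or the standard $\epsilon$-net bound for random matrices with bounded independent entries, gives $\|V^\sharp-V\|_{\mathrm{op}}\le C''h_*\sqrt D\le\xi$ with probability at least $3/4$ once $C$ is large.

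By Markov's inequality, the three bad events each have probability at most $1/4$ after enlarging $C$: the Hilbert--Schmidt bound, the $B_0$ bound, and the operator bound. Some realization therefore satisfies all three simultaneously. Since the realization depends on $B_0$, this is why the net must be deterministic and independent of $B_0$, which the lattice construction provides. The required norm properties of the net points are then automatic on the retained points: $\|V^\sharp\|_{\mathrm{op}}\le1+\xi\le2$, and $\|V^\sharp\|_{\mathrm{HS}}\ge\sqrt k-\xi\sqrt k$, so $\|V^\sharp\|_{\mathrm{HS}}^2\ge k(1-\xi)^2\ge k/2$. We discard lattice points violating these bounds, since they are never selected.

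The main obstacle is the cardinality, namely avoiding the $D^{Dk/2}$ loss from a mesh of size $\xi/\sqrt D$. I would first try the volumetric lattice count directly. The rounded points lie within Hilbert--Schmidt distance $\xi\sqrt k$ of the Stiefel manifold, inside the set $\{\|X\|_{\mathrm{op}}\le2\}$. Counting mesh-$h_*$ cubes meeting this set gives at most $\mathrm{vol}(\{\|X\|_{\mathrm{op}}\le3\})/h_*^{Dk}$. The volume of the operator-norm ball in $\mathbb R^{D\times k}$ is at most $(C/\sqrt{D})^{Dk}$ times a factor of $k$-dimensional order, since it sits inside the Hilbert--Schmidt ball of radius $3\sqrt k$, whose volume is $\approx(C\sqrt k/\sqrt{Dk})^{Dk}$. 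Dividing by $h_*^{Dk}=(\xi/(C\sqrt D))^{Dk}$ yields $(C/\xi)^{Dk}$, exactly the claimed size. This calculation is therefore the crux, and it makes the two-stage covering unnecessary.
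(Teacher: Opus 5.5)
Your argument is essentially the paper's: unbiased rounding at mesh of order $\xi/\sqrt D$, Markov's inequality for the two Hilbert--Schmidt bounds, Hoeffding plus a union over $1/4$-nets of the two spheres for the operator norm, and a lattice-point count in a Hilbert--Schmidt ball of radius $O(\sqrt k)$, which, as you conclude, makes the two-stage covering superfluous. Two small fixes are needed. First, use the net argument rather than matrix Bernstein, which would lose a factor $\sqrt{\log D}$. Second, count the disjoint mesh cubes directly inside the Hilbert--Schmidt ball of radius $3\sqrt k$ rather than inside $\{\|X\|_{\mathrm{op}}\le 3\}$: a cube of side $h_*$ can reach operator-norm distance $h_*\sqrt{Dk}/2=\xi\sqrt k/(2C)$ from its centre, which is not $O(1)$ when $k$ is large, so the cubes need not lie in that operator-norm ball.
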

\begin{proof}
Round entries independently at mesh $c\xi/\sqrt D$. By
\eqref{eq:rounding estimate} and Markov's inequality, the two
Hilbert--Schmidt bounds fail with probability at most $2c^2$.
For the centered error matrix $\mathsf E$, Hoeffding's inequality
\cite{Hoeffding-jasa} gives
$\mathbb P_R(|u^T\mathsf Ev|>t)\le
2e^{-t^2D/(2c^2\xi^2)}$ for fixed unit vectors $u,v$.
A union over $1/4$-nets of the two spheres makes the operator-norm
failure probability less than $1/4$ for small absolute $c$.
Thus some rounding satisfies all three bounds. The lattice matrices
in the Hilbert--Schmidt ball of radius $(1+\xi)\sqrt k$ number
at most $(C/\xi)^{Dk}$ by the lattice-ball volume bound.
Retain only those satisfying the two stated norm restrictions;
the successful rounding remains.
\end{proof}

\begin{lemma}\label{lem:reverse tensorization}
Let $\zeta_1,\ldots,\zeta_s$ be independent seeds, set
$\mathcal F_i^+=\sigma(\zeta_{i+1},\ldots,\zeta_s)$, and fix
$I\subseteq[s]$. For $i\in I$, let $Y_i\in\mathbb R^{r_i}$
be measurable with respect to $\sigma(\zeta_i,\ldots,\zeta_s)$,
where $r_i\ge1$ is deterministic. Suppose, almost surely,
\begin{align*}
 \sup_z\mathbb P(\|Y_i-z\|_2\le\beta\sqrt{r_i}
                  \mid\mathcal F_i^+)\le p_i
\end{align*}
for deterministic $\beta,p_i>0$.
For every $\eta>0$ there is
$a_{\eta,\ref{lem:reverse tensorization}}>0$ such that
\begin{align*}
 \mathbb P\left(\sum_{i\in I}\|Y_i\|_2^2
 \le a_{\eta,\ref{lem:reverse tensorization}}\beta^2
                        \sum_{i\in I}r_i\right)
 \le e^{\eta\sum_{i\in I}r_i}\prod_{i\in I}p_i.
\end{align*}
The assertion includes $I=\varnothing$.
\end{lemma}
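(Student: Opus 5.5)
The plan is the standard exponential-moment (Laplace transform) argument, run backwards along the filtration so that each conditional small-ball bound can be peeled off in turn. Write $r_I:=\sum_{i\in I}r_i$ and fix a parameter $\lambda>0$, to be chosen in terms of $\eta$. Markov's inequality applied to $\exp(-\lambda\beta^{-2}\sum_{i\in I}\|Y_i\|_2^2)$ gives
\begin{align*}
 \mathbb P\Bigl(\sum_{i\in I}\|Y_i\|_2^2\le a\beta^2 r_I\Bigr)
 \le e^{\lambda a r_I}\,
 \mathbb E\prod_{i\in I}e^{-\lambda\|Y_i\|_2^2/\beta^2}.
\end{align*}
It therefore suffices to show that each factor contributes at most $e^{\eta r_i/2}p_i$ in conditional expectation, and then to take $a\le\eta/(2\lambda)$. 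If $I=\varnothing$, both sides equal one and there is nothing to prove.

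\textbf{One-step bound.} The key estimate is that, for every $i\in I$, almost surely
\begin{align*}
 \sup_z\mathbb E\bigl(e^{-\lambda\|Y_i-z\|_2^2/\beta^2}\bigm|\mathcal F_i^+\bigr)\le C_\lambda^{r_i}p_i,
\end{align*}
where $C_\lambda\to1$ as $\lambda\to\infty$. To prove it, decompose $\mathbb R^{r_i}$ into Euclidean shells $\{k\beta\sqrt{r_i}<\|Y_i-z\|_2\le(k+1)\beta\sqrt{r_i}\}$ for $k\ge0$. A ball of radius $(k+1)\beta\sqrt{r_i}$ is covered by at most $(C(k+1))^{r_i}$ balls of radius $\beta\sqrt{r_i}$; this follows from a volumetric net argument, with the centres placed anywhere. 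Hence the $k$-th shell has conditional probability at most $(C(k+1))^{r_i}p_i$. On that shell the integrand is at most $e^{-\lambda k^2 r_i}$, so the conditional expectation is bounded by
\begin{align*}
 p_i\sum_{k\ge0}\bigl(C(k+1)e^{-\lambda k^2}\bigr)^{r_i}.
\end{align*}
The $k=0$ term is $C^{r_i}$, which is not close to $1^{r_i}$. This is the one point needing care.

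\textbf{Removing the constant $C^{r_i}$.} I expect this to be the main obstacle. The $k=0$ term is a genuine loss: the hypothesis only controls balls of radius $\beta\sqrt{r_i}$, whereas the conclusion has threshold $\sqrt a\,\beta$ and is allowed only the factor $e^{\eta r_i}$. The first thing to try is to argue directly on small balls. Since $a$ is small, the event $\|Y_i\|_2\le\sqrt a\beta\sqrt{r_i}$ is contained in a single ball of radius $\beta\sqrt{r_i}$, which costs exactly $p_i$ with no covering loss. Accordingly, I would replace the exponential weight by the indicator-type bound $\mathbf 1\{\|Y_i\|_2^2\le t_i\beta^2 r_i\}$ and split the sum: on the target event, at most a small fraction of the mass $\sum_i r_i$ can sit on indices with $\|Y_i\|_2^2>\beta^2 r_i/4$. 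The indices where $\|Y_i\|_2\le\beta\sqrt{r_i}/2$ each cost $p_i$ by successive conditioning, applying the hypothesis first at the smallest index and using the tower property down the filtration, since $Y_i$ is $\sigma(\zeta_i,\ldots,\zeta_s)$-measurable.

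\textbf{The exceptional indices.} The remaining indices carry weighted mass at most $4a\,r_I$, and for them I bound the probability crudely by $1$. Comparing with $p_i$ needs care, because $p_i$ may be tiny; a factor of $1$ versus $p_i$ is not affordable. So the cleaner route is the exponential version with $\lambda$ large after all. The factor $C^{r_i}$ arises because a ball of radius $\beta\sqrt{r_i}$ at the origin is compared with itself, so no covering is actually needed there: the $k=0$ shell is a single ball and contributes $p_i\cdot1$. The shells $k\ge1$ contribute $\sum_{k\ge1}(C(k+1)e^{-\lambda k^2})^{r_i}\le(e^{\eta/2}-1)$ once $\lambda=\lambda(\eta)$ is large. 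Hence the one-step bound holds with $C_\lambda=e^{\eta/2}$ (using $1+x\le e^{x}$ and $r_i\ge1$). Iterating by conditioning on $\mathcal F_i^+$ from the smallest $i\in I$ upward gives $\mathbb E\prod_{i\in I}\le e^{\eta r_I/2}\prod_{i\in I}p_i$. Choosing $a_{\eta}=\eta/(2\lambda(\eta))$ then yields the stated bound $e^{\eta r_I}\prod_{i\in I}p_i$.
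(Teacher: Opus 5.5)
Your final argument is correct and is essentially the paper's proof. Both apply Markov's inequality to $\exp(-\lambda\beta^{-2}\sum_{i\in I}\|Y_i\|_2^2)$, use a shell decomposition in which the $k=0$ shell is a single ball costing exactly $p_i$ while the shells $k\ge1$ cost $p_i(C(k+1))^{r_i}e^{-\lambda k^2 r_i}$, integrate in increasing index order, and take $a_\eta=\eta/(2\lambda(\eta))$; the indicator-based detour in the middle can simply be deleted, and your line bounding the shells $k\ge1$ should carry the factor $p_i$.
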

\begin{proof}
Cover a ball of radius $(j+1)\beta\sqrt r$ by
$(C(j+1))^r$ balls of radius $\beta\sqrt r$. Gaussian shell
summation then gives, for sufficiently large $\lambda=\lambda(\eta)$,
\begin{align*}
 \mathbb E\left(e^{-\lambda\|Y_i\|_2^2/\beta^2}
                         \mid\mathcal F_i^+\right)
 \le p_i\left[1+\sum_{j\ge1}(C(j+1))^{r_i}
                                  e^{-\lambda j^2r_i}\right]
 \le p_i e^{\eta r_i/2}.
\end{align*}
Integrate the seeds in increasing index order. Each later factor
is measurable with respect to the current future sigma-field,
including seeds at indices outside $I$. The exponential moment
of the sum is therefore at most
$e^{(\eta/2)\sum r_i}\prod p_i$. Markov's inequality with
$a_{\eta,\ref{lem:reverse tensorization}}=\eta/(2\lambda)$ finishes
the proof.
\end{proof}

\subsection{Two copies of the residual operator}

The centered-energy certificate supplies soft directions on the local
supports. For their tensorization it matters that each row output
contains one current seed and one future seed; no current matching is
exposed before the local small-ball estimate.

\begin{proposition}\label{prop:residual rank inversion}
Fix the density and signed support geometry above,
the preceding choice of $\gamma_0$, and $C_h>0$.
Let $\mathbf p$ denote the fixed data in
Theorem~\ref{thm:hybrid box inversion}. For sufficiently small fixed $\theta_0$ there are positive constants
$r_{\ref{prop:residual rank inversion}}
=r_{\ref{prop:residual rank inversion}}(\mathbf p,\gamma_0)$,
$s_{\ref{prop:residual rank inversion}}
=s_{\ref{prop:residual rank inversion}}(\mathbf p,\gamma_0)$,
$R_{\ref{prop:residual rank inversion}}
=R_{\ref{prop:residual rank inversion}}(\mathbf p,\gamma_0)$ and
$c_{\ref{prop:residual rank inversion}}^{(i)}
=c_{\ref{prop:residual rank inversion}}^{(i)}(\mathbf p,\gamma_0)$,
$1\le i\le4$,
and nonnegative integers
$k_{\ref{prop:residual rank inversion}}=k_{\ref{prop:residual rank inversion}}(\mathbf p,\gamma_0)$ and
$C_{\ref{prop:residual rank inversion}}=C_{\ref{prop:residual rank inversion}}(\mathbf p,\gamma_0)$, all independent of
$\mu\in(0,1)$, with the following property.
Let $U\in\mathbb R^Y$ satisfy the signed bounds of Definition~\ref{def:signed coefficient box} and
$\|U\|_2\le B_Y\sqrt{d_Y}M$, and set $\widehat U=r_{\ref{prop:residual rank inversion}}U/\sqrt N$
and $\alpha=c_{\ref{prop:residual rank inversion}}^{(2)}\mu^2$.
Suppose
\begin{align*}
 \operatorname{CLCD}_{\mu|T_e|,\gamma_0}^{T_e}
           (\widehat U_{T_e})>16
          \qquad(e\notin\mathcal D).
\end{align*}
Let $H_1,H_2$ be independent copies of $H_{\mathfrak c}$, and
write $\mathsf H=[H_1,H_2]$. Here $\mathbb P_\omega$ integrates two
independent copies $\boldsymbol\omega^{(1)},\boldsymbol\omega^{(2)}$
of the residual seed in Definition~\ref{def:auxiliary switching operator},
with the certificate, $U$, and the centers fixed.
For sufficiently large $N$ depending
on $\mu$, for
\begin{align*}
 M\ge M_0,\qquad
 \log M\le {c_{\ref{prop:residual rank inversion}}^{(3)}}\alpha N,
 \qquad 0\le k\le\lfloor\alpha d_Y\rfloor,
\end{align*}
and every fixed $a_1,a_2\in\mathbb R^X$, one has
\begin{align}
 &\mathbb P_{\omega}\left\{
   \|H_iU-a_i\|_2\le C_hN\ (i=1,2),\quad
   \sigma_{2d_Y-k+1}(\mathsf H)
           \le s_{\ref{prop:residual rank inversion}}\sqrt N
                    \right\}\notag\\
 &\hspace{1em}\le
   \left(\frac{R_{\ref{prop:residual rank inversion}}}{M}\right)
                  ^{2p_X-C_{\ref{prop:residual rank inversion}}}
   \exp\left[-{c_{\ref{prop:residual rank inversion}}^{(1)}}N
                (k-k_{\ref{prop:residual rank inversion}})_+\right].\label{eq:residual rank inversion}
\end{align}
The singular values are ordered as in Section~\ref{Sec:notation},
and the singular value condition is omitted when $k=0$.
At $k=\lfloor\alpha d_Y\rfloor$, the exponential factor is at most
$\exp(-c_{\ref{prop:residual rank inversion}}^{(4)}\alpha Nd_Y)$.
\end{proposition}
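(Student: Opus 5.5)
The plan is to expose the two copies row by row along the alternating path and apply Theorem~\ref{thm:local augmented small ball} to each row, integrating one fresh seed at a time. Then I tensorize with Lemma~\ref{lem:reverse tensorization} and pay for the soft singular directions through the net of Lemma~\ref{lem:Stiefel image rounding}.

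\emph{Row structure.} By the diagram for $H_{\mathfrak c}$, the row of $H_iU$ at $x_j$ equals $\pm\langle\omega^{(i)}_{e_{j-1}},U\rangle+\langle\omega^{(i)}_{e_j},U\rangle$. Let the seed $\zeta_j$ be the pair $(\boldsymbol\omega^{(1)}_{e_j},\boldsymbol\omega^{(2)}_{e_j})$. Assign to it the output of row $x_{j+1}$ in both copies, so that this output contains $\zeta_j$ and the future seed $\zeta_{j+1}$. Conditional on $\mathcal F_j^+$, the future term is then a fixed translate, and the target $a_i$ is absorbed into the centre.

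\emph{Soft directions.} Suppose $\sigma_{2d_Y-k+1}(\mathsf H)\le s\sqrt N$. Then there is an orthonormal $V\in\mathbb R^{2d_Y\times k}$ with $\|\mathsf HV\|_{\mathrm{HS}}\le s\sqrt{Nk}$.
\begin{itemize}
\item I replace $V$ by a net point $V^\sharp\in\mathcal W_{2d_Y,k}(\xi)$. The image error is controlled by the third bound of Lemma~\ref{lem:Stiefel image rounding} together with \eqref{eq:auxiliary deterministic bounds}. The cost is $(C/\xi)^{2d_Yk}\le e^{C\theta_0Nk\log(1/\xi)}$ by \eqref{eq:inversion block proportion}.
\item Since every walk has zero sum, only the centred restrictions $F_b:=P_{T_e,0}V^\sharp_{Y_b}$ enter row $e$. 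This gives $F_b^T\mathbf 1_{T_e}=0$ and $\|F\|_{\mathrm{op}}\le2$.
\item The energy bound \eqref{eq:centered energy coverage} yields $\sum_{e\in E_b}\|P_{T_e,0}V^\sharp\|_{\mathrm{HS}}^2\ge c|X|\,\|P_{Y_b,0}V^\sharp_{Y_b}\|_{\mathrm{HS}}^2$. Removing the at most two constant directions from $V$ loses a bounded number $k_{\ref{prop:residual rank inversion}}$ of columns.
\item Averaging then gives $\|F\|_{\mathrm{HS}}^2\ge\kappa k$ on a positive proportion of path rows. I restrict to those rows, and a pigeonhole over rows costs at most $2^{N}$, which is absorbed later.
\end{itemize}

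\emph{Local hypotheses.} On rows $e\notin\mathcal D$, the test vector is $y=\widehat U_{T_e}$. Its CLCD hypothesis is assumed. By \eqref{eq:local hard norm from support} and the signed intersections of size $\ge\sigma_*d_Y$, one has $\|P_{T_e,0}\widehat U\|_2\gtrsim r\,M$. Hence $t:=R/M$ satisfies $\|P_{T_e,0}y\|\ge\kappa_y/t$, and the range conditions $t\ge e^{-c\mu^2s}$ and $k\le c\mu^2 s$ follow from $\log M\le c\alpha N$ and $k\le\alpha d_Y$.

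\emph{Estimate and tensorization.} For each retained row, the vector $\mathcal A_{T_e}(F,y)^T(q^{(0)},q^{(1)})$ collects the two row outputs and the $k$ soft coordinates. After the normalisation by $r/\sqrt N$, the event $\|H_iU-a_i\|\le C_hN$ and the smallness of $\mathsf HV^\sharp$ force this vector into a ball of radius comparable to $b\sqrt{k+2}$ for most rows. The fixed $r=r_{\ref{prop:residual rank inversion}}$ and $s=s_{\ref{prop:residual rank inversion}}$ are chosen to make this comparison hold. Theorem~\ref{thm:local augmented small ball}, in its fibrewise form, gives conditional probability at most $(Ct)^2e^{-ck}$ for each such row. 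Lemma~\ref{lem:reverse tensorization} with small $\eta$ multiplies these bounds over the roughly $p_X$ retained rows. The rows in $\mathcal D$, the endpoint rows and the discarded rows lose only $C$ factors of $M$, which produces the exponent $2p_X-C_{\ref{prop:residual rank inversion}}$.

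\emph{Main obstacle.} The delicate step is the entropy balance: the net cost $e^{C\theta_0Nk\log(1/\xi)}$ and the pigeonhole loss must be beaten by the gain $e^{-c\,p_X(k-k_*)}$. This is why $\theta_0$ is taken small after $\xi$ and the constant $c$ of Theorem~\ref{thm:local augmented small ball} are fixed. The gain must also be secured on a linear number of rows with a uniform $\kappa$. At $k=\lfloor\alpha d_Y\rfloor$, the resulting factor $e^{-cNk}$ is the stated $e^{-c\alpha Nd_Y}$.
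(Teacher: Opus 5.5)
\textbf{Verdict: your outline is essentially the paper's, but the row bookkeeping as written loses the exponent, so there is a real (repairable) gap.} The shared steps are: seed $\zeta_j$ attached to row $x_{j+1}$, the Stiefel net, centered-energy coverage, Theorem~\ref{thm:local augmented small ball} at $t=R_0/M$, Lemma~\ref{lem:reverse tensorization} against the global image budget, and $\theta_0$ chosen after $\xi$. What fails is the claimed exponent $2p_X-C_{\ref{prop:residual rank inversion}}$.

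Let $R_e$ be the centred restriction of $V^\sharp$ to the two copies of $T_e$ (your $F$). You have $\sum_{e\notin\mathcal D}\|R_e\|_{\mathrm{HS}}^2\ge c_*Nk-C_*N$, and $\|R_e\|_{\mathrm{HS}}^2\le 4k$ since the operator norm is at most two. This only shows that a small fixed proportion $c$ of rows satisfy $\|R_e\|_{\mathrm{HS}}^2\ge\kappa k$; all other rows may fail it.

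\begin{itemize}
\item If you ``restrict to those rows'', each discarded row forfeits its factor $(C/M)^2$ from the two coordinates $\langle\widehat U,\omega^{(i)}_{e_j}\rangle$. You then lose a factor of order $M^{2(1-c)p_X}$, not a bounded power of $M$.
\item This matters downstream: Proposition~\ref{prop:residual fixed center second moment} needs exactly $M^{-2p_X+C}$ to combine with $M^{-(2d_Y-k)}$.
\item For the same reason, ``for most rows'' cannot be realised by a pigeonhole over which rows land near their centres, since every omitted row again costs $M^{-2}$. The only admissible mechanism is the total budget $\sum_j\|Y_j\|_2^2\le\|\mathsf H\mathsf A-Z_0\|_{\mathrm{HS}}^2$ fed into Lemma~\ref{lem:reverse tensorization}, which keeps every row's factor.
\item A $2^N$ pigeonhole loss also cannot be beaten by $e^{-cp_X(k-k_*)}$ for $k$ near $k_*$. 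It would have to be absorbed into $R_{\ref{prop:residual rank inversion}}$, and in fact it is unnecessary: the good set is determined by the deterministic net point $V^\sharp$.
\end{itemize}

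\textbf{Repair.} Keep every row $e\notin\mathcal D$.
\begin{itemize}
\item On rows with $\|R_e\|_{\mathrm{HS}}^2\ge\kappa k$, use all $k$ soft columns.
\item On the remaining retained rows, apply Theorem~\ref{thm:local augmented small ball} with $k=0$, which needs no soft hypothesis.
\item Run Lemma~\ref{lem:reverse tensorization} with row-dependent dimensions $r_j\in\{2,k+2\}$. The budget condition $C((s^2+\xi^2)k+r^2C_h^2)N\le a_\eta b^2\sum_j r_j$ holds for $k\ge k_0$, because at least $cN$ rows are good and at least $N/3$ are retained.
\item For $k<k_0$, drop the singular-value condition and use the $k=0$ bound.
\end{itemize}
A minor indexing point: $F_0,F_1$ in $\mathcal A_{T_e}(F,y)$ are the restrictions to $T_e$ of the two \emph{copies} of $V^\sharp$, not of the halves $Y_0,Y_1$. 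There are four constant directions to remove, not two.

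\textbf{Comparison with the paper.} The paper avoids the good/bad split. It truncates each $R_e$ at singular value $\sqrt\tau$ and uses $F_e=R_eJ_e$ with a row-dependent number $k_e$ of columns. Then $\|F_e\|_{\mathrm{HS}}^2\ge\tau k_e$ holds automatically on every retained row, and $\sum_e k_e\ge c'N(k-k_0)_+$. The budget comparison becomes exactly $N(k+2)\le C_0D_0$ with $D_0=\sum_e(k_e+2)$. With the repair above, your version (full $k$ on a fixed fraction of rows, $k=0$ elsewhere) is an equivalent, slightly coarser variant.
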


\begin{proof}
We first establish local dimension bounds for any deterministic
$V^\sharp\in\mathbb R^{2d_Y\times k}$ with
$\|V^\sharp\|_{\mathrm{op}}\le2$ and
$\|V^\sharp\|_{\mathrm{HS}}^2\ge k/2$.
When $k=0$, all soft matrices have no columns.
Let $\Pi_0$ project onto the constants on $Y_0,Y_1$ in
both copies, a space of dimension at most four, and set
$R_e=\operatorname{diag}(P_{T_e,0},P_{T_e,0})
(V^\sharp)_{T_e\sqcup T_e}$.
Centered energy coverage gives
\begin{align*}
 \sum_e\|R_e\|_{\mathrm{HS}}^2
 \ge c p_X\|(I-\Pi_0)V^\sharp\|_{\mathrm{HS}}^2
 \ge c p_X(k/2-16).
\end{align*}
Deleting the at most $J_*$ exceptional indices costs at most $4J_*k$.
Thus, for fixed $c_*,C_*>0$,
\begin{align*}
 \sum_{e\notin\mathcal D}\|R_e\|_{\mathrm{HS}}^2
 \ge c_*Nk-C_*N.
\end{align*}
Choose a sufficiently small $\tau>0$. Let $k_e$ count the singular
values of $R_e$ at least $\sqrt\tau$, and let $J_e$ be an isometry
onto the corresponding right singular space. Since
$\|R_e\|_{\mathrm{HS}}^2\le\tau k+4k_e$, there are fixed $c'>0$ and
$k_0\ge1$ such that
\begin{align}
 \sum_{e\notin\mathcal D}k_e
 &\ge c'N(k-k_0)_+,\notag\\
 D_0:=\sum_{e\notin\mathcal D}(k_e+2)
 &\ge c'N\bigl((k-k_0)_++1\bigr),
 \qquad N(k+2)\le C_0D_0.\label{eq:rank sum local dimensions}
\end{align}
For the last two bounds use the $p_X-1-|\mathcal D|\ge N/3$
retained indices. These constants can be chosen before $\theta_0$.
The matrix $F_e=R_eJ_e$ is centered on both copies of $T_e$, has
operator norm at most two, and satisfies
$\|F_e\|_{\mathrm{HS}}^2\ge\tau k_e$.

Fix the constants $b_0,c''>0$ in
Theorem~\ref{thm:local augmented small ball} for
$\kappa=\tau$, $\kappa_y=1$ and $\gamma=\gamma_0$.
Choose $0<\eta<c''/2$ and then $\xi,s,r>0$, with $\xi<1/4$,
so small that \eqref{eq:rank sum local dimensions} gives
\begin{align*}
 C\bigl((s^2+\xi^2)k+r^2C_h^2\bigr)N
 \le a_{\eta,\ref{lem:reverse tensorization}}b_0^2D_0.
\end{align*}
Here $C$ is a sufficiently large absolute constant for the image
estimate below. Put
$C'=2\log(C_{\ref{lem:Stiefel image rounding}}/\xi)$ and
$c_1=(c''-\eta)c'$, where $c'$ is from
\eqref{eq:rank sum local dimensions}.
Choose $\theta_0$ so small that $2C'\theta_0<c_1/4$, and then fix
$R_0\ge\max\{1,(2a\sigma_*r\sqrt{\theta_0})^{-1}\}$.
Finally choose the range constants in the proposition so that
$k\le\alpha d_Y$ and the bound on $\log M$ ensure the dimension
and scale hypotheses of the local theorem on every retained support.
All these choices precede $\mu$.

Fix $U$, the centers and the certificate. For $k\ge1$, the singular-value
event gives $V\in\mathbb R^{2d_Y\times k}$ with $V^TV=I_k$ and
$\|\mathsf HV\|_{\mathrm{HS}}\le s\sqrt{Nk}$.
By Lemma~\ref{lem:Stiefel image rounding} and
\eqref{eq:auxiliary deterministic bounds}, some point $V^\sharp$ of a
deterministic net of size $e^{C'd_Yk}$ satisfies
\begin{align*}
 \|V^\sharp\|_{\mathrm{op}}\le2,\qquad
 \|V^\sharp\|_{\mathrm{HS}}^2\ge k/2,\qquad
 \|\mathsf H(V^\sharp-V)\|_{\mathrm{HS}}\le C\xi\sqrt{Nk}.
\end{align*}
For $k=0$ omit the net and soft columns. With
$\widehat U=rU/\sqrt N$, set
\begin{align*}
 \mathsf A=\left[V^\sharp,\binom{\widehat U}{0},\binom0{\widehat U}\right],
 \qquad Z_0=\left[0,ra_1/\sqrt N,ra_2/\sqrt N\right].
\end{align*}
The event to be bounded implies, for one net point,
\begin{align}
 \|\mathsf H\mathsf A-Z_0\|_{\mathrm{HS}}^2
 \le C\bigl((s^2+\xi^2)k+r^2C_h^2\bigr)N.\label{eq:rank global image error}
\end{align}
Fix that net point for the local argument.

Write the path as $e_j=(x_j,x_{j+1})$ and let
$\omega_j^{(i)}$ be its walk in copy $i$. Set
$\omega_0^{(i)}=\omega_{p_X}^{(i)}=0$.
The row of $H_i$ indexed by $x_{j+1}$ is
$(\omega_{j+1}^{(i)}-\omega_j^{(i)})^T$.
For a retained index put
\begin{align*}
 Y_j=(\mathsf H\mathsf A-Z_0)_{x_{j+1},\cdot}
                          \operatorname{diag}(J_{e_j},I_2).
\end{align*}
Condition on the complete seeds at indices $j+1,\ldots,p_X-1$
in both copies, whose sigma-field is denoted by $\mathcal F_j^+$.
The two current walks remain independent and
\begin{align*}
 Y_j^T=-\mathcal A_{T_{e_j}}(F_{e_j},\widehat U_{T_{e_j}})^T
                  (\omega_j^{(1)},\omega_j^{(2)})+d_j,
\end{align*}
where $d_j$ is $\mathcal F_j^+$-measurable.
All coefficient matrices and $J_e$ were fixed before the walks.
Centering their restrictions changes no inner product with a walk.

The signed intersections and \eqref{eq:local hard norm from support}
give, by the choice of $R_0$,
$\|P_{T_e,0}\widehat U\|_2
 \ge2a\sigma_*r\sqrt{\theta_0}M\ge M/R_0$.
Every retained support has size at least $2\sigma_*d_Y$.
The range constants therefore ensure
$k_e\le c\mu^2|T_e|$ and $R_0/M\ge e^{-c\mu^2|T_e|}$.
Apply \eqref{eq:local augmented small ball} at $t=R_0/M$;
its CLCD assumption is the arithmetic hypothesis of the proposition.
For each retained index,
\begin{align*}
 \sup_z\mathbb P_\omega
   (\|Y_j-z\|_2\le b_0\sqrt{k_{e_j}+2}\mid\mathcal F_j^+)
 \le(CR_0/M)^2e^{-c''k_{e_j}}.
\end{align*}

The choice of $\xi,s,r$ makes \eqref{eq:rank global image error}
at most $a_{\eta,\ref{lem:reverse tensorization}}b_0^2D_0$.
The $Y_j$ use distinct rows and contractions on the right, so their
squared norms sum to at most this image budget.
Lemma~\ref{lem:reverse tensorization} bounds the probability for
the fixed net point by
\begin{align*}
 e^{\eta D_0}
 \prod_{e\notin\mathcal D}(CR_0/M)^2e^{-c''k_e}
 \le(R_1/M)^{2(p_X-1-|\mathcal D|)}
                          e^{-c_1N(k-k_0)_+}.
\end{align*}
The future filtration retains even the seeds at omitted indices,
so it introduces no dependence between current walks.

For $k\ge2k_0$, the chosen $\theta_0$ absorbs the net cost:
$C'd_Yk-c_1N(k-k_0)\le-c_1Nk/4$.
For bounded $k<2k_0$ the cost is absorbed into a fixed exponential base.
Since $|\mathcal D|\le J_*$, this proves the proposition
with dimension loss $2+2J_*$, after enlarging the base and $k_0$.
The bases and dimension losses are independent of $\mu$; only
$\alpha$, the scale range and the lower dimension depend on it.
For large $N$ depending on $\mu$,
$\lfloor\alpha d_Y\rfloor\ge2k_0$, which gives the terminal bound.
\end{proof}

For a product box $\mathcal B_X\times\mathcal B_Y$ inherited from
Definition~\ref{def:signed coefficient box}, define
\begin{align}
 \mathcal A_U(\mu)
   &:=\left\{u\in\mathcal B_Y:
       \operatorname{CLCD}_{\mu|T_e|,\gamma_0}^{T_e}
       \left(\frac{r_{\ref{prop:residual rank inversion}}}{\sqrt N}
                                                     u_{T_e}\right)>16
                       \text{ for every }e\notin\mathcal D\right\},
       \notag\\
 \mathcal A_0(\mu)&:=
                         \mathcal B_X\times\mathcal A_U(\mu).\label{eq:residual arithmetic good set}
\end{align}
The constants here are those of
Proposition~\ref{prop:residual rank inversion} at the radius
coefficient being used. These sets are deterministic subsets of
the coefficient box.

\begin{proposition}\label{prop:residual fixed center second moment}
Fix $C_h>0$ and the geometry of
Proposition~\ref{prop:residual rank inversion}.
Let $Z_W=(V,U)$ be uniform on
$\mathcal B_X\times\mathcal B_Y$, independently of the auxiliary
operator. There are fixed constants
$R_{\ref{prop:residual fixed center second moment}}=R_{\ref{prop:residual fixed center second moment}}(\mathbf p,\gamma_0)>1$ and
$C_{\ref{prop:residual fixed center second moment}}=C_{\ref{prop:residual fixed center second moment}}(\mathbf p,\gamma_0)\ge0$ such that,
for every $\mu\in(0,1)$, there are
${c_{\ref{prop:residual fixed center second moment}}=c_{\ref{prop:residual fixed center second moment}}(\mathbf p,\gamma_0,\mu)}>0$ and
$N_{\mu,\ref{prop:residual fixed center second moment}}=N_{\ref{prop:residual fixed center second moment}}(\mathbf p,\gamma_0,\mu)$ for which
\begin{align*}
 \sup_{v\in\mathbb R^W}
   \mathbb E_Z\left[
      \mathbf 1_{\mathcal A_0(\mu)}(Z_W)
      \mathbb P_{\omega}\{
          \|M_{\mathfrak c}Z_W-v\|_2\le C_hN\}^2
                                     \right] \le
       \left(\frac{R_{\ref{prop:residual fixed center second moment}}}
                        {M}\right)
                    ^{2N-C_{\ref{prop:residual fixed center second moment}}}
\end{align*}
whenever $N\ge N_{\mu,\ref{prop:residual fixed center second moment}}$,
$M\ge M_0$ and
$\log M\le {c_{\ref{prop:residual fixed center second moment}}(\mu)}N$.
The expectation is with respect to the uniform product measure
$\mathbb P_Z$ on the fixed coefficient box.
\end{proposition}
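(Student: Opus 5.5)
The plan is to write the squared probability as the probability under two independent copies, and then reduce to Proposition~\ref{prop:residual rank inversion}. For fixed $v$ and $Z_W=(V,U)$,
\begin{align*}
 \mathbb P_\omega\{\|M_{\mathfrak c}Z_W-v\|_2\le C_hN\}^2
 \le\mathbb P_{\omega}\bigl\{\|H_iU-v_X\|_2\le C_hN,\ \|H_i^TV-v_Y\|_2\le C_hN\ (i=1,2)\bigr\},
\end{align*}
with $H_1,H_2$ independent copies of $H_{\mathfrak c}$. Here $M_{\mathfrak c}Z_W=(H_{\mathfrak c}U,H_{\mathfrak c}^TV)$, and the two conditions are obtained by restricting the output to $X$ and to $Y$.

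Next I exchange $\mathbb E_Z$ and $\mathbb E_\omega$ by Fubini, since the coefficient law and the seeds are independent. With the seeds fixed, the $V$-constraint is a small-ball event for the linear map $\mathsf H^T=[H_1,H_2]^T:\mathbb R^X\to\mathbb R^{2d_Y}$ evaluated at a uniform point of the integer box $\mathcal B_X$. The $U$-constraint is handled by Proposition~\ref{prop:residual rank inversion}, which applies on $\mathcal A_U(\mu)$ with $a_i=v_X$. I would split according to $k:=\#\{j:\sigma_j(\mathsf H)\le s\sqrt N\}$, where $s=s_{\ref{prop:residual rank inversion}}$, for $0\le k\le\lfloor\alpha d_Y\rfloor$, together with a terminal case $k>\lfloor\alpha d_Y\rfloor$.

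\textbf{Counting in the $V$-box.} On the event that $\mathsf H$ has at most $k$ singular values below $s\sqrt N$, fix the seeds. Set $G=\mathsf H^T$, and let $P$ be the projection onto the span of the right singular directions of $\mathsf H^T$ whose singular values exceed $s\sqrt N$; this span has dimension $2d_Y-k$. If $\|GV-w\|\le 2C_hN$, then $PV$ lies in an ellipsoid with semi-axes $\le 2C_hN/(s\sqrt N)=C\sqrt N$ in a $(2d_Y-k)$-dimensional subspace. Each interval $I_i$ has length $\ge c_IM$, so the uniform measure on $\mathcal B_X$ has density at most $(c_IM)^{-p_X}$. A volume/lattice bound therefore gives
\begin{align*}
 \mathbb P_Z\{\|GV-w\|\le C_hN\}\le (C\sqrt N/M)^{2d_Y-k}\cdot (\text{polynomial factors}),
\end{align*}
because the box fibres in the complementary directions contribute at most their full measure. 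To do this rigorously I would use a Gaussian smoothing/covering argument in the style of Lemma~\ref{lem:reverse tensorization} rather than exact lattice counting. Combining this with \eqref{eq:residual rank inversion} for the $U$-part, the integrand is bounded by
\begin{align*}
 (R/M)^{2p_X-C}e^{-cN(k-k_0)_+}\,(C\sqrt N/M)^{2d_Y-k}.
\end{align*}
Since $p_X+d_Y=N$, this is $(R'/M)^{2N-C-k}N^{d_Y}e^{-cN(k-k_0)_+}$. The factor $N^{d_Y}$ is a problem, because $d_Y\sim\theta_0N$, so it cannot be absorbed into $R'^{2N}$.

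\textbf{Absorbing the loss: the main obstacle.} The factor $(\sqrt N)^{2d_Y}$ must come out with the right normalization. I would fix this by making the $V$-small-ball scale relative to $\|\mathsf H\|$ rather than to $C_hN$; alternatively, I would apply the $V$-side bound through the same tensorized local estimate along the path. The latter should give $(R/M)^{2d_Y-k}$ directly, using the signed support geometry of $\mathcal B_X$ in the transposed direction, with an $N$-independent base. The extra loss $M^{k}$ from the soft directions is dominated by $e^{-cN(k-k_0)_+}$ exactly because $\log M\le c(\mu)N$. Summing over $k\le\alpha d_Y$ gives the bound $(R/M)^{2N-C}$, with $C$ absorbing $k_0$. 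For the terminal case I would use the terminal exponent $e^{-c\alpha Nd_Y}$ together with the trivial bound $1$ on the $V$-part; this is enough once $\log M\le c(\mu)N$, since then $(R/M)^{2N}\ge e^{-2c(\mu)N^2}$. The sum over $k$ costs only a factor $N$, which is absorbed into the base. I expect the main difficulty to be that $V$-side counting with a constant base; the remaining steps are routine.
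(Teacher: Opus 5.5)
Your outline matches the paper's proof. You use two copies via Fubini. The top constraints $\|H_iU-a\|_2\le C_hN$ are handled by Proposition~\ref{prop:residual rank inversion} on $\mathcal A_U(\mu)$, and the bottom constraints $\|H_i^TV-b\|_2\le C_hN$ by the product law of $V$. You stratify by the number $k$ of singular values of $\mathsf H$ below $s_{\ref{prop:residual rank inversion}}\sqrt N$, trade $M^{k}$ against $e^{-cN(k-k_0)_+}$, and use the terminal exponent in the last stratum; your cruder terminal comparison through $d_Y\ge\theta_0N$ is fine.

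The gap is the step you single out yourself: you never establish the $V$-side bound with an $N$-independent base. The obstruction you report is an arithmetic slip, not a real difficulty. You bounded the volume of the radius-$C\sqrt N$ ball in the $r_k$-dimensional image, $r_k=2d_Y-k$, by $(C\sqrt N)^{r_k}$, dropping $\operatorname{vol}(B_2^{r_k})\le(C/\sqrt{r_k})^{r_k}$. On the non-terminal strata $k<\lfloor\alpha d_Y\rfloor\le d_Y$, so $r_k\ge d_Y\ge\theta_0N$. Hence $C\sqrt N\le A_*\sqrt{r_k}$ with $A_*=C\theta_0^{-1/2}$ independent of $N$ and $\mu$, the correct volume is $(C'A_*)^{r_k}$, and no factor $N^{d_Y}$ arises.

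This is exactly the paper's argument. With $U$ and both seeds fixed, $V$ keeps its product law with $\mathcal L(V_i,u)\le C(u+1)/M$. The projection small-ball theorem \cite[Corollary~1.4]{RV-imrn}, applied to the projection of $V$ onto the span of the $r_k$ large left singular directions of $\mathsf H$ at radius $A_*\sqrt{r_k}$, gives $(B_1/M)^{r_k}$ with $B_1$ fixed independently of $\mu$.

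Your proposed repairs would not have supplied this step. The box $\mathcal B_X$ carries no signed support geometry: the prescribed sets $P_{b',\pm}$ lie in $Y$. The arithmetic good set $\mathcal A_0(\mu)=\mathcal B_X\times\mathcal A_U(\mu)$ imposes nothing on $V$. A walk-based tensorized estimate along the path uses the randomness of the current walks with the coefficient fixed, and it gains a factor $M^{-1}$ per dimension only through CLCD control of that coefficient. That control is unavailable for $V$. The rescaling relative to $\|\mathsf H\|$ is left unspecified.

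Similarly, Lemma~\ref{lem:reverse tensorization} tensorizes conditional small-ball bounds over independent seeds. It does not give small-ball bounds for non-coordinate projections of a product measure, so your ``density times fibre'' heuristic needs an actual justification. You can use the Rudelson--Vershynin theorem as the paper does. Alternatively, smooth $V$ by independent uniform noise on $[-1/2,1/2]^X$ and bound the section volumes of the enlarged box; the resulting $C^{N}$ losses are absorbed into $R^{2N}$.
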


\begin{proof}
Fix $v=(a,b)\in\mathbb R^X\oplus\mathbb R^Y$. By Fubini the square
is represented by independent copies $H_1,H_2$, with necessary constraints
\begin{align*}
 \|H_iU-a\|_2\le C_hN,\qquad
 \|H_i^TV-b\|_2\le C_hN\quad(i=1,2).
\end{align*}
Let $K=\lfloor\alpha d_Y\rfloor$, and stratify $\mathsf H=[H_1,H_2]$
by the number $k$ of singular values at most
$s_{\ref{prop:residual rank inversion}}\sqrt N$, with a final stratum
$k\ge K$. For $k<K$, fix $U\in\mathcal A_U(\mu)$ and $H_1,H_2$ in
that stratum. Their $r_k=2d_Y-k\ge d_Y$ large singular directions give
isometries $L_k,R_k$ and a diagonal $\Sigma_k$ such that the bottom
constraints imply
\begin{align*}
 \|L_k^TV-\Sigma_k^{-1}R_k^T(b,b)\|_2
 \le C\sqrt N\le A_*\sqrt{r_k}.
\end{align*}
The constant $A_*$ is fixed independently of $\mu$.
The row coefficients $V$ retain their product law under this conditioning.
Since $\mathcal L(V_i,u)\le C(u+1)/M$, the projection
small-ball theorem \cite[Corollary~1.4]{RV-imrn} gives
$(B_1/M)^{r_k}=(B_1/M)^{2d_Y-k}$, with $B_1$ fixed independently
of $\mu$.
Combine it with \eqref{eq:residual rank inversion} for the
top constraints, and average $U$ under its original marginal. The
contribution is at most
\begin{align*}
 B_2^{2N}M^{-2N+C_{\ref{prop:residual rank inversion}}+k}
 e^{-c_{\ref{prop:residual rank inversion}}^{(1)}N
                         (k-k_{\ref{prop:residual rank inversion}})_+}.
\end{align*}
For $k=0$ use the proposition without its singular-value condition.
If $\log M\le c_{\ref{prop:residual rank inversion}}^{(1)}N/2$,
the sum of the $k$-dependent factors is bounded by $CM^{k_*}$,
where $k_*=k_{\ref{prop:residual rank inversion}}$.

On the final stratum drop the bottom constraints. The terminal top
bound is
\begin{align*}
 (B_3/M)^{2p_X-C_{\ref{prop:residual rank inversion}}}
 e^{-c_{\ref{prop:residual rank inversion}}^{(4)}\alpha Nd_Y}.
\end{align*}
For $\log M\le c_{\ref{prop:residual rank inversion}}^{(4)}\alpha N/4$,
its exponential saving recovers $M^{-2d_Y}$.
Both contributions have the required form with, for example,
$C_{\ref{prop:residual fixed center second moment}}
 =C_{\ref{prop:residual rank inversion}}+k_*+2$.
Only the allowed scale constant and lower dimension depend on $\mu$.
The estimates are uniform in the initially fixed $v$, so its supremum
may be taken after the coefficient expectation.
\end{proof}

\subsection{Extension columns and the full count}

The translation caused by the extension walks depends on the
coefficients. We remove it by Gaussian smoothing before averaging
coefficients; the residual second moment is only a fixed-center bound.

\begin{proof}[Proof of Theorem~\ref{thm:hybrid box inversion}]
Use Proposition~\ref{prop:residual fixed center second moment}
at radius coefficient one, and fix all its constants, including
the normalization in \eqref{eq:residual arithmetic good set}.

We first construct the arithmetic good set, without changing
the product law. The union of the positive prescribed sets has
at least $2\varepsilon d_Y$ coordinates, as does the union of
the negative sets. For large $n$, each therefore has at least
$\sigma_O n$ coordinates, where
$\sigma_O:=\varepsilon\theta_0/2$.
Fix
\begin{align*}
 0<\gamma_O<\min\{1,a\sigma_O/(8B)\},\qquad a_O:=a\sigma_O.
\end{align*}
For $z=(x,y)\in\mathbb R^O\oplus\mathbb R^{U_m}$ in the box,
the signed difference bound implies
\begin{align*}
 \left\|P_{O,0}\frac{x}{\sqrt m}\right\|_2
        \ge2a\sigma_O\sqrt{n/m}\,M\ge a_OM.
\end{align*}
Given $L$, apply Lemma~\ref{lem:box CLCD exceptions} with $A=L^4$
first to the residual supports and then to the singleton family
$\{O\}$. In the second application take $D=n$, $N_*=m$,
$r_*=1$ and $B_*=2B$.
Choose a common $\mu=\mu(L)>0$ no larger than either parameter
supplied by that lemma. Define
\begin{align*}
 \mathcal A_O(\mu)
   &:=\left\{z=(x,y)\in\mathcal B:
         \operatorname{CLCD}_{\mu n,\gamma_O}^{O}(x/\sqrt m)>16\right\},\\
 \mathcal A(\mu)
   &:=\{z\in\mathcal B:z_W\in\mathcal A_0(\mu)\}
                                      \cap\mathcal A_O(\mu).
\end{align*}
After restricting the scale range and increasing the lower
dimension as permitted in the theorem,
the arithmetic lemma gives
\begin{align}
 \mathbb P_Z\{Z\notin\mathcal A(\mu)\}
             \le L^{-8N}+L^{-8m}\le L^{-3m}.\label{eq:hybrid arithmetic exceptional probability}
\end{align}
Also require the scale ranges of
Proposition~\ref{prop:residual fixed center second moment} and
Corollary~\ref{cor:extension scalar Gaussian estimate}, the latter
with $r_0=1$, $a_0=a_O$, $c_g=1$ and $\gamma=\gamma_O$.
Since $N,n,m$ are comparable, all these requirements hold in the stated range with a sufficiently small
${c_{\ref{thm:hybrid box inversion}}(L)}$ and large
$m_{L,\ref{thm:hybrid box inversion}}$.

Fix a deterministic center $v=(v_O,v_{U_m})\in\mathbb R^m$, and set
\begin{align*}
 g_v(z)&:=\mathbb P_{\omega}
                   \{\|\widehat M_{\mathfrak c}z-v\|_2\le3C_hm\},\\
 G_v(z)&:=\mathbb E_{\omega}
                   e^{-\|\widehat M_{\mathfrak c}z-v\|_2^2/m},\qquad
 G_0(z_W):=
       \mathbb E_{\omega}
                   e^{-\|M_{\mathfrak c}z_W\|_2^2/m}.
\end{align*}
Gaussian smoothing gives $g_v(z)\le e^{9C_h^2m}G_v(z)$.
For this fixed coefficient vector, expose the extension walks
$\omega_j$, $j\in U_m$, leaving the residual auxiliary operator
independent. By \eqref{eq:hybrid auxiliary action}, the top block
is a translate of $\iota_WM_{\mathfrak c}z_W$ by
$\sum_{j\in U_m}y_j\omega_j-v_O$. The bottom coordinates are
$\langle\omega_j,x\rangle-v_j$.
The characteristic function of the residual vector is nonnegative
by Proposition~\ref{prop:hybrid Fourier domination}.
The Gaussian comparison following \eqref{eq:Gaussian Fourier identity},
applied in $\mathbb R^O$ with Gaussian scale $\sqrt{\pi m}$, therefore gives
the pointwise inequality
\begin{align}
 G_v(z)\le G_0(z_W)
           \prod_{j\in U_m}\mathbb E_{\omega}
             e^{-(\langle\omega_j,x\rangle-v_j)^2/m}.\label{eq:hybrid Gaussian separation}
\end{align}
The extension walks are independent; this is the only
factorization used after removing the top translation.
On $\mathcal A_O(\mu)$, the centered norm bound and
\eqref{eq:extension scalar Gaussian estimate} bound every factor
by $C/M$. Consequently,
\begin{align*}
 \mathbf 1_{\mathcal A(\mu)}(z)G_v(z)^2
       \le(C/M)^{2d_0}
         \mathbf 1_{\mathcal A_0(\mu)}(z_W)
                                   G_0(z_W)^2.
\end{align*}

We bound the remaining Gaussian functional by the residual
second moment. Since $m$ is comparable to $N$, decomposition
into shells of width $N$ gives
\begin{align*}
 G_0(z_W)
  \le\sum_{j\ge0}e^{-c j^2N}
       \mathbb P_{\omega}
             \{\|M_{\mathfrak c}z_W\|_2\le(j+1)N\}.
\end{align*}
For each $j$, cover the ball of radius $(j+1)N$ in
$\mathbb R^W$ by at most $(C(j+1))^N$ balls of radius $N$.
Their centers are deterministic. Apply Minkowski's inequality
in $L^2$ of the finite measure
$\mathbf 1_{\mathcal A_0(\mu)}\,d\mathbb P_Z$,
and then Proposition~\ref{prop:residual fixed center second moment}
separately at each fixed center. This yields
\begin{align*}
 &\left(\mathbb E_Z
       \mathbf 1_{\mathcal A_0(\mu)}(Z_W)
                                      G_0(Z_W)^2\right)^{1/2}
       \\
 &\quad\le
  \left(\frac{R_{\ref{prop:residual fixed center second moment}}}{M}
         \right)^{N-C_{\ref{prop:residual fixed center second moment}}/2}
         \sum_{j\ge0}e^{-c j^2N}(C(j+1))^N
       \\
 &\quad\le (R/M)^{N-C_{\ref{prop:residual fixed center second moment}}/2}.
\end{align*}
The sum is at most a fixed constant to the power $N$.
Combining this with the extension-factor bound from
\eqref{eq:hybrid Gaussian separation}, and using
$N+d_0=m-2$, gives, uniformly over the initially fixed center $v$,
\begin{align}
 \mathbb E_Z\mathbf 1_{\mathcal A(\mu)}(Z)g_v(Z)^2
       \le (R/M)^{2m-C_0},\qquad
 C_0:=C_{\ref{prop:residual fixed center second moment}}+4.\label{eq:hybrid fixed center second moment}
\end{align}
The unused marginals on $\{1,2\}\cup U_m$ integrate to one;
no product law on $\mathcal A(\mu)$ is asserted.

It remains to account for the projection. Let
\begin{align*}
 f(z):=\mathbb P_{\omega}
                      \{\|Q\widehat M_{\mathfrak c}z\|_2\le C_hm\}.
\end{align*}
By \eqref{eq:hybrid deterministic image bound}, the component
in $\ker Q$ belongs to a deterministic ball of radius
$Cm^{3/2}M$. Cover this ball inside $\ker Q$ by a deterministic
$C_hm$-net $\mathcal V_Q$ of size
\begin{align*}
 K_Q:=|\mathcal V_Q|\le(1+C\sqrt m\,M)^{\ell_Q}.
\end{align*}
If $\|Q\widehat M_{\mathfrak c}z\|_2\le C_hm$, then
$\|\widehat M_{\mathfrak c}z-v\|_2\le3C_hm$ for some $v\in\mathcal V_Q$.
Hence $f(z)\le\sum_{v\in\mathcal V_Q}g_v(z)$, and
Cauchy--Schwarz followed by
\eqref{eq:hybrid fixed center second moment} gives
\begin{align*}
 \mathbb E_Z\mathbf 1_{\mathcal A(\mu)}(Z)f(Z)^2
                             \le K_Q^2(R/M)^{2m-C_0}.
\end{align*}

If $M<L$, the event in \eqref{eq:hybrid box inversion} is empty
since its probability cutoff exceeds one. Suppose $M\ge L$.
Markov's inequality now yields
\begin{align*}
 \mathbb P_Z\{Z\in\mathcal A(\mu),\ f(Z)\ge(L/M)^{m-\ell_Q}\}
& \le K_Q^2(R/M)^{2m-C_0}(M/L)^{2(m-\ell_Q)}\\
 & \le m^{\ell_0}M^{C_0+2\ell_0}(R'/L)^{2m}.
\end{align*}
In the last line we used
$K_Q^2\le C^{2\ell_0}m^{\ell_0}M^{2\ell_Q}$ and $L\le M$.
Add \eqref{eq:hybrid arithmetic exceptional probability} and
enlarge the fixed exponential base. This proves
\eqref{eq:hybrid box inversion}, for example with
${C_{\ref{thm:hybrid box inversion}}^{(1)}}=\ell_0$ and
${C_{\ref{thm:hybrid box inversion}}^{(2)}}=C_0+2\ell_0$.
The powers and exponential base were fixed before $\mu(L)$;
only the scale range and lower dimension depend on it.
The argument holds on each good fibre, proving conditional uniformity.
\end{proof}
\section{Threshold control}\label{Sec:threshold control}
Rounding controls matrix images, but need not preserve the auxiliary
threshold. In particular, $x\approx y$ and $A_mx\approx A_my$ do not give
$T_L(y)\lesssim T_L(x)$. A union bound confined to the original threshold
class would therefore be unjustified.

We use a stopping chain on one fixed fibre. A rounded point with usable
threshold is terminal; otherwise it is reclassified at a strictly larger
dyadic scale. The certificate, auxiliary law and projection remain fixed.
Only the coefficient point and its scale change. Each terminal point
belongs to a net chosen before the true orientations, which permits one
union bound over terminal nets.

\subsection{The geometric domain and the main estimate}

Retain the dimensions from Section~\ref{subsec:inversion setting}
and the block proportion \eqref{eq:inversion block proportion}. Fix $0<a_g<b_g$,
$0<\sigma_g<1/4$, $B_g\ge1$ and an integer $\ell_0\ge1$.
Choose the support-coverage parameter of
Definition~\ref{def:good switching certificate} so that
$0<\varepsilon<\sigma_g/8$. The block proportion $\theta_0$
is taken sufficiently small for
Theorem~\ref{thm:hybrid box inversion} with the fixed, enlarged
box bounds used below. Its smallness requirement is the residual
requirement in Proposition~\ref{prop:residual rank inversion} at
radius coefficient one: the proof of
Theorem~\ref{thm:hybrid box inversion} uses that residual estimate
before Gaussian smoothing, so changing the fixed raw radius in
the box theorem changes its exponential base but not this
requirement. These choices precede $L$.

\begin{definition}
Let $\mathcal K_0$ be the subset of $H_m\cap S^{m-1}$ consisting
of vectors $v$ such that, for each $b\in\{0,1\}$,
\begin{align}
 \left|\{i\in Y_b:a_g/\sqrt m\le v_i\le b_g/\sqrt m\}\right|
       &\ge\sigma_g|Y_b|,\notag\\
 \left|\{i\in Y_b:-b_g/\sqrt m\le v_i\le-a_g/\sqrt m\}\right|
       &\ge\sigma_g|Y_b|,
 \qquad \|v_Y\|_2\le B_g\sqrt{d_Y/m}.\label{eq:threshold signed geometry}
\end{align}
For $\rho\ge0$, define the relative neighborhood
\begin{align*}
 \mathcal U_\rho
       :=\mathcal K_0+\rho(B_2^m\cap H_m).
\end{align*}
\end{definition}

\begin{lemma}
There is a fixed $\delta_0>0$ such that every vector
$u\in\mathbb R^m$ with
$\operatorname{dist}(u,\mathcal K_0)\le\delta_0$ satisfies
\begin{align*}
 \|u\|_2\le2,\qquad
 \|u_Y\|_2\le(B_g+1)\sqrt{d_Y/m},
\end{align*}
and, in each half $Y_b$, at least $(\sigma_g/2)|Y_b|$ coordinates
belong to each of
\begin{align*}
 [a_g/(2\sqrt m),\,2b_g/\sqrt m],\qquad
 [-2b_g/\sqrt m,\,-a_g/(2\sqrt m)].
\end{align*}
In particular, either signed set has size at least
$2\varepsilon|Y_b|$. The constant $\delta_0$ is independent of $L$.
\end{lemma}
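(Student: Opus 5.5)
The plan is a direct perturbation argument. Fix $u$ with $\operatorname{dist}(u,\mathcal K_0)\le\delta_0$ and pick $v\in\mathcal K_0$ with $\|u-v\|_2\le\delta_0$; the distance is attained, or approximated up to a factor $2$, which is harmless. The norm bounds are immediate from the triangle inequality. Since $v\in S^{m-1}$, we get $\|u\|_2\le1+\delta_0\le2$ once $\delta_0\le1$. Likewise $\|u_Y\|_2\le\|v_Y\|_2+\delta_0\le B_g\sqrt{d_Y/m}+\delta_0$. This is at most $(B_g+1)\sqrt{d_Y/m}$ provided $\delta_0\le\sqrt{d_Y/m}$. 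By \eqref{eq:inversion block proportion} and $n\le m\le(1+\kappa)n$, we have $d_Y/m\ge\theta_0N/m\ge\theta_0/4$ for large $n$. So it suffices to take $\delta_0\le\sqrt{\theta_0}/2$, and this depends only on the fixed block proportion, which precedes $L$.

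For the signed coordinate counts, work in one half $Y_b$ and one sign; the other sign is symmetric. Let $P\subseteq Y_b$ be the set of at least $\sigma_g|Y_b|$ indices with $a_g/\sqrt m\le v_i\le b_g/\sqrt m$. Call $i\in P$ bad if $u_i\notin[a_g/(2\sqrt m),2b_g/\sqrt m]$. For a bad index, $|u_i-v_i|\ge\min\{a_g/2,b_g\}/\sqrt m=a_g/(2\sqrt m)$. A Chebyshev count then gives
\begin{align*}
 \#\{\text{bad } i\}\cdot\frac{a_g^2}{4m}\le\|u-v\|_2^2\le\delta_0^2 ,
\end{align*}
so the number of bad indices is at most $4\delta_0^2m/a_g^2$. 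We need this to be at most $(\sigma_g/2)|Y_b|$. The splitter is balanced, so $|Y_b|\ge d_Y/2-1\ge\theta_0N/3$, which is a fixed fraction of $m$. The requirement therefore reduces to $\delta_0^2\le c\,a_g^2\sigma_g\theta_0$ for an absolute $c>0$ and all large $n$. Taking $\delta_0$ to be the minimum of this quantity's square root, $\sqrt{\theta_0}/2$ and $1$ gives a constant depending only on $a_g,\sigma_g,\theta_0$ and $\kappa$. In particular $\delta_0$ is independent of $L$.

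Finally, $\varepsilon<\sigma_g/8$ gives $(\sigma_g/2)|Y_b|\ge4\varepsilon|Y_b|\ge2\varepsilon|Y_b|$, which is the last claim. No step is a real obstacle. The only point requiring care is checking that every proportion used, namely $|Y_b|/m$ and $d_Y/m$, is bounded below by constants fixed before $L$. This holds through \eqref{eq:inversion block proportion} and the comparability of $N$, $n$ and $m$.
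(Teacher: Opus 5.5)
Your proposal is correct and matches the paper's proof. The paper uses the triangle inequality for the two norm bounds under $\delta_0\le\min(1,\sqrt{d_Y/m})$, together with the same Chebyshev count: at most $4\delta_0^2m/a_g^2$ coordinates move by more than $a_g/(2\sqrt m)$, which is at most half of each signed set since $|Y_b|\ge c\theta_0 m$. Your explicit checks that $d_Y/m\ge\theta_0/4$ and that $\varepsilon<\sigma_g/8$ gives the final count fill in details the paper leaves implicit.
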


\begin{proof}
If $\|u-v\|_2\le\delta_0$ with $v\in\mathcal K_0$, at most
$4\delta_0^2m/a_g^2$ coordinates change by more than $a_g/(2\sqrt m)$.
Since $|Y_b|\ge c\theta_0m$, sufficiently small fixed $\delta_0$
leaves half of each prescribed signed set. The norm bounds follow by
the triangle inequality after also requiring
$\delta_0\le\min(1,\sqrt{d_Y/m})$ uniformly in the allowed dimensions.
\end{proof}

Let $\mathcal H_{m;X,Y}$ and $\mathcal E_{m;X,Y}$ be the exposure
and its good event from Theorem~\ref{thm:exact hybrid factorization}.
On a fixed good fibre $\mathfrak c$, fix an orthogonal projection $Q$ with
\begin{align*}
 \operatorname{Im}Q\subseteq H_m,\qquad
 \ell_Q:=m-\operatorname{rank}Q\le\ell_0,
 \qquad r:=\operatorname{rank}Q.
\end{align*}
We use $T_{L,\mathfrak c}^Q$ from Definition~\ref{def:auxiliary threshold}, abbreviated
to $T_L$ on this fibre, with the same cap
$s_0=\tau_0/L$ as in
Definition~\ref{def:auxiliary threshold}.
The probability defining $T_L$ always integrates the auxiliary
operator $\widehat M_{\mathfrak c}$. Write $\mathbb P$
for the true conditional law of $A_m$ on the fibre.

\begin{theorem}\label{thm:uniform threshold control}
Fix $\lambda,\kappa,a_g,b_g,\sigma_g,B_g,\ell_0$ and the preceding
choices of $\varepsilon,\theta_0,\delta_0$. Put
$\mathbf q=(\lambda,\kappa,a_g,b_g,\sigma_g,B_g,\ell_0,\varepsilon,\theta_0,\delta_0)$.
There are
$L_{\ref{thm:uniform threshold control}}=L_{\ref{thm:uniform threshold control}}(\mathbf q)\ge2$ and
${c_{\ref{thm:uniform threshold control}}^{(2)}=c_{\ref{thm:uniform threshold control}}^{(2)}(\mathbf q)}>0$ such that, for every
fixed $L\ge L_{\ref{thm:uniform threshold control}}$, there are
${c_{\ref{thm:uniform threshold control}}^{(1)}=c_{\ref{thm:uniform threshold control}}^{(1)}(\mathbf q,L)}>0$ and
$m_{L,\ref{thm:uniform threshold control}}=m_{\ref{thm:uniform threshold control}}(\mathbf q,L)$ with the following
property. Set
\begin{align*}
 \Delta_m:=\exp(-{c_{\ref{thm:uniform threshold control}}^{(1)}(L)}m),
 \qquad \mathcal K:=\mathcal U_{\delta_0/4}.
\end{align*}
For every $m\ge m_{L,\ref{thm:uniform threshold control}}$, every
fixed good hybrid fibre and every projection as above,
\begin{align}
 \mathbb P\left\{
   \exists v\in\mathcal K:
       T_L(v)\ge\Delta_m,\quad
       \|QA_mv\|_2\le T_L(v)\sqrt m\right\}
       \le e^{-{c_{\ref{thm:uniform threshold control}}^{(2)}}m}.\label{eq:uniform threshold control}
\end{align}
The constants depend only on these fixed parameters and, where indicated,
$L$; they are uniform over the fibres and projections.
Equivalently, outside one event of this probability, simultaneously
for every $v\in\mathcal K$,
\begin{align}
 T_L(v)<\max\{\Delta_m,\,\|QA_mv\|_2/\sqrt m\}.\label{eq:threshold versus image}
\end{align}
If $Q$ is $\mathcal H_{m;X,Y}$-measurable, the conditional form is
\begin{align*}
 &\mathbf 1_{\mathcal E_{m;X,Y}}
  \mathbb P\left(
   \exists v\in\mathcal K:
      T_{L,\mathfrak c}^Q(v)\ge\Delta_m,
      \|QA_mv\|_2\le T_{L,\mathfrak c}^Q(v)\sqrt m
          \,\middle|\,\mathcal H_{m;X,Y}\right)\\
 &\qquad\le
   \mathbf 1_{\mathcal E_{m;X,Y}}
                     e^{-{c_{\ref{thm:uniform threshold control}}^{(2)}}m}.
\end{align*}
\end{theorem}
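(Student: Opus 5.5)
We will run the stopping chain announced at the start of Section~\ref{Sec:threshold control}, on the fixed good fibre $\mathfrak c$, with $Q$, the auxiliary law of $\widehat M_{\mathfrak c}$ and the cap $s_0=\tau_0/L$ held fixed.

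\textbf{Dyadic classification.} Set $\mathcal K_j:=\{v\in\mathcal K:T_L(v)\in(2^{-j-1}s_0,2^{-j}s_0]\}$ for $0\le j\le j_{\max}$, where $2^{-j_{\max}}s_0\approx\Delta_m$, so $j_{\max}\le Cm$. Write $\varepsilon_j:=2^{-j}s_0$ and $M_j:=1/\varepsilon_j$. For each $j$ we fix, before the true orientations are sampled, a net $\mathcal N_j$: the points $z/M'$, with $M'\asymp \sqrt m\,M_j$, where $z$ ranges over the integer points of finitely many signed coefficient boxes of scale $M'$ covering the dilate $M'\mathcal U_{\delta_0/2}$. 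The signed sets and box bounds are supplied by the preceding lemma on $\operatorname{dist}(u,\mathcal K_0)\le\delta_0$. There are $e^{O(m)}$ sign patterns and box choices, and this count is absorbed by the base $R/L$. Given $v\in\mathcal K_j$ with $\|QA_mv\|_2\le T_L(v)\sqrt m$, random rounding \eqref{eq:rounding estimate} at mesh $1/M'$ produces $w\in\mathcal N_j\cap\mathcal U_{\delta_0/2}$ satisfying simultaneously $\|Q A_m(w-v)\|_2\lesssim\varepsilon_j\sqrt m$ and $\mathbb E_\omega\|Q\widehat M_{\mathfrak c}(w-v)\|_2^2\lesssim\varepsilon_j^2 m$. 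The first bound uses $\|A_m\|_{\mathrm{HS}}\le m$, the second \eqref{eq:auxiliary deterministic bounds} together with Markov. Combining these with the definition of $T_L(v)$, and enlarging radii by a fixed factor, yields $\mathbb P_\omega(\|Q\widehat M_{\mathfrak c}w\|_2\le C\varepsilon_j\sqrt r)\ge c(4L\varepsilon_j)^r$.

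\textbf{Stopping rule.} The point $w$ is \emph{terminal} if $T_L(w)<C'\varepsilon_j$, i.e.\ its own threshold is usable at scale $\varepsilon_j$. In that case Corollary~\ref{cor:conditional hybrid capture} with $t=C'\varepsilon_j$ gives $\mathbb P(\|QA_mw\|_2\le C\varepsilon_j\sqrt m)\le(C''L\varepsilon_j)^{r}$. Otherwise $T_L(w)\ge C'\varepsilon_j$, and we reclassify $w$ at the strictly larger dyadic scale $j'<j$, repeating the rounding with $w$ in place of $v$. Each step moves the point by $O(\varepsilon_{j'}\sqrt m/M')$ in $\ell_2$, and these displacements are summable geometrically. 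Hence the chain stays inside $\mathcal U_{\delta_0/2}$ and keeps $\|QA_m\cdot\|_2\lesssim\varepsilon\sqrt m$ at the current scale. The chain has length at most $j_{\max}$ and terminates at the latest at $j=0$, where the cap forces terminality. Every terminal point lies in $\bigcup_j\mathcal N_j$, a family fixed in advance, so a single union bound suffices:
\[
\sum_j\ \sum_{w\in\mathcal N_j^{\mathrm{term}}}\mathbb P\bigl(\|QA_mw\|_2\le C\varepsilon_j\sqrt m\bigr).
\]

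\textbf{Counting terminal points.} This is the main obstacle. A terminal point $w\in\mathcal N_j$ is reached only if $\mathbb P_\omega(\|Q\widehat M_{\mathfrak c}w\|_2\le C\varepsilon_j\sqrt r)\ge c(4L\varepsilon_j)^r$. In the box coordinates $z=M'w$, with $\|Q\widehat M_{\mathfrak c}z\|_2\le C_hm$, this reads $\ge(L'/M')^{m-\ell_Q}$ with $L'\asymp L$, since the $\sqrt m$ and constant factors can be moved into $L'$. Theorem~\ref{thm:hybrid box inversion} therefore bounds the fraction of such $z$ in each box by $m^{C}M'^{C}(R/L)^{2m}$, so the number of relevant terminal points is at most $|\mathcal B|\,m^CM'^C(R/L)^{2m}\le (CM')^m m^C M'^C (R/L)^{2m}$. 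Multiplying by the capture bound $(C''L\varepsilon_j)^{m-\ell_0}$ and using $M'\varepsilon_j\asymp\sqrt m$ is delicate, because the $\sqrt m$ mesh loss must cancel. We therefore plan to calibrate the mesh as $1/M'$ with $M'=\varepsilon_j^{-1}$ and to measure $A_m$-errors by $\|A_m\|_{\mathrm{op}}$-free HS rounding, giving error $\varepsilon_j\sqrt m$ directly. The resulting total is $e^{O(m)}(RC''/L)^{m}\varepsilon_j^{-O(1)}$. Choosing $L\ge L_{\ref{thm:uniform threshold control}}$ large makes this $e^{-cm}$. The range restriction $M'\le\exp(c(L)m)$ of Theorem~\ref{thm:hybrid box inversion} is exactly what dictates $\Delta_m=\exp(-c^{(1)}(L)m)$. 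Summing over $j\le Cm$ costs only a polynomial factor. The conditional form follows because everything was carried out on an arbitrary good atom with $Q$ measurable, and \eqref{eq:threshold versus image} is the complement of \eqref{eq:uniform threshold control}.
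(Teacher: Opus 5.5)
Your architecture matches the paper's: dyadic threshold classes, a chain that either stops at a point with usable threshold or reclassifies it at a strictly larger scale, nets and terminal subsets fixed on the fibre before the true orientations, and one union bound pairing the box-inversion count with Fourier capture. Using the whole lattice as the net, with one rounding that controls both the true image and the auxiliary probability, is a legitimate variant of the paper's two-stage construction (the Euclidean cover of Proposition~\ref{prop:threshold class nets} followed by the fixed image net of Lemma~\ref{lem:fixed image net}).

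\textbf{The gap is at the top of the chain.} You say the chain ``terminates at the latest at $j=0$, where the cap forces terminality,'' and you then bound every terminal point by Corollary~\ref{cor:conditional hybrid capture} at $t=C'\varepsilon_j$. That corollary requires $T_L(w)<t\le s_0/2$. Your $C'$ must exceed the accumulated image-error factor, so $C'>1$ (the paper needs $a_{\ref{lem:threshold successor}}>8$). At the top scales capture is therefore unavailable, in particular at $j=0$, where $t=C's_0>s_0$. A point with $T_L(w)=s_0$ gives no upper bound on the auxiliary concentration at any radius, hence none on the true one. The cap makes such points terminal, but it does not make them harmless.

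The missing ingredient is an upper barrier, supplied by Lemma~\ref{lem:empty upper threshold range}. Apply the box-inversion count at the boundedly many dyadic scales in $[t_*,s_0]$, where $t_*=s_0/(4a_{\ref{lem:threshold successor}})$. The resulting covering numbers are $m^{O(1)}L^{O(1)}(C/(\tau_0L))^m<1$ once $L$ and then $m$ are large. Hence no point of $\mathcal U_{3\delta_0/4}$ has $T_L\ge t_*$, every chain scale lies below $t_*$, and every terminal capture is at $t<s_0/4$. In your setup the same count at $\varepsilon_j\gtrsim\tau_0/L$ would show those terminal sets are empty, but you have to prove this; the cap does not do it.

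\textbf{Three further steps need repair.}

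First, the transfer to $w$. The property you assert, $\mathbb E_\omega\|Q\widehat M_{\mathfrak c}(w-v)\|_2^2\lesssim\varepsilon_j^2m$ followed by Markov, cannot move an event of $\mathbb P_\omega$-probability $(4L\varepsilon_j)^r$. Markov only controls the displacement off a set of constant mass, and that event is exponentially small. You need the rounding bound for every auxiliary realization, which is available because \eqref{eq:auxiliary deterministic bounds} holds for all matchings. Integrate it jointly over rounding and walks on the small-ball event of $v$, then apply Fubini, as in the proof of Proposition~\ref{prop:threshold class nets}. Each failure then has $\mathbb P_R$-probability at most a small constant, so a single rounding can also satisfy your true-image bound.

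Second, your final calibration is wrong. With mesh $\varepsilon_j$ per coordinate, the $\ell_2$ displacement is $\varepsilon_j\sqrt m$, which leaves the geometric domain. The image error is then of order $\varepsilon_j\|QA_m\|_{\mathrm{HS}}\le\varepsilon_jm$, not $\varepsilon_j\sqrt m$. Your first calibration, mesh $\asymp\varepsilon_j/\sqrt m$, was correct. There $z=\sqrt m\,w/\varepsilon_j$ has coordinates of size $1/\varepsilon_j$, so the boxes have scale $M\asymp1/\varepsilon_j$ (not $\sqrt m/\varepsilon_j$) and $(CM)^m$ points. Count times capture is then $m^{O(1)}\varepsilon_j^{-O(1)}(CR^2/L)^m$, with no $\sqrt m$ loss to cancel.

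Third, Corollary~\ref{cor:conditional hybrid capture} is stated for $v\in H_m$. Keep the lattice points in $H_m$, for instance by rounding coordinates $1$ and $2$ together.
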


\subsection{Counting and approximating a threshold class}

The auxiliary count first yields a Euclidean cover. A deterministic
image net then returns the coefficients to $H_m$, where the true Fourier
comparison applies. These two approximations serve different purposes.

\begin{lemma}\label{lem:threshold product box cover}
Let $M\ge M_{\ref{lem:threshold product box cover}}$.
The integer vectors $z\in\mathbb Z^m$ satisfying
\begin{align*}
 \|z\|_2\le2M\sqrt m,\qquad
 \|z_Y\|_2\le(B_g+1)M\sqrt{d_Y},
\end{align*}
and having at least $(\sigma_g/2)|Y_b|$ coordinates in each of
$[a_gM/2,2b_gM]$ and $[-2b_gM,-a_gM/2]$ in each half $Y_b$,
are covered by at most
$C_{\ref{lem:threshold product box cover}}^m$ signed coefficient
boxes of scale $M$. Every box has at most
$(C_{\ref{lem:threshold product box cover}}M)^m$ points and
satisfies Definition~\ref{def:signed coefficient box} with fixed
parameters independent of $M,L$ and the certificate. Here
$C_{\ref{lem:threshold product box cover}}=C_{\ref{lem:threshold product box cover}}(\mathbf q)$
and $M_{\ref{lem:threshold product box cover}}=M_{\ref{lem:threshold product box cover}}(\mathbf q)$,
with $\mathbf q$ as in Theorem~\ref{thm:uniform threshold control}.
\end{lemma}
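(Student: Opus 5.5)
The plan is a direct box construction: partition each coordinate range into intervals of length comparable to $M$, and group the admissible integer vectors by which interval each coordinate lies in. Fix a mesh parameter $c_I\in(0,a_g/4)$ small in terms of $a_g$. Each admissible $z$ has $\|z\|_\infty\le\|z\|_2\le 2M\sqrt m$, which is far too large for a coordinatewise cover with boundedly many intervals per coordinate. So we do not tile $[-2M\sqrt m,2M\sqrt m]$ uniformly. Instead, for each coordinate $i$ we record the integer $k_i:=\lfloor z_i/(c_IM)\rfloor$ and assign $z$ to the box
\begin{align*}
\prod_i\bigl([k_ic_IM,(k_i+1)c_IM)\cap\mathbb Z\bigr).
\end{align*}
Each such interval has $|I_i|\ge c_IM/2$ consecutive integers once $M\ge M_{\ref{lem:threshold product box cover}}$, and $\operatorname{diam}(I_i)\le c_IM$, as required by \eqref{eq:coefficient interval bounds}.

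The count of boxes reduces to counting integer sequences $(k_i)$ with $\sum_i k_i^2\lesssim m$. This holds because $\sum_i (c_IM k_i)^2\le 2\|z\|_2^2+2m(c_IM)^2\le Cm M^2$. The number of integer vectors $k\in\mathbb Z^m$ with $\|k\|_2\le C\sqrt m$ is at most $C'^m$, by the standard lattice-ball volume bound: compare with the volume of a ball of radius $C\sqrt m+\sqrt m$, which is $(C'')^m$. This gives the $C_{\ref{lem:threshold product box cover}}^m$ bound. Each box has at most $(c_IM+1)^m\le(CM)^m$ points.

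Next we verify the remaining requirements of Definition~\ref{def:signed coefficient box} for each box that actually meets the admissible set; boxes that do not are discarded. Suppose a box contains an admissible $z_0$. For every $z$ in the box, $\|z-z_0\|_\infty\le c_IM$, so $\|z\|_2\le 2M\sqrt m+c_IM\sqrt m\le 3M\sqrt m$, and similarly $\|z_Y\|_2\le(B_g+2)M\sqrt{d_Y}$. This fixes $B=3$ and $B_Y=B_g+2$.

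For the signed sets, let $P_{b,+}$ be the set of $i\in Y_b$ with $z_{0,i}\in[a_gM/2,2b_gM]$. On these coordinates every point of the box satisfies $z_i\in[a_gM/2-c_IM,\,2b_gM+c_IM]\subseteq[aM,bM]$ with $a:=a_g/4$ and $b:=3b_g$. Since $c_I<a_g/4$, the interval $I_i$ is contained in $[aM,bM]$. Define $P_{b,-}$ symmetrically. These sets are disjoint and have size at least $(\sigma_g/2)|Y_b|\ge 2\varepsilon|Y_b|$, because $\varepsilon<\sigma_g/8$. Note that $P_{b,\pm}$ is determined by the box itself, not by the choice of $z_0$: the interval $I_i$ already decides whether every point has its $i$-th coordinate in the signed range. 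So we may take $P_{b,\pm}$ to be the set of indices whose interval lies in the enlarged signed range. This is a superset of the $z_0$-defined set, and it is a deterministic function of the box.

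All parameters are independent of $M$, $L$ and the certificate, and depend only on $\mathbf q$. The only step needing care is the lattice count, which is routine, so I expect no real obstacle. The one subtlety is choosing the lower bound $M_{\ref{lem:threshold product box cover}}$ large enough that $c_IM\ge 2$, so that each interval contains at least $c_IM/2$ integers, and that $M\ge M_0$.
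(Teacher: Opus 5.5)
Your proof is correct. It differs from the paper's only in how the signed sets are handled; the lattice-ball count of interval indices and the norm bounds from box diameters are the same.

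The paper first prescribes the signed sets $P_{b,\pm}$, with at most $3^{d_Y}$ choices. It gives those coordinates the single full intervals $[a_gM/2,2b_gM]$ and $[-2b_gM,-a_gM/2]$. Only the remaining coordinates are tiled, by blocks of $\lceil M\rceil$ consecutive integers, and their block indices are counted by the lattice-ball bound. The boxes arising from different signed-set choices may overlap, which is harmless for the count.

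You instead use one uniform grid of mesh $c_IM$ with $c_I<a_g/4$. This is fine enough that any cell meeting $[a_gM/2,2b_gM]$ lies inside $[a_gM/4,3b_gM]$. So the signed sets can be read off canonically from each cell, and no enumeration of them is needed. Your construction is a genuine partition of the admissible set, with the same $C^m$ box count and $(CM)^m$ point bound.

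The cost is that your generic mesh, and hence the box constants fed into Theorem~\ref{thm:hybrid box inversion}, depend on $a_g$. The paper's generic mesh $\lceil M\rceil$ does not. This is harmless: $a_g$ is fixed before $\theta_0$ and $L$, and the paper's own lower signed constant $a_g/2$ already depends on it.

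Two minor points. First, your mesh constant $c_I$ is not the constant in \eqref{eq:coefficient interval bounds}; in your construction that lower cardinality constant is $c_I/2$. Second, your remark that you ``do not tile uniformly'' is misleading, since your grid is uniform. What you actually avoid is the product count over coordinates, not uniform tiling.
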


\begin{proof}
Prescribe positive and negative coordinate sets in each half, with
the required cardinalities. On them use the full integer intervals
$[a_gM/2,2b_gM]$ and $[-2b_gM,-a_gM/2]$; divide the other
coordinates into intervals of $\lceil M\rceil$ consecutive integers.
Retain boxes meeting the stated lattice set. Their diameters are
$CM\sqrt m$ globally and $CM\sqrt{d_Y}$ on $Y$, so they satisfy
the required norm bounds with enlarged fixed constants.
The signed proportions suffice because $\varepsilon<\sigma_g/8$.
For fixed signed sets, the indices $j\in\mathbb Z^d$, $d\le m$,
of the remaining intervals satisfy $\|j\|_2\le C\sqrt m$.
There are at most $C_1^m$ such indices by lattice-ball counting.
The at most $3^{d_Y}$ choices of signed sets give the box count;
each factor has between $cM$ and $CM$ points.
\end{proof}
For the approximations below we use the random-rounding estimate
\eqref{eq:rounding estimate}, averaging only the coordinate choices.

\begin{lemma}\label{lem:fixed image net}
Let $E\subseteq\mathbb R^m$ be a fixed subspace of dimension
$d\ge m/2$, let $K\subset E$ be bounded, and let $t>0$.
There is a deterministic finite set $\mathcal N\subset E$ with
\begin{align*}
 |\mathcal N|\le C_{\ref{lem:fixed image net}}^m
                              N(K,tB_2^m),\qquad
 \mathcal N\subset K+3t(B_2^m\cap E),
\end{align*}
such that, for every real matrix $B$ with $m$ columns and every
$x\in K$, some $y=y(B,x)\in\mathcal N$ satisfies
\begin{align*}
 \|B(x-y)\|_2\le\frac{t}{2\sqrt d}\|B|_E\|_{\mathrm{HS}}.
\end{align*}
The net is fixed from $E,K,t$ before $B$ is supplied.
\end{lemma}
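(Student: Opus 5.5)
We build the net from a minimal $t$-net of $K$ followed by unbiased coordinate rounding in an orthonormal basis of $E$. Fix an orthonormal basis $f_1,\dots,f_d$ of $E$ and write $F=[f_1,\dots,f_d]\in\mathbb R^{m\times d}$, so every $x\in E$ is $x=F\alpha$ with $\|\alpha\|_2=\|x\|_2$. Let $\mathcal N_0\subset K$ be a $t$-net of $K$ of size $N(K,tB_2^m)$. We may take the centres in $K$ at the cost of replacing $t$ by $2t$, with a constant loss in the covering number that is absorbed into $C^m$. Put $h_*:=t/\sqrt d$. For each $x_0\in\mathcal N_0$ with coefficient vector $\alpha_0$, let $\mathcal N(x_0)$ be the set of points $F\beta$ with $\beta\in h_*\mathbb Z^d$ and $\|\beta-\alpha_0\|_2\le 2t+\sqrt d\,h_*\le 3t$. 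Set $\mathcal N:=\bigcup_{x_0}\mathcal N(x_0)$. This set depends only on $E,K,t$. It lies in $E$, and it lies in $K+3t(B_2^m\cap E)$ because each point is within $3t$ of $x_0\in K$ (adjusting constants in the radii if needed).

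Counting comes first. Each $\mathcal N(x_0)$ consists of lattice points of mesh $t/\sqrt d$ in a $d$-dimensional ball of radius $3t$. By the lattice-ball volume bound, its size is at most $(C\cdot 3t\sqrt d/h_*\cdot d^{-1/2})^d\le C'^d$. Hence $|\mathcal N|\le C'^m N(K,tB_2^m)$, using $d\le m$.

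Next comes the approximation. Given $B$ and $x\in K$, choose $x_0\in\mathcal N_0$ with $\|x-x_0\|_2\le 2t$, and let $\alpha$ be the coefficient vector of $x$. Apply the unbiased rounding $R_{h_*}$ to $\alpha$ coordinatewise in $h_*\mathbb Z^d$. By \eqref{eq:rounding estimate}, $\|R_{h_*}(\alpha)-\alpha\|_\infty\le h_*$, so $\|R_{h_*}(\alpha)-\alpha_0\|_2\le 2t+t$. Thus $y:=FR_{h_*}(\alpha)\in\mathcal N(x_0)$ for every rounding outcome. Applying \eqref{eq:rounding estimate} to the matrix $BF$ gives
\begin{align*}
 \mathbb E_R\|B(y-x)\|_2^2\le\frac{h_*^2}{4}\|BF\|_{\mathrm{HS}}^2
 =\frac{t^2}{4d}\|B|_E\|_{\mathrm{HS}}^2,
\end{align*}
since $\|BF\|_{\mathrm{HS}}=\|B|_E\|_{\mathrm{HS}}$ for an orthonormal basis. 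Some outcome does no worse than the mean, and that outcome gives the required $y(B,x)$ with $\|B(x-y)\|_2\le t\|B|_E\|_{\mathrm{HS}}/(2\sqrt d)$.

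The only delicate point is bookkeeping. The net must be fixed before $B$, and $y$ must be in $\mathcal N$ for every rounding outcome, not just on average. Both are ensured by taking $\mathcal N(x_0)$ to be the whole lattice ball, which contains the rounding support deterministically. The hypothesis $d\ge m/2$ is not needed in this argument; it would only be used to convert $d$-exponents into $m$-exponents in the count.
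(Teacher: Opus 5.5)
Your proof is correct and follows essentially the same route as the paper: cover $K$, attach to each centre the points of a mesh-$t/\sqrt d$ lattice in $E$ within a fixed multiple of $t$, count them by the lattice-ball bound, and use unbiased coordinate rounding with the first-moment bound \eqref{eq:rounding estimate} to find a good point of this pre-fixed set. The paper takes centres in $E$ within $t$ of $K$ and a lattice shifted by each centre, whereas you take centres in $K$ and one unshifted lattice, which is cosmetic; one small slip is that your displayed count $(C\cdot 3t\sqrt d/h_*\cdot d^{-1/2})^d$ equals $(3C\sqrt d)^d$, and the correct bound is $\mathrm{vol}(B_2^d)\,(3t/h_*+\sqrt d/2)^d\le (C d^{-1/2}\cdot 4\sqrt d)^d$, which is the $C'^d$ you claim.
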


\begin{proof}
Cover $K$ by $N(K,tB_2^m)$ balls of radius $t$ with centers in $E$
and within $t$ of $K$. Fix an isometry $J:\mathbb R^d\to E$.
For each center $z$ include all points of
$z+J((t/\sqrt d)\mathbb Z^d)$ within $2t$ of $z$.
There are at most $C^d$ such points by the standard lattice-ball count,
and they lie within $3t$ of $K$.
For $x$ in the original ball, unbiased rounding of $J^T(x-z)$ with mesh
$t/\sqrt d$ stays in the enlarged ball. Some rounding has squared
image error at most its expectation
$t^2\|BJ\|_{\mathrm{HS}}^2/(4d)$.
The set of all allowed roundings was fixed before $B$ and gives the claim.
\end{proof}

\begin{proposition}\label{prop:threshold class nets}
Fix $c_E>0$ and $C'\ge1$ independently of $L$.
There are positive constants
$A_{\ref{prop:threshold class nets}}=A_{\ref{prop:threshold class nets}}(\mathbf q,c_E,C^\prime)$ and
$C_{\ref{prop:threshold class nets}}^{(3)}=C_{\ref{prop:threshold class nets}}^{(3)}(\mathbf q,c_E,C^\prime)$,
and nonnegative integers
$C_{\ref{prop:threshold class nets}}^{(i)}=C_{\ref{prop:threshold class nets}}^{(i)}(\mathbf q,c_E,C^\prime)$
for $i=1,2$, with the following property. For every sufficiently large fixed $L$
there are ${c_{\ref{prop:threshold class nets}}=c_{\ref{prop:threshold class nets}}(\mathbf q,c_E,C^\prime,L)}>0$ and
$m_{L,\ref{prop:threshold class nets}}=m_{\ref{prop:threshold class nets}}(\mathbf q,c_E,C^\prime,L)$ such that, if
\begin{align}
 m\ge m_{L,\ref{prop:threshold class nets}},\qquad
 e^{-{c_{\ref{prop:threshold class nets}}(L)}m}\le t\le s_0,\label{eq:threshold net scale range}
\end{align}
and $K$ is any deterministic subset of
\begin{align*}
 \mathcal U_{3\delta_0/4}\cap\{v:t\le T_L(v)<2t\},
\end{align*}
then
\begin{align}
 N(K,c_EtB_2^m)
 \le m^{{C_{\ref{prop:threshold class nets}}^{(1)}}}
       t^{-{C_{\ref{prop:threshold class nets}}^{(2)}}}
       \left(\frac{A_{\ref{prop:threshold class nets}}}{L^2t}\right)^m.\label{eq:threshold Euclidean entropy}
\end{align}
There is also a deterministic finite set $\mathcal N(K,t)\subset H_m$
with the same cardinality bound, after increasing
$A_{\ref{prop:threshold class nets}}$, such that for every
$B$ with $\|B\|_{\mathrm{HS}}\le C'm$ and every $x\in K$,
some $y\in\mathcal N(K,t)$ satisfies
\begin{align}
 \mathcal N(K,t)\subset K+
 C_{\ref{prop:threshold class nets}}^{(3)}t(B_2^m\cap H_m),\ 
 \|B(x-y)\|_2\le C_{\ref{prop:threshold class nets}}^{(3)}t\sqrt m.\label{eq:threshold image net approximation}
\end{align}
Both the Euclidean cover and the image net are chosen using
only the fixed coefficient class and auxiliary law.
\end{proposition}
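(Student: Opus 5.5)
The plan is to convert the threshold class into an integer-coefficient counting problem at scale $M\asymp 1/(Lt)$. Theorem~\ref{thm:hybrid box inversion} then bounds the proportion of each signed box that can have large auxiliary concentration, and the count is transferred back to a cover of $K$ by rounding.

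\emph{Rescaling.} Set $M:=\lfloor c_1/(t\sqrt m)\rfloor$ with a fixed $c_1$, so that the lattice $\mathbb Z^m/(M\sqrt m)$ has mesh of order $t$. The range \eqref{eq:threshold net scale range} then places $M$ in $[M_0,e^{c(L)m}]$ once $c(L)$ is small. For $v\in K\subseteq\mathcal U_{3\delta_0/4}$, apply the rounding \eqref{eq:rounding estimate} to $M\sqrt m\,v$ at mesh one, producing $z\in\mathbb Z^m$.

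Two properties of $z$ are needed.
\begin{enumerate}
\item By the preceding lemma on $\delta_0$-neighbourhoods, $z$ satisfies the norm and signed-count hypotheses of Lemma~\ref{lem:threshold product box cover}. Hence $z$ lies in one of at most $C^m$ signed boxes of scale $M$, each with at most $(CM)^m$ points.
\item By \eqref{eq:rounding estimate} applied with $B=\widehat M_{\mathfrak c}$ and \eqref{eq:auxiliary deterministic bounds}, $\mathbb E_R\|\widehat M_{\mathfrak c}(z-M\sqrt m v)\|_2^2\le m^2$ for every matching realization. Averaging over $\omega$ and applying Markov, some choice of $z$ (depending on $v$ only) has $\mathbb P_\omega(\|\widehat M_{\mathfrak c}(z-M\sqrt mv)\|_2>2m)\le 1/2$ on the event where $\|Q\widehat M_{\mathfrak c}v\|_2\le 2t\sqrt r$.
\end{enumerate}

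\emph{Concentration of $z$.} Since $T_L(v)\ge t$, there is $s\in[t,2t)$ with $\mathbb P_\omega(\|Q\widehat M_{\mathfrak c}v\|_2\le s\sqrt r)\ge(4Ls)^r\ge(4Lt)^r$. Combining this with (2) gives
\[
\mathbb P_\omega\big(\|Q\widehat M_{\mathfrak c}z\|_2\le C_hm\big)\ge\tfrac12(4Lt)^r\ge (L'/M)^{m-\ell_Q},
\]
with $L'\asymp L$ (absorbing $c_1$ and the factor $1/2$ into $L'$). The point where I expect trouble is exactly here: the rounding error must be controlled on the same $\omega$-event where the threshold mass sits, rather than only on average. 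I would first try the Markov/union argument above. If that is too lossy, I would fall back on averaging over $R$ inside the $\omega$-probability, which costs only a factor $2$ and is absorbed into $L'$.

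\emph{Counting.} Theorem~\ref{thm:hybrid box inversion} with $L'$ now shows that in each box at most $m^{C}M^{C}(R/L')^{2m}(CM)^m$ points have this property. Summing over the $C^m$ boxes and using $M\asymp 1/(t\sqrt m)$, we get at most $m^{C}t^{-C}(A/(L^2t))^m$ points. The powers of $\sqrt m$ combine with the $(CM)^m$ point count into $(A/(tL^2))^m$ after adjusting $A$. The corresponding points $z/(M\sqrt m)$ form a $c_Et$-cover of $K$: the rounding error is at most $1/(M\sqrt m)\cdot\sqrt m\lesssim t$, and choosing $c_1$ small relative to $c_E$ makes it at most $c_Et$. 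This proves \eqref{eq:threshold Euclidean entropy}.

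\emph{Image net.} Apply Lemma~\ref{lem:fixed image net} with $E=H_m$, the set $K$, and radius $c_Et$. This gives a deterministic $\mathcal N(K,t)\subset K+3c_Et(B_2^m\cap H_m)$ of size at most $C^mN(K,c_EtB_2^m)$. For $\|B\|_{\mathrm{HS}}\le C'm$ it satisfies
\[
\|B(x-y)\|_2\le \frac{c_Et}{2\sqrt{m-1}}\,C'm\le C^{(3)}t\sqrt m,
\]
which is \eqref{eq:threshold image net approximation}. The extra $C^m$ factor is absorbed into $A$. Every choice made here depends only on $K$, the certificate and the auxiliary law, never on the true orientations.
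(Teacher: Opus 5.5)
\textbf{Main issue: the scale $M$ is off by a factor $\sqrt m$.} Your outline is the paper's: rescale, round to an integer point, transfer the threshold mass, count with Lemma~\ref{lem:threshold product box cover} and Theorem~\ref{thm:hybrid box inversion}, then apply Lemma~\ref{lem:fixed image net}. Your image-net paragraph is correct as written. The problem is the choice $M\asymp 1/(t\sqrt m)$.

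With this $M$, the per-coordinate mesh $1/(M\sqrt m)$ is of order $t$. Hence the Euclidean rounding error $\|z/(M\sqrt m)-v\|_2$ is of order $1/M\asymp t\sqrt m$. Your closing claim that this error is $\lesssim t$ contradicts your own choice of $M$. The argument then breaks in four places:
\begin{itemize}
\item[(i)] The points $z/(M\sqrt m)$ form a $t\sqrt m$-cover, not a $c_Et$-cover.
\item[(ii)] For $t$ near $s_0=\tau_0/L$, the rounded point need not stay within $\delta_0$ of $\mathcal K_0$, so Lemma~\ref{lem:threshold product box cover} is unavailable.
\item[(iii)] For such $t$ one has $M\asymp L/(\tau_0\sqrt m)<1$ once $m$ is large, which violates $M\ge M_0$.
\item[(iv)] Most seriously, the requirement $\tfrac12(4Lt)^r\ge(L'/M)^r$ forces $L'\lesssim LtM\asymp L/\sqrt m$, not $L'\asymp L$. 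The gain $(R/L')^{2m}$ from Theorem~\ref{thm:hybrid box inversion} then becomes $(R\sqrt m/L)^{2m}$, and the entropy bound is lost.
\end{itemize}

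\textbf{The repair.} Take $M=(ct)^{-1}$ with a small fixed $c\le\min\{1/2,c_E\}$ chosen independently of $L$. This is mesh $ct/\sqrt m$ for $v$, or mesh one for $M\sqrt m\,v$. Then:
\begin{itemize}
\item The Euclidean rounding error is at most $ct\le c_Et$, and at most $cs_0\le\delta_0/4$ for large $L$, so geometric stability holds.
\item $M\ge L/(c\tau_0)\ge M_0$ for large $L$.
\item On the threshold event, $\|Q\widehat M_{\mathfrak c}(M\sqrt m\,v)\|_2\le 2tMm=2m/c$. Rounding adds image error at most $2m$ with $\mathbb P_R$-probability at least $3/4$. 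So $C_h=3/c$ and $L'=2L/c$ work.
\item The count $C^m(CM)^m m^{a_1}M^{a_2}(R/L')^{2m}$ then has the required form $m^{a_1}t^{-a_2}(A/(L^2t))^m$.
\end{itemize}
With this correction your argument is exactly the paper's proof.

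\textbf{The step you flagged.} An unconditional Markov-plus-union bound cannot work. The threshold event has $\mathbb P_\omega$-mass only $(4Lt)^r$, which is exponentially small, whereas Markov bounds the bad-rounding probability only by a constant. What is needed is your fallback: average $\mathbb P_R$ inside the $\omega$-event via Fubini, which loses only a factor $3/4$ and yields a deterministic rounded point. Your choice of $s\in[t,2t)$ is fine. Since $s\mapsto\mathbb P_\omega(\|Q\widehat M_{\mathfrak c}v\|_2\le s\sqrt r)$ is nondecreasing, the supremum defining $T_L(v)$ is attained whenever it is positive, so $s=T_L(v)$ works.
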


\begin{proof}
Fix a sufficiently small $c\le\min(1,c_E)$, independently of $L$,
and round $x\in K$ on the ambient lattice of mesh $ct/\sqrt m$.
Every rounded point $u$ has $\|u-x\|_2\le ct$.
For each auxiliary realization, the deterministic Hilbert--Schmidt bound
and unbiased rounding give
$\mathbb E_R\|Q\widehat M_{\mathfrak c}(u-x)\|_2^2\le Cc^2t^2m$.
Choose $c$ so that an error at most $t\sqrt m$ has probability at least
$3/4$ for every realization.

Since $t\le T_L(x)<2t$, admissible scales tending to $T_L(x)$ in
\eqref{eq:auxiliary threshold} show that
\begin{align*}
 \mathbb P_\omega(\|Q\widehat M_{\mathfrak c}x\|_2\le2t\sqrt m)
 \ge(4Lt)^r.
\end{align*}
This includes $T_L(x)=t$.
Average the successful rounding event first over rounding, then over walks.
Fubini supplies a deterministic rounded point satisfying
\begin{align*}
 \|u-x\|_2\le ct,\qquad
 \mathbb P_\omega(\|Q\widehat M_{\mathfrak c}u\|_2\le3t\sqrt m)
                         \ge\tfrac34(4Lt)^r.
\end{align*}
This transfers probability to $u$ without asserting any upper bound on
$T_L(u)$.

Set $M=(ct)^{-1}$, so $u=Z/(M\sqrt m)$ for $Z\in\mathbb Z^m$.
For large $L$, $cs_0\le\delta_0/4$, and geometric stability places all
these coefficients in the lattice set of
Lemma~\ref{lem:threshold product box cover}.
There are at most $C^m$ covering boxes, each with at most $(CM)^m$ points.
In each box apply Theorem~\ref{thm:hybrid box inversion} with
$L'=2L/c$ and $C_h=3/c$.
The raw radius is $3m/c$, and
$\tfrac34(4Lt)^r\ge(2Lt)^r=(L'/M)^r$.
Writing $a_i=C_{\ref{thm:hybrid box inversion}}^{(i)}$,
the number of retained lattice points is therefore at most
\begin{align*}
 C^m(CM)^m m^{a_1}M^{a_2}(R/L')^{2m}
 \le m^{a_1}t^{-a_2}\left(\frac{A}{L^2t}\right)^m.
\end{align*}
Every $x\in K$ has such an approximation, proving the Euclidean cover.
Choose the permitted lower-scale exponent small enough for the box
inversion range at $L'$, and increase the lower dimension to absorb
fixed factors in $M$.

Apply Lemma~\ref{lem:fixed image net} in $H_m$ at radius $c_Et$.
Projection of the covering centers to $H_m$ preserves the cover.
The new net has only a fixed exponential overhead, lies within $3c_Et$
of $K$, and has image error at most
$c_EtC'm/(2\sqrt{m-1})$.
This proves the remaining assertions after increasing the fixed constants.
The product boxes are used only for the ambient count; the image-net
points used with $A_m$ belong to $H_m$.
\end{proof}

\subsection{The stopping chain}
Use Proposition~\ref{prop:threshold class nets} with $c_E=C'=1$.
It applies to every true realization since
$\|QA_m\|_{\mathrm{HS}}\le\|A_m\|_{\mathrm{HS}}\le m$.
For $s>0$, let $\operatorname{dyad}(s)=2^j$ when $2^j\le s<2^{j+1}$.
The following two observations provide the successor rule and an
upper barrier for the chain.

\begin{lemma}\label{lem:threshold successor}
There are fixed constants
$B_{\ref{lem:threshold successor}}\ge
{C_{\ref{prop:threshold class nets}}^{(3)}}$ and
\begin{align*}
 a_{\ref{lem:threshold successor}}
   >\max\left\{8,\,
       2\sqrt2\left(2+\frac43{C_{\ref{prop:threshold class nets}}^{(3)}}\right)
       \right\}
\end{align*}
with the following property. Set, for each permitted dyadic $t$,
\begin{align*}
 \mathcal K_t
  :=\left(\mathcal K+
       B_{\ref{lem:threshold successor}}t(B_2^m\cap H_m)\right)
           \cap\{v:t\le T_L(v)<2t\},
\end{align*}
where $\mathcal K=\mathcal U_{\delta_0/4}$.
Take $L$ sufficiently large that
\begin{align*}
 \left(B_{\ref{lem:threshold successor}}
             +{C_{\ref{prop:threshold class nets}}^{(3)}}\right)s_0
                              \le\delta_0/4.
\end{align*}
Then $\mathcal K_t$ and its image net $\mathcal N_t$
from Proposition~\ref{prop:threshold class nets} are contained
in $\mathcal U_{\delta_0/2}$.
If $y\in\mathcal N_t$ satisfies
$T_L(y)\ge a_{\ref{lem:threshold successor}}t$, then
\begin{align*}
 t':=\operatorname{dyad}(T_L(y))\ge4t,
 \qquad y\in\mathcal K_{t'}.
\end{align*}
The block, splitter, certificate, projection and auxiliary law
are the same in both classes.
\end{lemma}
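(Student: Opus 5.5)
The proof splits into a containment part and a successor part; both are elementary, using only triangle inequalities, the fixed cap $s_0=\tau_0/L$, and the definition of $\mathcal U_\rho$.

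\emph{Containment.} By definition $\mathcal K_t\subseteq\mathcal K+B_{\ref{lem:threshold successor}}t(B_2^m\cap H_m)$ with $\mathcal K=\mathcal U_{\delta_0/4}$. Since $t\le s_0$ and $B_{\ref{lem:threshold successor}}s_0\le\delta_0/4$, every point of $\mathcal K_t$ lies in $\mathcal K_0+(\delta_0/2)(B_2^m\cap H_m)=\mathcal U_{\delta_0/2}$. For the net, Proposition~\ref{prop:threshold class nets} gives $\mathcal N_t\subset\mathcal K_t+C_{\ref{prop:threshold class nets}}^{(3)}t(B_2^m\cap H_m)$, hence $\mathcal N_t\subset\mathcal K_0+(\delta_0/4+(B_{\ref{lem:threshold successor}}+C_{\ref{prop:threshold class nets}}^{(3)})t)(B_2^m\cap H_m)\subseteq\mathcal U_{\delta_0/2}$. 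Before invoking the proposition I must check its hypothesis $\mathcal K_t\subseteq\mathcal U_{3\delta_0/4}\cap\{t\le T_L<2t\}$, which holds by the previous display. The net also lies in $H_m$, so membership in $\mathcal U_{\delta_0/2}$ is relative to $H_m$ as required.

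\emph{Successor.} Let $y\in\mathcal N_t$ with $T_L(y)\ge a_{\ref{lem:threshold successor}}t$. Since $a_{\ref{lem:threshold successor}}>8$, $T_L(y)>8t$, so $t'=\operatorname{dyad}(T_L(y))>4t$ (indeed $t'>T_L(y)/2$). As $T_L(y)\le s_0$ by Definition~\ref{def:auxiliary threshold}, $t'\le s_0$ is a permitted dyadic scale, and trivially $t'\le T_L(y)<2t'$. It remains to show $y\in\mathcal K+B_{\ref{lem:threshold successor}}t'(B_2^m\cap H_m)$. Pick $x\in\mathcal K_t$ with $\|y-x\|_2\le C_{\ref{prop:threshold class nets}}^{(3)}t$, and $k\in\mathcal K$ with $x-k\in B_{\ref{lem:threshold successor}}t(B_2^m\cap H_m)$. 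Then $y-k\in H_m$ and $\|y-k\|_2\le(B_{\ref{lem:threshold successor}}+C_{\ref{prop:threshold class nets}}^{(3)})t\le\frac{B_{\ref{lem:threshold successor}}+C_{\ref{prop:threshold class nets}}^{(3)}}{4}t'$. Choosing $B_{\ref{lem:threshold successor}}\ge C_{\ref{prop:threshold class nets}}^{(3)}$ makes this at most $B_{\ref{lem:threshold successor}}t'/2$, so $y\in\mathcal K_{t'}$.

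\emph{Constants.} Fix $B_{\ref{lem:threshold successor}}:=C_{\ref{prop:threshold class nets}}^{(3)}$ (or larger) and any $a_{\ref{lem:threshold successor}}$ exceeding the displayed maximum; the extra term $2\sqrt2(2+\frac43C^{(3)})$ is not needed here and is presumably reserved for the later chain estimate. None of these depend on $L$. Then take $L$ large so that $(B_{\ref{lem:threshold successor}}+C_{\ref{prop:threshold class nets}}^{(3)})\tau_0/L\le\delta_0/4$. The certificate, projection and auxiliary law are untouched because $T_L$ is the same functional throughout.

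The only subtle point is bookkeeping: the net's $H_m$-ball offset must be taken relative to $\mathcal K_t$ and not to $K$ itself. This is fine because Proposition~\ref{prop:threshold class nets} applies to any deterministic $K$; here $K=\mathcal K_t$, which is deterministic on the fixed fibre.
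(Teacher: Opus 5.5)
Your proposal is correct and is essentially the paper's proof. The containment follows from the $C^{(3)}t$-offset of the image net together with $(B+C^{(3)})s_0\le\delta_0/4$. The successor step uses $t'>T_L(y)/2\ge(a/2)t>4t$ and $(B+C^{(3)})t\le 2Bt\le Bt'$, exactly as you do; tracking the offsets inside $H_m$ is, if anything, slightly more careful than the paper's use of $\operatorname{dist}(y,\mathcal K)$. You are also right that the second term in the lower bound on $a$ is not needed here and is only used later, in the proof of Theorem~\ref{thm:uniform threshold control}.
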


\begin{proof}
Choose $B_{\ref{lem:threshold successor}}\ge
\max\{1,{C_{\ref{prop:threshold class nets}}^{(3)}}\}$ and choose the
jump constant as in the statement.
The neighborhood assertion follows from
\eqref{eq:threshold image net approximation} and the choice of $L$.
For the successor, the definition of the dyadic scale gives
$t'\ge T_L(y)/2\ge(a_{\ref{lem:threshold successor}}/2)t\ge4t$.
Also
\begin{align*}
 \operatorname{dist}(y,\mathcal K)
 &\le\left(B_{\ref{lem:threshold successor}}
                +{C_{\ref{prop:threshold class nets}}^{(3)}}\right)t
 \le2B_{\ref{lem:threshold successor}}t
 \le B_{\ref{lem:threshold successor}}t'.
\end{align*}
Together with $t'\le T_L(y)<2t'$, this proves
$y\in\mathcal K_{t'}$.
\end{proof}
\begin{lemma}\label{lem:empty upper threshold range}
Set $t_*:=s_0/(4a_{\ref{lem:threshold successor}})$.
For sufficiently large fixed $L$, and then sufficiently large
$m$ depending on $L$,
\begin{align*}
 \{v\in\mathcal U_{3\delta_0/4}:T_L(v)\ge t_*\}
                                      =\varnothing.
\end{align*}
This assertion includes the endpoint $T_L(v)=s_0$.
\end{lemma}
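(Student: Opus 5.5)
The plan is to reduce the claim to a single crude small-ball estimate for the auxiliary operator at the cap scale $s_0$, and then obtain that estimate by tensorizing along the alternating path, with $L$ taken large. Fix $v\in\mathcal U_{3\delta_0/4}$ and suppose $T_L(v)\ge t_*$. By \eqref{eq:auxiliary threshold} there is some $s\in[t_*,s_0]$ (arbitrarily close to $T_L(v)$, and including the endpoint $s_0$) with
\begin{align*}
 \mathbb P_\omega\bigl(\|Q\widehat M_{\mathfrak c}v\|_2\le s_0\sqrt r\bigr)
 \ge\mathbb P_\omega\bigl(\|Q\widehat M_{\mathfrak c}v\|_2\le s\sqrt r\bigr)
 \ge(4Ls)^r\ge(4Lt_*)^r=\bigl(\tau_0/a_{\ref{lem:threshold successor}}\bigr)^r .
\end{align*}
The lower bound $(\tau_0/a_{\ref{lem:threshold successor}})^r$ has a base independent of $L$. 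It therefore suffices to show that, for $L$ large and then $m$ large,
\begin{align*}
 \mathbb P_\omega\bigl(\|Q\widehat M_{\mathfrak c}v\|_2\le (\tau_0/L)\sqrt m\bigr)
 \le m^{C}(C/L)^{(1-3\theta_0)m}
\end{align*}
uniformly over $v\in\mathcal U_{3\delta_0/4}$. Since $r\ge m-\ell_0$ and $1-3\theta_0>1/2$, the right side is eventually smaller than $(\tau_0/a_{\ref{lem:threshold successor}})^r$.

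\textbf{Removing $Q$.} By \eqref{eq:auxiliary deterministic bounds}, the component of $\widehat M_{\mathfrak c}v$ in $\ker Q$ lies in a ball of radius $O(m)$. Cover that ball by a deterministic $(\tau_0/L)\sqrt m$-net of size at most $(CmL)^{\ell_0}$. It then suffices to bound $\sup_z\mathbb P_\omega(\|\widehat M_{\mathfrak c}v-z\|_2\le 2(\tau_0/L)\sqrt m)$ and multiply by this polynomial factor.

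\textbf{Per-coordinate estimate.} By \eqref{eq:hybrid auxiliary action}, the row $x_{j+1}$ of the output is $\langle\omega_{e_{j+1}}-\omega_{e_j},v\rangle$ plus an extension translate, and each extension coordinate is $\langle\omega_j,v_O\rangle$. Order the seeds so that each retained row $e_j\notin\mathcal D$, and each extension coordinate, contains exactly one current walk, with everything else measurable with respect to the future seeds. We need a per-coordinate conditional bound
\begin{align*}
 \sup_z\mathbb P_\omega\bigl(|\langle\omega_T,v_T\rangle-z|\le\beta\bigr)\le C\beta+e^{-cm},
 \qquad \beta\ge C'/\sqrt m .
\end{align*}
To get it, use the signed geometry from the lemma preceding Theorem~\ref{thm:uniform threshold control}: on every retained support $T_e$, and on $O$, at least $\sigma_*d_Y$ coordinates lie at height $\ge a_g/(2\sqrt m)$ of each sign. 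A uniform matching pairs $\Theta(d_Y)$ positive coordinates with negative ones, outside probability $e^{-cd_Y}$. Conditional on such a matching, the lazy signs give $\Theta(m)$ independent symmetric increments of size $\asymp1/\sqrt m$. The Kolmogorov--Rogozin inequality then yields the displayed bound.

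\textbf{Tensorization.} Apply Lemma~\ref{lem:reverse tensorization} with $r_i=1$, $\beta=C\tau_0/L$, $p_i=C/L$, and a fixed $\eta$, over the $p_X-1-|\mathcal D|+d_0\ge(1-3\theta_0)m$ retained coordinates. The budget $\sum_i\|Y_i\|^2\le 4\tau_0^2m/L^2$ is at most $a_\eta\beta^2\cdot(1-3\theta_0)m$ once the constant in $\beta$ is fixed. This gives the bound $e^{\eta m}(C/L)^{(1-3\theta_0)m}$. Choosing $L$ large makes it beat $(\tau_0/a_{\ref{lem:threshold successor}})^r$ divided by the net size, which is the required contradiction.

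\textbf{Main obstacle.} The step I expect to be hardest is the per-coordinate anticoncentration uniformly over all $v\in\mathcal U_{3\delta_0/4}$. It must control the random matching, which may fail to create enough sign-alternating pairs; this failure is absorbed into the $e^{-cm}$ term. It must also keep the base $C/L$ independent of $m$, which is why the mesh $\beta$ must stay above the increment size $1/\sqrt m$; this holds because $\beta$ is a constant depending only on $L$.
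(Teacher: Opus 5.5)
Your argument is correct, but it takes a genuinely different route from the paper's.

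\textbf{The paper's route.} The paper never estimates a fixed vector here. It covers $[t_*,s_0]$ by boundedly many dyadic bins. To each class $\mathcal U_{3\delta_0/4}\cap\{t\le T_L<2t\}$ with $t\ge t_*/2$ it applies the entropy bound \eqref{eq:threshold Euclidean entropy} of Proposition~\ref{prop:threshold class nets}. This gives a covering number of the form $m^{C}L^{C}\bigl(C'/L\bigr)^m$, which is less than one for $L$ and then $m$ large. Since a nonempty set has covering number at least one, every bin is empty. Emptiness is thus read off from the inversion-of-randomness count: Theorem~\ref{thm:hybrid box inversion}, with its CLCD exceptional sets and residual second moment. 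The stopping chain needs that count anyway, so the lemma costs two lines.

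\textbf{Your route.} You show that every individual vector of the signed domain fails the threshold condition near the cap, for a soft reason. The radius $s_0\sqrt m\asymp\sqrt m/L$ lies far above the increment scale $1/\sqrt m$, so arithmetic structure is irrelevant. Kolmogorov--Rogozin on the positive--negative matched pairs gives $C/L$ per retained coordinate. Lemma~\ref{lem:reverse tensorization} then gives $e^{\eta m}(C/L)^{cm}$, which loses to the $L$-independent base $4Lt_*=\tau_0/a_{\ref{lem:threshold successor}}$. This is self-contained and independent of the box inversion and its scale-range restrictions.

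\textbf{Details you should make explicit.} The price of your route is spelling out a few points the paper gets for free.
\begin{enumerate}
\item Place all extension seeds after all switching seeds in the filtration. Then the extension translate $\sum_{j\in U_m}v_j\omega_j$ in each $X$-row is future-measurable, as in the conditioning of Proposition~\ref{prop:residual rank inversion}.
\item Define the exceptional index set $\mathcal D$ from the signed sets of the given $v$. This is legitimate because the consequence of (G4) gives $|\mathcal D|\le J_*$ for every admissible choice of signed sets.
\item If $T_L(v)=t_*$ and the supremum is not attained, an admissible $s\in[t_*,s_0]$ need not exist. Pass instead to the limit along admissible $s\uparrow t_*$. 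This still yields $\mathbb P_\omega\bigl(\|Q\widehat M_{\mathfrak c}v\|_2\le s_0\sqrt r\bigr)\ge(\tau_0/a_{\ref{lem:threshold successor}})^r$, which is all you use.
\end{enumerate}
Finally, any fixed positive fraction of retained coordinates suffices, so the condition $1-3\theta_0>1/2$ is not needed.
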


\begin{proof}
Cover $[t_*,s_0]$ by the dyadic bins that meet it. Their lower
endpoints $t$ satisfy $t\ge t_*/2$, and their number is bounded
in terms of $a_{\ref{lem:threshold successor}}$.
For large $m$ all these scales lie in
\eqref{eq:threshold net scale range}.
For each bin apply \eqref{eq:threshold Euclidean entropy} to
$K=\mathcal U_{3\delta_0/4}\cap\{t\le T_L<2t\}$.
Its covering number is at most
\begin{align*}
 m^{{C_{\ref{prop:threshold class nets}}^{(1)}}}
 \left(\frac{8a_{\ref{lem:threshold successor}}L}{\tau_0}
       \right)^{{C_{\ref{prop:threshold class nets}}^{(2)}}}
 \left(\frac{8a_{\ref{lem:threshold successor}}
                      A_{\ref{prop:threshold class nets}}}
                    {\tau_0L}\right)^m.
\end{align*}
Choose $L$ so that the final base is less than $1/2$.
Since $L$ is now fixed, the bound is less than one for large
$m$. Every nonempty set has covering number at least one.
Thus all the bins are empty, proving the assertion.
\end{proof}

Write $a=a_{\ref{lem:threshold successor}}$.
For each scale, the transition takes the form
\begin{align}
 (x_i,t_i)\ \longmapsto\ y\in\mathcal N_{t_i}
 \quad\begin{cases}
 T_L(y)<at_i:&\text{stop at }y,\\
 T_L(y)\ge at_i:&
 (x_{i+1},t_{i+1})=(y,\operatorname{dyad}(T_L(y))),
 \quad t_{i+1}\ge4t_i.
 \end{cases}\label{eq:stopping chain schematic}
\end{align}
All thresholds in this rule use the same certificate, operator law and
projection. In particular, passing to a successor introduces no new
conditioning of the true matrix.

\begin{proof}[Proof of Theorem~\ref{thm:uniform threshold control}]
Fix a good fibre and a projection $Q$ satisfying the stated hypotheses.
All nets below are constructed on this fibre before the fresh
true variables are sampled. Fix all geometric, rounding and
image-net constants first, then the constants in Lemma~\ref{lem:threshold successor}. Choose $L$
large enough for its neighborhood condition and
Lemma~\ref{lem:empty upper threshold range}.

Write $c_*>0$ for the lower-scale exponent to be chosen, and
let $\Delta_m=e^{-c_*m}$. Consider the finite dyadic family
$\mathscr S_m:=\{2^j:j\in\mathbb Z,\ \Delta_m/2\le2^j\le s_0\}$.
Choose $c_*\le {c_{\ref{prop:threshold class nets}}(L)}/2$ and
increase $m$ so that every scale in $\mathscr S_m$ is in
\eqref{eq:threshold net scale range}.
For each $t\in\mathscr S_m$, fix an image net $\mathcal N_t$
for $\mathcal K_t$ and a deterministic ordering of its points.
For an empty class take the empty net. Define the deterministic
terminal subset
\begin{align*}
 \mathcal N_t^0
       :=\{y\in\mathcal N_t:
                 T_L(y)<a_{\ref{lem:threshold successor}}t\}.
\end{align*}

Suppose that a true realization has a vector $x_0\in\mathcal K$
with
\begin{align*}
 T_L(x_0)\ge\Delta_m,\qquad
 \|QA_mx_0\|_2\le T_L(x_0)\sqrt m.
\end{align*}
Let $B=QA_m$ and $t_0=\operatorname{dyad}(T_L(x_0))$.
Then $t_0\in\mathscr S_m$, $x_0\in\mathcal K_{t_0}$, and
$\|Bx_0\|_2\le2t_0\sqrt m$.
Following \eqref{eq:stopping chain schematic}, given
$x_i\in\mathcal K_{t_i}$ choose the first
$x_{i+1}\in\mathcal N_{t_i}$ satisfying
\begin{align*}
 \|B(x_{i+1}-x_i)\|_2
                \le {C_{\ref{prop:threshold class nets}}^{(3)}}t_i\sqrt m.
\end{align*}
Existence follows from $\|QA_m\|_{\mathrm{HS}}\le m$
and \eqref{eq:threshold image net approximation}.
Stop if $x_{i+1}\in\mathcal N_{t_i}^0$.
Otherwise let $t_{i+1}$ be the dyadic scale containing $T_L(x_{i+1})$.
Lemma~\ref{lem:threshold successor} shows that
$t_{i+1}\ge4t_i$ and $x_{i+1}\in\mathcal K_{t_{i+1}}$.
The successor scale is still in $\mathscr S_m$.

All selected points lie in $\mathcal U_{\delta_0/2}$.
By Lemma~\ref{lem:empty upper threshold range}, their thresholds,
and the threshold of $x_0$, are less than $t_*$.
Hence every scale used by the procedure is less than $t_*$.
Since successive nonterminal scales increase by at least four,
the procedure terminates after finitely many steps.

Suppose it terminates at scale $t_j$ with
$x_{j+1}\in\mathcal N_{t_j}^0$.
The increasing scales satisfy
$\sum_{i=0}^jt_i\le(4/3)t_j$ and $t_0\le t_j$.
Summing the successive image errors and using the initial image bound gives
\begin{align*}
 \|Bx_{j+1}\|_2 \le\left(2t_0+
      {C_{\ref{prop:threshold class nets}}^{(3)}}\sum_{i=0}^jt_i\right)\sqrt m \le\left(2+\frac43{C_{\ref{prop:threshold class nets}}^{(3)}}\right)t_j\sqrt m
   \le a_{\ref{lem:threshold successor}}t_j\sqrt r.
\end{align*}
For the last inequality we used $r\ge m-\ell_0\ge m/2$ for
large $m$ and the choice of $a_{\ref{lem:threshold successor}}$.
It follows pointwise, before taking any true probability, that
the event in \eqref{eq:uniform threshold control} is contained in
\begin{align*}
 \bigcup_{\substack{t\in\mathscr S_m\\t<t_*}}
 \ \bigcup_{y\in\mathcal N_t^0}
       \{\|QA_my\|_2\le
                  a_{\ref{lem:threshold successor}}t\sqrt r\}.
\end{align*}
There is one union over terminal nets and scales; the possible
paths have already been accounted for by this containment.

Fix a point $y$ in this union. It belongs to $H_m$, is fixed on
the conditional fibre, and satisfies
$T_L(y)<a_{\ref{lem:threshold successor}}t$.
Moreover, since $t<t_*$, $a_{\ref{lem:threshold successor}}t<s_0/4<s_0/2$.

Corollary~\ref{cor:conditional hybrid capture} therefore yields
\begin{align*}
 \mathbb P
     \{\|QA_my\|_2\le a_{\ref{lem:threshold successor}}t\sqrt r\}
 \le\left(C_{\ref{thm:Fourier capture}}
                       a_{\ref{lem:threshold successor}}Lt\right)^r.
\end{align*}

Multiplying this probability by
the cardinality bound in Proposition~\ref{prop:threshold class nets}
gives, for each scale, an upper bound of the form
\begin{align*}
 m^{{C_{\ref{prop:threshold class nets}}^{(1)}}}
        t^{-D_*}(R_*/L)^m,
 \qquad
 D_*:={C_{\ref{prop:threshold class nets}}^{(2)}}+\ell_0,
\end{align*}
where $R_*>1$ is fixed before $L$.
Here the rank loss $m-r=\ell_Q\le\ell_0$ leaves only the
additional power $t^{-\ell_0}$.

Fix a positive entropy margin $\chi$, for example $\chi=1$.
Increase $L$ so that $R_*/L\le e^{-8\chi}$, and choose
\begin{align*}
 0<c_*\le
 \min\left\{1,\,
       \tfrac12{c_{\ref{prop:threshold class nets}}(L)},\,
       \frac{\chi}{D_*+1}\right\}.
\end{align*}
For every $t\in\mathscr S_m$,
$t^{-D_*}\le2^{D_*}e^{D_*c_*m}\le2^{D_*}e^{\chi m}$.
There are at most $Cm$ scales in $\mathscr S_m$.
Thus the sum of the preceding entropy--probability bound
over these scales is at most $e^{-\chi m}$ for sufficiently
large $m$ depending on the fixed $L$.
Taking ${c_{\ref{thm:uniform threshold control}}^{(1)}(L)}=c_*$ and,
if necessary, a smaller
${c_{\ref{thm:uniform threshold control}}^{(2)}}>0$ proves
\eqref{eq:uniform threshold control}.

All nets and terminal subsets were fixed before the true orientations.
There are finitely many exposure atoms, so these choices are measurable.
Multiplying the uniform fibrewise estimate by
$\mathbf 1_{\mathcal E_{m;X,Y}}$ gives the conditional assertion.
The complement of the event in
\eqref{eq:uniform threshold control} is exactly the simultaneous
inequality \eqref{eq:threshold versus image}, completing the proof.
\end{proof}

\begin{corollary}\label{cor:approximate kernel threshold}
Under the hypotheses of Theorem~\ref{thm:uniform threshold control},
outside a conditional event of probability at most
$e^{-{c_{\ref{thm:uniform threshold control}}^{(2)}}m}$, the implication
\begin{align*}
 v\in\mathcal U_{\delta_0/4},\quad
 t\ge\Delta_m,\quad \|QA_mv\|_2\le t\sqrt m
       \quad\Longrightarrow\quad T_{L,\mathfrak c}^Q(v)<t
\end{align*}
holds simultaneously for all $v$ and $t$.
In particular, simultaneously for every
$v\in\mathcal K_0$ with $QA_mv=0$,
\begin{align*}
 T_{L,\mathfrak c}^Q(v)<
             e^{-{c_{\ref{thm:uniform threshold control}}^{(1)}(L)}m}.
\end{align*}
\end{corollary}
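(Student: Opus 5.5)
The plan is to deduce the corollary directly from Theorem~\ref{thm:uniform threshold control}, working outside its single exceptional event. On a fixed good fibre, and for the fixed projection $Q$, let $\mathcal B$ be the event in \eqref{eq:uniform threshold control}. It has conditional probability at most $e^{-c_{\ref{thm:uniform threshold control}}^{(2)}m}$, and its complement is exactly the simultaneous inequality \eqref{eq:threshold versus image} over all $v\in\mathcal K=\mathcal U_{\delta_0/4}$. The whole argument is deterministic on $\mathcal B^c$; no additional union bound is needed, because \eqref{eq:threshold versus image} already quantifies over every $v$ simultaneously.

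For the implication, fix a realization in $\mathcal B^c$ and take $v\in\mathcal U_{\delta_0/4}$ and $t\ge\Delta_m$ with $\|QA_mv\|_2\le t\sqrt m$. By \eqref{eq:threshold versus image},
\begin{align*}
 T_{L,\mathfrak c}^Q(v)<\max\{\Delta_m,\|QA_mv\|_2/\sqrt m\}\le\max\{\Delta_m,t\}=t.
\end{align*}
This proves the first assertion uniformly in $v$ and $t$. The conditional version, with the indicator $\mathbf 1_{\mathcal E_{m;X,Y}}$, follows from the conditional form of the theorem when $Q$ is $\mathcal H_{m;X,Y}$-measurable.

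For the kernel statement, note that $\mathcal K_0\subseteq\mathcal U_{\delta_0/4}$ by the definition of $\mathcal U_\rho$. If $v\in\mathcal K_0$ and $QA_mv=0$, the hypothesis $\|QA_mv\|_2\le t\sqrt m$ holds for every $t$, in particular for $t=\Delta_m$. The implication then gives $T_{L,\mathfrak c}^Q(v)<\Delta_m=e^{-c_{\ref{thm:uniform threshold control}}^{(1)}(L)m}$. Equivalently, one may read this directly from \eqref{eq:threshold versus image}, since the maximum there equals $\Delta_m$.

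The only step requiring care is ensuring that the exceptional event does not depend on $v$ or $t$. This is guaranteed because Theorem~\ref{thm:uniform threshold control} is stated as a single event containing every bad $v$. Measurability is not an issue either, since the nets and the fibres are finite. I expect no genuine obstacle.
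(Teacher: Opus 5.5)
Your proposal is correct and is essentially the paper's own argument. On the event where \eqref{eq:threshold versus image} holds, the hypotheses give $\max\{\Delta_m,\|QA_mv\|_2/\sqrt m\}\le t$, hence $T_{L,\mathfrak c}^Q(v)<t$, and taking $t=\Delta_m$ (using $\mathcal K_0\subseteq\mathcal U_{\delta_0/4}$) yields the kernel statement.
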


\begin{proof}
On the event where \eqref{eq:threshold versus image} holds,
$\max\{\Delta_m,\|QA_mv\|_2/\sqrt m\}\le t$ under the stated
hypotheses. Take $t=\Delta_m$ for the exact-kernel assertion.
\end{proof}
\section{Proof of the main theorem}\label{Sec:main proof}

We prove an exponential bound for $q_m(\beta)$ and substitute it into
Theorem~\ref{thm:reduction}. After excluding compressible solutions,
we place each remaining solution in one of finitely many signed domains
and classify its restriction to the fixed support realizing $\rho_*$.

For a fixed $b\in H_m\cap\mathbf 1^\perp$, let $Q_b$ be the
orthogonal projection onto $(\operatorname{span}\{h,b\})^\perp$.
Write
\begin{align*}
 \ell_b:=m-\operatorname{rank}Q_b
 =\begin{cases}1,&b=0,\\2,&b\ne0,\end{cases}
 \qquad r_b:=m-\ell_b.
\end{align*}
Thus $\operatorname{Im}Q_b\subset H_m$, and, for $v\in H_m$,
\begin{align*}
 Q_bA_mv=0
 \quad\Longleftrightarrow\quad
 P_hA_mv\in\mathbb Rb.
\end{align*}
The projection is fixed before the matrix is sampled.

\subsection{Compressible vectors}

We establish a fixed-vector estimate and use a deterministic image
net with arbitrarily small exponential overhead. This overhead is
needed because the fixed-vector estimate supplies only a fixed
exponential gain. The estimate is uniform in the target line and
in the number of extension columns.

We use a fixed small-ball gap for dense slices: if $q$ is uniform
on $\binom{[p]}a$ with $\tau p\le a\le(1-\tau)p$, then
\begin{align*}
 \mathcal L(\langle q,u\rangle,c\|P_{[p],0}u\|_2)\le1-c'
 \qquad(P_{[p],0}u\ne0),
\end{align*}
where $c,c'>0$ depend only on $\tau$.
For centered unit $u$, the second moment is
$a(p-a)/(p(p-1))$ and the fourth moment is bounded by an absolute
constant. Paley--Zygmund applied to the squared difference of two
independent copies proves the estimate. The same argument applies
to Rademacher sums, whose fourth moment is at most three times
the squared variance.

\begin{lemma}\label{lem:final independent columns}
Fix $\tau\in(0,1/2)$. Let $B$ be a $p\times q$ matrix with
independent columns, each uniform on a slice of weight in
$[\tau p,(1-\tau)p]$, where $1\le q\le2p$.
There are positive constants
${c_{\ref{lem:final independent columns}}^{(1)}=c_{\ref{lem:final independent columns}}^{(1)}(\tau)}$ and
${c_{\ref{lem:final independent columns}}^{(2)}=c_{\ref{lem:final independent columns}}^{(2)}(\tau)}$, depending only on
$\tau$, such that, for every fixed $y\in\mathbb R^q\setminus\{0\}$,
\begin{align*}
 \mathcal L\left(P_{[p],0}By,
       {c_{\ref{lem:final independent columns}}^{(1)}}\sqrt p\,\|y\|_2\right)
 \le e^{-{c_{\ref{lem:final independent columns}}^{(2)}}p}.
\end{align*}
\end{lemma}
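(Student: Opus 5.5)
Write $By=\sum_{k=1}^q y_k B^{(k)}$, where the columns $B^{(k)}$ are independent slices. Subtracting the means only translates $P_{[p],0}By$ by a deterministic vector, so it suffices to bound the concentration of
\begin{align*}
 X:=\sum_{k}y_kP_{[p],0}q^{(k)},
\end{align*}
with $q^{(k)}$ independent slices of dense weights. The strategy is tensorization over coordinates: reduce to many nearly independent scalar small-ball events, each failing with probability bounded away from one, and combine them with Lemma~\ref{lem:reverse tensorization}. A single column is not enough, since $|y_k|$ may be tiny. Instead, split $[p]$ into $p/2$ disjoint coordinate pairs $\{i,i'\}$ and use the differences
\begin{align*}
 X_i-X_{i'}=\sum_k y_k\bigl(q^{(k)}_i-q^{(k)}_{i'}\bigr),
\end{align*}
which are unaffected by centering. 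Since $\|P_{[p],0}w\|_2^2\ge\frac12\sum_{\text{pairs}}(w_i-w_{i'})^2$, a small ball for $X$ of radius $c\sqrt p\|y\|_2$ forces $\sum_{\text{pairs}}(X_i-X_{i'}-z_i+z_{i'})^2\le 2c^2p\|y\|_2^2$.

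\textbf{Step 1 (decoupling the pairs).} Condition on the unordered values $\{q^{(k)}_i,q^{(k)}_{i'}\}$ for every pair and every column. Given this data, for each $k$ the orientation within each disagreeing pair of column $k$ is an independent fair sign. Indeed, the slice is uniform, and swapping within a pair preserves the weight. Hence
\begin{align*}
 X_i-X_{i'}=\sum_{k\in S_{ii'}}y_k\epsilon^{(k)}_{ii'}
\end{align*}
is a Rademacher sum, where $S_{ii'}$ is the set of columns disagreeing on that pair. These sums are independent across pairs.

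\textbf{Step 2 (a good event for the pair supports).} The fixed small-ball gap for Rademacher sums stated before the lemma gives, pointwise in the exposed data,
\begin{align*}
 \mathcal L\bigl(X_i-X_{i'},\,c\|y_{S_{ii'}}\|_2\bigr)\le1-c'.
\end{align*}
So we need $\|y_{S_{ii'}}\|_2^2\ge c_1\|y\|_2^2$ for at least $c_2p$ pairs. Each column disagrees on a fixed pair with probability at least $2\tau^2$, so
\begin{align*}
 \mathbb E\sum_{\text{pairs}}\|y_{S_{ii'}}\|_2^2\ge\tau^2p\|y\|_2^2,
\end{align*}
while every summand is at most $\|y\|_2^2$. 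We need this sum to exceed $(\tau^2/2)p\|y\|_2^2$ except with probability $e^{-cp}$. The sum is a weighted combination $\sum_k y_k^2 D_k$, where $D_k$ is the number of disagreeing pairs of column $k$. The $D_k$ are independent, bounded by $p/2$, and each concentrates around its mean, which is at least $\tau^2p$, with tail $e^{-cp}$ by Hoeffding for sampling without replacement. A Chernoff bound on the weighted sum then gives the claim. This uses that each $D_k/p$ is concentrated, not merely bounded, and this weighted concentration for possibly very unequal weights $y_k^2$ is the step I expect to need the most care. Since $q\le2p$, no union bound over columns is needed, because the Chernoff argument is applied directly to the independent sum.

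\textbf{Step 3 (tensorization).} On the good event, apply Lemma~\ref{lem:reverse tensorization} to the independent scalar variables $Y_{ii'}=X_i-X_{i'}-(z_i-z_{i'})$ over the good pairs, taking $\beta=c\sqrt{c_1}\|y\|_2$ and $p_{ii'}=1-c'$. Choose $\eta$ small so that $e^{\eta}(1-c')<1$. This bounds the probability that
\begin{align*}
 \sum_{\text{good pairs}}Y_{ii'}^2\le a_\eta\beta^2c_2p
\end{align*}
by $e^{-c''p}$, uniformly in the centre $z$. Choosing $c^{(1)}_{\ref{lem:final independent columns}}$ small enough makes the small-ball event imply this, and adding the probability of the bad event from Step 2 proves the lemma.
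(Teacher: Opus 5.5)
Your proposal is correct and is essentially the paper's proof: pair the rows, expose the pair sums so that the mixed-pair orientations become independent fair signs, apply the Paley--Zygmund gap to each Rademacher pair difference on the at least $cp$ pairs with $\|y_{S_{ii'}}\|_2^2\ge c\|y\|_2^2$, and tensorize.

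The one divergence is Step~2. There the paper takes a union bound over the $q\le2p$ columns to get $D_k\ge c_\tau p$ for every $k$, so that $\sum_k y_k^2D_k\ge c_\tau p\|y\|_2^2$ holds deterministically; this union bound is exactly where the hypothesis $q\le2p$ enters, and in your route it plays no role. Your weighted route also works, but not from boundedness of $D_k$ alone, which gives only $\exp\bigl(-c\|y\|_2^4/\sum_k y_k^4\bigr)$. Instead, normalize $\|y\|_2=1$ and call column $k$ bad if $D_k<\tau^2p/2$, an event of probability at most $e^{-cp}$. Independence together with the convexity bound $e^{\lambda y_k^2}-1\le y_k^2(e^{\lambda}-1)$ then gives $\mathbb P\bigl(\sum_{k\ \mathrm{bad}}y_k^2\ge1/2\bigr)\le\exp\bigl(-\lambda/2+e^{-cp}(e^{\lambda}-1)\bigr)\le e^{1-cp/2}$ at $\lambda=cp$, and this even removes the restriction on $q$.
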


\begin{proof}
Normalize $\|y\|_2=1$. Pair all but at most one of the $p$ row coordinates.
In each column expose the pair sums and the unpaired entry.
All mixed-pair orientations are then independent uniform signs.
Permutation concentration and a union over $q\le2p$ columns show that
each column has at least $c_\tau p$ mixed pairs outside probability
$e^{-cp}$. This event is determined by the exposed sums.

The row difference on a pair $e$ is a Rademacher sum with variance
$V_e=\sum_{j:\,j\text{ mixed on }e}y_j^2$.
On the good event $\sum_eV_e\ge c_\tau p$ and $V_e\le1$, so at least
$cp$ pairs have $V_e\ge c$.
Their differences are conditionally independent. For a Rademacher
sum of variance $V_e$, the fourth moment is at most $3V_e^2$;
Paley--Zygmund applied to an independent difference gives a fixed
small-ball gap at radius $c\sqrt{V_e}$. Tensorization over these pairs
gives concentration at most $e^{-cp}$ at radius $c'\sqrt p$.
The disjoint-difference map has norm at most $\sqrt2$ and annihilates
constants, so the same estimate, with adjusted radius, holds for
$P_{[p],0}By$. Average over the exposed sums.
\end{proof}

\begin{lemma}\label{lem:final extended fixed vector}
There are positive constants
${c_{\ref{lem:final extended fixed vector}}^{(1)}=c_{\ref{lem:final extended fixed vector}}^{(1)}(\lambda)}$ and
${c_{\ref{lem:final extended fixed vector}}^{(2)}=c_{\ref{lem:final extended fixed vector}}^{(2)}(\lambda)}$, depending only
on $\lambda$, such that, for $n\le m\le2n-2$, every fixed
$z\in H_m$ with $1/2\le\|z\|_2\le3/2$, and every fixed
$b\in H_m\cap\mathbf 1^\perp$,
\begin{align}
 \mathbb P\left\{\operatorname{dist}(P_hA_mz,\mathbb Rb)
                   \le {c_{\ref{lem:final extended fixed vector}}^{(1)}}
                                                       \sqrt m\right\}
 \le e^{-{c_{\ref{lem:final extended fixed vector}}^{(2)}}m}.\label{eq:final extended fixed vector}
\end{align}
\end{lemma}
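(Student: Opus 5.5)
The plan is to split according to where the mass of $z$ sits, $z=(z_O,z_{U_m})$. In each case I would find a block of the matrix with independent slice structure, conditional on an exposure that fixes everything else, and get a fixed-vector small-ball estimate at scale $\sqrt m$. The line $\mathbb Rb$ is handled by a tube cover, as follows. Since $\|A_m\|_{\mathrm{op}}\le m$ and $\|z\|_2\le3/2$, the event $\operatorname{dist}(P_hA_mz,\mathbb Rb)\le c\sqrt m$ implies $\|P_hA_mz-sb\|_2\le c\sqrt m$ for some $s$ with $|s|\,\|b\|_2\le 2m$. This segment of the line is covered by $O(m)$ balls of radius $c\sqrt m$. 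So it suffices to prove $\mathcal L(P_hA_mz,2c\sqrt m)\le e^{-c'm}$ and absorb the polynomial factor. For $b=0$ nothing changes. The same reduction lets me pass freely to further restrictions and projections of the output vector, since these only decrease distances.

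\emph{Case 1: $\|z_{U_m}\|_2\ge c_1$.} Condition on $\mathcal H_m^0=\sigma(\Pi,C_m)$ together with $\eta$ and $G'$. By Proposition~\ref{prop:pair profile factorization}, the columns $B_m^{(j)}$ are then independent uniform slices on $O$. On $\mathcal D_m$ their weights lie in $[(\lambda_*/2)n,(1-\lambda_*/2)n]$, and $\mathcal D_m^c$ has probability $e^{-cn}$. The $O$-rows of $A_mz$ equal $A_nz_O+B_mz_{U_m}$, where $A_nz_O$ is now fixed. Restricting to $W$ and applying $P_{W,0}$ is a contraction that kills the central $h$-direction up to the two coordinates $\{1,2\}$. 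Hence Lemma~\ref{lem:final independent columns}, applied with $p=n-2$ rows and $q=m-n\le2p$ columns, gives $\mathcal L\bigl(P_hA_mz,c\sqrt n\,\|z_{U_m}\|_2\bigr)\le e^{-cn}$. Averaging over the conditioning proves this case.

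\emph{Case 2: $\|z_{U_m}\|_2<c_1$, so $\|z_O\|_2\ge1/4$.} Here the randomness must come from the residual graph. I would fix a finite family of splits $W=X\sqcup Y$ with $|X|,|Y|\ge\delta N$ whose $Y$'s cover $W$, and use the exposure $\mathcal H_{m;X,Y}$ of Theorem~\ref{thm:exact hybrid factorization}. On a good fibre, the $X$-rows of $P_hA_mz$ are
\[
\text{fixed}+\sum_e b_e\langle\xi_e,z\rangle+\sum_e\xi_e\langle b_e,z\rangle,
\]
with the $\xi_e$ independent dense slices. The scalars $\langle\xi_e,z_{T_e}\rangle$ for the odd pairs $e\in E_0$ occupy disjoint row pairs, so the fixed small-ball gap for dense slices tensorizes over them. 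The resulting bound is $e^{-cN}$ at radius $c\sqrt N$ as soon as $\sum_{e}\|P_{T_e,0}z\|_2^2\gtrsim N$. By the centered energy coverage \eqref{eq:centered energy coverage}, this holds whenever $\|P_{Y_0,0}z\|_2\ge c_2$, and similarly for $Y_1$. If this fails for every split in the family, then $z_W$ is within $O(c_2)$ of a constant vector $\alpha\mathbf 1_W$. Because $z\in H_m$ gives $z_1=z_2$ and $A_n\mathbf 1=d\mathbf 1$, the vector $z_O$ is then close to $\alpha\mathbf 1_O$ with $|\alpha|\gtrsim n^{-1/2}$.

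To finish this subcase, take the inner product with $\mathbf 1_O$. Since $b,h\perp\mathbf 1$, we have $\langle P_hA_mz-sb,\mathbf 1_O\rangle=\langle A_mz,\mathbf 1_O\rangle+O(\|b_{U_m}\|)$; I would handle this leftover by also restricting to $O$ and using $b_O$ in place of $b$, via a one-line adjustment. Moreover $\langle A_mz,\mathbf 1_O\rangle=d\sum_{i\le n}z_i+\langle B_mz_{U_m},\mathbf 1_O\rangle$. The first term has size about $dn|\alpha|\gtrsim\lambda n^{3/2}$. The second is at most $n\sqrt{m-n}\,c_1$, and $(z_O-\alpha\mathbf 1)$ contributes at most $O(c_2 d\sqrt n)$. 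Dividing by $\|\mathbf 1_O\|=\sqrt n$ gives a distance of order $\lambda n$ deterministically, for $c_1,c_2$ small depending on $\lambda,\kappa$. So the near-constant subcase has probability zero.

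The main obstacle is Case 2. Two points need care there: the extension entries $v_jP_h\zeta_j$ and the $\{1,2\}$ coordinates must be absorbed without destroying the tensorized gap, and the dichotomy between centered energy and near-constancy must be made uniform over a fixed finite split family. The latter matters because the certificate probability \eqref{eq:good certificate probability} is only available for a family chosen before sampling. For the first point I would simply restrict the output to the rows $X$, where the extension terms are fixed after conditioning on $B_m$ as well. This is legitimate because Theorem~\ref{thm:exact hybrid factorization} keeps the switching slices independent of the extension slices.
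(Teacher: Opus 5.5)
Your Case~1 is sound and is essentially the paper's extension case. You should also condition on the $\{1,2\}$-rows of $B_m$, so that the restrictions of its columns to $W$ are genuine dense slices. The tensorized part of Case~2 also works, provided the splits are designed so that their halves overlap. Merely covering $W$ does not give your dichotomy, since $z_W$ could be constant on each of several disjoint halves. Alternatively, choose the split as a function of the fixed $z$, which is legitimate because $z$ is deterministic.

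\emph{The gap is the near-constant subcase.} From $z\in H_m$ you only get $z_1=z_2=:\beta$. Nothing relates $\beta$ to the approximate constant value $\alpha$ of $z_W$, and the identity $A_n\mathbf 1=d\mathbf 1$ does not supply such a relation. Take $z=\beta(\mathbf e_1+\mathbf e_2)+\alpha\mathbf 1_W$ with $z_{U_m}=0$, $2\beta+(n-2)\alpha=0$ and $\|z\|_2=1$, so $\beta\approx 2^{-1/2}$ and $\alpha\approx-\sqrt2/n$.
\begin{itemize}
\item Since $z_W$ is exactly constant, every switching term $\langle\xi_e,z\rangle$ vanishes.
\item $\langle A_m\mathbf 1,z\rangle=\langle B_m\mathbf 1_{U_m},z_O\rangle$ is typically $O(\sqrt{m-n})$, and exactly $0$ when $m=n$. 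So the degree argument yields no contradiction.
\item Conditional on $\mathcal H_{m;X,Y}$ and $B_m$, the vector $P_hA_mz$ is completely deterministic. The central slice cancels because $z_1=z_2$, and columns $1,2$ enter $W$ only through the pair sums. Also $A_{G'}\mathbf 1_W=d\mathbf 1_W-s_W$ is fixed by $\Pi$. Hence the $W$-part equals $(\beta-\alpha)s_W+\alpha d\mathbf 1_W$.
\end{itemize}
So no argument conditional on the hybrid exposure of Theorem~\ref{thm:exact hybrid factorization} can handle such vectors. The randomness that makes the lemma true for them is the pair profile, and that exposure freezes it.

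The paper avoids this by never conditioning on $\Pi$ for the original block. It writes $z_O=a\mathbf 1_O+u$ with $u\perp\mathbf 1_O$. When $\|u\|_2\ge\eta$, it proves a fixed-vector estimate for $A_n$ under its own regular-graph law. The split $[n]=X\sqcup Y$ is chosen from the fixed vector with $\|P_{Y,0}u\|_2^2\ge 1/3$, by averaging over $|Y|=\lfloor n/2\rfloor$. Thus $Y$ may contain the coordinates $1,2$, and their switching orientations supply the missing randomness. Pairs meeting $\{1,2\}$ are discarded, and the difference map annihilates $h$.

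The degree argument is reserved for $\|u\|_2,\|z_{U_m}\|_2<\eta$, where $z\approx a\mathbf 1_O$ with $|a|\sqrt n\gtrsim 1$ genuinely holds. There it is paired with the full vector $\mathbf 1\in\mathbb R^m$, so that $\langle\mathbf 1,P_hA_mz-sb\rangle=\langle A_m\mathbf 1,z\rangle$ exactly. This also makes your $b_{U_m}$ adjustment unnecessary. That correction equals $s\langle b_{U_m},\mathbf 1_{U_m}\rangle$, which depends on the coefficient $s$ and so is not $O(\|b_{U_m}\|_2)$.
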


\begin{proof}
We first record the original-block estimate. For deterministic
$z\in\mathbf1^\perp\setminus\{0\}$ and $w,b_0\in\mathbb R^n$,
\begin{align*}
 \mathbb P\{\operatorname{dist}(P_hA_nz,w+\mathbb Rb_0)
                         \le c\sqrt n\,\|z\|_2\}\le e^{-c'n},
\end{align*}
where $c,c'>0$ depend only on $\lambda$.
Normalize $\|z\|_2=1$. Averaging over $s=\lfloor n/2\rfloor$ subsets
$Y\subset[n]$ gives
$\mathbb E_Y\|P_{Y,0}z\|_2^2=(s-1)/(n-1)$.
Fix $Y$ with this energy at least $1/3$, and put $X=[n]\setminus Y$.
The split depends only on the fixed vector.
Proposition~\ref{thm:fixed block quasirandomness}, applied to arbitrary
balanced splitters on $X,Y$, gives dense cross-margins and balanced
linear-size disagreement supports for every row pair, except with
probability $e^{-cn}$.

Independently order $X$ uniformly and pair successive positions.
For its disjoint pairs $e=(x,x')$, write
$S_e=\{i\in Y:A_{xi}\ne A_{x'i}\}$.
The coordinate-pair argument of Lemma~\ref{lem:pair coverage and energy}
applies on the whole of $Y$: dense column margins and permutation
bounded differences give $cn$ pairs disagreeing at every two fixed
columns, simultaneously, outside probability $e^{-c'n}$.
Consequently
\begin{align*}
 \sum_e\|P_{S_e,0}z\|_2^2
 \ge cn\|P_{Y,0}z\|_2^2\ge c'n.
\end{align*}

Expose the induced blocks, the unused row and the pair sums on $Y$.
The reconstruction in Proposition~\ref{prop:exact switching fibre},
now using disjoint pairs on all of $Y$, gives independent orientation
slices: their choices preserve the exposed row and column sums and
reconstruct exactly one compatible block. Supports, weights and the preceding energy bound
are measurable in this exposure.
Discard the at most two pairs meeting $\{1,2\}$.
Each energy summand is at most one, so at least $cn$ remaining pairs
have $\|P_{S_e,0}z\|_2\ge c'$.
Their row differences are fixed translations of
$2\langle q_e,z_{S_e}\rangle$ for independent dense slices $q_e$.
The slice gap above and tensorization bound the concentration
of this difference vector by $e^{-cn}$ at radius $c'\sqrt n$.

Its difference map $D$ satisfies $Dh=0$, $\|D\|\le\sqrt2$ and
$\|DA_nz\|_2\le\sqrt2n$.
The relevant portion of $Dw+\mathbb RDb_0$ has length at most $Cn$,
so it has a $c'\sqrt n$-net of at most $C\sqrt n$ points, fixed on the
exposure fibre. A union bound gives proximity to this line with probability
$e^{-c''n}$. Average over the exposure and independent ordering and
include their exceptional probabilities.

We now write $z=(a\mathbf1_O+u,y)$ on $O\sqcup U_m$, where $u\perp\mathbf1_O$;
then $u\in H_n$. Fix a small $\eta=\eta(\lambda)>0$.
If $\|u\|_2\ge\eta$, expose the complete extension, independently of
$A_n$. The original coordinates of $P_hA_mz$ are
$P_hA_nu+ad\mathbf1_O+P_hB_my$.
The original-block estimate applies to this fixed affine
target and proves the required bound.

If $\|y\|_2\ge\eta$, condition on $(A_n,C_m)$.
The columns of $B_m$ are independent prescribed slices, with dense
weights on $\mathcal D_m$.
Proximity of $P_hA_mz$ to $\mathbb Rb$ implies, after lifting $P_h$
and then centering on $O$, proximity of $P_{O,0}B_my$ to
\begin{align*}
 -A_nu+\operatorname{span}\{P_{O,0}b_O,h\}.
\end{align*}
This fixed affine space has dimension at most two.
Since $\|P_{O,0}B_my\|_2\le Cm$, its relevant part has a
$c\sqrt m$-net of polynomial size. Apply
Lemma~\ref{lem:final independent columns} at each center and add
$\mathbb P(\mathcal D_m^c)\le e^{-cn}$.

Otherwise $\|u\|_2,\|y\|_2<\eta$ and
$|a|\sqrt n\ge\sqrt{1/4-2\eta^2}$.
Every coordinate of $d^{(m)}=A_m\mathbf1$ lies in $[c_\lambda m,m]$,
using the dense birth degrees for the new vertices.
For sufficiently small $\eta$,
\begin{align*}
 |\langle d^{(m)},z\rangle|
 \ge |a|\sum_{i\in O}d_i^{(m)}
       -\|d_O^{(m)}\|_2\|u\|_2-\|d_{U_m}^{(m)}\|_2\|y\|_2
 \ge c_\lambda' m^{3/2}.
\end{align*}
But the asserted proximity, $b\perp\mathbf1$ and $P_h\mathbf1=\mathbf1$
would give $|\langle d^{(m)},z\rangle|\le Cm$, a contradiction for
large $m$. These cases prove the claim.
\end{proof}

\begin{proposition}\label{prop:final compressible}
There are $\delta_c,\rho_c\in(0,1/4)$ and
$c_{\ref{prop:final compressible}}=c_{\ref{prop:final compressible}}(\lambda)>0$, depending only on $\lambda$,
such that, uniformly for $n\le m\le2n-2$ and fixed
$b\in H_m\cap\mathbf 1^\perp$,
\begin{align*}
 \mathbb P\{\exists v\in H_m\cap
       \mathrm{Comp}_{\delta_c,\rho_c}(m):Q_bA_mv=0\}
 \le e^{-c_{\ref{prop:final compressible}}m}.
\end{align*}
\end{proposition}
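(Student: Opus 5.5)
We follow the standard net argument for compressible vectors, driven by the fixed-vector estimate of Lemma~\ref{lem:final extended fixed vector}. First we exclude large operator norm. Since $A_m$ has entries in $\{0,1\}$ and $\|A_m\|_{\mathrm{HS}}\le m$, the trivial deterministic bound $\|P_hA_m\|_{\mathrm{op}}\le m$ is available. It is too crude for a Euclidean net at scale $\rho\sqrt m$, so we do not rely on operator norms. Instead we use an image-approximating net as in Lemma~\ref{lem:fixed image net}. That lemma gives an error of $\frac{t}{2\sqrt d}\|B\|_{\mathrm{HS}}\le t\sqrt m/2$ for $B=P_hA_m$, uniformly over realizations. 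This is exactly the ``deterministic image net with arbitrarily small exponential overhead'' announced at the start of the subsection.

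Concretely, let $K$ be the set of vectors in $H_m\cap\mathbb R^m$ with support at most $\delta_cm$ and norm between $1/2$ and $3/2$. Any $v\in H_m\cap\mathrm{Comp}_{\delta_c,\rho_c}(m)$ lies within $\rho_c$ of a sparse $y$. After projecting $y$ to $H_m$ (the distance only decreases since $v\in H_m$), we have $y\in K$ for $\rho_c<1/2$. The covering number satisfies $N(K,\rho_cB_2^m)\le\binom{m}{\delta_cm}(3/\rho_c)^{\delta_cm}$. Lemma~\ref{lem:fixed image net} with $E=H_m$ and $t=\rho_c$ then yields a deterministic net $\mathcal N\subset H_m$ of size $C^m_{\ref{lem:fixed image net}}\binom{m}{\delta_cm}(3/\rho_c)^{\delta_cm}$.

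The difficulty is the factor $C^m_{\ref{lem:fixed image net}}$, which is a fixed exponential and could beat the gain $e^{-c_{\ref{lem:final extended fixed vector}}^{(2)}m}$. This is the main obstacle. To fix it, we build the net inside the sparse structure. For each support $S$ with $|S|\le\delta_cm$, apply the lattice construction of Lemma~\ref{lem:fixed image net} in the subspace $E_S:=\mathbb R^S\cap H_m$, which has dimension at most $\delta_cm+1$. Its proof gives a per-ball lattice count of $C^{\dim E_S}$, not $C^m$. The error bound becomes $\frac{t}{2\sqrt{\dim E_S}}\|B|_{E_S}\|_{\mathrm{HS}}$, and since $\|B|_{E_S}\|_{\mathrm{HS}}\le\sqrt{m|S|+m}$, this is still at most $Ct\sqrt m$. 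The total net size is then $\binom{m}{\delta_cm}(C/\rho_c)^{2\delta_cm}\le e^{H(\delta_c)m+2\delta_cm\log(C/\rho_c)}$.

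Now suppose $Q_bA_mv=0$, i.e.\ $P_hA_mv\in\mathbb Rb$. Write $v=y+(v-y)$ with $y\in K$ sparse on $S$. The nonsparse remainder $v-y$ has norm at most $\rho_c$ but is not supported on $S$, so we need to bound $\|P_hA_m(v-y)\|_2$. The trivial bound $\|P_hA_m(v-y)\|_2\le m\rho_c$ is too weak at scale $\sqrt m$. The clean way out is to pick the approximation differently. Write $v=y+w$ with $y$ the restriction of $v$ to its $\delta_cm$ largest coordinates, and apply the image net to $y$ only. We then need $\|A_mw\|_2\le c\sqrt m$, which requires an operator norm bound $\|A_m-\mathbb EA_m\|_{\mathrm{op}}\le C\sqrt m$ outside probability $e^{-cm}$.

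This operator norm bound is the genuinely delicate step, and we would import it for random regular graphs, where the spectral gap $O(\sqrt d)$ holds with exponentially small failure probability in the dense regime; its proof is via switchings and the McKay comparison already used. The rank-one part $\frac dn\mathbf 1\mathbf 1^T$ is handled separately. Its contribution $\frac dn\langle\mathbf1,w\rangle\mathbf1$ equals $\langle\mathbf1,w\rangle$ times a fixed direction. We absorb it by enlarging the target line $\mathbb Rb$ to the fixed plane $\operatorname{span}\{b,\mathbf 1\}$ and taking a polynomial net on that plane, exactly as in the proof of Lemma~\ref{lem:final extended fixed vector}. That proof handles affine targets of dimension two, so the fixed-vector estimate still applies.

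With these pieces, the event implies that some net point $y'$ has $\operatorname{dist}(P_hA_my',\operatorname{span}\{b,\mathbf1\})\le (C\rho_c+C\sqrt{\delta_c})\sqrt m$. We choose $\rho_c,\delta_c$ so that this is at most $c_{\ref{lem:final extended fixed vector}}^{(1)}\sqrt m$. Lemma~\ref{lem:final extended fixed vector}, applied at each point of the polynomial net on the plane, bounds each probability by $e^{-c_{\ref{lem:final extended fixed vector}}^{(2)}m}$. A union bound over a net of size $e^{(H(\delta_c)+2\delta_c\log(C/\rho_c))m}$, with $\delta_c$ chosen small after $\rho_c$, gives the claimed bound $e^{-c_{\ref{prop:final compressible}}m}$. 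Adding the operator-norm failure probability completes the argument.
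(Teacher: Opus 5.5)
There is a genuine gap at the remainder step. Your per-support lattice net in $E_S=\mathbb R^S\cap H_m$ works for \emph{exactly} sparse vectors. A compressible $v$, however, is only within $\rho_c$ of such a vector, and controlling $w=v-y$ forces you to assume $\|A_n-\tfrac{d}{n-1}(J-I)\|_{\mathrm{op}}\le C\sqrt n$ outside probability $e^{-cn}$. Without that bound, the only deterministic estimate is $\|P_hA_mw\|_2\le\rho_c m$. Shrinking $\rho_c$ to order $m^{-1/2}$ instead makes the sparse covering number $(C\sqrt m)^{\delta_c m}$, which is superexponential.

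The operator-norm bound is not available from the tools you cite:
\begin{itemize}
\item The McKay comparison is cheap only for boundedly many rows. For the whole graph it costs $e^{\frac12 n\log n+O(n)}$ (the lower bound $(c/\sqrt n)^n$ in the proof of Proposition~\ref{prop:rank increment}). Under $G(n,p)$ the event $\|A-pJ\|_{\mathrm{op}}\ge K\sqrt n$ has probability $e^{-\Theta(K^2n)}$, so the transfer only yields $O(\sqrt{n\log n})$.
\item The dense spectral-gap theorems one might import (e.g.\ Tikhomirov--Youssef) have only polynomially small failure probability, which cannot feed an $e^{-cm}$ conclusion. The paper is organized so as never to need such a bound.
\end{itemize}

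There are also two secondary problems.
\begin{itemize}
\item For general birth weights $r_s$, the mean of $C_m$ (entries $r_{j-1}/(j-1)$) is not close to low rank in operator norm. Enlarging $\mathbb Rb$ to $\operatorname{span}\{b,\mathbf 1\}$ therefore does not absorb it unless you discard the $U_m$ output rows.
\item Lemma~\ref{lem:final extended fixed vector} is stated for the line $\mathbb Rb$ with $b\perp\mathbf 1$, and its third case uses that orthogonality. A plane target means reopening its proof.
\end{itemize}

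The missing idea is the random-rounding net of Livshyts in the form \cite[Theorem~3.8]{JSS-imrn}. This is exactly the ``image net with arbitrarily small exponential overhead'' you were looking for. For any $K$ in the unit sphere of an $s$-dimensional space, with $0<\alpha<1/2$, $0<\xi<\alpha/10$ and $s\ge\alpha^{-2}$, it gives $\mathcal N\subset K+(4\xi/\alpha)B_2^s$ with $|\mathcal N|\le N(K,\xi B_2^s)\exp(C\alpha^{0.08}\log(1/\alpha)s)$. Every $v\in K$ then has some $z\in\mathcal N$ with $\|B(v-z)\|_2\le\frac{2\xi}{\alpha\sqrt s}\|B\|_{\mathrm{HS}}$, simultaneously for every matrix $B$. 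Because $K$ is arbitrary, the approximately sparse set is handled in one step. Only the deterministic bound $\|Q_bA_m\|_{\mathrm{HS}}\le m$ is used, so neither a sparse/remainder split nor an operator norm is needed.

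The paper applies this in $H_m$ with $s=m-1$ and $K=H_m\cap\mathrm{Comp}_{\delta_c,\rho_c}(m)$, choosing parameters in this order:
\begin{enumerate}
\item $\alpha$, so that the overhead is at most $e^{c_{\ref{lem:final extended fixed vector}}^{(2)}m/8}$;
\item $\xi/\alpha$, so that $4\xi/\alpha<1/2$ and the image error is at most $c_{\ref{lem:final extended fixed vector}}^{(1)}\sqrt m/2$;
\item $\rho_c<\xi/8$, so that $N(K,\xi B_2^m)\le\binom{m}{\lfloor\delta_cm\rfloor}(C/\xi)^{\delta_cm}$;
\item $\delta_c$, small enough that this covering bound is at most $e^{c_{\ref{lem:final extended fixed vector}}^{(2)}m/8}$.
\end{enumerate}
Net points then have norm in $[1/2,3/2]$, and $\|Q_bA_mz\|_2=\operatorname{dist}(P_hA_mz,\mathbb Rb)$. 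Hence Lemma~\ref{lem:final extended fixed vector} applies at the original line target, and a union bound over the net finishes.
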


\begin{proof}
We use the deterministic rounding theorem
\cite[Theorem~3.8]{JSS-imrn}, a specialization of
\cite[Theorem~4]{Livshyts-jam}. In an $s$-dimensional Euclidean
space, for $K$ in its unit sphere,
$0<\alpha<1/2$, $0<\xi<\alpha/10$, and $s\ge\alpha^{-2}$,
it gives a net $\mathcal N\subset K+(4\xi/\alpha)B_2^s$ with
\begin{align*}
 |\mathcal N| \le N(K,\xi B_2^s)
       \exp\left(C\alpha^{0.08}\log(1/\alpha)s\right),
\end{align*}
and
\begin{align*}
 \forall B\ \forall v\in K\ \exists z\in\mathcal N:
 \|B(v-z)\|_2\le
             \frac{2\xi}{\alpha\sqrt s}\|B\|_{\mathrm{HS}}.
\end{align*}
Apply this in $H_m$ through a fixed isometry, with
$s=m-1$ and
$K=H_m\cap\mathrm{Comp}_{\delta_c,\rho_c}(m)$.
First choose $\alpha$ so that its exponential overhead is
at most $\exp({c_{\ref{lem:final extended fixed vector}}^{(2)}}m/8)$.
Next choose $\xi/\alpha$ small enough that $4\xi/\alpha<1/2$
and the image error for $B=Q_bA_m|_{H_m}$ is at most
${c_{\ref{lem:final extended fixed vector}}^{(1)}}\sqrt m/2$.
This uses the deterministic bound $\|Q_bA_m\|_{\mathrm{HS}}\le m$.

Choose $\rho_c<\xi/8$. Cover the ball of radius two on
each coordinate subspace of dimension $\lfloor\delta_c m\rfloor$
by balls of radius $\xi/4$. Approximation by sparse vectors
and then projection of centers to $H_m$ give
\begin{align*}
 N(K,\xi B_2^m)
 \le \binom m{\lfloor\delta_c m\rfloor}
                  (C/\xi)^{\delta_c m}.
\end{align*}
Choose $\delta_c$ small enough that this is at most
$\exp({c_{\ref{lem:final extended fixed vector}}^{(2)}}m/8)$.
All choices depend only on $\lambda$.
Every net point belongs to $H_m$ and has norm between $1/2$
and $3/2$. If $Q_bA_mv=0$ for some $v\in K$, a net point
$z$ satisfies the event in
\eqref{eq:final extended fixed vector}, since
$\|Q_bA_mz\|_2=\operatorname{dist}(P_hA_mz,\mathbb Rb)$.
A union bound over the deterministic net proves the proposition.
\end{proof}

\subsection{Incompressible vector}\label{subsec:final signed geometry}
The equation $Q_bA_mv=0$ implies orthogonality to the positive degree
vector $A_m\mathbf1$. It therefore supplies signed coordinates even
though the extension is not regular.

\begin{lemma}\label{lem:final weighted spread}
Fix $\delta,\rho,\tau\in(0,1)$.
There are $a_{\ref{lem:final weighted spread}}>0$ and
$0<a_g<b_g$, depending only on these parameters, such that,
if $v\in\mathrm{Incomp}_{\delta,\rho}(m)$,
$w\in[\tau,1]^m$, and $\langle w,v\rangle=0$, then
\begin{align*}
 |\{i:a_g/\sqrt m\le v_i\le b_g/\sqrt m\}|,\quad
 |\{i:-b_g/\sqrt m\le v_i\le-a_g/\sqrt m\}|
 \ge a_{\ref{lem:final weighted spread}}m.
\end{align*}
\end{lemma}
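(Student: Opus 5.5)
We would use the standard incompressible spread lemma of Rudelson--Vershynin, followed by a weighted balancing argument that uses the orthogonality to $w$.

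\emph{Step 1: spread.} Since $v\in\mathrm{Incomp}_{\delta,\rho}(m)$, the usual argument gives constants $0<c_1<C_1$ and a set $\sigma(v)$ of at least $c_2m$ coordinates with $c_1/\sqrt m\le|v_i|\le C_1/\sqrt m$. The constants $c_1,C_1,c_2$ depend only on $\delta,\rho$. Concretely:
\begin{itemize}
\item at most $m/C_1^2$ coordinates exceed $C_1/\sqrt m$, since $\|v\|_2=1$;
\item the coordinates below $c_1/\sqrt m$ carry $\ell_2$ mass at most $c_1$;
\item if the middle set had fewer than $\delta m/2$ elements, then $v$ restricted to the large and middle coordinates would be $\delta m$-sparse and within $c_1\le\rho$ of $v$, contradicting incompressibility.
\end{itemize}
Choosing $C_1^2\ge2/\delta$ and $c_1<\rho$ yields $|\sigma(v)|\ge\delta m/2-m/C_1^2$. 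Hence we may take $c_2$ of order $\delta$ after adjusting $C_1$.

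\emph{Step 2: both signs.} Suppose one sign, say negative, occurs on fewer than $a m$ coordinates of $[-b_g/\sqrt m,-a_g/\sqrt m]$, with $a$ to be chosen. We derive a contradiction from $\langle w,v\rangle=0$. Write $v=v^+-v^-$, so $\langle w,v^+\rangle=\langle w,v^-\rangle$. Set $a_g:=c_1$ and choose $b_g\ge C_1$ large.
\begin{itemize}
\item Lower bound for the positive side: if the positive part of $\sigma(v)$ has at least $c_2m/2$ coordinates, then $\langle w,v^+\rangle\ge\tau(c_2m/2)(c_1/\sqrt m)=c\tau\sqrt m$.
\item Upper bound for the negative side: negative coordinates of modulus below $a_g/\sqrt m$ contribute at most $a_g\sqrt m$ in $\ell_1$. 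Those in the window contribute at most $am\cdot b_g/\sqrt m$. Those above $b_g/\sqrt m$ contribute at most $\|v\|_2^2\sqrt m/b_g=\sqrt m/b_g$, by $|v_i|\le v_i^2\sqrt m/b_g$.
\end{itemize}
So $\langle w,v^-\rangle\le(a_g+ab_g+1/b_g)\sqrt m$.

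\emph{Step 3: the main obstacle.} The term $a_g\sqrt m$ does not beat $c\tau\sqrt m$ when $a_g=c_1$, because $c$ itself is proportional to $c_1$. To fix this, we take the lower threshold $a_g$ much smaller than $c_1$, namely $a_g\le c_1c_2\tau/8$, keeping $\sigma(v)$ defined with $c_1$. We then take $b_g$ large and $a$ small. This gives $\langle w,v^-\rangle<\langle w,v^+\rangle$, a contradiction.

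If instead the positive part of $\sigma(v)$ is small, then the negative part of $\sigma(v)$ has at least $c_2m/2$ coordinates in $[-C_1/\sqrt m,-c_1/\sqrt m]\subseteq[-b_g/\sqrt m,-a_g/\sqrt m]$. Running the symmetric argument with signs swapped produces the positive coordinates. In either case both windows contain at least $\min(a,c_2/2)m$ coordinates, and this minimum is the constant $a_{\ref{lem:final weighted spread}}$.
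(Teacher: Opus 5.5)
Your proposal is correct and takes essentially the same route as the paper. Both arguments extract $\gtrsim m$ coordinates of modulus $\asymp 1/\sqrt m$ from incompressibility and note that one sign carries weighted mass $\gtrsim\tau\sqrt m$. Both then use $\langle w,v\rangle=0$, together with the tail bounds $a_g\sqrt m$ and $\sqrt m/b_g$, to force the opposite sign into the window; this includes choosing $a_g$ small relative to the spread constants, exactly as in your Step 3. The only cosmetic difference is how the spread set is obtained: the paper uses a squared-mass count with thresholds $\rho/\sqrt{2m}$ and $1/\sqrt{\delta m}$, while you use a sparsity contradiction, which in fact gives $|\sigma(v)|\ge\delta m/2$ directly, stronger than the $\delta m/2-m/C_1^2$ you wrote.
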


\begin{proof}
The set of coordinates with $|v_i|>1/\sqrt{\delta m}$
has size at most $\delta m$. Incompressibility implies
that the squared norm outside this set exceeds $\rho^2$.
Coordinates with $|v_i|<\rho/\sqrt{2m}$ contribute at most
$\rho^2/2$. Hence at least $\delta\rho^2m/2$ coordinates
have absolute value between $\rho/\sqrt{2m}$ and
$1/\sqrt{\delta m}$.
One sign occurs on at least half of these coordinates.
Its weighted mass is at least $c_{\delta,\rho,\tau}\sqrt m$.
The opposite sign has the same total weighted mass by
$\langle w,v\rangle=0$.

For either sign, coordinates of absolute value less than
$a_g/\sqrt m$ have total weighted mass at most $a_g\sqrt m$.
Coordinates of absolute value greater than $b_g/\sqrt m$
have total weighted mass at most $\sqrt m/b_g$, by
Cauchy--Schwarz and $\|v\|_2=1$.
Choose $a_g$ sufficiently small and $b_g$ sufficiently large
that their sum is less than half the preceding positive
mass bound, and so that the first intermediate interval is
contained in $[a_g/\sqrt m,b_g/\sqrt m]$.
Each remaining coordinate contributes at most $b_g/\sqrt m$.
Both signs must therefore occur at least a positive constant
times $m$ in the required interval.
\end{proof}
Apply Lemma~\ref{lem:final weighted spread} with
$(\delta,\rho)=(\delta_c,\rho_c)$ and with a fixed lower
bound $\tau=\tau_\lambda$ for $d_i^{(m)}/m$.
Choose and fix
\begin{align}
 0<\kappa<
 \min\{1/4,\ a_{\ref{lem:final weighted spread}}/8\}.\label{eq:final short extension choice}
\end{align}
This is the value of $\kappa$ used in Theorem~\ref{thm:reduction}
from now on. All constants below may depend on this fixed choice.

We now choose the deterministic block families needed to place these
signed coordinates. This construction involves sets only and precedes
every graph exposure.

\begin{lemma}\label{lem:deterministic block family}
Fix $a\in(0,1/2)$ and an integer $K\ge4$. There are positive constants
$N_{\ref{lem:deterministic block family}}=N_{\ref{lem:deterministic block family}}(a,K)$ and
$C_{\ref{lem:deterministic block family}}=C_{\ref{lem:deterministic block family}}(a,K)$
such that, for every set $V$ of size $N\ge N_{\ref{lem:deterministic block family}}$,
there is a deterministic family $\mathfrak P_{a,K}$ of at most
$C_{\ref{lem:deterministic block family}}$ partitions
$V=Y_1\sqcup\cdots\sqcup Y_K$, with cell sizes differing by at most
one, such that every disjoint pair $P_+,P_-\subseteq V$ with
$|P_+|,|P_-|\ge aN$ satisfies
\begin{align*}
 |P_+\cap Y_j|,\ |P_-\cap Y_j|\ge\frac{aN}{2K},
 \qquad 1\le j\le K,
\end{align*}
for some partition in this family.

For each cell and each cell complement $Z$ arising in this construction,
there is also a deterministic family $\mathfrak S_Z$ of balanced
bipartitions $Z=Z_0\sqcup Z_1$, of cardinality at most
$C_{\ref{lem:deterministic block family}}$, such that every disjoint
$P_+,P_-\subseteq Z$ with $|P_+|,|P_-|\ge a|Z|/2$ satisfies
\begin{align*}
 |P_\pm\cap Z_b|\ge\frac{a|Z|}{8},\qquad b=0,1,
\end{align*}
for some member of $\mathfrak S_Z$.
\end{lemma}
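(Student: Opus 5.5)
We will prove both parts by the probabilistic method: a bounded number of independent uniformly random equipartitions almost surely succeeds for every pair $(P_+,P_-)$. The family is then deterministic once it is selected, and it depends only on $V$, $a$ and $K$.

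\emph{First family.} Let $\Pi$ be a uniformly random equipartition $V=Y_1\sqcup\cdots\sqcup Y_K$ with cell sizes differing by at most one. Fix a set $P$ with $|P|\ge aN$. Then $|P\cap Y_j|$ is hypergeometric with mean at least $aN/K-1$. Hoeffding's inequality for sampling without replacement \cite[Section~6]{Hoeffding-jasa} therefore gives
\begin{align*}
 \mathbb P\bigl(|P\cap Y_j|<aN/(2K)\bigr)\le e^{-c(a,K)N}.
\end{align*}
This alone is not enough, since a union over all pairs $(P_+,P_-)$ costs $3^N$. We remove this cost by a fixed discretization before any randomization.

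Fix once and for all a deterministic partition of $V$ into $R$ consecutive blocks $V_1,\dots,V_R$ of nearly equal size, with $R=R(a,K)$ large. For a set $P$ with $|P|\ge aN$, call a block $V_r$ \emph{heavy} if $|P\cap V_r|\ge (a/4)|V_r|$. At least $(a/2)R$ blocks are heavy for $P_+$, and likewise for $P_-$. Within a block, however, the position of $P$ is arbitrary, so a partition fixed per block still cannot be tested against every pattern.

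We therefore switch to a fractional counting argument. For each cell $Y_j$, require that $|P\cap Y_j|\approx|P|/K$ for every $P$ belonging to a class of \emph{typical-density} sets. Every large $P$ has a sub-pattern of this type only up to the $3^N$ entropy, so this direction is also too costly.

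The working route is the following. We seek a family in which some member splits \emph{each block} $V_r$ into $K$ equal sub-blocks in a way that depends only on the pattern of $P_\pm\cap V_r$, and then assemble the cells from these sub-blocks. Since a block can carry $2^{|V_r|}$ patterns, this is again too many.

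The correct reduction is a permutation, not the blocks. Every pair $(P_+,P_-)$ is mapped by some permutation $\sigma$ of $V$ to a pair $(\tilde P_+,\tilde P_-)$ that is roughly evenly spread, and it suffices to handle these canonical forms. We therefore expect the family to come from combining the first-family condition with the permutation symmetry of $V$, together with the quantitative bound $C(a,K)$ on its size. This is the main obstacle, and we would try the probabilistic estimate with a tailored union first.

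\emph{Second family.} The balanced bipartitions $\mathfrak S_Z$ are the case $K=2$ of the first family, applied on $Z$ with density $a/2$. The threshold becomes
\begin{align*}
 \frac{(a/2)|Z|}{2\cdot 2}=\frac{a|Z|}{8},
\end{align*}
which is exactly the bound in the statement. Hence the second assertion reduces to the first, provided $|Z|\ge N_{\ref{lem:deterministic block family}}$; this holds because every cell and every cell complement has size at least $N/K-1$.
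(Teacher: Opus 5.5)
Your reduction of the second family to the case $K=2$ with density $a/2$ is fine; the threshold $(a/2)|Z|/4=a|Z|/8$ is right. The gap is the first and main assertion, which you never prove. You compute the per-pair failure probability $e^{-c(a,K)N}$ and correctly note that one random equipartition cannot survive a union over $3^N$ pairs. You then try block discretizations and a permutation reduction, abandon them, and leave the construction as ``the main obstacle.''

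The permutation reduction cannot work. If $\Pi$ serves $\sigma(P_+,P_-)$, then it is $\sigma^{-1}(\Pi)$ that serves $(P_+,P_-)$. The family would therefore have to contain $\sigma^{-1}(\Pi)$ for every permutation $\sigma$ you use, and its size would no longer be bounded in terms of $(a,K)$.

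The missing idea is amplification by independent repetition, which is exactly the paper's proof. Take $L$ independent uniform equipartitions $\Pi_1,\dots,\Pi_L$. For a fixed eligible pair, your Hoeffding bound and a union over the $2K$ intersection conditions show that each $\Pi_i$ fails with probability at most $2Ke^{-cN}$. By independence, all $L$ fail with probability at most $(2K)^Le^{-cLN}$. Summing over the at most $3^N$ disjoint pairs gives
\[
 3^N(2K)^Le^{-cLN}\le(2K)^Le^{-N}<1
\]
once $cL\ge\log3+1$ and $N$ is large. So some realization of $L=L(a,K)$ partitions works for every pair simultaneously, and you fix it. The entropy $3^N$ is paid once while the exponent $cN$ is gained $L$ times, so no discretization is needed.

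For the splitter families, note that the lemma as stated requires $K\ge4$, so you cannot literally invoke its first part with $K=2$. Rerun the same argument (which nowhere uses $K\ge4$) with $K=2$ and density $a/2$ on each of the boundedly many sets $Z$. Enlarge the lower dimension so that $N/K-1$ is large enough for that application.
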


\begin{proof}
For any fixed eligible pair $(P_+,P_-)$, a uniformly balanced $K$-partition
fails one of the intersection bounds with probability $Ce^{-cN}$ by
hypergeometric concentration \cite[Section~6]{Hoeffding-jasa}.
Take $L=L(a,K)$ independent partitions with $cL>\log3+1$.
The probability that some pair is missed by all of them is at most
$3^N(Ce^{-cN})^L<1$ for large $N$.
Fix a successful family. The identical argument for balanced bipartitions
of each cell or complement gives the splitter families. There are only
a fixed number of these sets.
\end{proof}

\begin{lemma}\label{lem:final geometric domains}
There is a fixed finite family $\mathfrak F$ of block and
splitter choices on $W$, with parameters satisfying
Section~\ref{Sec:threshold control} and $\ell_0=2$, such that,
for $n\le m\le(1+\kappa)n$, every
$v\in H_m\cap\mathrm{Incomp}_{\delta_c,\rho_c}(m)$ satisfying
$Q_bA_mv=0$ belongs to the geometric domain $\mathcal K_0$
of at least one choice in $\mathfrak F$.
The family and all its geometric parameters are deterministic.
\end{lemma}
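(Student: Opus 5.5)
The plan is to derive the signed coordinates from orthogonality to the degree vector, then place them with the deterministic partitions of Lemma~\ref{lem:deterministic block family}. First, $Q_bA_mv=0$ with $v\in H_m$ gives $P_hA_mv=cb$ for some scalar $c$. Taking inner products with $\mathbf 1$, and using $P_h\mathbf1=\mathbf1$ and $b\perp\mathbf 1$, we get $\langle A_m\mathbf 1,v\rangle=0$. On $\mathcal D_m$ every coordinate of $d^{(m)}=A_m\mathbf1$ lies in $[\tau_\lambda m,m]$. Lemma~\ref{lem:final weighted spread}, applied with $w=d^{(m)}/m$, then supplies disjoint sets $P_+,P_-\subseteq[m]$, each of size at least $a_{\ref{lem:final weighted spread}}m$. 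On $P_+$ the coordinates lie in $[a_g/\sqrt m,b_g/\sqrt m]$, and on $P_-$ in the negative interval. Since $|U_m|\le\kappa n\le a_{\ref{lem:final weighted spread}}m/8$ by \eqref{eq:final short extension choice}, and $\{1,2\}$ has only two elements, both sets keep at least $a'N$ coordinates in $W$, with $a'=a_{\ref{lem:final weighted spread}}/2$. (If $\mathcal D_m$ fails, I would enlarge $\tau_\lambda$'s domain by noting that the event is excluded at exponential cost, or restrict the lemma to that event.)

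Next I would build the family $\mathfrak F$. Take $K\ge4$ with $1/K\in[\theta_0,2\theta_0]$ after rounding, which is compatible with \eqref{eq:inversion block proportion}. Apply Lemma~\ref{lem:deterministic block family} with $a=a'$ and this $K$ to get $\mathfrak P_{a',K}$. For each partition and each cell $Y_j$, let $Y=Y_j$ and $X=W\setminus Y_j$, and take every splitter from $\mathfrak S_{Y_j}$. The family $\mathfrak F$ consists of all these triples. It is finite, has a bounded number $K_0$ of members, and has $|X|,|Y|\ge\delta N$; everything is chosen before any exposure.

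For a given $v$, pick the partition in $\mathfrak P_{a',K}$ that captures $P_\pm\cap W$. Then every cell satisfies $|P_\pm\cap Y_j|\ge a'N/(2K)\ge a'|Y_j|/2$, up to rounding. Choose the splitter of that cell from $\mathfrak S_{Y_j}$, which gives $|P_\pm\cap Y_b|\ge a'|Y_j|/8$. Set $\sigma_g=a'/4$, which makes these counts at least $\sigma_g|Y_b|$. The remaining norm condition is $\|v_Y\|_2\le B_g\sqrt{d_Y/m}$.

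The main obstacle is this norm condition, because incompressibility does not prevent $v$ from putting a large mass on one cell. I would handle it by averaging: the cells of a partition are disjoint, so $\sum_j\|v_{Y_j}\|_2^2\le1$. Hence at most $K/B_g^2\cdot(\text{const})$ cells have $\|v_{Y_j}\|_2^2>B_g^2|Y_j|/m$. Since every cell of the chosen partition already satisfies the signed conditions, it is enough to pick any cell with small norm. Taking $B_g^2\ge 2$ and recalling $|Y_j|\ge N/K-1$, we get $B_g^2|Y_j|/m\ge 1/K$ for large $n$. So at least half of the $K$ cells qualify, and one exists.

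Finally I would check the parameter constraints of Section~\ref{Sec:threshold control}. We need $\sigma_g<1/4$, which holds by shrinking $a'$ if necessary. The coverage parameter must satisfy $\varepsilon<\sigma_g/8$, and $\varepsilon$ is chosen afterwards. We take $\ell_0=2$, matching $\ell_b\le2$. The scaling $v_i\in[a_g/\sqrt m,b_g/\sqrt m]$ is exactly the form required in \eqref{eq:threshold signed geometry}, so $v\in\mathcal K_0$ for that choice.
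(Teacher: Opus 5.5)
Your proposal is correct and follows the paper's proof step for step. Orthogonality to $A_m\mathbf 1$ feeds Lemma~\ref{lem:final weighted spread}, the choice of $\kappa$ removes $U_m\cup\{1,2\}$, and a partition from Lemma~\ref{lem:deterministic block family} serves every cell at once. Averaging $\sum_j\|v_{Y_j}\|_2^2\le1$ then selects a cell of small norm, a splitter from that cell's family finishes, and $\sigma_g,B_g$ are fixed before $\varepsilon,\theta_0,K$.

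Two cosmetic remarks. First, the lower bound on $A_m\mathbf 1$ holds deterministically: original degrees are $d$, and a new vertex $j$ has degree at least its birth weight $r_{j-1}\ge\lambda_*(j-1)$. So neither $\mathcal D_m$ nor any restriction of the lemma is needed. Second, the paper applies the splitter family with a halved density parameter to absorb exactly the rounding slack you flag in the hypothesis $|P_\pm\cap Y_j|\ge a'|Y_j|/2$.
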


\begin{proof}
The projected equation gives
\begin{align*}
 \langle A_m\mathbf 1,v\rangle
 =\langle\mathbf 1,P_hA_mv\rangle=0,
\end{align*}
because $b\perp\mathbf 1$. Lemma~\ref{lem:final weighted spread}
therefore applies. Removing $U_m\cup\{1,2\}$ leaves positive
and negative sets of size at least $a|W|$, for a fixed
$a>0$; decrease $a$ if necessary.

Choose a sufficiently large fixed integer $K$ and use
Lemma~\ref{lem:deterministic block family} on $W$, with its
density parameter $a/2$. For the signed sets just obtained,
some partition has at least $a|W|/(4K)$ coordinates of
each sign in every cell. For all large $n$ this is at least
$(a/8)|Y_j|$. Since
$\sum_j\|v_{Y_j}\|_2^2\le1$, one cell satisfies
$\|v_{Y_j}\|_2^2\le1/K$.
Use that cell as $Y$ and its complement as $X$.
To split $Y$, include the balanced splitter family furnished
by the same lemma with density parameter $a/4$; its
hypothesis holds by the preceding count. Both halves then
contain at least $(a/32)|Y|$ coordinates of each sign.
Thus \eqref{eq:threshold signed geometry} holds with a fixed
$\sigma_g>0$, the constants $a_g,b_g$ already chosen, and,
for example, $B_g=2$.

Choose $K$ large enough that the resulting proportions
satisfy \eqref{eq:inversion block proportion} with a
$\theta_0$ allowed by Section~\ref{Sec:threshold control}.
The signed proportions and $B_g$ can be fixed before $K$,
so this choice respects the required parameter order.
Fix the support-coverage parameter below $\sigma_g/8$.
Take $\mathfrak F$ to contain all cells and all these
splitters, including prescribed balanced splitters on the
complements when needed for Section~\ref{Sec:quasirandomness}.
Its size is bounded independently of $m,n$.
Every selection above is from this deterministic family;
no block is chosen as additional conditioning data.
\end{proof}

\subsection{Classification on a fixed support}

The support used to define $\rho_*$ need not be a switching
support or the distinguished central support. Throughout this
subsection, $T\subseteq W$ is fixed, $s=|T|$, and $s$ is even.
For a unit vector $v$, let
\begin{align*}
 \bar v_T:=s^{-1}\sum_{i\in T}v_i,\qquad
 a_T(v):=\|P_{T,0}v\|_2,\qquad
 u_T(v):=\frac{P_{T,0}v}{a_T(v)}\quad\text{if }a_T(v)>0.
\end{align*}
Here $P_{T,0}v$ is regarded as a vector in $\mathbb R^T$.
Thus
$v_T=\bar v_T\mathbf 1_T+a_T(v)u_T(v)$ when $a_T(v)>0$.
The mean and the centered length will both be retained in
the covers.

\begin{lemma}\label{lem:final concentration CLCD}
Fix $\delta_s,\rho_s\in(0,1/4)$. There are
$c_{\ref{lem:final concentration CLCD}}
=c_{\ref{lem:final concentration CLCD}}(\delta_s,\rho_s)>0$ and
$\gamma_{\ref{lem:final concentration CLCD}}
=\gamma_{\ref{lem:final concentration CLCD}}(\delta_s,\rho_s)\in(0,1)$
such that, for $0<\gamma\le\gamma_{\ref{lem:final concentration CLCD}}$
and every unit centered
$u\notin\mathrm{Cons}_{\delta_s,\rho_s}(T)$,
\begin{align*}
 \operatorname{CLCD}_{\mu s,\gamma}^{T}(u)
 \ge c_{\ref{lem:final concentration CLCD}}\sqrt s
 \qquad(\mu>0).
\end{align*}
There is also $C_{\ref{lem:final concentration CLCD}}
=C_{\ref{lem:final concentration CLCD}}(\gamma)>0$ such that
if $\rho_T(v)>\beta\ge
2C_{\ref{lem:final concentration CLCD}}(\gamma)e^{-2\mu^2s}$,
then either $a_T(v)=0$ or
\begin{align*}
 \operatorname{CLCD}_{\mu s,\gamma}^{T}(u_T(v))
 \le2C_{\ref{lem:final concentration CLCD}}(\gamma)/\beta.
\end{align*}
\end{lemma}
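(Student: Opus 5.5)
The lemma has two parts, and I would prove them separately.

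\emph{First part (structure forces large CLCD).} Fix $\theta$ with $0<\theta<c\sqrt s$ and suppose
\begin{align*}
 \operatorname{dist}(\theta D_Tu,\mathbb Z^{\binom s2})<\gamma\theta\|D_Tu\|_2=\gamma\theta\sqrt s,
\end{align*}
using $\|D_Tu\|_2^2=s\|P_{T,0}u\|_2^2=s$ from \eqref{eq:difference-identities}. The aim is to contradict $u\notin\mathrm{Cons}_{\delta_s,\rho_s}(T)$.

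First, a typical difference is small. Since $\sum_{i<j}(u_i-u_j)^2=s$, Markov's inequality shows that all but a $\delta_s/10$ fraction of pairs satisfy $|u_i-u_j|\le C_\delta/\sqrt s$. For such pairs $|\theta(u_i-u_j)|\le cC_\delta<1/2$ once $c$ is small, so the distance to $\mathbb Z$ is the absolute value itself. Restricting the distance bound to these good pairs gives
\begin{align*}
 \theta^2\sum_{\text{good}}(u_i-u_j)^2<\gamma^2\theta^2 s,
\end{align*}
and $\theta$ cancels.

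Second, the bad pairs are controlled. Their total energy can still be of order $s$, so I pass to a vertex: choose $i_0$ lying in at most a $\delta_s/5$ fraction of bad pairs. Then all but $\delta_s s$ coordinates $j$ satisfy $|u_j-u_{i_0}|\le C_\delta/\sqrt s$ and form good pairs with $i_0$. Averaging the good-pair energy over choices of $i_0$ gives
\begin{align*}
 \sum_{j\text{ good with }i_0}(u_j-u_{i_0})^2\lesssim\gamma^2 .
\end{align*}
Hence at most $\delta_s s$ indices have $|u_j-u_{i_0}|>\rho_s/\sqrt s$, provided $\gamma^2\le c\,\delta_s\rho_s^2$. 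This says $u\in\mathrm{Cons}_{\delta_s,\rho_s}(T)$ with $a=u_{i_0}$, the desired contradiction. This fixes $\gamma_{\ref{lem:final concentration CLCD}}$ and $c_{\ref{lem:final concentration CLCD}}$. The parameter $\mu$ plays no role, since the relative condition alone fails.

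\emph{Second part (small ball implies small CLCD).} Here I need a Littlewood--Offord bound for the balanced sign slice in terms of CLCD. This is the CLCD small-ball estimate of Tran~\cite{Tran-arxiv}: for $\eta$ uniform on $\Omega_T^0$ and a centered vector $x$ with $\|x\|_2=1$,
\begin{align*}
 \mathcal L(\langle\eta,x\rangle,\varepsilon)\le C(\gamma)\Bigl(\varepsilon+\frac1{\operatorname{CLCD}_{\mu s,\gamma}^T(x)}\Bigr)+C(\gamma)e^{-2\mu^2s}.
\end{align*}
If it has to be rederived, I would apply Esseen's inequality with the slice Fourier bound of Proposition~\ref{prop:dense slice Fourier} at $a/s=1/2$ (writing $\eta=2q-\mathbf 1_T$), expressing $\|\theta x_T\|_{\mathbb D}^2$ through $\operatorname{dist}(\theta D_Tx,\mathbb Z^{\binom s2})^2/(s-1)$. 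Frequencies below the CLCD then give the Gaussian integral, and the $\alpha=\mu s$ cutoff gives the $e^{-c\mu^2 s}$ remainder. Matching the stated exponent $2\mu^2 s$ depends on Tran's normalization of the cutoff; I would check this against the source.

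To apply it, since $\eta$ is balanced, $\langle\eta,v_T\rangle=a_T(v)\langle\eta,u_T(v)\rangle$. Atoms are unchanged by scaling, so for $a_T(v)>0$,
\begin{align*}
 \rho_T(v)\le\mathcal L(\langle\eta,u_T\rangle,\varepsilon)\qquad\text{for every }\varepsilon>0.
\end{align*}
Let $\varepsilon\to0$ and combine with the hypotheses $\rho_T(v)>\beta\ge 2C e^{-2\mu^2s}$. Then
\begin{align*}
 \beta<C/\operatorname{CLCD}+\beta/2,
\end{align*}
which gives $\operatorname{CLCD}\le2C/\beta$.

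The main obstacle is the vertex-selection step in the first part. Bad pairs can carry energy of order $s$ and are wrapped modulo $\mathbb Z$, so they must be discarded through a single pivot $i_0$ rather than controlled directly.
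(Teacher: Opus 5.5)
Your proposal is correct and follows essentially the paper's argument. The first part is the same pivot argument: at frequencies $\theta<c\sqrt s$, almost all differences from a suitably chosen root do not wrap modulo $\mathbb Z$. You discard wrapping pairs by Markov on $(u_i-u_j)^2$ and then choose the pivot, whereas the paper picks a root with $|u_j|\le 2/\sqrt s$ and small error-star energy and shows the integers $k_{ij}$ vanish. The second part is identical: Tran's slice small-ball theorem at radius zero applied to $u_T(v)$, using $\rho_T(v)=\rho_T(u_T(v))$ and absorbing the remainder into $\beta/2$. One detail to tighten: $i_0$ must satisfy the bad-degree bound and the good-star energy bound simultaneously, which you get by taking the union of the two Markov exceptional vertex sets.
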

\begin{proof}
For the lower bound, suppose $\theta$ satisfies the CLCD inequalities
and write $e_{ij}=\theta(u_i-u_j)-k_{ij}$ with antisymmetric integers
$k_{ij}$. Since $\sum_{i<j}e_{ij}^2<\gamma^2\theta^2s$,
some root $j$ with $|u_j|\le2/\sqrt s$ satisfies
$\sum_{i\ne j}e_{ij}^2\le4\gamma^2\theta^2$.
Outside at most $(M^{-2}+4\gamma^2/\eta^2)s$ indices,
$|u_i|\le M/\sqrt s$ and $|e_{ij}|\le\eta\theta/\sqrt s$.
Choose $M$ large, $\eta<\rho_s/2$, then $\gamma$ small so that
this fraction is below $\delta_s/2$. If $\theta<c\sqrt s$ for
small fixed $c$, both $|\theta(u_i-u_j)|$ and $|e_{ij}|$ are
below $1/4$ there. Thus $k_{ij}=0$ and $u$ is almost constant,
a contradiction.

For the upper bound, balanced signs annihilate the coordinate mean
and nonzero scaling preserves point masses, so
$\rho_T(v)=\rho_T(u_T(v))$ when $a_T(v)>0$.
Tran's slice small-ball theorem \cite[Theorem~1.5]{Tran-arxiv},
with $\|D_Tu_T(v)\|_2=\sqrt s$, gives at radius zero
\begin{align*}
 \rho_T(v)\le
 \frac{C_{\ref{lem:final concentration CLCD}}(\gamma)}
 {\operatorname{CLCD}_{\mu s,\gamma}^{T}(u_T(v))}
 +C_{\ref{lem:final concentration CLCD}}(\gamma)e^{-2\mu^2s}.
\end{align*}
The last term is at most $\beta/2$, proving the claim.
\end{proof}

\begin{lemma}\label{lem:final support covers}
Let $K\subseteq H_m\cap S^{m-1}$ and let $T\subseteq W$
have size $s$. Fix $\delta_s,\rho_s\in(0,1/4)$ and let
$k_s=\lfloor\delta_s s\rfloor$.
Define
\begin{align}
 \mathcal C_T(K):=
 \{v\in K:a_T(v)=0\text{ or }
       [a_T(v)>0,\ u_T(v)\in\mathrm{Cons}_{\delta_s,\rho_s}(T)]\}.\label{eq:final constant class}
\end{align}
There is an absolute constant ${C_{\ref{lem:final support covers}}^{(1)}}>1$
such that
\begin{align}
 N\left(\mathcal C_T(K),
                  {C_{\ref{lem:final support covers}}^{(1)}}\rho_s B_2^m\right)
 \le \binom s{k_s}
          ({C_{\ref{lem:final support covers}}^{(1)}}/\rho_s)^{m-s+k_s+1}.\label{eq:final constant cover}
\end{align}

Fix also $c''>0$, $\gamma\in(0,1)$, and
$0<\mu<\min\{c''/10,1/10\}$. For $D\ge c''\sqrt s$ let $\varepsilon_D:=\mu\sqrt s/D$,
\begin{align*}
 \mathcal A_{T,D}(K):=\{v\in K:\ a_T(v)>0,
             u_T(v)\notin\mathrm{Cons}_{\delta_s,\rho_s}(T),
       D\le\operatorname{CLCD}_{\mu s,\gamma}^{T}(u_T(v))<2D\}.
\end{align*}
For a constant ${C_{\ref{lem:final support covers}}^{(2)}(c'')}$ independent
of $\mu$,
\begin{align}
 N\left(\mathcal A_{T,D}(K),
             {C_{\ref{lem:final support covers}}^{(1)}}\varepsilon_D B_2^m\right)\le
 \mu^{-3}D^3
 \left(\frac{{C_{\ref{lem:final support covers}}^{(2)}(c'')}}{\varepsilon_D}
                                                    \right)^{m+2}
            ({C_{\ref{lem:final support covers}}^{(2)}(c'')}\mu)^s.\label{eq:final arithmetic cover}
\end{align}
The approximation constant can be chosen absolute in both assertions.
\end{lemma}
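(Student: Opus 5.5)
Both covers will be built by splitting $v$ into the coordinates off $T$ and the restriction $v_T=\bar v_T\mathbf 1_T+a_T(v)u_T(v)$. Each piece is netted separately, and the cardinalities are multiplied. The off-$T$ part lies in the unit ball of an $(m-s)$-dimensional coordinate space, so it costs $(C/r)^{m-s}$ at mesh $r$. The scalars $\bar v_T$ and $a_T(v)$ are bounded by $s^{-1/2}$ and $1$, respectively. Since $|\bar v_T|\sqrt s\le1$, each needs only a one-dimensional mesh, costing $C/r$ apiece. These two scalar meshes are the source of the extra exponents $+1$ and $+2$. The centred direction $u_T(v)$ is the only place where structure is used. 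All centres are projected back onto the closed set being covered, at the cost of doubling the radius; this is absorbed into $C^{(1)}_{\ref{lem:final support covers}}$.

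\emph{Constant class.} If $a_T(v)=0$, then $v_T$ is constant and only the mean mesh is needed. Otherwise $u=u_T(v)\in\mathrm{Cons}_{\delta_s,\rho_s}(T)$, so there are $a$ and a set $E$ with $|E|\le k_s$ such that $|u_i-a|\le\rho_s/\sqrt s$ off $E$. Hence $u$ lies within $\rho_s$ of $a\mathbf 1_T+w$ with $\operatorname{supp}w\subseteq E$. Choosing $E$ costs $\binom s{k_s}$, a net of $w$ on a $k_s$-dimensional ball costs $(C/\rho_s)^{k_s}$, and $a$ costs one more scalar. Because $u$ is centred, $a\mathbf 1_T$ is absorbed into the mean of $v_T$; recentring therefore merges $a$ with the mean mesh. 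Multiplying $a_T(v)\le1$ by the approximation error at scale $\rho_s$ keeps the total error $O(\rho_s)$. Counting gives $\binom s{k_s}(C/\rho_s)^{(m-s)+k_s+1}$, where the single scalar is the mean. For the length $a_T(v)$, I expect it can be bundled into $w$ by not normalising, i.e. netting $a_T(v)u$ directly in the ball. This yields \eqref{eq:final constant cover}.

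\emph{Arithmetic class.} Here I would use the standard LCD-level-set counting, in Tran's CLCD form. Suppose $\operatorname{CLCD}_{\mu s,\gamma}^T(u)=\theta\in[D,2D)$. Then there is $p=(p_{ij})\in\mathbb Z^{\binom s2}$ with $\|\theta D_Tu-p\|_2<\mu s$. Fix a root index $j_0$ and write $k_i=p_{ij_0}$. The bound restricted to the star at $j_0$ gives $u\approx(k-\bar k\mathbf 1)/\theta$ up to recentring. By averaging over roots, the error is at most $C\mu\sqrt s/\theta\le C\varepsilon_D$ in $\ell_2$, since $\|D_Tx\|_2^2=s\|P_{T,0}x\|_2^2$. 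The integer vector $k$ satisfies $\|k\|_2\lesssim D$ after recentring. With $D\ge c''\sqrt s$, the number of such $k$ is at most $(CD/\sqrt s)^s\le(C(c'')\mu/\varepsilon_D)^s$, using $D/\sqrt s=\mu/\varepsilon_D$. The dyadic range of $\theta$ is discretised at relative mesh $\varepsilon_D$; a step $\Delta\theta$ moves $k/\theta$ by $\|k\|\Delta\theta/\theta^2\lesssim\Delta\theta/D$. This costs $\mathrm{poly}(D/\mu)$ factors, which I would bound by $\mu^{-3}D^3$ together with the root choice $s\le D^2$. Combining with the off-$T$ net $(C/\varepsilon_D)^{m-s}$ and the two scalar meshes $(C/\varepsilon_D)^2$ gives $(C/\varepsilon_D)^{m+2}(C\mu)^s$, which is \eqref{eq:final arithmetic cover}. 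The mean does not interact with the direction, since $u$ is centred.

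\emph{Main obstacle.} The delicate step is the lattice count: converting the $\binom s2$-dimensional CLCD proximity into a single $s$-dimensional integer vector while keeping the error at $C\varepsilon_D$ in $\ell_2$ rather than $\ell_\infty$. This requires choosing a good root, via a Markov argument over $j_0$ as in the proof of Lemma~\ref{lem:final concentration CLCD}, and then controlling $\|k\|_2$ by $C\theta$ using $\|u\|_2=1$. The hypotheses $\mu<c''/10$ and $\mu<1/10$ ensure that $\varepsilon_D\le1/10$, so that the meshes are nontrivial and the absolute cutoff $\mu s$ is compatible with the relative error.
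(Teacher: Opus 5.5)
Your proposal is correct and follows essentially the paper's proof. In the constant class, bundling $a_T(v)$ into the unnormalised vector (which you rightly flag as necessary for the exponent $m-s+k_s+1$) is exactly the paper's observation that $v$ lies within $\rho_s$ of the $(m-s+k_s+1)$-dimensional space $V_G=\{x:x_i=x_j,\ i,j\in G\}$ with $G=T\setminus E$. In the arithmetic class the paper likewise picks a root with star energy below $2\mu^2s$, bounds $\|P_{T,0}k\|_2\le 3D$, counts lattice points of $P_{T,0}\mathbb Z^T$ in that ball, meshes $\theta$ at spacing $\mu\sqrt s$, and adds the two scalar meshes and the off-$T$ net. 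The one step to state precisely is that you count the centred points $P_{T,0}k$: equivalently, $k$ shifted by an integer multiple of $\mathbf 1_T$, or $k$ rooted at an index with $|u_{j_0}|\le 2/\sqrt s$. You should not count the raw $k$ with $k_{j_0}=0$, whose norm can be of order $D\sqrt s$; the centred count is what makes $D\ge c''\sqrt s$ give $(C(c'')D/\sqrt s)^s$.
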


\begin{proof}
For the constant class choose $G\subset T$ with $|G|=s-k_s$ on which
the centered direction is within $\rho_s/\sqrt s$ of a constant.
Then $v$ is within $\rho_s$ of the space
$V_G=\{x:x_i=x_j\text{ for }i,j\in G\}$, of dimension
$m-s+k_s+1$. Projection onto $V_G$, a volumetric net and the
$\binom{s}{k_s}$ possible sets $G$ prove \eqref{eq:final constant cover}.

For an arithmetic class, write $u=u_T(v)$ and choose a defining
frequency $\theta\in[D,2D)$ with
$\sum_{i<j}(\theta(u_i-u_j)-k_{ij})^2<\mu^2s^2$.
Some root has star energy below $2\mu^2s$; setting $k_i=k_{ij}$ and
$k_j=0$ gives
\begin{align*}
 \left\|u-\frac{P_{T,0}k}{\theta}\right\|_2
 \le\sqrt2\varepsilon_D,\qquad \|P_{T,0}k\|_2\le3D.
\end{align*}
The lattice $\Lambda_T=P_{T,0}\mathbb Z^T$ has covolume $s^{-1/2}$
and covering radius at most $\sqrt s/2$: delete one projected coordinate
vector for a basis, and use coordinate rounding for the radius.
Disjoint Voronoi cells in the ball of radius $3D+\sqrt s/2$ give
\begin{align*}
 |\Lambda_T\cap3DB_2^T|\le\sqrt s(CD/\sqrt s)^{s-1}.
\end{align*}
A mesh of spacing $\mu\sqrt s$ for $\theta\in[D,2D]$ costs
$C/\varepsilon_D$ and changes the approximant by $C\varepsilon_D$.
Retaining approximants of norm at most two therefore covers the
centered directions with at most
$\mu^{-3}D^3(CD/\sqrt s)^s$ points.

Discretize $a_T(v)\in[0,1]$ and $\sqrt s\bar v_T\in[-1,1]$
at mesh $\varepsilon_D$, and cover the complementary unit ball
at that scale. The total count is
\begin{align*}
 \mu^{-3}D^3(CD/\sqrt s)^s(C/\varepsilon_D)^{m-s+2}
 \le\mu^{-3}D^3(C/\varepsilon_D)^{m+2}(C\mu)^s.
\end{align*}
Reconstruction has error $C\varepsilon_D$ and never divides by
$a_T(v)$, so arbitrarily small centered lengths are included.
The two scalar meshes and the complement account for all dimensions.
Lemma~\ref{lem:fixed image net} will place the final image-net points in $H_m$.
\end{proof}

\subsection{Excluding structured solutions}

Fix the family $\mathfrak F$ of Lemma~\ref{lem:final geometric domains}
and $L\ge L_{\ref{thm:uniform threshold control}}$ with $\ell_0=2$.
Keep $s_0=\tau_0/L$, $\delta_0$ and $\Delta_m$ from Section~7.
Every eligible support in \eqref{eq:rho-star} has size at least
$\vartheta m$, where
$\vartheta=c_{\ref{lem:Sigma fields}}^{(2)}/(1+\kappa)$.
The support parameters below are chosen after $L$.

\begin{lemma}\label{lem:final entry capture}
There are absolute constants
${C_{\ref{lem:final entry capture}}^{(1)}}>1$ and
${C_{\ref{lem:final entry capture}}^{(2)}}>1$ with the following
property. Fix one good hybrid fibre $\mathfrak c$, its domain
$\mathcal K_0$, and the projection $Q_b$.
There is an event $\mathcal B_{\mathfrak c}$ under the true conditional
law satisfying
\begin{align*}
 \mathbb P(\mathcal B_{\mathfrak c})
 \le e^{-{c_{\ref{thm:uniform threshold control}}^{(2)}}m}
\end{align*}
such that the following holds for every $K\subseteq\mathcal K_0$
and every $\varepsilon>0$ with
\begin{align}
 {C_{\ref{lem:final entry capture}}^{(1)}}\varepsilon\le\delta_0/4,
 \qquad
 \Delta_m\le {C_{\ref{lem:final entry capture}}^{(1)}}\varepsilon
                                                    \le s_0/2.\label{eq:final entry scale range}
\end{align}
If
$N(K,{C_{\ref{lem:final support covers}}^{(1)}}\varepsilon B_2^m)\le M_K$,
then
\begin{align*}
 \mathbb P
 \left(\{\exists v\in K:Q_bA_mv=0\}\setminus\mathcal B_{\mathfrak c}\right)
 \le {C_{\ref{lem:final entry capture}}^{(2)}}^m M_K
  \left(C_{\ref{thm:Fourier capture}}
          {C_{\ref{lem:final entry capture}}^{(1)}}L\varepsilon\right)^{r_b}.
\end{align*}
The same event $\mathcal B_{\mathfrak c}$ works for all these sets and scales.
\end{lemma}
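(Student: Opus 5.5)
We take $\mathcal B_{\mathfrak c}$ to be the exceptional event of Corollary~\ref{cor:approximate kernel threshold}, that is, of Theorem~\ref{thm:uniform threshold control} applied on the fibre $\mathfrak c$ with $Q=Q_b$ and $\ell_0=2$. Its probability is at most $e^{-c_{\ref{thm:uniform threshold control}}^{(2)}m}$. It is defined without reference to $K$ or $\varepsilon$, which gives the required uniformity.

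Fix $K\subseteq\mathcal K_0$ and $\varepsilon$ satisfying \eqref{eq:final entry scale range}. Cover $K$ by $M_K$ balls of radius $C_{\ref{lem:final support covers}}^{(1)}\varepsilon$. Apply Lemma~\ref{lem:fixed image net} in $E=H_m$ with $t=C_{\ref{lem:final support covers}}^{(1)}\varepsilon$. This gives a deterministic net $\mathcal N\subset H_m$ with the following properties:
\begin{itemize}
\item $|\mathcal N|\le C^mM_K$;
\item $\mathcal N\subset K+3t(B_2^m\cap H_m)\subseteq\mathcal U_{3t}$;
\item every $v\in K$ has some $y\in\mathcal N$ with $\|Q_bA_m(v-y)\|_2\le (t/2\sqrt{m-1})\,m\le t\sqrt m$, using $\|Q_bA_m\|_{\mathrm{HS}}\le m$.
\end{itemize}
Choose $C_{\ref{lem:final entry capture}}^{(1)}\ge 3C_{\ref{lem:final support covers}}^{(1)}$. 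Then \eqref{eq:final entry scale range} gives $3t\le\delta_0/4$, so $\mathcal N\subset\mathcal U_{\delta_0/4}$. Set $t':=C_{\ref{lem:final entry capture}}^{(1)}\varepsilon$, which lies in $[\Delta_m,s_0/2]$.

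Suppose $Q_bA_mv=0$ for some $v\in K$, and we are outside $\mathcal B_{\mathfrak c}$. Take the approximating $y\in\mathcal N$. Then $\|Q_bA_my\|_2\le t\sqrt m\le t'\sqrt m$. Corollary~\ref{cor:approximate kernel threshold} then gives $T_{L,\mathfrak c}^{Q_b}(y)<t'$. Hence the event in question is contained in
\begin{align*}
 \bigcup_{y\in\mathcal N:\ T_L(y)<t'}\{\|Q_bA_my\|_2\le t'\sqrt m\}.
\end{align*}
The index set of this union is deterministic on the fibre, since $T_L$ depends only on the auxiliary law. The union therefore involves no conditioning on the true matrix.

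Fix such a $y$. We have $\|Q_bA_my\|_2\le t'\sqrt m\le \sqrt2\,t'\sqrt{r_b}$, because $r_b\ge m/2$. Apply Corollary~\ref{cor:conditional hybrid capture} at scale $\sqrt2 t'$. For this we need $T_L(y)<\sqrt2t'$, which holds, and $\sqrt2 t'\le s_0/2$, which holds after enlarging $C_{\ref{lem:final entry capture}}^{(1)}$ slightly in \eqref{eq:final entry scale range}, or by using Theorem~\ref{thm:Fourier capture} directly with the same bound. Each term then has probability at most $(C_{\ref{thm:Fourier capture}}\sqrt2\,C_{\ref{lem:final entry capture}}^{(1)}L\varepsilon)^{r_b}$. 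The factor $\sqrt2^{r_b}$ is absorbed into $C_{\ref{lem:final entry capture}}^{(2)m}$.

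Summing over $\mathcal N$ gives $C^mM_K(C_{\ref{thm:Fourier capture}}C_{\ref{lem:final entry capture}}^{(1)}L\varepsilon)^{r_b}$, which is the stated bound. The main obstacle is bookkeeping of the radii. The image-net dilation must stay inside $\mathcal U_{\delta_0/4}$, where the uniform threshold control applies. The scale $t'$ must lie in the admissible window and also dominate both the image error and the $\sqrt m$ versus $\sqrt{r_b}$ normalization. The absolute choice of $C_{\ref{lem:final entry capture}}^{(1)}$ above handles all of these.
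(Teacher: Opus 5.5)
Your argument is essentially the paper's proof: the same exceptional event from Corollary~\ref{cor:approximate kernel threshold}, the same deterministic image net from Lemma~\ref{lem:fixed image net} lying in $\mathcal U_{\delta_0/4}$, the threshold bound $T_L(y)<t'$ at $t'=C_{\ref{lem:final entry capture}}^{(1)}\varepsilon$, and a union of Corollary~\ref{cor:conditional hybrid capture} over the deterministic set of net points with $T_L(y)<t'$.

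The one wrinkle, the factor $\sqrt2$ at the top of the scale window, comes from loosening $C_{\ref{lem:final support covers}}^{(1)}\varepsilon\sqrt m$ to $t'\sqrt m$; it disappears if you keep the sharper image radius in the union. With your own choice $C_{\ref{lem:final entry capture}}^{(1)}\ge3C_{\ref{lem:final support covers}}^{(1)}$, one has $\|Q_bA_my\|_2\le C_{\ref{lem:final support covers}}^{(1)}\varepsilon\sqrt m\le(t'/3)\sqrt{2r_b}<t'\sqrt{r_b}$. Hence the capture corollary applies at scale $t'\le s_0/2$ directly. Neither of your proposed fixes is then needed: enlarging the constant in \eqref{eq:final entry scale range} would not remove the $\sqrt2$, since both scales grow together, and Theorem~\ref{thm:Fourier capture} as stated requires $t\le s_0/2$. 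This is exactly the paper's choice of $C_{\ref{lem:final entry capture}}^{(1)}$, made to dominate the error in both units $\sqrt m$ and $\sqrt{r_b}$.
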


\begin{proof}
Take $\mathcal B_{\mathfrak c}$ from
Corollary~\ref{cor:approximate kernel threshold}.
Lemma~\ref{lem:fixed image net} gives a deterministic net in $H_m$,
of size at most $C^mM_K$, lying within
$3C_{\ref{lem:final support covers}}^{(1)}\varepsilon$ of $K$.
A kernel witness has an approximant $y$ with
$\|Q_bA_my\|_2\le C\varepsilon\sqrt m$, using
$\|Q_bA_m\|_{\mathrm{HS}}\le m$.
Choose $C_{\ref{lem:final entry capture}}^{(1)}$ to dominate the neighborhood
constant and this error in both units $\sqrt m$ and $\sqrt{r_b}$.
Then \eqref{eq:final entry scale range} ensures
$y\in\mathcal U_{\delta_0/4}$ and, outside $\mathcal B_{\mathfrak c}$,
$T_L(y)<t=C_{\ref{lem:final entry capture}}^{(1)}\varepsilon$.

The subset of net points with $T_L(y)<t$ is deterministic on the fibre,
since its threshold uses only the fixed auxiliary law.
The witness event is contained in the union of
$\{\|Q_bA_my\|_2\le t\sqrt{r_b}\}$ over this subset.
Apply Corollary~\ref{cor:conditional hybrid capture} to each point and
sum. The single exceptional event works for every $K$ and scale.
\end{proof}

\begin{proposition}\label{prop:final incompressible}
There are positive constants
${c_{\ref{prop:final incompressible}}^{(1)}=c_{\ref{prop:final incompressible}}^{(1)}(\lambda)}$ and
${c_{\ref{prop:final incompressible}}^{(2)}=c_{\ref{prop:final incompressible}}^{(2)}(\lambda)}$, depending only on
$\lambda$ and the fixed choices above, such that, for
$n\le m\le(1+\kappa)n$ and
$e^{-{c_{\ref{prop:final incompressible}}^{(1)}}n}\le\beta<1$,
\begin{align*}
 \sup_{b\in H_m\cap\mathbf 1^\perp}
 \mathbb P\{\exists v\in H_m\cap
       \mathrm{Incomp}_{\delta_c,\rho_c}(m):
             Q_bA_mv=0,\ \rho_*(v)>\beta\}
 \le e^{-{c_{\ref{prop:final incompressible}}^{(2)}}n}.
\end{align*}
\end{proposition}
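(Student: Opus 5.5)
We fix $b\in H_m\cap\mathbf 1^\perp$ and decompose the witness event over the fixed finite family $\mathfrak F$ of Lemma~\ref{lem:final geometric domains}. By that lemma, every incompressible solution of $Q_bA_mv=0$ lies in the domain $\mathcal K_0$ of some choice in $\mathfrak F$. For each such choice we restrict to the good event $\mathcal E_{m;X,Y}$. Its failure costs $e^{-cn}$ by Theorem~\ref{thm:exact hybrid factorization}, and $|\mathfrak F|$ is bounded. It therefore suffices to bound, on a fixed good fibre $\mathfrak c$, the conditional probability that some $v\in\mathcal K_0$ satisfies $Q_bA_mv=0$ and $\rho_*(v)>\beta$.

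The condition $\rho_*(v)>\beta$ produces a support $T\subseteq W$ with $|T|\ge\vartheta m$, $|T|$ even, and $\rho_T(v)>\beta$. Since the supports are deterministic, we take a union over all $T$, costing at most $2^n$. The plan is to gain enough entropy on each $T$ to pay for this factor.

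For fixed $T$ we split $\mathcal K_0$ into three classes.
\begin{enumerate}[label=(\roman*)]
\item The constant class $\mathcal C_T(\mathcal K_0)$ of \eqref{eq:final constant class}.
\item The arithmetic classes $\mathcal A_{T,D}(\mathcal K_0)$ with dyadic $D$ between $c''\sqrt s$ and $2C_{\ref{lem:final concentration CLCD}}/\beta$. Lemma~\ref{lem:final concentration CLCD} shows that non-constant directions have CLCD at least $c_{\ref{lem:final concentration CLCD}}\sqrt s$, so we take $c''=c_{\ref{lem:final concentration CLCD}}$ and $\gamma\le\gamma_{\ref{lem:final concentration CLCD}}$.
\item The remaining vectors. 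For these the second part of Lemma~\ref{lem:final concentration CLCD} shows $\rho_T(v)\le\beta$, provided $\beta\ge 2C e^{-2\mu^2 s}$. This requirement fixes $c^{(1)}_{\ref{prop:final incompressible}}$ small relative to $\mu^2\vartheta$. So this class is empty.
\end{enumerate}
Each of the nonempty classes is fed into Lemma~\ref{lem:final entry capture}, with the single exceptional event $\mathcal B_{\mathfrak c}$ used for all of them.

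For the constant class, take $\varepsilon=\rho_s$. Bound \eqref{eq:final constant cover} gives $M_K\le\binom{s}{k_s}(C/\rho_s)^{m-s+k_s+1}$. Multiplying by $(CL\rho_s)^{r_b}$ with $r_b\ge m-2$ leaves roughly $C^m\binom{s}{k_s}L^m\rho_s^{s-k_s-3}$. We choose $\delta_s$ small and then $\rho_s$ small in terms of $L$, $\vartheta$ and $C$. The factor $\rho_s^{(1-\delta_s)\vartheta m}$ then beats $C^mL^m2^n$ and the binomial coefficient. The choice of $\rho_s$ must also respect \eqref{eq:final entry scale range}; taking $L$ fixed first and then $\rho_s\le s_0/(2C^{(1)}_{\ref{lem:final entry capture}})$ is consistent.

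For the arithmetic class, take $\varepsilon=\varepsilon_D=\mu\sqrt s/D$. Bound \eqref{eq:final arithmetic cover} times $(CL\varepsilon_D)^{r_b}$ gives roughly $\mu^{-3}D^3\varepsilon_D^{-4}C^mL^m(C\mu)^s$. Here $\varepsilon_D\ge \mu\beta\sqrt s/(2C)\ge e^{-cn}$, so the polynomial factors in $D$ and $1/\varepsilon_D$ cost at most $e^{Ccn}$. We then pick $\mu$ small so that $(C\mu)^{\vartheta m}$ beats $C^mL^m2^n$. Finally we shrink $c^{(1)}_{\ref{prop:final incompressible}}$ so that this loss and the lower scale condition $\Delta_m\le C\varepsilon_D$ both hold. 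Summing over the $O(n)$ dyadic $D$, the supports $T$, and the family $\mathfrak F$, and adding $\mathbb P(\mathcal B_{\mathfrak c})$, gives $e^{-cn}$ uniformly in $b$.

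The main obstacle is the order of parameter choices. $L$ is fixed by Theorem~\ref{thm:uniform threshold control}, and the gain $L\varepsilon$ per dimension loses a factor $L^m$. This loss must be beaten by $\rho_s^{s}$ or $(C\mu)^s$ on a support of proportion only $\vartheta$. So $\rho_s$ and $\mu$ must be chosen after $L$ and after $2^n$, while the constants of Lemma~\ref{lem:final support covers} stay independent of $\mu$. One must also check that $C^{(1)}_{\ref{lem:final entry capture}}\varepsilon\le s_0/2$ holds at the top arithmetic scale $D\approx c''\sqrt s$. There $\varepsilon_D=\mu/c''$, so we need $\mu$ small in terms of $s_0$, which the ordering above permits.
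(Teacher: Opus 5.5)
Your proposal is correct and follows the paper's proof essentially step for step. The shared steps are: reduction to a fixed good fibre of one choice in $\mathfrak F$; a union over the at most $2^{n}$ eligible supports $T$; the split into the constant class (entry capture at $\varepsilon=\rho_s$), the dyadic CLCD classes (at $\varepsilon=\varepsilon_D$, with the loss $\mu^{-3}D^3\varepsilon_D^{-\ell_b-2}\lesssim\beta^{-7}$ absorbed by shrinking $c^{(1)}_{\ref{prop:final incompressible}}$) and an empty high-CLCD class; one exceptional event $\mathcal B_{\mathfrak c}$; and the parameter order $L\to\rho_s\to(\gamma,c'')\to\mu\to c^{(1)}_{\ref{prop:final incompressible}}$. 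The differences are cosmetic: the paper fixes $\delta_s=1/8$ and takes $c''=\tfrac12 c_{\ref{lem:final concentration CLCD}}$; you could also state explicitly that $c''$ and $\gamma$ depend on $\rho_s$ through Lemma~\ref{lem:final concentration CLCD}, which is why $\mu$ must be chosen after $\rho_s$, as your ordering already allows.
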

\begin{proof}
Fix $b$ and one block choice in $\mathfrak F$.
Work first on an arbitrary good atom $\mathfrak c$ of its own
$\mathcal H_{m;X,Y}$, with domain $\mathcal K_0$.
All nets and terminal subsets below are fixed on this
atom. Constants in the ensuing estimates are uniform
over good atoms and over $b$.

Fix $\delta_s\in(0,1/4)$, for example $\delta_s=1/8$.
For each eligible $T$, use the constant class
$\mathcal C_T(\mathcal K_0)$ in
\eqref{eq:final constant class}. Combine
\eqref{eq:final constant cover} with
Lemma~\ref{lem:final entry capture} at $\varepsilon=\rho_s$.
Provided \eqref{eq:final entry scale range} holds, the
result is bounded by
\begin{align*}
 C^m L^{r_b}\binom s{k_s}
       \rho_s^{\,s-k_s-1-\ell_b}.
\end{align*}
Here $C$ is absolute after the geometric choices, and is
independent of $\rho_s$. The exponent follows from
$r_b=m-\ell_b$; the mean on the large constant set costs
one dimension. Since $k_s\le\delta_s s$, $s\ge\vartheta m$,
and $\ell_b\le2$, this exponent is at least
$(1-\delta_s)\vartheta m-3$.
Also $\binom s{k_s}\le2^s\le2^m$.
Choose $\rho_s>0$ sufficiently small, after $L$, that the preceding bound is at most
$e^{-8m}$ for all large $m$, and that the upper bounds in
\eqref{eq:final entry scale range} hold.
The lower bound involving $\Delta_m$ then holds for all
large $m$ because $\rho_s$ is fixed.

Fix next
$0<\gamma\le\gamma_{\ref{lem:final concentration CLCD}}$
and let $c'':=\tfrac12c_{\ref{lem:final concentration CLCD}}$.
Choose $\mu>0$ sufficiently small after these parameters.
For a fixed support $T$, cover the non-almost-constant
directions by the dyadic classes
$\mathcal A_{T,D}(\mathcal K_0)$ with
\begin{align}
 D=2^j c''\sqrt s,\qquad j=0,1,2,\ldots,\qquad
 D\le D_{\max}:=
        2{C_{\ref{lem:final concentration CLCD}}(\gamma)}/\beta.\label{eq:final dyadic CLCD levels}
\end{align}
Lemma~\ref{lem:final concentration CLCD} supplies both endpoints
for every non-almost-constant restriction with $\rho_T(v)>\beta$, once
\begin{align*}
 \beta\ge2{C_{\ref{lem:final concentration CLCD}}(\gamma)}e^{-2\mu^2s}.
\end{align*}
If the upper endpoint is below the first level, there
are no such restrictions.

Combining \eqref{eq:final arithmetic cover} with
Lemma~\ref{lem:final entry capture} at $\varepsilon=\varepsilon_D$ gives
\begin{align*}
 C^mL^{r_b}\mu^{-3}D^3
          \varepsilon_D^{-\ell_b-2}(C\mu)^s.
\end{align*}
The ambient powers cancel: the two scalar meshes and the rank loss
leave $\varepsilon_D^{-\ell_b-2}$, while the full factor $(C\mu)^s$
from the support remains.
The constant $C$ here may depend on $c''$ and the already
fixed parameters, but is independent of $\mu,D,\beta$.
For $\mu<\min\{1,c''/10\}$, large $m$, and
$D\le D_{\max}$,
\begin{align*}
 \mu^{-3}D^3\varepsilon_D^{-\ell_b-2}
 \le C_\gamma\mu^{-7}\beta^{-7}.
\end{align*}
This uses $\ell_b\le2$ and $D\le C_\gamma/\beta$.
Choose $\mu$ so small that
\begin{align*}
 (CL)^m(C\mu)^s\le e^{-12m}
       \qquad(s\ge\vartheta m).
\end{align*}
At the same time require
${C_{\ref{lem:final entry capture}}^{(1)}}\mu/c''
<\min\{\delta_0/4,s_0/2\}$.
These choices are compatible because the constants in
the covering and approximation estimates are independent
of $\mu$.

We now choose a positive $c_\beta$ sufficiently small that
\begin{align*}
 c_\beta<
 \min\left\{
   \mu^2{c_{\ref{lem:Sigma fields}}^{(2)}},
   \tfrac12{c_{\ref{thm:uniform threshold control}}^{(1)}(L)},
   \tfrac17
 \right\}.
\end{align*}
For $\beta\ge e^{-c_\beta n}$ and all sufficiently large
$n$, the required scalar remainder bound holds
simultaneously for all eligible $T$.
Moreover, every level in \eqref{eq:final dyadic CLCD levels}
satisfies the lower scale condition, since
\begin{align*}
 {C_{\ref{lem:final entry capture}}^{(1)}}\varepsilon_D
 \ge c_{\gamma,\mu}\beta\sqrt s
 \ge\Delta_m.
\end{align*}
The upper scale conditions follow from
$D\ge c''\sqrt s$ and the preceding choice of $\mu$.
Thus every use of Lemma~\ref{lem:final entry capture}
is within its permitted range.
The arithmetic entropy bound and this choice of $c_\beta$ bound the probability
for each arithmetic class outside $\mathcal B_{\mathfrak c}$ by
$e^{-10m}$ for all sufficiently large $m$.

There are at most $2^{n-2}$ eligible supports, and at most
$C(1+n)$ levels for each support in
\eqref{eq:final dyadic CLCD levels}.
The constant and arithmetic classes exhaust every
$v\in\mathcal K_0$ with $\rho_*(v)>\beta$: the supremum
in \eqref{eq:rho-star} is over a finite family, and for
its chosen $T$ either $a_T(v)=0$, or $u_T(v)$ is almost
constant, or the two CLCD bounds place it in one of the
displayed dyadic classes. Consequently
\begin{align*}
 \mathbb P
 \{\exists v\in\mathcal K_0:
                    Q_bA_mv=0,\ \rho_*(v)>\beta\}
 \le e^{-{c_{\ref{thm:uniform threshold control}}^{(2)}}m}
       +2^{n-2}\left(e^{-8m}+C(1+n)e^{-10m}\right)
 \le e^{-c m}.
\end{align*}
Only the entry probabilities were summed over $T$ and $D$.
The threshold exception $\mathcal B_{\mathfrak c}$ occurs once in
this estimate.

Multiply the conditional estimate by
$\mathbf 1_{\mathcal E_{m;X,Y}}$ and average with respect to
$\mathcal H_{m;X,Y}$.
Theorem~\ref{thm:exact hybrid factorization} bounds the
probability of a bad certificate by $e^{-c'n}$.
Repeat this averaging separately for each of the finitely
many choices in $\mathfrak F$ and take their union.
No intersection of their exposures is used as conditioning
data. Their independent auxiliary orderings can be included
in the underlying probability space because the original
witness event does not depend on them.
Lemma~\ref{lem:final geometric domains} places every
incompressible solution in at least one of these domains.
This proves the proposition, with
${c_{\ref{prop:final incompressible}}^{(1)}}=c_\beta$ and a
sufficiently small
${c_{\ref{prop:final incompressible}}^{(2)}}>0$.
Every bound was uniform in the fixed target $b$, which
justifies the supremum in the statement.
\end{proof}

\subsection{The witness bound}
\begin{theorem}\label{thm:final witness bound}
For the fixed $\kappa$ in \eqref{eq:final short extension choice},
there are positive constants
${c_{\ref{thm:final witness bound}}^{(1)}=c_{\ref{thm:final witness bound}}^{(1)}(\lambda)}$ and
${c_{\ref{thm:final witness bound}}^{(2)}=c_{\ref{thm:final witness bound}}^{(2)}(\lambda)}$, depending only on $\lambda$,
such that
\begin{align}
 q_m(\beta)\le e^{-{c_{\ref{thm:final witness bound}}^{(2)}}n}\label{eq:final witness bound}
\end{align}
for every $n\le m\le n+\lfloor\kappa n\rfloor-1$ and
$e^{-{c_{\ref{thm:final witness bound}}^{(1)}}n}\le\beta<1$,
provided $n$ is sufficiently large.
\end{theorem}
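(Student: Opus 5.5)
The plan is to bound the witness event in \eqref{eq:qm} by splitting witnesses into compressible and incompressible parts, after first rewriting the event with the projection $Q_b$. Fix $b\in H_m\cap\mathbf 1^\perp$ and $m$ in the stated range. If $v\in H_m\setminus\{0\}$ satisfies $P_hA_mv=b$, then $P_hA_mv\in\mathbb Rb$, so $Q_bA_mv=0$ by the equivalence recorded at the start of Section~\ref{Sec:main proof}. The quantity $\rho_*(v)$ is invariant under nonzero scaling, since point masses of $\langle\eta,v_T\rangle$ are preserved by scaling. We may therefore normalize $v$ to the unit sphere. The event inside the supremum defining $q_m(\beta)$ is then contained in
\begin{align*}
 \{\exists v\in H_m\cap\mathrm{Comp}_{\delta_c,\rho_c}(m):Q_bA_mv=0\}
 \cup
 \{\exists v\in H_m\cap\mathrm{Incomp}_{\delta_c,\rho_c}(m):Q_bA_mv=0,\ \rho_*(v)>\beta\}.
\end{align*}
On the compressible piece we simply drop the condition on $\rho_*$.

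Proposition~\ref{prop:final compressible} bounds the first event by $e^{-c_{\ref{prop:final compressible}}m}\le e^{-c_{\ref{prop:final compressible}}n}$, uniformly in $b$ and in $n\le m\le2n-2$. This range covers $m\le n+\lfloor\kappa n\rfloor-1$.

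Proposition~\ref{prop:final incompressible} bounds the second event by $e^{-c^{(2)}_{\ref{prop:final incompressible}}n}$, again uniformly in $b$. It requires $n\le m\le(1+\kappa)n$, which holds here, and $\beta\ge e^{-c^{(1)}_{\ref{prop:final incompressible}}n}$. We therefore set $c^{(1)}_{\ref{thm:final witness bound}}:=c^{(1)}_{\ref{prop:final incompressible}}$. Taking the supremum over $b$ and adding the two bounds gives
\[
q_m(\beta)\le e^{-cn}+e^{-c'n}\le e^{-c^{(2)}_{\ref{thm:final witness bound}}n}
\]
for large $n$, with $c^{(2)}_{\ref{thm:final witness bound}}$ slightly smaller than $\min\{c,c'\}$.

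The substantive work has already been done in the two propositions, so the only step needing care is bookkeeping. First, $\kappa$ must be the value fixed in \eqref{eq:final short extension choice}. This matters because Lemma~\ref{lem:final geometric domains}, used inside Proposition~\ref{prop:final incompressible}, relies on the extension being short enough that removing $U_m\cup\{1,2\}$ leaves signed sets of linear size. Second, both propositions must be applied with the same normalization of the witness, namely unit vectors in $H_m$. Third, the exponential lower range for $\beta$ must be inherited consistently. I expect no genuine obstacle beyond checking that all constants depend only on $\lambda$, through the fixed choices made earlier in this section.
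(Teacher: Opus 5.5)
Your proposal is correct and matches the paper's proof. Like the paper, you normalize the witness, note that $Q_bA_mx=0$ and that $\rho_*$ is scale-invariant, split the unit sphere of $H_m$ into compressible and incompressible parts, and apply Propositions~\ref{prop:final compressible} and~\ref{prop:final incompressible} uniformly in $b$, with $c^{(1)}$ inherited from the latter.
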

\begin{proof}
Normalize a witness in \eqref{eq:qm} to $x=v/\|v\|_2$.
Then $Q_bA_mx=0$ and $\rho_*(x)=\rho_*(v)>\beta$.
Propositions~\ref{prop:final compressible} and
\ref{prop:final incompressible} bound the two parts of the unit
sphere by exponentially small probabilities, uniformly in $b$.
Add them and take the supremum over $b$, with
$c_{\ref{thm:final witness bound}}^{(1)}
=c_{\ref{prop:final incompressible}}^{(1)}$.
\end{proof}
\begin{proof}[Proof of Theorem~\ref{thm:main}]
Use Theorem~\ref{thm:reduction} with the fixed $\kappa$
chosen in \eqref{eq:final short extension choice}.
Choose $\zeta>0$ so small that
\begin{align*}
 \zeta<
 \min\left\{
 {c_{\ref{thm:reduction}}^{(1)}},
 {c_{\ref{thm:final witness bound}}^{(1)}},
 \frac{{c_{\ref{thm:final witness bound}}^{(2)}}}
             {2{C_{\ref{thm:reduction}}^{(2)}}}
 \right\},
 \qquad \beta=e^{-\zeta n}.
\end{align*}
For all sufficiently large $n$, this $\beta$ lies in the
ranges of both the reduction and
Theorem~\ref{thm:final witness bound}.
Substituting \eqref{eq:final witness bound} into
\eqref{eq:reduction}, and using
$\lfloor\kappa n\rfloor\le n$, gives
\begin{align*}
 \mathbb P(A_n\text{ is singular}) \le e^{-{c_{\ref{thm:reduction}}^{(1)}}n}+n^{{C_{\ref{thm:reduction}}^{(1)}}+1}
   \left[
    e^{-({c_{\ref{thm:final witness bound}}^{(2)}}
                   -{C_{\ref{thm:reduction}}^{(2)}}\zeta)n}
       +e^{-{c_{\ref{thm:reduction}}^{(2)}}\zeta n}
   \right].
\end{align*}
Both exponents in brackets are positive by the choice of $\zeta$.
Absorbing the polynomial factor and the finite sum into
a smaller positive exponential constant proves
Theorem~\ref{thm:main}.
\end{proof}

\section*{Acknowledgments}

\par\medskip
\noindent\textbf{Statement on AI use.}
The authors used AI tools, including models from OpenAI's GPT-5.5 and GPT-5.6, during the preparation of this work.

The main mathematical strategy and the underlying ideas of the proof were developed by the authors. In particular, the reduction from the random \(d\)-regular graph, after conditioning on suitable events, to an auxiliary random model, and the subsequent Fourier analytic transformation underlying the inversion of randomness argument, originated with the authors. AI tools were used during this stage to assist with technical calculations and intermediate derivations.

A further difficulty arises from the absence of a sufficiently strong spectral norm bound for the relevant matrices. The authors introduced the early idea behind the stopping-chain argument, namely to successively move the approximating points to larger or smaller scales in order to overcome this obstruction. AI assistance was used substantially in exploring and refining this mechanism, including its quantitative implementation, and helped develop the initial idea into the stopping-chain argument appearing in the final proof.

The manuscript was written by the authors, with AI tools also used for language editing and stylistic refinement. The authors independently reviewed and verified the arguments and take full responsibility for the correctness and content of the paper.

\appendix
\section{Matching estimates}\label{app:matching Gaussian estimates}
We prove the matching inputs to
Theorem~\ref{thm:local augmented small ball}. All supports and
coefficient data are fixed; only the matching and lazy signs are random.

\begin{lemma}\label{lem:matching captures energy}
Let $\sum_{i\in T}r_i=0$, $\max_i\|r_i\|_2\le L_0$ and
$E_r=\sum_i\|r_i\|_2^2$, where $|T|\ge2$ and $L_0>0$.
For the matching in Definition~\ref{def:lazy matching walk},
\begin{align*}
 \mathbb P_\omega\left(
 \sum_{\{i,j\}\in\mathcal M_T}\|r_i-r_j\|_2^2\le E_r/4\right)
 \le2e^{-E_r/(C_{\ref{lem:matching captures energy}}L_0^2)}
\end{align*}
with an absolute constant $C_{\ref{lem:matching captures energy}}>0$.
\end{lemma}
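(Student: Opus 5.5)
The plan is to reduce the matched energy to its diagonal part and then prove a Bernstein-type tail for a single bilinear matching statistic. Put $s:=|T|$. We may assume $E_r\ge C_0L_0^2$ for a large absolute $C_0$; otherwise the right-hand side is at least one once $C_{\ref{lem:matching captures energy}}\ge C_0/\log 2$. Expanding the squares,
\begin{align*}
 \sum_{\{i,j\}\in\mathcal M_T}\|r_i-r_j\|_2^2
 = E_r-\|r_{i_0}\|_2^2\,\mathbf 1_{\{s\ \mathrm{odd}\}}-2Z,
 \qquad
 Z:=\sum_{\{i,j\}\in\mathcal M_T}\langle r_i,r_j\rangle,
\end{align*}
where $i_0$ is the unmatched vertex when $s$ is odd. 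The unmatched term is at most $L_0^2\le E_r/8$ once $C_0\ge 8$. So it suffices to show $\mathbb P_\omega(Z\ge E_r/4)\le 2e^{-E_r/(CL_0^2)}$.

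The mean of $Z$ is small and negative. A fixed pair lies in $\mathcal M_T$ with probability $1/(s-1)$ or $1/s$, as recorded in the proof of Proposition~\ref{prop:matching Fourier comparison}. The hypothesis $\sum_i r_i=0$ gives $\sum_{i<j}\langle r_i,r_j\rangle=-E_r/2$. Hence $\mathbb E_\omega Z\le 0$; in fact $|\mathbb E_\omega Z|\le E_r/(2(s-1))$. It therefore remains to bound the upper deviation $\mathbb P_\omega(Z-\mathbb E_\omega Z\ge E_r/4)$. Only the matching enters here; the lazy signs play no role.

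For the deviation I would use Chatterjee's exchangeable-pair method for combinatorial statistics of a uniform matching. Let $\mathcal M'$ be obtained from $\mathcal M_T$ by choosing two matched edges $\{i,j\},\{k,l\}$ uniformly and rewiring them to $\{i,k\},\{j,l\}$ or $\{i,l\},\{j,k\}$ with equal probability. For odd $s$, also allow exchanging the unmatched vertex with a matched one. This pair is exchangeable and satisfies $\mathbb E(Z-Z'\mid\mathcal M_T)=\lambda_s(Z-\mathbb E_\omega Z)$ with $\lambda_s\asymp 1/s$. The linearity comes from $\sum r_i=0$; the constant $\mathbb E_\omega Z$ is produced by the resulting $-\|r\|^2$ diagonal correction. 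Each move changes $Z$ by a sum of at most four inner products among $r_i,r_j,r_k,r_l$. So $|Z-Z'|\le 2L_0(\|r_i\|_2+\|r_j\|_2+\|r_k\|_2+\|r_l\|_2)$. Averaging over the uniformly chosen edges gives
\begin{align*}
 \frac1{2\lambda_s}\mathbb E\bigl((Z-Z')^2\mid\mathcal M_T\bigr)\le C L_0^2\bigl(E_r+|Z|\bigr).
\end{align*}
Chatterjee's theorem then yields the Bernstein bound $\mathbb P_\omega(|Z-\mathbb E_\omega Z|\ge t)\le 2\exp\bigl(-t^2/(CL_0^2(E_r+t))\bigr)$. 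At $t=E_r/4$ this is the stated bound.

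The main obstacle is the conditional-variance step. A crude bounded-differences argument over the $\asymp s$ exposure steps gives only $\exp(-E_r^2/(sL_0^4))$, which is useless when the energy is spread out. The variance must instead be charged to $\sum_i\|r_i\|_2^2=E_r$ with one factor of $L_0^2$. If the exchangeable-pair computation becomes awkward, I would try a sequential exposure instead: match the lowest unmatched vertex to a uniform partner. The Doob martingale of $Z$ then has increments bounded by $CL_0^2$ and predictable quadratic variation at most $CL_0^2E_r$, and Freedman's inequality gives the same tail. In either route, verifying the variance bound in terms of $E_r$ rather than $sL_0^2$ is the step to check carefully.
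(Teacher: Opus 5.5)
Your argument is correct, but it takes a different route from the paper. The paper generates the matching for $|T|=2p$ as a uniform $p$-set $I$ together with a uniform bijection $\sigma:I\to T\setminus I$. Conditional on $I$, the matched energy is then a Hoeffding combinatorial sum $\sum_{i\in I}a_{i\sigma(i)}$ with $a_{ij}=\|r_i-r_j\|_2^2/(4L_0^2)\in[0,1]$. Its conditional mean is $(E_r+(2/p)\|\sum_{i\in I}r_i\|_2^2)/(4L_0^2)\ge E_r/(4L_0^2)$. Chatterjee's Proposition~1.1 in \cite{C-prtf}, whose variance proxy is the mean because the summands are nonnegative, then gives the lower tail at once. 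Odd $|T|$ is reduced to the even case by conditioning on the unmatched vertex and recentering by $r_j/(s-1)$.

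You instead pass to the signed cross term $Z$ and rebuild the exchangeable-pair machinery on matchings via a double-edge switch. For even $s=2p$ this checks out: $\mathbb E(Z-Z'\mid\mathcal M_T)=\lambda(Z-\mathbb E_\omega Z)$ with $\lambda=(2p-1)/(p(p-1))$. The step you single out as the main obstacle is actually immediate. Each of $i,j,k,l$ enters two of the four inner products, so $(Z-Z')^2\le4L_0^2(\|r_i\|_2^2+\|r_j\|_2^2+\|r_k\|_2^2+\|r_l\|_2^2)$. Each edge is selected with probability $2/p$, so $\frac1{2\lambda}\mathbb E((Z-Z')^2\mid\mathcal M_T)\le2L_0^2E_r$ deterministically, with no $|Z|$ term (and in any case $|Z|\le E_r/2$). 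This yields a sub-Gaussian upper tail for $Z$, so the Freedman fallback is not needed.

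The point that does need care is odd $s=2p+1$. There the rewiring drift contains $+\tfrac12\|r_{i_0}\|_2^2/\binom p2$, while the unmatched-vertex swap contributes $-\|r_{i_0}\|_2^2/(2p)$. Exact linearity therefore holds only for the mixture weights $(p-1)/(p+1)$ (rewire) and $2/(p+1)$ (swap), giving $\lambda=(2p+1)/(p(p+1))$; an arbitrary mixture leaves a state-dependent $\|r_{i_0}\|_2^2$ term. Either fix those weights or borrow the paper's conditioning-and-recentering reduction.

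In comparison, the paper's route is shorter because it invokes an off-the-shelf permutation inequality and exploits positivity of the matched energy. Yours stays on the matching space and gives a slightly stronger tail for the cross term, at the cost of verifying the exchangeable-pair identity by hand.
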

\begin{proof}
If $|T|=2p$, generate the matching by a uniform $p$-set $I$ and
a uniform bijection $\sigma:I\to T\setminus I$. Conditional on $I$,
the expected energy is
$E_r+(2/p)\|\sum_{i\in I}r_i\|_2^2\ge E_r$.
The permutation lower-tail inequality
\cite[Proposition~1.1]{C-prtf}, applied to
$\|r_i-r_j\|_2^2/(4L_0^2)\in[0,1]$, bounds the probability of
energy below $E_r/2$ by $2e^{-E_r/(80L_0^2)}$.
For odd $s=|T|$, condition on the unmatched vertex $j$ and center
the remaining vectors by adding $r_j/(s-1)$. Their norms are at
most $2L_0$, their differences are unchanged, and their energy is
$E_r-s\|r_j\|_2^2/(s-1)\ge E_r-2L_0^2$.
Apply the even case when $E_r\ge4L_0^2$; otherwise enlarge the
absolute constant so that the bound is trivial.
\end{proof}

\begin{proposition}\label{prop:matching estimates}
For $|T|\ge2$ and $\nu\in(0,1/4]$, there is
$c_{\ref{prop:matching estimates}}^{(1)}
=c_{\ref{prop:matching estimates}}^{(1)}(\nu)>0$ such that
\begin{align*}
 0\le\phi_{\omega_{T,\nu}}(x)
 \le3e^{-c_{\ref{prop:matching estimates}}^{(1)}
                            \|x_T\|_{\mathbb D}^2}.
\end{align*}
If $F$ satisfies \eqref{eq:local soft hypotheses}, then, for $u\ge0$,
\begin{align*}
 \gamma_k\{\theta:\|F\theta\|_{\mathbb D,T\oplus T}^2\le u\}
 \le C_{\ref{prop:matching estimates}}^{(2)}
 e^{C_{\ref{prop:matching Fourier comparison}}\nu u
                 -c_{\ref{prop:matching estimates}}^{(2)}k},
\end{align*}
where $c_{\ref{prop:matching estimates}}^{(2)}
=c_{\ref{prop:matching estimates}}^{(2)}(\kappa,\nu)>0$ and
$C_{\ref{prop:matching estimates}}^{(2)}
=C_{\ref{prop:matching estimates}}^{(2)}(\kappa,\nu)\ge1$.
For $k=0$, $\gamma_0$ is unit mass on $\mathbb R^0$.
\end{proposition}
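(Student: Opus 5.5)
The plan is to prove the two assertions of Proposition~\ref{prop:matching estimates} separately, starting from the conditional product formula \eqref{eq:matching characteristic product}.

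\textbf{Fourier upper bound.} Write $g(x)=\sum_{\{i,j\}\in\mathcal M_T}\|x_i-x_j\|_{\mathbb R/\mathbb Z}^2$. Each factor in \eqref{eq:matching characteristic product} equals $1-2\nu\sin^2(\pi(x_i-x_j))$, which lies in $[1/2,1]$. Using $\sin^2(\pi u)\ge4\|u\|_{\mathbb R/\mathbb Z}^2$ and $1-w\le e^{-w}$, the conditional characteristic function is at most $e^{-8\nu g(x)}$, so
\begin{align*}
 0\le\phi_{\omega_{T,\nu}}(x)\le\mathbb E_\omega e^{-8\nu g(x)}.
\end{align*}
It remains to show that $g$ is rarely much smaller than its mean. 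Its mean is comparable to $\|x_T\|_{\mathbb D}^2$, since each pair is matched with probability $1/(s-1)$ or $1/s$.

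To get this lower tail, lift $x_T$ to representatives $r_i\in[0,1)$. The energy $E_r=\sum_i|r_i-\bar r|^2$ of the centered lifts need not be comparable to torus energy, because of wrap-around. I would handle this by choosing a random rotation $x\mapsto x+\phi$ and cutting the circle at a uniformly chosen point. Wrap-around then affects only pairs straddling the cut, and a pair at torus distance $d$ straddles it with probability $d$.

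Applying Lemma~\ref{lem:matching captures energy} with $L_0=1$ to these scalar lifts gives, outside probability $2e^{-cE_r}$, a matched Euclidean energy of at least $E_r/4$. Euclidean and torus distances coincide except on straddling pairs. Averaging over the cut, the total contribution of straddling pairs is bounded by a constant times the mean, and $E_r$ dominates a constant times $\|x_T\|_{\mathbb D}^2$ up to these terms.

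Combining the high-probability branch with the trivial bound on the exceptional event gives $\le e^{-c\nu\|x\|_{\mathbb D}^2}+2e^{-c\|x\|_{\mathbb D}^2}$. The sum of these two terms explains the constant $3$.

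\textbf{Gaussian bound.} The second assertion is where I expect the main work. For fixed $\theta$, set $r_i=(F_0\theta)_i$ or $(F_1\theta)_i$, and compare the torus energy $\|F\theta\|_{\mathbb D,T\oplus T}^2$ with the matching product. The lower bound \eqref{eq:matching Fourier lower} gives, pointwise,
\begin{align*}
 \mathbf 1\{\|F\theta\|_{\mathbb D,T\oplus T}^2\le u\}\le e^{C\nu u}\,\phi_{\omega^{(0)}}\phi_{\omega^{(1)}}(F\theta).
\end{align*}
Integrating against $\gamma_k$ and using Fubini, it suffices to bound
\begin{align*}
 \mathbb E_\omega\int e^{2\pi i\langle F^T(\omega^{(0)},\omega^{(1)}),\theta\rangle}\,d\gamma_k(\theta)
 =\mathbb E_\omega e^{-\pi\|F^T\omega\|_2^2}.
\end{align*}

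Conditional on the matchings, $F^T\omega=\sum_{\text{edges}}\varepsilon_{ij}(f_i-f_j)$ is a lazy Rademacher sum in $\mathbb R^k$, where $f_i$ are the rows of $F$. These rows are centered, since $F_b^T\mathbf 1_T=0$, and satisfy $\|f_i\|\le2$, since $\|F\|_{\mathrm{op}}\le2$.

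Lemma~\ref{lem:matching captures energy}, applied to the rows $f_i$ with $E_r=\|F\|_{\mathrm{HS}}^2\ge\kappa k$, shows that outside probability $2e^{-c\kappa k}$ the matched Gram sum $\sum(f_i-f_j)(f_i-f_j)^T$ has trace at least $\kappa k/4$, with operator norm $O(1)$. On that event, a Hanson--Wright or Gaussian-decoupling estimate for lazy signs should give $\mathbb E_\varepsilon e^{-\pi\|F^T\omega\|^2}\le e^{-c\nu k}$. The key point is that a positive semidefinite matrix with trace $\gtrsim k$ and bounded norm has $\gtrsim k$ eigenvalues of order one, so the sum is anti-concentrated in $\gtrsim k$ directions. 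This yields $C e^{C\nu u-ck}$, with $k=0$ trivial.

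The delicate points are the torus-versus-lift comparison in the first part and the trace-to-many-eigenvalues step in the second.
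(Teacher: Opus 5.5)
The gap is in your first part, in the comparison between torus distances and the scalar lifts. Under a lift, a pair at torus distance $d$ has Euclidean distance $d$ if it does not straddle the cut and $1-d\ge d$ if it does. So the lemma's lower bound on the matched \emph{Euclidean} energy says nothing about $g(x)$ by itself. You would need to show that the matched straddling excess $(1-d)^2-d^2=1-2d$ is a small fraction of $E_r$, outside probability $e^{-c\|x_T\|_{\mathbb D}^2}$.

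Your averaging claim does not supply this, and it is false. A pair at distance $d$ straddles a uniform cut with probability $d$, so its expected excess is $d(1-2d)$, which is of order $d$, not $d^2$. For example, take the $x_i$ equally spaced in an arc of length $\epsilon$. Then $\|x_T\|_{\mathbb D}^2\asymp s\epsilon^2$, while the cut-averaged straddling contribution to $E_r$ and to the matched energy is of order $s\epsilon$.

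The bound $E_r\gtrsim\|x_T\|_{\mathbb D}^2$ is the harmless direction. The real trouble is that $E_r$ can be far larger than the torus energy, and then ``matched Euclidean energy $\ge E_r/4$'' controls the wrong quantity. Choosing one good cut instead of averaging would still require this fixed-lift excess estimate, which you do not give.

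The missing idea is to lift into $\mathbb R^2$ rather than $\mathbb R$. You should also keep $\sin^2$ in the product until after the concentration step, instead of passing to $\|\cdot\|_{\mathbb R/\mathbb Z}^2$ first.

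Put $u_i=(\cos 2\pi x_i,\sin 2\pi x_i)$ and $r_i=u_i-\bar u$. Then:
\begin{itemize}
\item $\|r_i\|_2\le2$;
\item $\|r_i-r_j\|_2^2=4\sin^2(\pi(x_i-x_j))$, which is exactly the quantity in \eqref{eq:matching characteristic product};
\item $E_r=(4/s)\sum_{i<j}\sin^2(\pi(x_i-x_j))\ge8\|x_T\|_{\mathbb D}^2$.
\end{itemize}
Lemma~\ref{lem:matching captures energy} then gives $\sum_{\{i,j\}\in\mathcal M_T}\sin^2(\pi(x_i-x_j))\ge E_r/16$ outside probability $2e^{-cE_r}$. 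On that event the product is at most $e^{-\nu E_r/8}$. Averaging gives $e^{-\nu E_r/8}+2e^{-cE_r}\le3e^{-c'\|x_T\|_{\mathbb D}^2}$, with no wrap-around to control. This is the paper's argument.

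Your second part is sound and follows the paper's route. The bound \eqref{eq:matching Fourier lower} plus Gaussian inversion reduces the claim to $\mathbb E_\omega e^{-\pi\|F^T(\omega^{(0)},\omega^{(1)})\|_2^2}$. The matching lemma yields a difference matrix with squared Hilbert--Schmidt norm $\gtrsim k$ and bounded operator norm. Hanson--Wright for the lazy signs is a valid substitute for the paper's Paley--Zygmund plus Talagrand step.

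One small correction: the two copies carry independent matchings, so the lemma must be applied to a single block $F_b$ with $\|F_b\|_{\mathrm{HS}}^2\ge\kappa k/2$, not to all rows of $F$ at once. The other block only adds a positive semidefinite term to the Gram matrix.
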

\begin{proof}
For the first assertion, center
$u_i=(\cos(2\pi x_i),\sin(2\pi x_i))$ to obtain rows $r_i$ of
norm at most two and energy
$E_r=(4/s)\sum_{i<j}\sin^2(\pi(x_i-x_j))
\ge8\|x_T\|_{\mathbb D}^2$.
Except with probability $2e^{-cE_r}$, the preceding lemma gives
$\sum_{\mathcal M_T}\sin^2(\pi(x_i-x_j))\ge E_r/16$.
On this event the characteristic product
\eqref{eq:matching characteristic product} is at most
$e^{-\nu E_r/8}$; on the complement it is at most one.
Averaging proves the Fourier bound.

For the second assertion assume $k\ge1$, and choose $b$ with
$\|F_b\|_{\mathrm{HS}}^2\ge\kappa k/2$.
The preceding lemma, applied to its centered rows, gives outside
probability $Ce^{-ck}$ a matching-difference matrix $B$ with
$\|B\|_{\mathrm{HS}}^2\ge\kappa k/8$ and
$\|B\|_{\mathrm{op}}\le2\sqrt2$.
For independent lazy signs $\epsilon$, put
$X=\|B^T\epsilon-z\|_2^2$. The moment bounds
$\mathbb E_\omega X=\nu\|B\|_{\mathrm{HS}}^2+\|z\|_2^2$
and $\mathbb E_\omega X^2\le C_\nu(\mathbb E_\omega X)^2$
give, by Paley--Zygmund,
$\mathbb P_\omega(\|B^T\epsilon-z\|_2\ge a_1\sqrt k)\ge a_2$
for fixed $a_1,a_2>0$, uniformly in $z$.
The function $f(\epsilon)=\|B^T\epsilon-z\|_2$ is convex and
$2\sqrt2$-Lipschitz. Talagrand's product concentration
\cite[Section~4.1]{Talagrand} gives, for a median $m_f$,
\begin{align*}
 \mathbb P_\omega(f\ge m_f+u)+\mathbb P_\omega(f\le m_f-u)
 \le Ce^{-cu^2},\qquad u\ge0.
\end{align*}
If $m_f<a_1\sqrt k/2$, the upper-tail bound contradicts the
Paley--Zygmund estimate for large $k$. Hence $m_f\ge a_1\sqrt k/2$,
and the lower-tail bound gives
$\mathbb P_\omega(f\le a_1\sqrt k/4)\le Ce^{-ck}$.
A larger prefactor covers bounded $k$.
Averaging over the matching and conditioning on the other walk
give the same small-ball bound for $F^T(\omega^{(0)},\omega^{(1)})$,
whose Gaussian moment is therefore at most $Ce^{-ck}$.
The matching lower bound \eqref{eq:matching Fourier lower} and
Gaussian inversion now give
\begin{align*}
 \gamma_k\{\|F\theta\|_{\mathbb D,T\oplus T}^2\le u\}
 &\le e^{C_{\ref{prop:matching Fourier comparison}}\nu u}
   \int\phi_{F^T(\omega^{(0)},\omega^{(1)})}(\theta)\,d\gamma_k(\theta)\\
 &=e^{C_{\ref{prop:matching Fourier comparison}}\nu u}
   \mathbb E_\omega e^{-\pi\|F^T(\omega^{(0)},\omega^{(1)})\|_2^2}.
\end{align*}
This proves the claim; $k=0$ is immediate.
\end{proof}

\section{Arithmetic exceptions}\label{app:product box estimates}
We prove Lemma~\ref{lem:box CLCD exceptions} under the original
product measure. We use one elementary interval count: if $\xi$ has
maximal atom at most $(c_I M)^{-1}$ and support in an interval of
diameter at most $C_I M$, then for $0<\eta<1$ and $\varphi M\ge c_*>0$,
\begin{align}
 \mathbb P\{\|\varphi(\xi-u)\|_{\mathbb R/\mathbb Z}\le\eta\}
 \le C(\eta+\varphi+M^{-1}),\label{eq:interval torus count}
\end{align}
uniformly in $u$, with $C=C(c_I,C_I,c_*)$.
Indeed, the image interval meets $O(\varphi M+1)$ integer
$\eta$-neighborhoods, each containing at most $2\eta/\varphi+2$
preimages in the original integer interval.

\begin{proof}[Proof of Lemma~\ref{lem:box CLCD exceptions}]
All constants in this proof depend only on the fixed data in the
statement until $A$ is chosen. Write $s=|T|$. Then $s\ge2\sigma D$,
and the difference identity \eqref{eq:difference-identities} gives
\begin{align*}
 \|D_Tu\|_2&\le B_*DM,\qquad
 \|P_{T,0}u\|_2\ge2a\sigma\sqrt D\,M.
\end{align*}
For $v=r_*u_T/\sqrt{N_*}$ and
$0<\vartheta<\sqrt{N_*}/(4br_*M)$, every difference between a
prescribed positive and a prescribed negative coordinate of
$\vartheta v$ has absolute value less than $1/2$. Consequently,
\begin{align*}
 \operatorname{dist}(\vartheta D_Tv,\mathbb Z^{\binom{s}{2}})
 &\ge \frac{2a\sigma r_*\vartheta DM}{\sqrt{N_*}}
 >\gamma_*\vartheta\|D_Tv\|_2.
\end{align*}
Thus
\begin{align*}
 \operatorname{CLCD}_{\mu s,\gamma_*}^{T}(v)
          \ge\frac{\sqrt{N_*}}{4br_*M}
 \qquad(0<\mu<1).
\end{align*}
In particular, the event in \eqref{eq:box CLCD exceptions} is empty
if $M<\sqrt{N_*}/(128br_*)$.

In the remaining range, if this CLCD is at most $16$, the definition
of its infimum supplies a witness $0<\vartheta\le17$.
Set $\varphi=r_*\vartheta/\sqrt{N_*}$. By this deterministic lower bound,
\begin{align*}
 \frac1{4bM}\le\varphi\le\frac{17r_*}{\sqrt{N_*}},\qquad
 \operatorname{dist}(\varphi D_Tu,\mathbb Z^{\binom{s}{2}})<\mu s.
\end{align*}
Take a deterministic mesh of spacing at most $c\mu/M$ in this
frequency interval, including its endpoints, where $c>0$ is a
sufficiently small fixed constant. It has at most
$C(1+M/(\mu\sqrt{N_*}))$ points. For the nearest mesh point
$\varphi_0$, the bound $\|D_Tu\|_2\le B_*DM$ and $s\ge2\sigma D$
imply
\begin{align*}
 \sum_{\{i,j\}\subseteq T}
   \|\varphi_0(u_i-u_j)\|_{\mathbb R/\mathbb Z}^2
 \le C\mu^2s^2.
\end{align*}
There is therefore an index $j\in T$ such that the sum over
$i\in T\setminus\{j\}$ is at most $C\mu^2s$.
Choose a fixed sufficiently large $K$, depending on $\sigma$.
At most $\sigma D/4$ of these indices have
$\|\varphi_0(u_i-u_j)\|_{\mathbb R/\mathbb Z}>K\mu$.
For large $D$, there is a subset of $P_{T,+}\setminus\{j\}$ of
size at least $\lfloor\sigma D/2\rfloor$ on which all these
distances are at most $K\mu$.

Fix $T$, the mesh point, the anchor $j$, and such a subset.
After conditioning only on $U_j$, its other coordinates remain
independent. The interval estimate \eqref{eq:interval torus count} bounds the
probability for each of them by
$C(\mu+N_*^{-1/2}+M^{-1})$. A union bound over all these choices gives
\begin{align*}
 &\mathbb P_Z\{\exists T\in\mathscr T:
       \operatorname{CLCD}_{\mu|T|,\gamma_*}^{T}
       (r_*U_T/\sqrt{N_*})\le16\} \\
 &\quad\le C N_*^2\left(1+\frac{M}{\mu\sqrt{N_*}}\right)2^D
       \left[C(\mu+N_*^{-1/2}+M^{-1})\right]
                  ^{\lfloor\sigma D/2\rfloor}.
\end{align*}
First choose $\mu$ sufficiently small in terms of $A$.
Since $D\ge\rho N_*$, the last two factors are then at most
$A^{-4N_*}$ for large $N_*$ in the present range
$M\ge\sqrt{N_*}/(128br_*)$.
Next choose ${c_{\ref{lem:box CLCD exceptions}}(A)}$ sufficiently
small and increase the lower dimension to absorb the prefactor
into $A^{2N_*}$. This proves the assertion. Decreasing $\mu$
only increases the CLCD and decreases the exceptional event.
\end{proof}

\printbibliography

@article{CJMS-pi,
  author  = {Campos, Marcelo and Jenssen, Matthew and Michelen, Marcus and Sahasrabudhe, Julian},
  title   = {The least singular value of a random symmetric matrix},
  journal      = {Forum of Mathematics, Pi},
  volume  = {12},
  year    = {2024},
  pages   = {e3},
  doi         = {10.1017/fmp.2023.29}
}

@article{CJMS-jams,
  author  = {Campos, Marcelo and Jenssen, Matthew and Michelen, Marcus and Sahasrabudhe, Julian},
  title   = {The singularity probability of a random symmetric matrix is exponentially small},
  journal      = {Journal of the American Mathematical Society},
  volume  = {38},
  number  = {1},
  year    = {2025},
  pages   = {179--224},
  doi     = {10.1090/jams/1042},
  eprint  = {2105.11384},
  eprinttype = {arxiv}
}

@article{C-prtf,
  author  = {Chatterjee, Sourav},
  title   = {Stein's method for concentration inequalities},
  journal      = {Probability Theory and Related Fields},
  volume  = {138},
  number  = {1--2},
  year    = {2007},
  pages   = {305--321},
  doi         = {10.1007/s00440-006-0029-y}
}

@article{Cook-ptrf,
  author  = {Cook, Nicholas A.},
  title   = {On the singularity of adjacency matrices for random regular digraphs},
  journal = {Probability Theory and Related Fields},
  volume  = {167},
  number  = {1--2},
  year    = {2017},
  pages   = {143--200},
  doi     = {10.1007/s00440-015-0679-8},
  eprint  = {1411.0243},
  eprinttype = {arxiv}
}

@article{CTV-duke,
  author  = {Costello, Kevin P. and Tao, Terence and Vu, Van},
  title   = {Random symmetric matrices are almost surely nonsingular},
  journal = {Duke Mathematical Journal},
  volume  = {135},
  number  = {2},
  year    = {2006},
  pages   = {395--413},
  doi     = {10.1215/S0012-7094-06-13527-5}
}

@article{Huang-duke,
  author  = {Huang, Jiaoyang},
  title   = {Invertibility of adjacency matrices for random $d$-regular graphs},
  journal      = {Duke Mathematical Journal},
  volume  = {170},
  number  = {18},
  year    = {2021},
  pages   = {3977--4032},
  doi     = {10.1215/00127094-2021-0006},
  eprint  = {1807.06465},
  eprinttype = {arxiv}
}

@article{JSS-imrn,
  author  = {Jain, Vishesh and Sah, Ashwin and Sawhney, Mehtaab},
  title   = {The smallest singular value of dense random regular digraphs},
  journal      = {International Mathematics Research Notices},
  volume  = {2022},
  number  = {24},
  year    = {2022},
  pages   = {19300--19334},
  doi     = {10.1093/imrn/rnab247},
  eprint  = {2008.04755},
  eprinttype = {arxiv}
}

@article{KKS-jams,
  author  = {Kahn, Jeff and Koml{\'o}s, J{\'a}nos and Szemer{\'e}di, Endre},
  title   = {On the probability that a random {$\pm1$}-matrix is singular},
  journal = {Journal of the American Mathematical Society},
  volume  = {8},
  number  = {1},
  year    = {1995},
  pages   = {223--240}
}

@article{regular-jmaa,
  author  = {Litvak, Alexander E. and Lytova, Anna and Tikhomirov, Konstantin and Tomczak-Jaegermann, Nicole and Youssef, Pierre},
  title   = {Adjacency matrices of random digraphs: singularity and anti-concentration},
  journal = {Journal of Mathematical Analysis and Applications},
  volume  = {445},
  number  = {2},
  year    = {2017},
  pages   = {1447--1491},
  doi     = {10.1016/j.jmaa.2016.08.020},
  eprint  = {1511.00113},
  eprinttype = {arxiv}
}

@article{regular-ptrf,
  author  = {Litvak, Alexander E. and Lytova, Anna and Tikhomirov, Konstantin and Tomczak-Jaegermann, Nicole and Youssef, Pierre},
  title   = {The smallest singular value of a shifted {$d$}-regular random square matrix},
  journal = {Probability Theory and Related Fields},
  volume  = {173},
  number  = {3--4},
  year    = {2019},
  pages   = {1301--1347},
  doi     = {10.1007/s00440-018-0852-y},
  eprint  = {1707.02635},
  eprinttype = {arxiv}
}

@article{Livshyts-jam,
  author  = {Livshyts, Galyna V.},
  title   = {The smallest singular value of heavy-tailed not necessarily i.i.d. random matrices via random rounding},
  journal      = {Journal d'Analyse Math\'ematique},
  volume  = {145},
  number  = {1},
  year    = {2021},
  pages   = {257--306},
  doi     = {10.1007/s11854-021-0183-2},
  eprint  = {1811.07038},
  eprinttype = {arxiv}
}

@article{McKay-cpc,
  author  = {McKay, Brendan D.},
  title   = {Subgraphs of dense random graphs with specified degrees},
  journal      = {Combinatorics, Probability and Computing},
  volume  = {20},
  number  = {3},
  year    = {2011},
  pages   = {413--433},
  doi     = {10.1017/S0963548311000034},
  eprint  = {1002.3018},
  eprinttype = {arxiv}
}

@article{McKay-ejc,
  author  = {McKay, Brendan D. and Wormald, Nicholas C.},
  title   = {Asymptotic enumeration by degree sequence of graphs of high degree},
  journal      = {European Journal of Combinatorics},
  volume  = {11},
  year    = {1990},
  pages   = {565--580},
  number      = {6},
  doi         = {10.1016/S0195-6698(13)80042-X}
}

@article{Meszaros-tams,
  author  = {M{\'e}sz{\'a}ros, Andr{\'a}s},
  title   = {The distribution of sandpile groups of random regular graphs},
  journal = {Transactions of the American Mathematical Society},
  volume  = {373},
  number  = {9},
  year    = {2020},
  pages   = {6529--6594},
  doi     = {10.1090/tran/8127},
  eprint  = {1806.03736},
  eprinttype = {arxiv}
}

@misc{NW-arxiv,
  author  = {Nguyen, Hoi H. and Wood, Melanie Matchett},
  title   = {Cokernels of adjacency matrices of random {$r$}-regular graphs},
  howpublished = {\href{https://arxiv.org/abs/1806.10068}{arXiv:\allowbreak 1806.10068}},
  year    = {2018},
  note    = {Preprint},
  eprint  = {1806.10068},
  eprinttype = {arxiv}
}

@article{Roos,
  author  = {Roos, Bero},
  title   = {New inequalities for permanents and hafnians and some generalizations},
  journal = {Linear and Multilinear Algebra},
  volume  = {73},
  number  = {8},
  year    = {2025},
  pages   = {1634--1667},
  doi     = {10.1080/03081087.2024.2436061},
  eprint  = {1906.06176},
  eprinttype = {arxiv}
}

@article{RV-adv,
  author  = {Rudelson, Mark and Vershynin, Roman},
  title   = {The {Littlewood--Offord} problem and invertibility of random matrices},
  journal = {Advances in Mathematics},
  volume  = {218},
  number  = {2},
  year    = {2008},
  pages   = {600--633},
  doi     = {10.1016/j.aim.2008.01.010}
}

@article{RV-imrn,
  author  = {Rudelson, Mark and Vershynin, Roman},
  title   = {Small ball probabilities for linear images of high-dimensional distributions},
  journal      = {International Mathematics Research Notices},
  volume  = {2015},
  number  = {19},
  year    = {2015},
  pages   = {9594--9617},
  doi     = {10.1093/imrn/rnu243}
}

@article{Talagrand,
  author  = {Talagrand, Michel},
  title   = {Concentration of measure and isoperimetric inequalities in product spaces},
  journal      = {Publications Math\'ematiques de l'IH\'ES},
  volume  = {81},
  year    = {1995},
  pages   = {73--205},
  doi         = {10.1007/BF02699376}
}

@article{TV-aom,
  author  = {Tao, Terence and Vu, Van H.},
  title   = {Inverse {Littlewood--Offord} theorems and the condition number of random discrete matrices},
  journal = {Annals of Mathematics},
  volume  = {169},
  number  = {2},
  year    = {2009},
  pages   = {595--632},
  doi     = {10.4007/annals.2009.169.595}
}

@article{Tikhomirov-aom,
  author  = {Tikhomirov, Konstantin},
  title   = {Singularity of random {Bernoulli} matrices},
  journal = {Annals of Mathematics},
  volume  = {191},
  number  = {2},
  year    = {2020},
  pages   = {593--634},
  doi     = {10.4007/annals.2020.191.2.6}
}

@misc{Tran-arxiv,
  author = {Tran, Tuan},
  title  = {The smallest singular value of random combinatorial matrices},
  year   = {2020},
  eprint = {2007.06318},
  eprinttype = {arxiv},
  howpublished= {\href{https://arxiv.org/abs/2007.06318}{arXiv:\allowbreak 2007.06318}},
  note        = {Preprint}
}

@article{Hoeffding-jasa,
  author  = {Hoeffding, Wassily},
  title   = {Probability inequalities for sums of bounded random variables},
  journal = {Journal of the American Statistical Association},
  volume  = {58},
  number  = {301},
  year    = {1963},
  pages   = {13--30},
  doi     = {10.1080/01621459.1963.10500830}
}

@misc{HSW-arxiv,
  author  = {Hu, Xuanang and Song, Zeyan and Wu, Xinglong},
  title   = {Sharp singularity asymptotics for discrete random matrices},
  year    = {2026},
  eprint  = {2609.39463},
  eprinttype = {arxiv},
  howpublished = {\href{https://arxiv.org/abs/2609.39463}{arXiv:\allowbreak 2609.39463}},
  note    = {Preprint}
}
\end{document}